\documentclass[11pt,reqno,tbtags]{amsart}
\allowdisplaybreaks

\usepackage[foot]{amsaddr}
\usepackage[final]{showkeys} 

\usepackage{amsthm,amssymb,amsfonts,amsmath}
\usepackage{mathtools}
\usepackage{amscd} 
\usepackage{mathrsfs}
\usepackage[numbers, sort&compress]{natbib}
\usepackage{subfigure,graphicx}
\usepackage{vmargin, enumerate}
\usepackage{setspace}
\usepackage{paralist}
\usepackage[pagebackref=true]{hyperref}
\usepackage{color}
\usepackage[normalem]{ulem}
\usepackage{stmaryrd} 
\usepackage[refpage,intoc]{nomencl} 
\usepackage{tikz}
\usetikzlibrary{arrows}
\usetikzlibrary{decorations.markings}
\usepackage{wrapfig}
\usepackage{mathabx}
\usepackage{etoolbox}
\usepackage{caption}
\captionsetup{
	font=small}
\usepackage{tcolorbox}
\tcbuselibrary{theorems}
\usepackage{fancybox}
\usepackage{bbm}
\usepackage{mathrsfs}
\usepackage{textcase}
\usepackage{float}
\usepackage{type1cm}

\usepackage[T1]{fontenc}
\usepackage[utf8]{inputenc}
\usepackage[english]{babel}

\numberwithin{equation}{section}

\newcommand{\R}{\mathbb{R}}
\newcommand{\Z}{\mathbb{Z}}
\newcommand{\N}{{\mathbb N}}

\newcommand{\Q}{\mathbb{Q}}

\newcommand{\E}[1]{{\mathbb E}\left[#1\right]}

\newcommand{\p}[1]{{\mathbb P}\left(#1\right)}

\renewcommand{\P}{\mathbb{P}}



\newtcbtheorem[number within=section]{thm}{Theorem}%
{colback=green!5,colframe=green!35!black,fonttitle=\bfseries}{th}
\newtcbtheorem[number within=section]{theo}{Theorem}%
{colback=yellow!5,colframe=yellow!50!black,fonttitle=\bfseries}{theo}
\newtheorem{theorem}{Theorem}[section]

\newtheorem{lemma}[theorem]{Lemma}
\newtheorem{proposition}[theorem]{Proposition}
\newtheorem{corollary}[theorem]{Corollary}

\newtheorem{claim}[theorem]{Claim}
\newtheorem{remark}[theorem]{Remark}

\newtheorem{definition}[theorem]{Definition}

\newtheorem*{rep@theorem}{\rep@title}
\newcommand{\newreptheorem}[2]{%
  \newenvironment{rep#1}[1]{%
    \def\rep@title{#2 \ref{##1}}%
    \begin{rep@theorem}}%
    {\end{rep@theorem}}}
\makeatother

\newreptheorem{theorem}{Theorem}
\newcommand\cA{\mathcal A}
\newcommand\cB{\mathcal B}
\newcommand\cC{\mathcal C}
\newcommand\cD{\mathcal D}

\newcommand\cF{\mathcal F}

\newcommand\cM{\mathcal M}

\newcommand\cS{{\mathcal S}}
\newcommand\cT{{\mathcal T}}


\newcommand{\rF}{\mathrm{F}}


\newcommand{\bT}{\mathbf{T}}



\providecommand{\eps}{}
\renewcommand{\eps}{\varepsilon}

\providecommand{\ora}[1]{}
\renewcommand{\ora}[1]{\overrightarrow{#1}}
\newcommand{\diam}{\mathrm{diam}}



\newcommand{\eqdist}{\ensuremath{\stackrel{\mathrm{d}}{=}}}

\hypersetup{
	bookmarks=true,         
	unicode=false,          
	pdftoolbar=true,        
	pdfmenubar=true,        
	pdffitwindow=true,      
	pdftitle={},    
	pdfauthor={},     
	pdfsubject={Mathematics},   
	pdfnewwindow=true,      
	pdfkeywords={keywords}, 
	colorlinks=true,       
	linkcolor=blue,          
	citecolor=blue,        
	filecolor=blue,      
	urlcolor=blue           
}


\begingroup
\count255=\time
\divide\count255 by 60
\count1=\count255
\multiply\count255 by -60
\advance\count255 by \time
\ifnum \count255 < 10 \xdef\oclock{\the\count1:0\the\count255}
\else\xdef\oclock{\the\count1:\the\count255}\fi
\endgroup

\DeclareRobustCommand{\SkipTocEntry}[5]{}

\newcommand{\op}{\operatorname}
\newcommand{\frk}{\mathfrak}
\newcommand{\eqd}{\overset{d}{=}}
\newcommand{\wt}{\widetilde}
\newcommand{\wh}{\widehat} 
\newcommand{\bdy}{\partial} 
\renewcommand{\P}{\mathbb{P}}

\newcommand{\notion}[1]{{\emph{#1}}}
\newcommand{\ol}[1]{{\overline{#1}}}
\newcommand{\Map}{{\mathsf{M}}}
\newcommand{\simple}{\op{S}^{\bullet}}
\newcommand{\Bol}{\op{Bol}}

\newcommand{\defeq}{:=}
\newcommand{\IV}{\mathring{V}} 
\newcommand{\IF}{\mathring{F}}
\newcommand{\IE}{\mathring{E}}
\newcommand{\IC}{\mathring{C}}
\newcommand{\BD}{\op{BD}}
\newcommand{\old}[1]{{}}
\newcommand{\Var}{\op{Var}}
\newcommand{\LP}{\mathbf{\Lambda}}
\newcommand{\LPO}{\mathbf{\Lambda}^{\mathbf{0}}}
\newcommand{\CT}{\mathbf{C}}
\newcommand{\BR}{\mathbf{b}} 
\newcommand{\BM}{\mathbf{M}} 
\newcommand{\1}{\mathbf{1}} 
\newcommand{\BB}{\mathbbm}
\def\BMS{\BB M}
\newcommand{\GHPU}{{\mathrm{GHPU}}}
\newcommand{\GHP}{{\mathrm{GHP}}}
\newcommand{\ep}{\epsilon}
\newcommand{\rta}{\rightarrow}

\newcommand{\mcon}{{\mathbbm c}}
\newcommand{\Bolm}{\Bol^{\Delta}_{\op{III}}}

\providecommand{\v}{}
\renewcommand{\v}{\mathrm{v}}

\newcommand{\pl}{\ensuremath{\preceq_{\mathrm{lex}}}}
\newcommand{\pc}{\ensuremath{\preceq_{\mathrm{ctr}}}}

\newcommand{\pcy}{\ensuremath{\preceq_{\mathrm{cyc}}}}

\newcommand{\bbrcy}[1]{\ensuremath{[#1]_{\mathrm{cyc}}}}

\newcommand{\out}{\ensuremath{\mathrm{out}}}

\newcommand{\mtrig}[1][p]{\ensuremath{\Delta^\vartriangle_{#1}}}
\newcommand{\mtrign}[2][p]{\ensuremath{\Delta^\vartriangle_{#1,#2}}}

\newcommand{\inn}{\ensuremath{\mathrm{inn}}}

\begin{document}

\title[Scaling limit of triangulations of polygons]{Scaling limit of triangulations of polygons}
\author{Marie Albenque, Nina Holden and Xin Sun}
\date{\today{}}
\keywords{triangulation,  scaling limit, Brownian disk. 
}

\begin{abstract}
We prove that random triangulations of types I, II, and III with a simple boundary under the critical Boltzmann weight 
converge in the scaling limit to the  Brownian disk. The proof uses a bijection due to Poulalhon and Schaeffer between type III triangulations of the $p$-gon and so-called blossoming forests. A variant of this bijection was also used by Addario-Berry and the first author to prove convergence  of type III triangulations to the Brownian map, but new ideas are needed to handle the simple boundary. Our result is an ingredient in the program of the second and third authors on the convergence of uniform triangulations under the Cardy embedding.
\end{abstract}
\maketitle

\section{Introduction}

The metric geometry of random planar maps is  a central topic in the study of random planar geometry.    Le Gall \cite{LeGall-uniqueness} and Miermont \cite{Miermont-BM}
independently proved that  uniformly sampled  quadrangulations of large size converge in the scaling limit as metric measure  spaces to 
a random metric measure space known as \emph{the Brownian map}.
Le Gall \cite{LeGall-uniqueness} also proved the same result  for uniform $p$-angulations with $p=3$ or $p$ even.
It was proved earlier that the Brownian map has the topology of the sphere \cite{LeGall-Sphere} and Addario-Berry and the first author extended this result to every dodd value of $p\geq 5$ \cite{AddarioAlbenqueOdd}.  The same scaling limit result was also proved for uniform random planar maps with a fixed number of edges by Bettinelli, Jacob, and  Miermont~\cite{bjm-general}. Therefore the Brownian map is a universal random surface with sphere topology.
Following Le Gall and Miermont's breakthrough, there have been extensive studies on  the universality of the Brownian map. One extension of \cite{LeGall-uniqueness,Miermont-BM} is the case when
the planar maps of interest satisfy certain connectivity properties, for example that self-loops and multiple edges are not allowed.
In this direction, it was shown in \cite{ABA-Simple,ABW-Core} that simple or 2-connected triangulations/quadrangulations converge to the Brownian map.
The convergence result was also shown for quadrangulations with no pendant vertices \cite{bl-pendant}.

Scaling limit results for random planar maps have also been extended to other topologies. Bettinelli and Miermont \cite{BeMi15} proved that uniform quadrangulations with boundary under proper rescaling converge to a random metric measure space called the Brownian disk. It is shown by Bettinelli \cite{Bet} that the Brownian disk has the  disk topology. Both these papers focus on the case where the boundary of the planar map is allowed to be non-simple. On the other hand, random planar maps with simple boundary arise naturally  in the so-called peeling process. In \cite{gwynne-miller-disk}, Gwynne and Miller adapted the result in \cite{BeMi15} to the simple boundary case by a perturbation argument.

The purpose of this paper is to further demonstrate the universality of the Brownian disk. In particular, we prove that random triangulations with a simple boundary, with or without self loops or multiple edges, converge to the Brownian disk.
Beside its independent interest, our result supplies a missing ingredient in the study of the scaling limit of percolation on uniform triangulations (see Section~\ref{subsec:cardy}).

\subsection{Main results}
A \notion{planar map} is an embedding of a finite connected (multi-)graph into the 2-dimensional sphere, considered up to orientation-preserving homeomorphisms. The \notion{faces} of the map are the connected components of the complement of edges. For any planar map $m$, we denote by $V(m)$, $E(m)$, and $F(m)$ the set of vertices, edges, and faces, respectively, of $m$.  
For any $v\in V(m)$, there is a unique cyclic (clockwise) ordering of the edges around $v$. A corner of $m$ is an ordered pair $\xi=(e,e')$, where $e$ and $e'$ are incident to a common vertex $v$ and $e'$ immediately follows $e$ in the clockwise order around $v$. We write $\v(\xi)=v$ and say that $\xi$ is incident to $v$ (and also to $e$ and $e'$). We denote by $\cC(m)$ the set of corners of $m$.
If $e=\{u,v\}$, $e'=\{v,w\}$, and $f$ is the face on the left when going along $e$ and $e'$ starting at $u$ and ending at $w$, we say that $f$ is incident to $\xi=(e,e')$, and vice versa. The degree of $f$ is the number of corners incident to it. A \notion{triangulation} is a map in which all faces have degree 3. 

For technical reasons, the maps we consider are always rooted, meaning that one of the corners is distinguished and called the \notion{root} or \notion{root corner}. The face and the vertex incident to the root corner are called the \notion{root face} and the \notion{root vertex}, respectively. The  edge incident to the root corner which is clockwise before the root corner is called the \notion{root edge}. For $M$ a rooted planar map, its root corner is denoted by $\xi(M)$ and its root vertex by $\rho(M)$.
A \notion{triangulation with boundary} of length $p$ is a map such that all its non-root faces have degree 3 and its root face has degree $p$.  A \notion{triangulation of the $p$-gon} is a triangulation with boundary of length $p$ such that the boundary of the root face is a simple curve.

When self-loops and multiple edges are allowed, the triangulations are  called  
triangulations of \notion{type I} or \notion{general triangulations}.  
Triangulations of \notion{type II} or 
\notion{loopless triangulations} 
are triangulations in which self-loops are not allowed but multiple edges are. 
Finally, triangulations of \notion{type III} or \notion{simple triangulations} are triangulations with neither self-loops nor multiple edges.\footnote{Recall that a graph is said to be $k$-connected, if at least $k$ vertices have to be removed to disconnect it. Then, triangulations of types I, II, and III are 1-connected, 2-connected, and 3-connected triangulations, respectively.} We define similarly triangulations of the $p$-gon of types I, II, and III. 

The \notion{size} of a map is the number of vertices. 
For integers $n\ge p\ge 3$ and $i\in\{ \op{I},\op{II},\op{III} \}$,
let $\Delta_i(p,n) $ be the set of type  $i$ triangulations of the $p$-gon of size $n$. Define $\Delta_i(p)=\bigcup_{n\ge p}\Delta_i(p,n)$.  
Set 
\begin{align}
  \rho_{\op{I}} = (12\sqrt 3)^{-1},\qquad \rho_{\op{II}} = 2/27, \qquad \textrm{and}\qquad \rho_{\op{III} }= 27/256. 
  \label{eq:parameter}
\end{align}
Given  $i\in\{ \op{I},\op{II},\op{III} \}$, we may assign each $m\in \Delta_i(p)$  weight  $\rho_i^n$, where $n$ is the size of $m$.
It is well known that this defines a finite measure on $\Delta_i(p)$ for all $p$. 
Let $\Bol_i(p)$ be the probability measure obtained by the normalization of this measure. We call a sample drawn from $\Bol_i(p)$ a \notion{Boltzmann triangulation} of the $p$-gon of type $i$.

We may view a triangulation of a polygon as a compact metric measure space decorated with a (boundary) curve.  A natural topology called the \notion{Gromov-Hausdorff-Prokhorov-uniform} (GHPU) topology was introduced in \cite{gwynne-miller-disk} for such objects.
Moreover, in the case of quadrangulations instead of triangulations, it was proved in \cite{gwynne-miller-disk} that  the GHPU scaling limit is the so-called \notion{Brownian disk} as defined in \cite{BeMi15}. 
The main result of this paper is the triangulation version of this theorem, for all of the three types.
\begin{theorem}\label{thm:rigor}
Let $\mcon_{\op{I}}=3/4$,  $\mcon_{\op {II}}=3/2$, and $\mcon_{\op {III}}=3$.
Fix $i\in\{ \op{I},\op{II},\op{III} \}$.  
For $p\ge 3$, let $\Map_p$ be sampled from $ \Bol_i(p)$.   
Assign length $\sqrt{3/2} p^{-1/2}$ to each edge, mass $\mcon_i p^{-2}$ to each inner vertex,  and length $p^{-1}$ to each boundary edge.
Then under this  rescaling,  $\Map_p$ converges to the free Brownian disk with perimeter 1 (see Section~\ref{subsec:BD} for definition) in the GHPU topology.
\end{theorem}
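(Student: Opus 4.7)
The plan is to prove the result first for type III (simple) triangulations via a direct bijective encoding, and then reduce types I and II to the type III case by a core-extraction argument. For $\Map_p\sim\Bol_{\op{III}}(p)$, I would apply a variant of the Poulalhon--Schaeffer bijection adapted to the $p$-gon with simple boundary, in the spirit of what \cite{ABA-Simple} does for the sphere case. This represents $\Map_p$ by a blossoming forest of $p$ trees---one rooted at each corner of the root polygon---together with a pairing of stems. From the forest one reads off two discrete processes: the contour function $(C^p_t)_{t\ge 0}$ tracing the boundaries of the trees, and a label function $(L^p_t)_{t\ge 0}$ recording graph distances from a marked boundary vertex along a canonical path. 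Under the critical Boltzmann weight $\rho_{\op{III}}=27/256$, the individual trees should behave as critical Galton--Watson trees with explicit finite-variance offspring distribution, and the labels should perform a centered random walk along the contour.

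The first main step is to show that, under an appropriate time rescaling together with the space and mass rescalings of the theorem statement, the pair $(C^p,L^p)$ converges jointly in distribution to $(e,Z)$, the excursion and head-of-Brownian-snake encoding the free Brownian disk of perimeter~$1$ as in~\cite{BeMi15}; the constants $\sqrt{3/2}$ and $\mcon_{\op{III}}=3$ are pinned down by matching first and second moments of the encoding increments. The \emph{main obstacle} is transferring this process-level convergence to GHPU convergence of the metric measure spaces. For this I would establish, following \cite{ABA-Simple, ABW-Core}, a two-sided comparison between graph distances in $\Map_p$ and the pseudo-distance derived from $L^p$: a cactus-type lower bound, together with an upper bound realised by explicit paths furnished by the bijection. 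The genuinely new difficulty, absent in the sphere and non-simple-boundary settings, is that the simple boundary has macroscopic length $p\cdot p^{-1}=1$ and forbids many shortcuts that are available when boundary edges are allowed to coincide. I would handle this by treating the boundary polygon as a marked simple cycle from which the $p$ trees hang, so that its scaling is governed entirely by the $p^{-1}$ rescaling of boundary edges and yields in the limit a simple curve of length $1$, precisely the boundary of the Brownian disk. GHPU tightness and identification of the limit then follow by combining these metric estimates with the process-level convergence, in the framework developed in~\cite{gwynne-miller-disk}.

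For types I and II, I would argue by core extraction. A type I triangulation of the $p$-gon decomposes canonically into its simple core---a type III triangulation of some random $p'$-gon---together with type I triangulations with (possibly non-simple) boundary glued inside each face of the core and dangling subtrees or loops pending at its vertices; an analogous 2-connected core extraction handles type II. Under the critical Boltzmann weights in \eqref{eq:parameter}, the dangling and glued pieces are subcritical, so their diameters are of order $p^{o(1)}$ while their total masses concentrate around deterministic multiples of the core mass. Hence gluing them onto the core leaves the GHPU scaling limit unchanged. Applying the type III result to the core (with $p'/p\to\alpha_i$ for an explicit $\alpha_i\in(0,1)$) then yields convergence of the whole map to the free Brownian disk of perimeter~$1$, and the constants $\mcon_{\op{I}}=3/4$ and $\mcon_{\op{II}}=3/2$ are produced by the asymptotic ratio of total vertex mass to core vertex mass.
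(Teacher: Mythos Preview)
Your high-level plan matches the paper's, but there are two genuine gaps.

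\textbf{The boundary obstacle is misdiagnosed and the proposed fix does not work.} The difficulty is not that the simple boundary ``forbids shortcuts''; it is the opposite. The lower bound in the \cite{ABA-Simple} scheme shows that modified leftmost paths are near-geodesic by a counting argument (Euler's formula applied to the region enclosed by an excursion of a competing path away from the leftmost path). This argument requires all inner vertices in the enclosed region to have outdegree exactly~$3$ in the minimal orientation; it fails if the region contains boundary vertices, whose total outdegree is only $2p-3$. Consequently the lower bound you obtain compares the label process only to the \emph{interior} graph metric $\wt d_{\Map_p}$ (distance using paths that avoid $\partial\Map_p$), not to $d_{\Map_p}$. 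Nothing in ``treating the boundary as a marked simple cycle'' bridges this gap. The paper's resolution is indirect: it couples $\Bol_{\op{III}}(p)$ with $\Bol_{\op{II}}(p)$ via the simple-core construction, peels one layer off the type~II boundary, and observes that the largest $2$-connected component $\ol\Map^{\op{II}}_p$ of the interior has a \emph{random} perimeter $\ol\ell_p$ with $\ol\ell_p/p\to 1$; on $\ol\Map^{\op{II}}_p$ the interior-only lower bound from $\Map_p$ becomes a genuine lower bound, because the peeled boundary is now strictly inside. This yields GHP convergence for $\Bolm(\ol\ell_p)$, and a separate coupling between $\Bolm(\ell_p)$ and $\Bolm(p)$ for $\ell_p/p\to 1$ transfers the result to fixed~$p$. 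You would need some device of this kind.

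\textbf{The core extraction does not change the perimeter.} Your claim that the simple (or $2$-connected) core is a triangulation of a $p'$-gon with $p'/p\to\alpha_i<1$ is incorrect. In the decomposition used here, one attaches a Boltzmann type~II $2$-gon to every edge of the core, including boundary edges, but the boundary cycle itself is unchanged: the simple core of $\Map_p\sim\Bol_{\op{II}}(p)$ has law exactly $\Bol_{\op{III}}(p)$. The constants $\mcon_i$ arise from the asymptotic edge (or vertex) ratio $|E(\Map_p)|/|E(S_p)|\to 2$, not from a perimeter ratio. Also, the glued $2$-gons are \emph{critical} Boltzmann maps, not subcritical; what makes their contribution negligible is that there are $O(p^2)$ of them each of expected size $O(1)$, together with a tail bound $\P[\diam(\Map_2)>x]=o(x^{-4})$ ensuring $\max_i\diam(\Map^i)=o(p^{1/2})$.
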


We will restate Theorem~\ref{thm:rigor} precisely at the end of Section~\ref{sec:pre}, before which we will properly introduce the space of curve-decorated compact metric measure spaces and endow it with the GHPU topology. We will also give the precise interpretation of $\Map_p$ under the scaling in Theorem~\ref{thm:rigor} as a random variable in such a space.

  We will first prove the type III case of Theorem~\ref{thm:rigor}.
  Our proof uses a bijection due to Poulalhon and Schaeffer~\cite{PoSc} between type III triangulations of the $p$-gon and ``blossoming forests'', which are forests where vertices carry some decorations. We will review this bijection in Section~\ref{sec:bijection}. 
  Having this bijection, our overall strategy is similar to the approach in \cite[Section~8]{BeMi15}. 
  Namely, we  establish the convergence of the contour and label processes in Section~\ref{sec:process}
  and conclude   the GHPU convergence from there, via the classical re-rooting argument  introduced in~\cite[Section~9]{LeGall-uniqueness}.
  However, contrary to Schaeffer's classical bijection~\cite{Sch98}, the Poulalhon-Schaeffer bijection does not encode directly metric properties of the triangulation in the blossoming forest. Borrowing some techniques developed by Addario-Berry and the first author~\cite{ABA-Simple}, we explain in Sections~\ref{sec:upper} and~\ref{sec:lower} how to define a labeling on the forest which gives some approximation of the distances. 
  
  There is also a new  difficulty in relating distances in the maps with labels in the trees coming from the presence of the macroscopic boundary. 
  We overcome this new issue by considering  a type III Boltzmann triangulation of a certain \emph{random} perimeter. 
  The type I and II cases can be reduced to the type III case by the canonical coupling of 
  $\Bol_{\op{I}} (p)$, $\Bol_{\op{II}} (p)$, and $\Bol_{\op{III}} (p)$ through the so-called \emph{core} construction. 
  This reduction was carried out in \cite{ABW-Core} in the case of quadrangulations without boundary and we adapt their technique to our setting.
  Some technical argument in the type III convergence  relies on the coupling between $\Bol_{\op{II}} (p)$ and $\Bol_{\op{III}} (p)$. Therefore we present the canonical coupling in Section \ref{sec:kernel} and prove the type III convergence in Section~\ref{sec:BD}.

\subsection{Universality}
Using our method and the quadrangulation variant of the Poulalhon-Schaeffer bijection (see \cite[Chapter 3]{FusyThesis}), it is possible to prove that  the Boltzmann quadrangulation with a simple boundary, its  2-connected core, and its simple core  jointly converge  to the same Brownian disk. This in particular  would give another proof of the main result of~\cite{gwynne-miller-disk}.
We remark that~\cite{ABA-Simple} and~\cite{ABW-Core} treated both the quadrangulation and the triangulation;
for the triangulation case of \cite{ABW-Core}, see \cite[Remark (2) after Proposition 1.5]{ABW-Core}. 

\subsection{Application to the scaling limit of percolations on random triangulations}\label{subsec:cardy}
Beyond its own interest, Theorem~\ref{thm:rigor} has important consequences for the scaling limit of percolation-decorated triangulations.  According to \cite[Theorem~8.3]{GM-Percolation}, given Theorem~\ref{thm:rigor},  the interface of the Bernoulli-$1/2$ site percolation on  $\Map_{p}$ converges to chordal SLE$_6$ on an independent free Brownian disk. Our paper makes this theorem unconditional. Based on this convergence result and \cite{bhs-site-perc},  Gwynne and the second and third authors of this paper
establish the annealed scaling limit of the full collection of interfaces for percolation-decorated Boltzmann type II triangulations \cite{ghs-metric-peano}.
The second and third authors of this paper then upgrade this to a quenched result, based on which they show that uniform triangulations converge to a so-called $\sqrt{8/3}$-Liouville quantum gravity surface under a discrete conformal embedding called the Cardy embedding \cite{hs-cardy}. 

\subsection*{Acknowledgements}
We are grateful for enlightening discussions with Louigi Addario-Berry and Yuting Wen. The research of M.A.\ is supported by the ANR under the agreement ANR-14-CE25-0014 (ANR GRAAL) and ANR-16-CE40-0009-01 (ANR GATO). 
The research of N.H.\ is supported by the Norwegian Research Council, Dr.\ Max R\"ossler, the Walter Haefner Foundation, and the ETH Z\"urich Foundation.
The research of X.S.\ is supported by the Simons Foundation as a Junior Fellow at the Simons Society of Fellows, by NSF grant DMS-1811092, and by Minerva fund at the Department of Mathematics at Columbia University.

\section{Preliminaries}\label{sec:pre}

\subsection{Basic notations}\label{subsec:notation}
We introduce some further notations which will be used frequently.

Let $\N=\{1,2,\dots \}$ be the set of positive integers. For $n\in \N$, let $[n]=\{1,\cdots,n\}$. 
Given  $a<b$, let $[a,b]_\Z=[a,b]\cap\Z$.  
Given a finite set $A$, let $|A|$ be the cardinality of  $A$. 

If $\{a_n\}_{n\in\N}$ is a sequence  such that $\lim_{n\to\infty}|a_n|=0$, we write $a_n=o_n(1)$.  For two non-negative sequences $\{a_n\}_{n\in \N}$ and $\{b_n\}_{n\in \N}$, 
if there exists a constant $C$ not depending on $n$ such that $a_n\le Cb_n$ for each $n\in\N$, then we write $a_n\lesssim b_n$.  
If $a_n/ b_n$ converges to a positive constant as $n\rta\infty$ then we write $a_n\sim b_n$. 

Given any two random variables $X$ and $Y$, we write $X\eqd Y$ if they have the same law.

Let $M$ be a triangulation of the $p$-gon. We call a vertex (resp.\ edge and corner) of $M$ a  boundary vertex (resp.\ boundary edge and boundary corner)
if it is incident to the root face, and inner vertex (resp.\ inner edge and inner corner) otherwise. We also call a non-root face an inner face.  
We let $\IV(\Map)$, $\IF(\Map)$, $\IE(\Map)$, and $\IC(\Map)$ denote the set of inner vertices, faces, edges, and corners, respectively.

If $M$ is a simple map and $\{u,v\}\in E(M)$, let $\kappa^{\op \ell}(uv)$ and $\kappa^{\op r}(uv)$ denote the corners incident to $u$ situated just before $uv$ and just after $uv$, respectively, in clockwise direction around $u$. Note that $\kappa^{\op \ell}(uv)$ is equal to $\kappa^{\op r}(uv)$ if $u$ is of degree one and that $\kappa^{\op \ell}(uv)\neq \kappa^{\op \ell}(vu)$ and $\kappa^{\op r}(uv)\neq \kappa^{\op r}(vu)$. If there is an ambiguity, we may write $\kappa_M^{\op \ell}$ and $\kappa_M^{\op r}$ to emphasize the dependence on $M$.

\subsection{Trees}\label{sub:Trees}
A \notion{plane tree} is a rooted planar map with only one face. Equivalently, it is a rooted planar map whose underlying graph has no cycle. For the remaining subsection, let $T$ be a plane tree rooted at a corner $\xi$. 

For each $w\in V(T)$, we call the graph distance from $w$ to $\v(\xi)$ the \notion{generation} of $w$ and denote it by $|w|$.
For $w\in V(T)\backslash \{\v(\xi)\}$, we define its \notion{parent} $p(w)$ to be its only neighbor such that its generation is equal to $|w|-1$, and the children of $w$ are its neighbors with generation equal to $|w|+1$ (if any). We write $k(w)$ for the number of children of $w$. For $i \in [k(w)]$, the $i$-th child of $w$ is the vertex incident to the $i$-th edge around $w$ in the clockwise direction and starting from the edge $\{w,p(w)\}$ or from $\xi$ if $w=\v(\xi)$. 
Finally, the ancestors of $w$ are the vertices lying on the unique path between $\v(\xi)$ and $w$. 

The {\em Ulam--Harris encoding} is the injective function $U=U_{T}:V(T) \to \bigcup_{i \ge 0} \mathbb{N}^i$ defined as follows, where $\N^0=\{\emptyset\}$ by convention. First, set $U(\v(\xi)) = \emptyset$. For every other vertex $w\in V(T)$, consider the unique path $\v(\xi)=v_0,v_1, \ldots,v_k=w$ from $\v(\xi)$ to $w$.
For $1 \le j \le k$ let $i_j$ be such that $v_j$ is the $i_j$-th child of $v_{j-1}$.
Then set $U(w)= i_1i_2\ldots i_k \in \N^{k}$.
In other words, the root receives label $\emptyset$ and for each $i \ge 1$ the label of any $i$-th child is obtained recursively by concatenating the integer $i$ to the label of its parent. In particular, the length of the label of a vertex is equal to its generation. The set of Ulam-Harris labels $\{U(w): w \in V(T)\}$ determines clearly (the isomorphism class of) $T$. Hence, when there is no ambiguity, we identify vertices with their Ulam-Harris label.\footnote{The Ulam--Harris labeling is only introduced to define the lexicographic order on vertices below and will not be used in subsequent sections.}

\subsection{Forests}\label{sub:Forests}
For $p$ in $\N$, a \notion{cyclic forest} (or forest) of perimeter $p$ is a planar map with 2 faces, such that the root face has a simple boundary. Equivalently, it is a sequence of trees grafted on the vertices of a simple cycle of $p$ edges with a marked corner, such that all the trees lie in the face without the marked corner, see Figure~\ref{fig:forest}$(a)$. For reasons that will appear later, we adopt in this article the unusual convention that a forest is embedded in the plane with its inner face as its unbounded face. In the following, if we define a sequence of trees $(T_1,T_2,\ldots,T_p)$ as a forest $F$, then we always consider it as a cyclic forest, with the convention that $\rho(T_1)$ is incident to $\xi(F)$ and that $T_{j+1}$ is ordered after $T_j$ in clockwise order for $j=1,\dots,p-1$. In particular, $V(F)=\bigcup_{i=1}^p V(T_i)$ and $E(F)=\Big(\bigcup_{i=1}^p E(T_i)\Big)\cup\Big(\bigcup_{i=1}^p \{\rho(T_i),\rho(T_{i+1})\}\Big)$, with $T_{p+1}:=T_{1}$. For $F$ a cyclic forest, we denote by $\rho_1,\ldots,\rho_p$ its vertices incident to the root face starting from $\rho(F)$ and in clockwise order around the root face. We also often adopt the convention that $\rho_{p+1}=\rho_1$.
\begin{figure}
  \centering
  \includegraphics[scale=0.8,page=1]{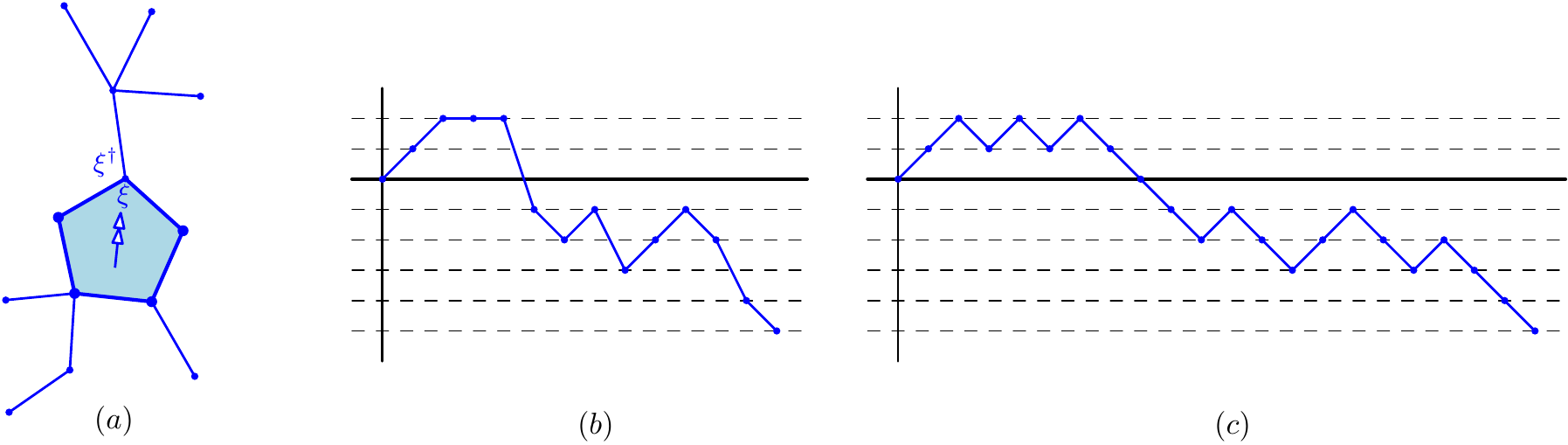}
  \caption{\label{fig:forest}$(a)$ A cyclic forest of perimeter 5, $(b)$ its height function, $(c)$ its contour function.}
\end{figure}
For $v\in V(F)$, we denote by $\tau(v)\in[p]$ the index of the tree $v$  belongs to. 
The Ulam-Harris encoding $U:V(F)\to \bigcup_{i \ge 0} \mathbb{N}^i$ of a forest $F=(T_1,T_2,\ldots,T_p)$ is defined as follows: for $v \in V(F)$, $U(v)=\tau(v) U_{T_{\tau(v)}}(v)$. 

For the remaining section, let $p\in \mathbb{N}$ and $F=(T_1,T_2,\ldots,T_p)$ be a cyclic forest with $p$ trees rooted at a corner $\xi$. Denote by $\xi^\dagger$ the next corner after $\xi$ in clockwise order around $\v(\xi)$.   

The {\em lexicographic order} $\pl$ of $V(F)$ is the total order on $V(F)$ induced by the lexicographic order on $\{U(v): v \in V(F)\}$. Similarly, the lexicographic order on $E(F)$ 
(also denoted $\pl$ by a slight abuse of notation) is defined such that $e\pl\{\rho_p,\rho_1\}$ for any $e\in E(F)$, and for any other edges $\{u,v\},\{u',v'\}$ different from $\{\rho_p,\rho_1\}$, we have $\{u,v\}\pl \{u',v'\}$ if and only if $u,v\pl u'$ or $u,v\pl v'$. The lexicographic order corresponds to the order in which a clockwise contour exploration of $F$ started at $\xi^\dagger$ encounters respectively the vertices and edges of $F$.
Let $(v_{0},v_1,\ldots,v_{|V(F)|-1)})$ be the list of vertices of $F$ sorted in the lexicographic order. We encode the forest $F$ by its \notion{height function} $H_F$, which is the $\cC([0,|V(F)|],\R)$-function defined as follows:
\[
\forall i\in[0,|V(F)|-1]_{\mathbb{Z}},\ H_{F}(i) = |v_{i}|-\tau(v_i)+1\quad \text{ and }\quad H_{F}(|V(F)|)=-p.
\]
and by linear interpolation otherwise, see Figure~\ref{fig:forest}$(b)$.

We now define the \notion{contour exploration} $\beta : [0,2|V(F)|-p]_{\Z} \to V(F)$. 
We set $\beta(0)=\v(\xi)$. 
Then for $1\leq i \leq 2|V(F)|-p$, let $\beta(i)$ be the smallest (for the lexicographic order) child of $\beta(i-1)$ that has not yet been explored, if such a vertex exists. Otherwise, if $\beta(i-1)\notin \{\rho(T_1),\ldots,\rho(T_p)\}$, let $\beta(i)$ be the parent of $\beta(i-1)$, while if $\beta(i-1)=\rho(T_k)$ for $1\le k\le p$, let $\beta(i)=\rho(T_{k+1})$ with the convention $T_{p+1}=T_1$.  A cyclic forest with $|V(F)|$ vertices has $|V(F)|$ edges. Since exactly $p$ edges are incident to both the root face and the unique inner face of $F$, the contour process explores $2|V(F)|-p$ ``sides of edges'', hence $\beta(2|V(F)|-p)=\v(\xi)$.

Recall that $\cC(F)$ is the set of corners of $F$ and that $\mathring{\cC}(F)$ denotes the set of corners of $F$ not incident to its root face. The contour exploration induces an order on $\mathring{\cC}(F)$ as follows. 
Let $\xi(0)=\xi^\dagger$,  
and for $1 \le i < 2|V(F)|-p$ let $\xi(i)=(\{\beta(i-1),\beta(i)\},\{\beta(i),\beta(i+1)\})$.  The \notion{contour order}, denoted by $\pc$, is the total order of $\mathring{\cC}(F)$ induced by $(\xi(i),0 \le i < 2|V(F)|-p)$.  
For convenience, also let $\xi(2|V(F)|-p)=\xi^\dagger$. 
Finally, write $\pcy$ for the cyclic order on $\mathring{\cC}(F)$ induced by $\pc$. It can be verified that $\pcy$ does not depend on the choice of root corner $\xi$. We define cyclic intervals accordingly: for $c,c' \in \cC(F)$, let
\begin{equation}
\bbrcy{c,c'} =  \begin{cases}
\{c'': c \pc c'' \pc c'\}& \mbox{ if } c \pc c'\, , \\
\{c'': c'' \pc c \mbox{ or } c'\pc  c''\} & \mbox{ if } c' \pc c\, .
\end{cases}
\label{eq:cyclic-interval}
\end{equation}

We also encode the forest $F$ by its \notion{contour function} $C_F$, which is the $\cC([0,2|V(F)-p|],\R)$-function defined as follows:
\[
\forall i\in[0,2|V(F)|-p-1]_{\mathbb{Z}},\ C_{F}(i) = |\beta(i)|-\tau(\beta(i))+1\quad \text{ and }\quad C_{F}(2|V(F)|-p)=-p.
\]
and by linear interpolation otherwise, see Figure~\ref{fig:forest}$(c)$.

\subsection{GHPU topology}\label{subsec:GHPU}
Given a metric space $(X,d)$, 
for two closed sets $E_1,E_2 \subset X$, their \notion{Hausdorff distance}   is given by 
\[
\BB d_d^{\op{H}}(E_1,E_2):=\max\{ \sup_{x\in E_1}\inf_{y\in E_2} d(x,y), \sup_{y\in E_2}\inf_{x\in E_1} d(x,y) \}.
\]
For two finite Borel measures $\mu_1,\mu_2$ on $X$, their \notion{Prokhorov distance} is given by   
\begin{equation*}
\begin{split}
\BB d^{\op{P}}_d (\mu_1,\mu_2) =\inf \{ \ep>0\,:\, & \mu_1(A)\le \mu_2(A^\ep)+\ep \; \\
& \textrm{and}\; \mu_2(A)\leq\mu_1(A^\ep)+\ep \; \textrm{for all closed sets }A\subset X \},
\end{split}
\end{equation*}
where $A^\ep$ is the set of elements of  $X$ at distance less than $\ep$ from $A$, i.e. $A^\ep=\{x\in X\text{ such that }\exists a\in A,\,d(a,x)<\ep\}$.

A finite (continuous) curve with length $T\in [0,\infty)$ on $X$ is a continuous map $\eta:[0,T]\to X$. Given a finite curve $\eta$ of length $T$, let $\bar\eta$ be the extension of $\eta$ to $\R$ such that $\bar \eta(t)=\eta(0)$ for $t\le 0$ and $\bar \eta(t)=\eta(T)$ for $t\ge T$.
For two finite  curves $\eta_1, \eta_2$ on $X$,  their \notion{uniform distance}  is given by   
$$\BB d^{\op U}_d(\eta_1,\eta_2)\defeq\sup_{t\in \R} d(\bar \eta_1(t) ,\bar\eta_2(t)).$$

Let $\BMS^\GHPU$ be the set of quadruples $\frk X  = (X , d , \mu , \eta)$ where $(X,d)$ is a compact metric space, 
$\mu$ is a finite Borel measure on $X$, and $\eta$ is a finite curve on $X$. 
If we are given elements $\frk X^1 = (X^1 , d^1, \mu^1 , \eta^1) $ and $\frk X^2 =  (X^2, d^2,\mu^2,\eta^2) $ of $ \BMS^\GHPU$ and isometric embeddings 
$\iota^1 : X^1 \rta W$ and $\iota^2 : X^2 \rta W$
 for some metric space $(W,D)$, we define the \notion{GHPU distortion} of $(\iota^1,\iota^2)$ by
\begin{equation}
\label{eqn-ghpu-var}
\begin{split}
\op{Dis}_{\frk X^1,\frk X^2}^\GHPU\left(W,D , \iota^1, \iota^2 \right)   
:=&  \BB d^{\op{H}}_D \left(\iota^1(X^1) , \iota^2(X^2) \right) +   
\BB d^{\op{P}}_D \left((\iota^1)_*\mu^1 ,(\iota^2)_*\mu^2) \right) \\
&+ 
\BB d_D^{\op{U}}\left( \iota^1 \circ \eta^1 , \iota^2 \circ\eta^2 \right).
\end{split}
\end{equation}
The \notion{Gromov-Hausdorff-Prokhorov-Uniform distance} between $\frk X^1$ and $\frk X^2$ is given by
\begin{align} \label{eqn-ghpu-def}
  \BB d^\GHPU\left( \frk X^1 , \frk X^2 \right) 
  = \inf_{(W, D) , \iota^1,\iota^2}  \op{Dis}_{\frk X^1,\frk X^2}^\GHPU\left(W,D , \iota^1, \iota^2 \right)      ,
\end{align}
where the infimum is over all compact metric spaces $(W,D)$ and isometric embeddings $\iota^1 : X^1 \rta W$ and $\iota^2 : X^2\rta W$.
By~\cite{gwynne-miller-uihpq}, $\BB d^\GHPU$ is a complete separable metric on~$\BMS^\GHPU$ provided we identify any two elements of~$\BMS^\GHPU$ which differ by a measure- and curve-preserving isometry.

Given two compact metric measure spaces $\frk X^1=(X^1,d^1,\mu^1)$ and $\frk X^2=(X^2,d^2,\mu^2)$,  their \notion{Gromov-Hausdorff-Prokhorov} (GHP) distance  $\BB d^\GHP\left( \frk X^1 , \frk X^2 \right) $ is defined in the same manner as in \eqref{eqn-ghpu-def} with the  curve component removed. 
We denote the space of compact metric measure spaces under the GHP metric by $\BMS^\GHP$.

Given a finite graph $G$,  identify each edge of $G$ with a copy of the unit interval $[0,1]$. We define the metric $d_G$   by requiring that  this identification is an isometric  embedding  from $[0,1]$ to $(G,d_G)$. Let  
$\mu_G$ be the counting measure on the vertex set of $G$.  A \notion{path} on  $G$ of length $m\in \Z_{>0}$ is a sequence of vertices $v_0,\ldots, v_m$ such that $v_i$ and $v_{i+1}$ are adjacent for all $0\le i<m$. A  path of length $m$ can be identified with a continuous curve of length $m$ on the metric space $(G, d_G)$, which is not necessarily unique if the map allows multiple edges between two vertices. Given a planar map $m$, the edges on the root face form a path $\bdy_m$ on $m$ which starts and ends at its root vertex, and has orientation consistent with the root edge. 
For $i\in \{\op{I},\op{II},\op{III} \}$, recall the constant $\mcon_i$ and the map $\Map_p\in \Delta_i(p)$ in Theorem~\ref{thm:rigor}.
Set
\begin{equation}\label{eq:scale}
d_p\defeq   \sqrt{3/2} p^{-1/2}d_{\Map_p}, \quad\quad   \mu_p\defeq\mcon_i p^{-2}\mu_{\Map_p},\quad\textrm{ and }\quad \bdy_p(t)\defeq\bdy_{\Map_p}(t p)\; \textrm{for }t\in[0,1].
\end{equation}
Then $\cM_p\defeq\left( \Map_p , d_p , \mu_p , \bdy_p \right)$ is a random variable in $\BMS^\GHPU$.

\subsection{Brownian disk}\label{subsec:BD}
In this subsection we review the definition of the Brownian disk following \cite{BeMi15}. See \cite{LeGall-disk} for further details.

Let $\CT$ be a standard Brownian motion and let $\bT_y\defeq  \inf\{t\ge 0: \CT_t\le -y \}$ for each $y\ge 0$. 
Define $\cA\defeq \bT_1$.  We now introduce a random process $\LPO$ coupled with $\CT$ and defined on $[0,\cA]$. 
The conditional law of $\LPO$ given $\CT$ is a centered Gaussian process (more precisely, the continuous modification of a centered Gaussian process) with covariance given by: 
\[
\text{Cov}(\LPO_s,\LPO_{s'})=\inf_{u\in[s\wedge s',s\vee s']}(\CT_u-\underline{\CT}_u) \text{ for }s,s'\in[0,\cA],
\]
where $\underline{\CT}_u\defeq\inf_{0\leq v\leq u}\CT_v$ is the past infimum of $\CT$. 

Next, let $\BR$ be a standard Brownian bridge of duration $1$ independent of $(\CT,\LPO)_{[0,\cA]}$ and  with covariance given by
\[\text{Cov}(\BR_y,\BR_{y'})=y(1-y') \text{ for } 0\le y \le y' \le 1.\]
We define the \notion{labeling process} $\LP$ by: 
\begin{equation}\label{eq:defZ}
\LP_s \defeq \LP^{\mathbf{0}}_s+\sqrt{3}\BR_{\bT^{-1}(s)} \text{ for }0\le s\le \cA, 
\end{equation}
where $\bT^{-1}(s)\defeq\sup\{y\ge 0\, :\, \bT_{y}\le s\}$, which almost surely equals  $-\inf\{ \CT_t\,:\,t\in[0,s] \} $.

For $0\le s\le s'\le \cA$, let $\underline\CT_{s, s'}=\inf\{\CT_u: u\in [s,s']\}$ and 
\begin{equation}\label{eq:metric1}
d_{\CT} (s,s')= \CT_s+\CT_{s'} -2\underline{\CT}_{s,s'}.
\end{equation} 
The function $d_{\CT}$ defines  a pseudo-metric on $[0,\cA]$, which we still denote by $d_{\CT}$.
Let $\underline\LP_{s, s'}=\inf\{\LP_u: u\in [s,s']\}$ for $0\le s\le s'\le \cA$ and  $\underline\LP_{s, s'}=\underline\LP_{s, \cA}\wedge \underline \LP_{0,s}$ for $0\le s'<s\le \cA$. Let 
\begin{equation}\label{eq:metric2}
d_{\LP} (s,s')= \LP_s+\LP_{s'} -2\max\{\underline{\LP}_{s,s'}, \underline{\LP}_{s',s} \} \qquad \textrm{for} \qquad s,s'\in [0,\cA].
\end{equation}

Let $\cD$ be  the set  of all pseudo-metrics $d$ on $[0,\cA]$ satisfying the following two properties: $\{d_{\CT}=0\}\subset\{d=0\}$ and $d\le d_{\LP}$. The set $\cD$ is nonempty (it contains the zero pseudo-metric) and contains a maximal
element $D^*$.  Let  $\BM$ be the quotient space $[0,\cA]/\{D^*=0 \}$ and let $D$ be the metric on $\BM$ induced by $D^*$. 
Let $\mu$ be the pushforward of the Lebesgue measure on $[0,\cA]$ by the quotient map  $\pi:[0,\cA]\to \BM$.   
This defines a metric measure space $\BD_1^*\defeq (\BM, D,\mu)$, which is called the \notion{free pointed Brownian disk} 
(with perimeter 1). The term ``pointed'' comes from the fact that $\BD_1^*$ has a natural distinguished point which corresponds to the image by $\pi$ of the (a.s) unique point in $[0,\cA]$ at which $\LP$ reaches its minimum, see~\cite[Lemma~11]{Bet}. We can view $\BD_1^*$ as a random variable on $\BMS^{\GHP}$.

To make $(\BM, d,\mu)$ an element of $\BMS^{\GHPU}$, let $\beta(s)\defeq \pi\circ \bT_{s}$.
Then $\beta$ can be viewed as a continuous closed simple curve on $\BM$, which is called the \notion{boundary curve} of $\BM$.
We abuse notation and let $\BD_1^*\defeq (\BM,D,\mu,\beta)$ so that $\BD_1^*$ is a random variable in $\BMS^{\GHPU}$.

If we reweight the law of $\BD_1^*$ by $\cA^{-1}=\mu(\BM)^{-1}$, then under the new measure the quadruple  $\BD_1\defeq (\BM,D,\mu,\beta)$ is  a random variable in $\BMS^{\GHPU}$ which is called a \notion{free Brownian disk} (with perimeter 1).

Now we are ready to give a precise statement of Theorem~\ref{thm:rigor}.
\begin{reptheorem}{thm:rigor}$(\mathrm{Precise}$  $\mathrm{version})$
  Fix $i\in\{ \op{I},\op{II},\op{III} \}$. For $p\ge 3$, let $\Map_p$ be sampled from $ \Bol_i(p)$ and let $\cM_p$ be defined as in Section~\ref{subsec:GHPU}.  Then  $\cM_p$ converges in law to $\BD_{1}$ in the GHPU topology.
\end{reptheorem}

\section{Bijections for simple triangulations of polygons}\label{sec:bijection}
For $n\ge p\ge 3$, let $\mtrign{n}$ be the set of type III triangulations of the $p$-gon of size $n$ with a marked triangular face (different from the root face) and let $\mtrig=\bigcup_{n\ge p}\mtrign{n}$. 
By Euler's formula, elements of $\mtrign{n}$ have $3n+p-3$ edges and $2n+p-1$ faces. The pushforward of the uniform distribution on $\mtrign{n}$ obtained by forgetting the marked face is just the uniform distribution on $\Delta_{\op{III}} (p,n)$. 
In this section we review and reformulate an encoding of the elements of $\mtrign{n}$ in terms of certain decorated forests due to Poulalhon and Schaeffer \cite{PoSc}.

\subsection{Orientations on simple triangulations}\label{sub:orientations}

Let $M$ be a rooted planar map. An \emph{orientation} of $M$ is the choice of an orientation for each of its edges. We identify an orientation $\overrightarrow{O}$ with the set of the corresponding oriented edges, i.e. for every $\{u,v\}\in E(M)$, either $\overrightarrow{uv}\in \overrightarrow{O}$ or $\overrightarrow{vu}\in \overrightarrow{O}$. For $v$ in $V(M)$, the outdegree of $v$, denoted by $\out(v)$, is the number of edges oriented away from 
$v$. 

For a triangulation $M$ of the $p$-gon a \emph{p-gonal 3-orientation} (or a \emph{3-orientation} when there is no ambiguity), is an orientation of $M$ such that for all inner vertices $v\in V(M)$, $\out(v)=3$. One can check that if this condition is satisfied then the sum of the outdegree of the vertices incident to the root face is equal to $2p-3$. For $M\in \mtrig$ endowed with an orientation, a directed cycle is called \emph{clockwise} (respectively, \emph{counterclockwise}) if the triangular marked face lies on its left (respectively, on its right).\footnote{The definition of clockwise and counterclockwise cycles coincides with the standard definition when considering the natural embedding of an element of $\mtrig$ in the plane with its marked triangular face as the unbounded face}

A combination of results from~\cite{Ossona,Felsner04,PoSc} yields the following characterization of simple triangulations of the $p$-gon. See Figure~\ref{fig:BijectionPS}. A similar characterization of simple triangulations (rather than triangulations of the $p$-gon) in terms of orientations was given previously by Schnyder~\cite{Schnyder}.
\begin{lemma}\label{lem:3orientation}
  A triangulation of the $p$-gon is simple if and only if it admits a $p$-gonal 3-orientation. 
  
  Moreover, for any $M\in \mtrig$, there exists a unique $p$-gonal 3-orientation -- called the \emph{minimal orientation} -- without counterclockwise cycles and such that the boundary 
  of the root face is a clockwise cycle, see Figure~\ref{fig:BijectionPS}(c).
\end{lemma}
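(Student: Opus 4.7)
The plan is to assemble the lemma from the framework of $\alpha$-orientations of planar maps of Ossona de Mendez~\cite{Ossona} and Felsner~\cite{Felsner04}, specialized to the $p$-gon case as in~\cite{PoSc}. Recall that, given a non-negative integer-valued function $\alpha$ on $V(M)$, an \emph{$\alpha$-orientation} of $M$ is an orientation of $E(M)$ with $\out(v)=\alpha(v)$ for every vertex $v$. For the lemma I would work with the $\alpha$ taking the value $3$ on every inner vertex, paired with the extra constraint that the root face boundary be a clockwise cycle, which pins down the $\alpha$-value of each boundary vertex and uses the $2p-3$ outdegrees available on the boundary, consistently with $\sum_v\alpha(v)=|E(M)|$.

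For the direction \emph{simple implies 3-orientation exists}, I would verify the Ossona-de-Mendez--Felsner cut criterion for the $\alpha$ just described: existence of an $\alpha$-orientation is equivalent to the inequality $\sum_{v\in U}\alpha(v)\ge |\{e\in E(M):e\subseteq U\}|$ holding for every $U\subseteq V(M)$, with equality when $U=V(M)$. I would check the inequality by applying Euler's formula to the sub-map induced by $U$ in a simple triangulation of the $p$-gon, following the strategy Schnyder introduced for the sphere. For the converse direction, I would argue by contradiction: in a triangulation carrying a 3-orientation, a self-loop would contribute $1$ to the outdegree of its incident vertex while enclosing a sub-disk whose interior vertex outdegrees sum to $3$ times the number of interior vertices, and the matching constraint $\sum\out = |E|$ inside the sub-disk is violated; the multi-edge case is analogous and classical.

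For the uniqueness of the minimal orientation, I would invoke the Propp--Felsner theorem that, once non-empty, the set of $\alpha$-orientations of a planar map is a distributive lattice under the operation of reversing essential counterclockwise cycles, with clockwise and counterclockwise measured against the marked triangular face of $M\in\mtrig$. The minimum of this lattice is, by definition, the unique $\alpha$-orientation with no counterclockwise cycle. The extra property that the root-face boundary is clockwise then comes for free: the boundary is itself a cycle of the graph, so if it were oriented counterclockwise in the minimum we could reverse it to produce a strictly smaller orientation, contradicting minimality.

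The main obstacle will be the cut-inequality check in the existence step, since the presence of the macroscopic boundary of length $p$ breaks the symmetry available in Schnyder's original triangulation-of-the-sphere setting: one must handle separately the contribution of subsets $U$ that meet the root face, and keep careful book of the $\alpha$-values imposed on the boundary. A backup route, if the direct check proves delicate, would be to collapse the boundary to a single apex vertex joined to all $p$ boundary vertices, invoke the classical Schnyder--Felsner statement for simple triangulations of the sphere, and then translate the resulting 3-orientation back to a $p$-gonal 3-orientation of the original map.
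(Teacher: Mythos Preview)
The paper does not give its own proof of this lemma: it simply attributes the statement to a combination of results from \cite{Ossona,Felsner04,PoSc}, so your plan to assemble the argument from exactly these sources is precisely what is intended.

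That said, one step in your outline does not quite work as written. You say that imposing ``the root face boundary is a clockwise cycle'' \emph{pins down the $\alpha$-value of each boundary vertex}. It does not: fixing each boundary edge $\rho_i\rho_{i+1}$ to be oriented clockwise contributes exactly one unit of outdegree to each $\rho_i$, but the number of \emph{inner} edges oriented out of $\rho_i$ is still free, so $\alpha(\rho_i)$ is not determined and the family of $p$-gonal $3$-orientations is not the set of $\alpha$-orientations for a single $\alpha$. For the same reason, your final argument (``if the boundary were counterclockwise in the minimum, reverse it'') is incomplete: in an arbitrary $p$-gonal $3$-orientation the boundary edges need not even form a directed cycle, so there is nothing to reverse.

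Your backup route repairs this cleanly and is the standard device: add an apex $v_\infty$ in the root face joined to all $p$ boundary vertices, work on the resulting triangulation of the sphere with the marked triangle as outer face, and apply the Schnyder/Felsner lattice for the $\alpha$ that assigns $3$ to every vertex except $v_\infty$ (which gets $0$). This is a fixed $\alpha$, so Felsner's distributive-lattice theorem applies directly; the minimum has no counterclockwise cycle, and since every edge incident to $v_\infty$ is forced to point towards $v_\infty$, the boundary of the $p$-gon is automatically a clockwise cycle. Deleting $v_\infty$ then yields the claimed minimal $p$-gonal $3$-orientation, and simplicity of $M$ is equivalent to simplicity of the augmented triangulation (for $p\ge 4$; the case $p=3$ is immediate), so the existence criterion also transfers.
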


\subsection{The Poulalhon-Schaeffer bijection for simple triangulations of a polygon}\label{sub:PS}
For an integer $p\geq 3$, a \emph{blossoming forest} (see Figure~\ref{fig:BijectionPS}) of perimeter $p$ is a plane forest $F$ of perimeter $p$ such that: 
\begin{itemize}
  \item Any inner vertex of degree greater than one, is incident to exactly two vertices of degree 1.
  \item There are exactly $p-3$ vertices of degree one whose unique neighbor is incident to the root face. 
\end{itemize}
We write $\cB(F)$ for the set of degree-one vertices of $F$ and call them the \emph{blossoms} of $F$. A vertex which is not a blossom is called a \emph{proper vertex}.
The \emph{size} of a blossoming forest is the number of its proper vertices. 
We denote by $\cF^*_{p,n}$ the set of rooted blossoming forests of size $n$ and perimeter $p$.  
When there is no ambiguity, we identify blossoms with their incident corners. A corner not incident to a blossom is called a \emph{proper corner}. An edge incident to a blossom is called a \emph{stem} and the other edges are called \emph{proper edges}.
\smallskip

\begin{figure}
  \begin{center}
    \includegraphics[width=0.9\linewidth,page=2]{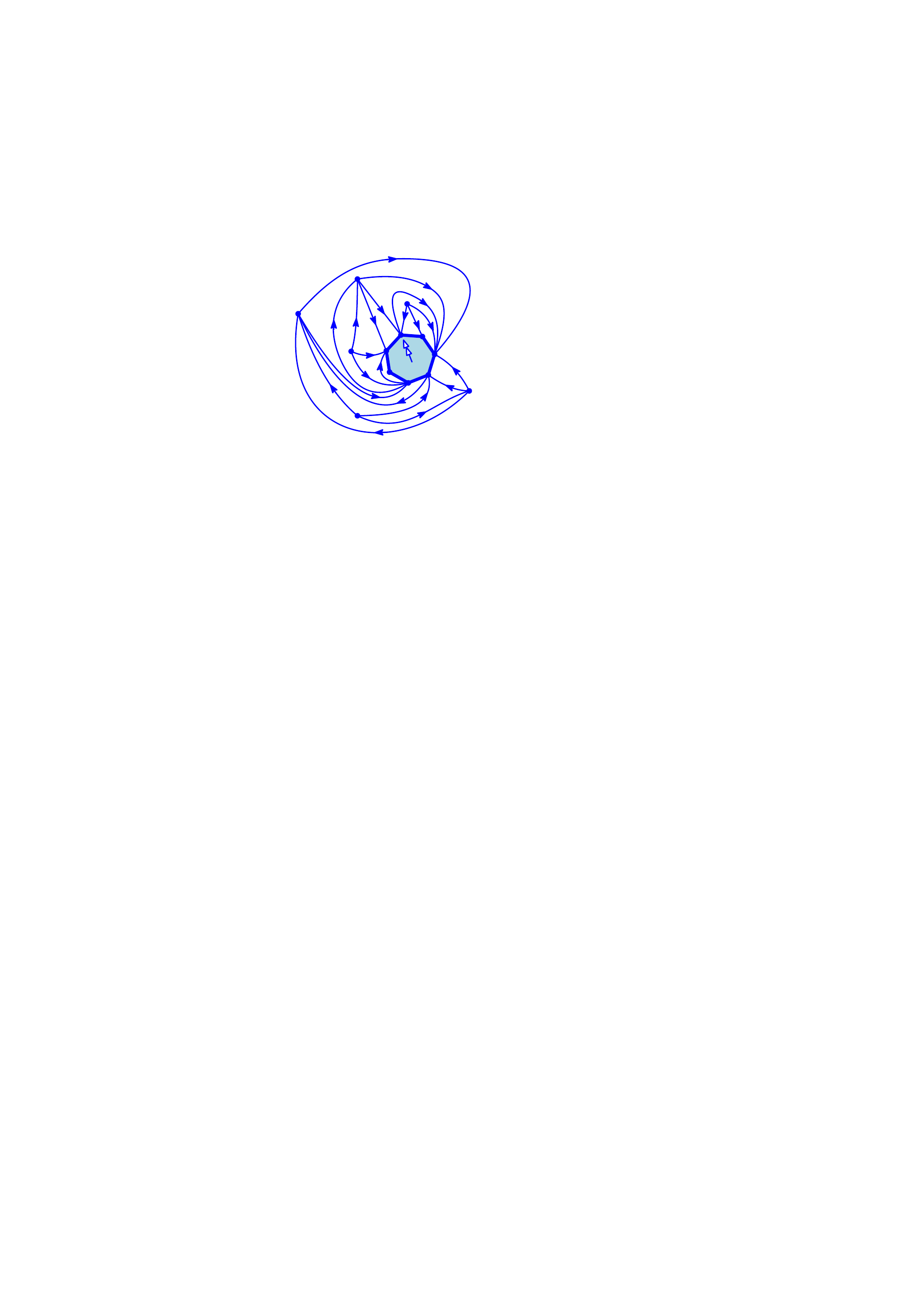}
  \end{center}
  \caption{\label{fig:BijectionPS}Illustration of the Poulalhon-Schaeffer bijection. (a) A blossoming forest of perimeter 7, where the stems are represented by green arrows, $(b)$ after some local closures, and $(c)$ the simple triangulation of the 7-gon with a marked face (endowed with its canonical minimal 3-orientation).}
\end{figure}

Let $F$ be a blossoming forest. Given a stem $\{b,u\}$ with $b\in \cB(F)$, if $\{b,u\}$ is followed by two proper edges in a clockwise contour exploration of $F$ -- $\{u,v\}$ and $\{v,w\}$, say -- then the local closure of $\{b,u\}$ consists in replacing the blossom $b$ and its stem by a new edge $\{u,w\}$ in such a way that the unbounded face lies to the left of $\{u,w\}$ when it is oriented from $u$ to $w$. The edge $\{u,w\}$ is considered to be a proper edge in subsequent local closures. The \emph{closure} of $F$, which we denote by $\chi(F)$, is defined to be the map obtained after performing all possible local closures; see Figure~\ref{fig:BijectionPS}. It is proved in~\cite{PoSc} that $\chi(F)$ does not depend on the order in which the local closures are performed. The closure is rooted in the same corner as $F$ and has a natural marked face which corresponds to the unbounded face in the planar embedding. 

The closure is naturally endowed with an orientation. We first specify an orientation of $F$:
First, orient the edges incident to the root face such that its contour forms a clockwise cycle and orient each other proper edge from a vertex towards its parent. Then, orient all the stems towards their blossom. When performing a local closure, orient the newly created edge in the clockwise direction. Observe that, the outdegree of each vertex is preserved in the closure operation. 

The following result is due to Poulalhon and Schaeffer~\cite{PoSc} (see also \cite{AP} for a proof that does not rely on an independent enumeration result).

\begin{theorem}[Poulalhon-Schaeffer \cite{PoSc}]\label{th:PS}
  For $p\geq 3$ and $n\geq p$, the closure operation $\chi$ is a bijection between $\cF^*_{p,n}$ and $\mtrign{n}$. Moreover, the orientation of the closure as described above is the minimal orientation. 
\end{theorem}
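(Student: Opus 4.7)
The plan is to prove \refT{th:PS} in three stages: (i) the closure $\chi(F)$ is well-defined; (ii) $\chi(F)$ is a triangulation of the $p$-gon of size $n$ with a marked face, and its induced orientation is a $p$-gonal 3-orientation with no counterclockwise cycle and with the boundary oriented clockwise; (iii) there exists an inverse map $\chi^{-1}:\mtrign{n}\to\cF^*_{p,n}$ inverting $\chi$. Given (ii), \refL{lem:3orientation} will immediately imply both that $\chi(F)$ is simple (hence $\chi(F)\in\mtrign{n}$) and that the induced orientation is the unique minimal 3-orientation.

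For (i), termination is clear since each local closure reduces $|\cB(F)|$ by one. Order-independence follows from a confluence ``diamond'' argument: if two stems are simultaneously eligible for closure, their local patterns (a stem followed by two proper edges in the clockwise contour order) occupy disjoint portions of the contour, so the corresponding operations commute as elementary map manipulations.

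For (ii), an Euler-formula count together with the structural constraints on blossoming forests yields $|\cB(F)| = 2n+p-3$; since each local closure preserves the edge count while decreasing the vertex count by one, $\chi(F)$ has $n$ vertices, $3n+p-3$ edges, and $2n+p-1$ faces, which are the correct counts for $\mtrign{n}$. Verifying that every non-root face is triangular reduces to checking that each local closure creates exactly one new triangular face, while the root face keeps degree $p$ because boundary edges are never touched. The orientation inherited from $F$ assigns outdegree $1$ (edge to parent) $+\, 2$ (the two incident blossoms) $=3$ to each inner proper vertex, and this is preserved through local closures. Since the initial boundary is a clockwise cycle and each new edge is oriented clockwise by construction, no counterclockwise cycle can appear; combined with the acyclicity of the forest edges inside $F$, this gives the full property required in (ii).

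For (iii), given $M \in \mtrign{n}$ equipped with its minimal 3-orientation $\overrightarrow{O}$, the inverse performs a local ``opening'' at each inner vertex $v$: two of the three outgoing edges at $v$ are cut according to a canonical local rule based on $\overrightarrow{O}$ (determined by the cyclic position of the outgoing edges relative to a reference corner), with each cut end replaced by a blossom. The compositions $\chi\circ\chi^{-1}$ and $\chi^{-1}\circ\chi$ are then checked locally by tracing the effect of closure and opening at each inner triangle. The main obstacle lies in step (iii): one must identify the correct local rule for choosing which outgoing edges to cut and show that the uncut proper edges form an acyclic subgraph with the required blossom distribution, which crucially uses the absence of counterclockwise cycles in $\overrightarrow{O}$ to rule out cycles in the candidate forest.
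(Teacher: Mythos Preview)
The paper does not prove \refT{th:PS} at all; it is stated as a known result due to Poulalhon and Schaeffer~\cite{PoSc}, with the remark that \cite{AP} gives a proof not relying on independent enumeration. So there is no ``paper's own proof'' to compare against: you are attempting to supply a proof the authors deliberately omitted.

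As an independent attempt, your sketch has a genuine gap in stage~(iii). You describe the inverse as a local opening rule: at each inner vertex, cut two of the three outgoing edges ``according to a canonical local rule based on $\overrightarrow{O}$'', but you do not specify the rule, and you openly flag this as ``the main obstacle''. That is the heart of the theorem, and leaving it unresolved means you have a plan, not a proof. Moreover, a purely local cutting rule at inner vertices would only produce $2(n-p)$ stems, whereas the blossoming forest also carries $p-3$ stems attached to boundary vertices; your description does not account for those. In fact, the actual inverse in \cite{PoSc,AP} is not a local vertex-by-vertex rule but a global traversal (a rightmost depth-first search along the minimal orientation starting from the root face) that peels edges off in contour order and reconstitutes the forest; the key lemma is that this traversal visits every edge, which is exactly where the absence of counterclockwise cycles is used.

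Stage~(ii) is also too quick. The sentence ``since each new edge is oriented clockwise by construction, no counterclockwise cycle can appear'' is not an argument: a counterclockwise cycle in the final orientation could in principle use a mixture of tree-edges and closure-edges, and the fact that each closure-edge is individually drawn ``clockwise'' around the unbounded face at the moment of its creation does not by itself preclude this. One needs to argue, for instance, that any directed cycle in $\chi(F)$ must enclose the marked face on its left (because the underlying forest is acyclic and every closure edge jumps forward in contour order toward $\kappa_{\min}$). Finally, a minor arithmetic point: your count $|\cB(F)|=2n+p-3$ has a sign error; with $n$ proper vertices, of which $p$ are on the boundary, the blossom count is $2(n-p)+(p-3)=2n-p-3$.
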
 

Let $F$ be a $p$-gonal blossoming forest and let $M$ be its closure. We identify the vertices (respectively, edges) of $M$ with the proper vertices (respectively, edges) of $F$. Edges of $M$ which correspond to proper edges of $F$ are called \emph{tree-edges}, while the other (corresponding to the stems of $F$) are called \emph{closure-edges}. 

Let $\kappa \in \cC(F)$ and let $e$ be the unique directed edge of $F$ such that $\kappa^{\op \ell}(e)=\kappa$. 
We define the image of $\kappa$ in $\cC(M)$ (still denoted by $\kappa$ by a slight abuse of notation) to be $\kappa^{\op \ell}(e)$, where $e$ denotes the image of $e$ in $M$ as described in the preceding paragraph. This is clearly a bijection between $\mathring{\cC}(F)$ and $\mathring{\cC}(M)$.

\subsection{Reformulation of the Poulalhon-Schaeffer bijection with labels}\label{sub:reformulationPS}
We now present a reformulation of the Poulalhon-Schaeffer bijection, based on a labeling of the corners of the blossoming forest. This reformulation is an extension of the presentation given in \cite[Section~5.2]{ABA-Simple}.

Given a rooted blossoming forest $\rF=(F,\xi)$,
define $\lambda=\lambda_{\rF}:\mathring{\cC}(\rF) \to \Z$ as follows. 
Recall the definition of the contour order $(\xi(i),0 \le i \le 2|V(F)|-p)$ from Section~\ref{sub:Forests}, and in particular that $\xi(0)=\xi^\dagger$. 
Let $\lambda(\xi(0))=0$ and, for $0 \le i < 2|V(F)|-p$, set 
\[
\lambda(\kappa(i+1)) = 
\begin{cases}
\lambda(\kappa(i)) -1  & \text{if } \kappa(i)\not\in \cB(F), \kappa(i+1) \not\in \cB(F) , \\
\lambda(\kappa(i))  & \text{if } \kappa(i) \not\in \cB(F), \kappa(i+1)\in \cB(F) , \\
\lambda(\kappa(i)) +1  & \text{if } \kappa(i)\in \cB(F), \kappa(i+1) \not\in \cB(F) , 
\end{cases}
\]
This labeling is depicted in Figure~\ref{fig:BijectionLabel}.
\begin{figure}
  \begin{center}
    \includegraphics[width=0.9\linewidth,page=2]{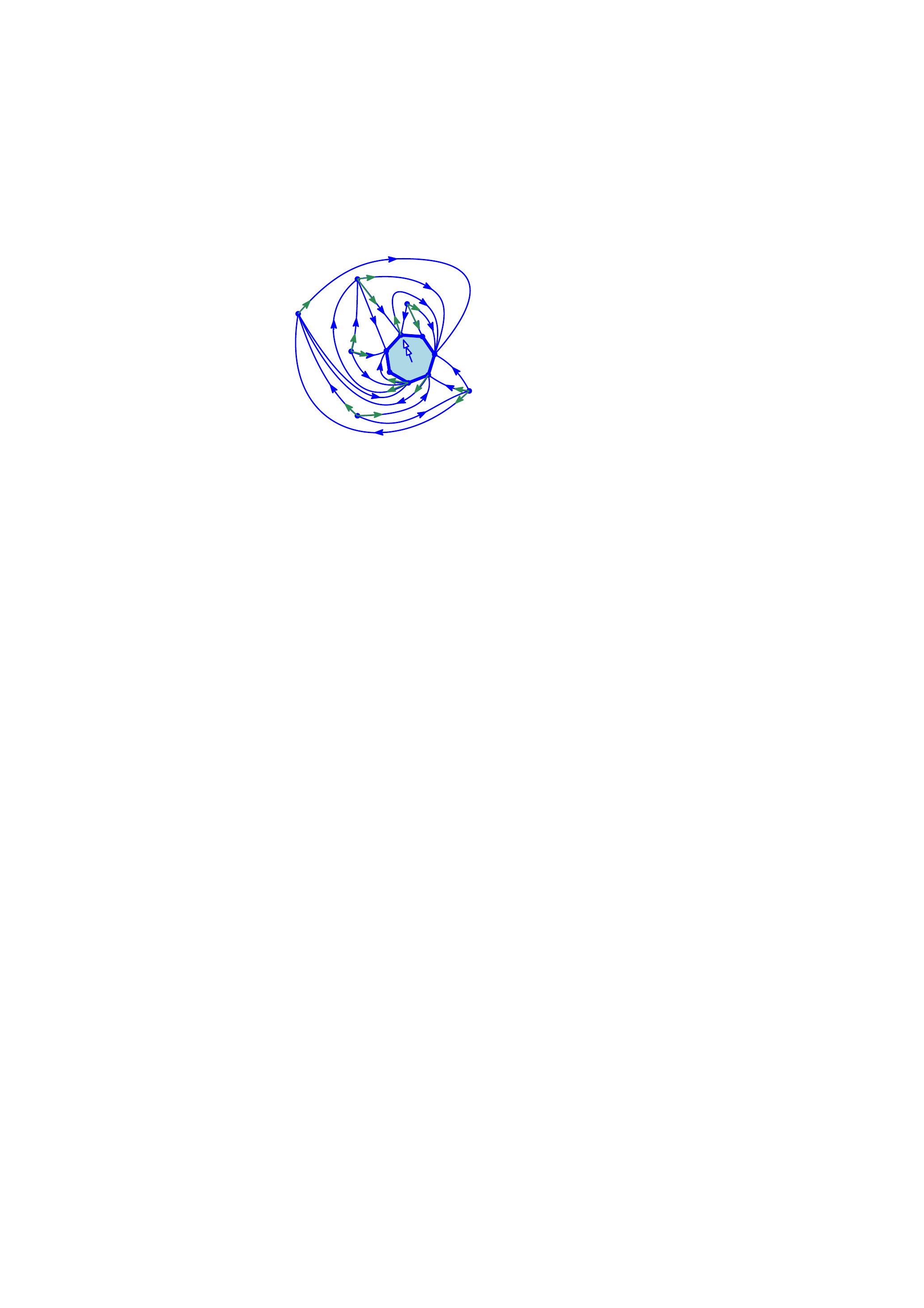}
  \end{center}
  \caption{\label{fig:BijectionLabel}Illustration of the reformulation of the Poulalhon-Schaeffer bijection with labels. For the sake of clarity, we only label inner corners of the blossoming forest.}
\end{figure} 
Informally, corners are labeled iteratively in the following manner. We walk around the forest in clockwise direction starting from the corner $\xi^\dagger$, which we label 0. Then, 
\begin{itemize}
  \item when we reach a blossom, the label remains the same;
  \item  when we leave a blossom, the label increases by one; 
  \item when we follow a proper edge, the label decreases by one. 
\end{itemize}
A simple counting argument shows that $\lambda(\kappa(2|V(F)|-p))=-3\neq\lambda(\xi^\dagger)$. To take care of this slight shift, we artificially duplicate $\xi^\dagger$ into $\xi^\circ$ and $\xi^\dagger$, we set $\lambda(\xi^\circ)=-3$, and we say that $\xi^\circ$ is the last corner for $\pc$, see~Figure~\ref{fig:BijectionLabel}(a).

For $\kappa\in \mathring{\cC}(\rF)$, the \emph{successor} $s(\kappa)\in \mathring{\cC}(\rF)$ of $\kappa$ is defined as follows:
\begin{itemize}
  \item Let $s(\kappa)$ be the first (for $\pc$) corner $\kappa'$ after $\kappa$ such that $\lambda(\kappa')<\lambda (\kappa)$, if such a corner exists (and in fact $\lambda(\kappa')=\lambda (\kappa)-1$ in this case).
  \item Otherwise, let $s(\kappa)$ be the first (for $\pc$) corner $\kappa'$ after $\xi^{\dagger}$ such that $\lambda(\kappa')<\lambda (\kappa)+3$ (and in fact $\lambda(\kappa')=\lambda(\kappa)+2$ in this case.)
\end{itemize}
The fact that the successor for any $\kappa\in \mathring{\cC}(\rF)$ is well defined is clear from the definition of $\lambda$. Note that the second case can only happen if $\kappa \in \cB(\rF)$ or if $\kappa$ precedes a blossom for $\pc$. Depending on whether the first or the second case of the definition applies, a successor is respectively called successor of the first type or of the second type.  Furthermore, the term $+3$ in the second part of the definition takes into account the shift of $-3$ in the definition of the labeling. 

For a stem $\{u,b\}$, with $b\in \cB(F)$, the \emph{local labeled closure} associated with $b$ consists in identifying $b$ with $\v(s(b))$ (recall that we identify blossoms with their unique incident corner), in such a way that the new edge splits $s(b)$ into two corners that both inherit the label of $s(b)$. The \emph{labeled closure} is the map obtained after performing all possible labeled closures. We make the following observation.

\begin{claim}
  The closure and the labeled closure obtained from a blossoming forest coincide (after removing the labels of the labeled closure).
\end{claim}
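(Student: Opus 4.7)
My plan is to prove the claim by induction on the number of blossoms of $F$. The base case is vacuous, since a blossoming forest with no blossoms undergoes no local closures in either procedure. For the inductive step, the first move is to locate a blossom $b$ that is ready for an unlabeled local closure in the sense of Section~\ref{sub:PS}: its stem $\{u,b\}$ is immediately followed in the clockwise contour of $F$ by two proper edges $\{u,v\}$ and $\{v,w\}$. Such a $b$ must exist whenever there is at least one blossom, since Theorem~\ref{th:PS} guarantees that the P-S closure procedure terminates, and in particular any valid execution of $\chi$ on $F$ must start with at least one local closure.

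The heart of the argument is a short label computation that pins down the successor $s(b)$. Setting $\ell = \lambda(b)$ and applying the three label-update rules in sequence starting from the corner at $b$, the labels of the next three corners in contour order are $\ell+1$ (at $u$, just after leaving the blossom), $\ell$ (at $v$, after the proper edge $\{u,v\}$), and $\ell-1$ (at $w$, after the proper edge $\{v,w\}$). The label is therefore $\geq \ell$ at every intermediate corner and first drops strictly below $\ell$ at the corner at $w$. Hence this corner is exactly $s(b)$, so $\v(s(b)) = w$. Consequently, the local labeled closure of $b$ and the local P-S closure of $b$ both identify $b$ with $w$ and install the new edge $\{u,w\}$ in the same planar position, with the unbounded face on its left.

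To close the induction, I would perform this local operation in parallel in both procedures and argue that the remaining identifications reduce to the claim for a configuration with one fewer blossom. The labels on corners away from the affected region are untouched, and the two corners at $w$ produced by the split both inherit the label $\ell-1$ of $s(b)$, so the successor relation on the remaining blossoms is computed consistently from the updated data. The main obstacle I anticipate is purely bookkeeping: after a local closure the intermediate object is no longer a blossoming forest, because the new edge closes a cycle, so one cannot literally apply the inductive hypothesis to a smaller blossoming forest. Two natural remedies are to reformulate the hypothesis for partially closed blossoming forests (where already-closed stems are treated as proper edges), or to bypass the reduction entirely by running both procedures as parallel iterative algorithms that make the same local move at each step and then verify agreement blossom-by-blossom. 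Either way, the only substantive content of the proof is the three-line label computation above.
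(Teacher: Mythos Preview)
The paper does not actually give a proof of this claim; it is stated as an observation immediately after the definition of the labeled closure, with no argument supplied. Your proposal therefore furnishes precisely the details the paper leaves implicit, and the core of it is correct: for a stem $\{u,b\}$ that is P--S closable (followed in the clockwise contour by proper edges $\{u,v\}$ and $\{v,w\}$), the label sequence starting at the blossom corner is $\ell,\ \ell+1,\ \ell,\ \ell-1$, so $s(b)$ is the corner at $w$ and both local closures produce the same edge $\{u,w\}$ in the same planar position.

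Your honest remark about the inductive framework is on point, but it is worth noting that the bookkeeping is slightly more than cosmetic. The labeled closure as defined in the paper computes \emph{all} successors $s(b)$ once and for all on the original forest $F$, rather than updating after each step; the P--S closure, by contrast, is genuinely iterative. So the clean way to complete your argument is the second remedy you mention: run both procedures in parallel, always choosing a P--S closable blossom, and check at each stage that (i) the labels on the remaining outer corners still obey the increment rules (you verified this: the new proper edge $\{u,w\}$ joins a corner of label $\ell$ to one of label $\ell-1$), and (ii) no other blossom $b'$ had its successor among the three corners that disappear into the new triangle. Point (ii) holds because those corners have labels $\ell,\ell+1,\ell$, but the corner at $u$ just \emph{before} the stem already has label $\ell$, so any $b'$ preceding it with $\lambda(b')\le\ell+1$ would already have found a smaller label by then. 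With these two checks made explicit, the parallel-execution argument goes through and your proof is complete.
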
 

\begin{remark}\label{rmk:embedding}
  For $F$ a blossoming forest, consider an embedding of $F$ in the plane such that its unbounded face is the non-root face. Let $\gamma$ be a semi-infinite ray emanating from $\v(\xi)$ which splits $\xi^\dagger$ (to create $\xi^\circ$) and does not intersect any other vertex or edge. 
  
  Consider an embedding of $\chi(F)$ built on this embedding of $F$ (by that we mean that the embedding of the proper edges of $F$ coincide with their image by $\chi$), such that each closure-edge intersects $\gamma$ at most once. Then, closure-edges that intersect $\gamma$ exactly correspond  to blossoms with successors of the second type. Moreover, if $uv$ is a closure-edge crossing $\gamma$ oriented from $u$ to $v$ then the unbounded part of $\gamma$ lies on the left of $uv$.
\end{remark}

\subsection{Properties of labels in blossoming forests and triangulations of a polygon}\label{sub:labelsBlossoming}
For the next  two subsections, let $F$ be a fixed blossoming forest and let $M=\chi(F)$ be its closure endowed with its minimal 3-orientation $\overrightarrow{O}$. 

Each corner of $M$ corresponds either to a proper corner of $F$ or was obtained by splitting a proper corner of $F$ during a closure. The label of a corner in $M$ is defined as the label of the corresponding corner in the blossoming forest, see Figure~\ref{fig:BijectionLabel}. Recall the bijection between $\mathring{\cC}(F)$ and $\mathring{\cC}(M)$ given at the end of Section~\ref{sub:PS}. Then, for any $\kappa \in \mathring{\cC}(F)$ not incident to a blossom, $\lambda_F(\kappa)=\lambda_M(\kappa)$, 
where we identify $\kappa$ with the associated corner in $\mathring{\cC}(M)$.

Thanks to the label on the corners of $F$, we can determine (without performing the closure) which corners belong to the marked triangular face of $\chi(F)$.

\begin{claim}
  Let $\ell_{\min}$ be the minimal label in $F$ and let $\kappa_{\min}$ be the first corner (for $\preceq_{\mathrm{ctr}}$) in $F$ such that $\lambda(\kappa_{\min})=\ell_{\min}$. Similarly, let $\kappa_{\min+1}$ and $\kappa_{\min+2}$ be the first corners of respective label $\ell_{\min}+1$ and $\ell_{\min}+2$. Then the images of $\kappa_{\min}$, $\kappa_{\min+1}$, and $\kappa_{\min+2}$ in $M$ are the three corners of its  marked triangular face. 
  
  The vertex incident to $\kappa_{\min}$ is denoted by $v^\star$. 
  \label{claim:vstar}
\end{claim}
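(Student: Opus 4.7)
The plan is to identify the three corners of the marked face by locating a key second-type closure-edge adjacent to $v^\star = \v(\kappa_{\min})$. First I would observe that $\kappa_{\min}$, $\kappa_{\min+1}$, and $\kappa_{\min+2}$ cannot be blossom corners: the labeling rule forces a blossom to inherit the label of the preceding corner, which would contradict the first-occurrence property defining each of them.

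Assuming the main case $\kappa_{\min} \neq \xi^\circ$, the corner in contour order just after $\kappa_{\min}$ must be a blossom $b'$ with $\lambda(b') = \ell_{\min}$, for otherwise, being a proper corner reached via a proper edge, it would carry the impossible label $\ell_{\min}-1$. Since no corner strictly after $b'$ in $\pc$-order has label below $\ell_{\min}$, the successor $s(b')$ is of the second type and by definition equals $\kappa_{\min+2}$. The local closure of $b'$ produces an edge $e$ oriented from $v^\star$ to $\v(\kappa_{\min+2})$; by \refR{rmk:embedding}, $e$ crosses $\gamma$ with the unbounded part of $\gamma$, hence the marked face, on its left. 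At $v^\star$, the two corners of $F$ adjacent to the stem of $b'$ are $\kappa_{\min}$ (just before $b'$, label $\ell_{\min}$) and a proper corner just after (label $\ell_{\min}+1$); the orientation constraint from \refR{rmk:embedding} places $\kappa_{\min}$ on the marked-face side of $e$. At $\v(\kappa_{\min+2})$, the closure splits the corner $\kappa_{\min+2}$ into two corners both carrying label $\ell_{\min}+2$, and the split on the marked-face side is the image of $\kappa_{\min+2}$.

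The third corner of the triangular marked face must carry label $\ell_{\min}+1$ by the label-change rules along the remaining two edges of the triangle (each being either a proper edge shifting labels by $\pm 1$ or a closure-edge with known label jumps). To pin down the third corner as $\kappa_{\min+1}$ specifically, I would repeat the argument of the previous step starting from $\kappa_{\min+1}$: the corner just after $\kappa_{\min+1}$ is either a blossom $b''$ with $\lambda(b'') = \ell_{\min}+1$ and $s(b'') = \kappa_{\min}$ of first type, or (if no such blossom exists) it is directly $\kappa_{\min}$ via a tree edge. In both sub-cases, the corresponding boundary edge of the marked face connects $\v(\kappa_{\min+1})$ to $v^\star$, identifying the third marked-face corner with $\kappa_{\min+1}$.

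The edge case $\kappa_{\min} = \xi^\circ$, which occurs precisely when $\ell_{\min} = -3$ is first achieved at the artificial last corner, is handled analogously: $v^\star = \v(\xi)$, the ray $\gamma$ enters the marked face directly through the corner at $\v(\xi)$ without any second-type closure at $v^\star$ being needed, and the marked-face corner at $\v(\xi)$ is interpreted as $\xi^\circ$ (the $\gamma$-side of the split) so that it carries the artificial label $-3 = \ell_{\min}$. The remaining two corners are identified as in the main case. The main technical obstacle is the analysis of the third corner: the closure may produce various intermediate edges with labels in the range $[\ell_{\min}+1, \ell_{\min}+2]$, and either a careful boundary tracing or an appeal to the uniqueness of the minimal $3$-orientation of $M$ is needed to rigorously rule out other candidate corners with label $\ell_{\min}+1$.
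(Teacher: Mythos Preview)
Your approach is constructive—you try to exhibit the boundary edges of the marked triangle explicitly—whereas the paper's proof is non-constructive and much shorter: for every corner $\kappa$, the fact that label increments lie in $\{-1,0,1\}$ forces $s(\kappa)\in\bbrcy{\kappa,\kappa_{\min}}$, so no closure chord ever separates $\kappa_{\min}$ from the unbounded face. The same one-line argument applies to $\kappa_{\min+1}$ and $\kappa_{\min+2}$, and since the marked face has exactly three corners, these must be them.

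There is a genuine gap in your argument even before the third-corner issue you flag. When you locate the second-type closure $e$ of the blossom $b'$ and invoke \refR{rmk:embedding}, what you learn is only that the unbounded part of $\gamma$ lies to the left of $e$; this says the marked face is \emph{somewhere} on that side, not that $e$ bounds the marked face or that $\kappa_{\min}$ is one of its corners. There can be several second-type closure edges (one for each blossom whose label is minimal among all later corners), all crossing $\gamma$, and only the outermost of them borders the marked triangle. Proving that the closure of $b'$ is the outermost one—equivalently, that no other closure encloses $\kappa_{\min}$—requires exactly the observation $s(\kappa)\in\bbrcy{\kappa,\kappa_{\min}}$ that constitutes the paper's entire proof; once you have that, the explicit edge-identification becomes superfluous.
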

\begin{proof}
  First observe that $\kappa_{\min}$ cannot be incident to a blossom. So, either there exists a closure-edge $\{v^\star,u\}$, with $v^\star u \in \overrightarrow{O}$ and $\kappa_{\min} = \kappa_M^{\op\ell}(v^\star u)$, or there exists a tree-edge $\{v^\star,u\}$, such that $\kappa_{\min} = \kappa_M^{\op\ell}(v^\star u)$ (the latter case can only happen if $\kappa_{\min} = \xi^\circ$).
  
  Now, let $\kappa \in \mathcal{C}(F)$. By definition of the successor and thanks to the fact that the increments of labels in $F$ belong to $\{-1,0,1\}$, then $s(\kappa)\in \bbrcy{\kappa,\kappa_{\min}}$. Hence the closure of a stem cannot separate $\kappa_{\min}$ from the unbounded face, which proves the claim for $\kappa_{\min}$. A similar reasoning applies for $\kappa_{\min+1}$ and $\kappa_{\min+2}.$
\end{proof}
\smallskip

It will be useful in the following to also give a label to vertices and edges of $F$ and $M$. For $v$ a proper vertex of $F$ (or equivalently a vertex of $M$), the label of $v$ is the minimum of the label of corners incident to $v$, where, in the special case $v=\rho_1$, we do not consider the corner $\xi^\circ$ with label $-3$ so that $X(\rho_1)=0$. We denote the label by $X_F(v)$, $X_M(v)$, or even $X(v)$ if there is no ambiguity. An easy consequence of the labeling function is the following.
\begin{claim}\label{claim:labelVertex}
  Let $v$ in $\mathring{V}(F)\backslash \mathcal{B}(F)$ be a proper inner vertex of $F$, let $u$, $b_1$ and $b_2$ be the parent of $v$ and its two blossoms, respectively, such that $u$, $b_1$ and $b_2$ appear in this order when turning clockwise around $v$. Then the corners of $F$ incident to $v$ are labeled:
  \begin{itemize}
    \item $X(v)$ if they lie between $u$ and $b_1$,
    \item $X(v)+1$ if they lie between $b_1$ and $b_2$,
    \item $X(v)+2$ if they lie between $b_2$ and $u$.
  \end{itemize}
\end{claim}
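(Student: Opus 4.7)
The plan is to prove the claim by induction on the size of the subtree of $F$ rooted at $v$, simultaneously establishing the auxiliary statement that for every proper inner vertex $v'$ of $F$, the label of the last corner at $v'$ in the contour order exceeds the label of the first corner at $v'$ by exactly $2$.

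The first step is to extract from the labeling rule a local increment formula: if $a$ is any proper vertex and $\kappa, \kappa'$ are two consecutive corners at $a$ in the contour order, separated by the exploration of the subtree rooted at a child $c$ of $a$, then
\[
\lambda(\kappa') - \lambda(\kappa) = \begin{cases} +1 & \text{if } c \in \cB(F), \\ 0 & \text{otherwise.} \end{cases}
\]
The blossom case is immediate: entering a blossom preserves the label and exiting it raises the label by $1$. For a non-blossom child $c$, traversing the edge $\{a,c\}$ subtracts $1$, the exploration of the subtree at $c$ raises the label by $+2$ by the induction hypothesis applied to $c$, and traversing back subtracts $1$ again, giving a net change of $0$.

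I then apply this increment formula to $v$ itself. Let $c_1, \ldots, c_{d-1}$ denote the children of $v$ listed clockwise after $u$; by assumption $c_{j_1} = b_1$ and $c_{j_2} = b_2$ for some $j_1 < j_2$, and the remaining $c_i$ are non-blossom. Let $\kappa_1, \ldots, \kappa_d$ be the corners incident to $v$ in the contour order and set $\ell \defeq \lambda(\kappa_1)$. The increment formula yields that $\lambda$ is constant equal to $\ell$ on $\kappa_1, \ldots, \kappa_{j_1}$ (the corners lying between $u$ and $b_1$), equal to $\ell + 1$ on $\kappa_{j_1 + 1}, \ldots, \kappa_{j_2}$ (between $b_1$ and $b_2$), and equal to $\ell + 2$ on $\kappa_{j_2 + 1}, \ldots, \kappa_d$ (between $b_2$ and $u$). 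Since $\ell$ is then the minimum label at $v$ we obtain $X(v) = \ell$, which matches the claim, and $\lambda(\kappa_d) = \ell + 2$ closes the induction on the auxiliary statement.

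The base case is a proper inner vertex of degree $3$ whose only children are $b_1$ and $b_2$, where the three corners are labeled $\ell, \ell+1, \ell+2$ by direct computation. The only genuinely subtle point is the auxiliary $+2$ assertion, which is what allows each non-blossom child's subtree to contribute $0$ to the label increments between consecutive corners at the parent; once this is folded into the induction, the labeling pattern claimed at $v$ follows from a simple count of non-blossom children on each side of $b_1$ and $b_2$.
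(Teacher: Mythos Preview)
Your proof is correct. The paper itself omits the argument entirely, stating only that the claim is ``an easy consequence of the labeling function,'' so your induction on subtree size with the auxiliary $+2$ statement is a perfectly valid way to fill in what the authors left implicit; one could alternatively obtain the $+2$ increment directly by counting (each of the $k-1$ proper edges in the subtree contributes $-2$ and each of the $2k$ stems contributes $+1$), but your inductive formulation is equally clean.
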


For $\{u,v\}\in E(M)$, recall that $\kappa^{\op \ell}(uv)$ and $\kappa^{\op r}(uv)$ are the corners incident to $u$ situated respectively just before $uv$ and just after $uv$ in clockwise direction around $u$.
Set $\lambda^{\op \ell}(uv)\defeq \lambda(\kappa^{\op \ell}(uv))$ and $\lambda^{\op r}(uv)\defeq\lambda(\kappa^{\op r}(uv))$.
The evolution of labels along an edge satisfies the following.
\begin{claim}\label{claim:labelEdges}
  Let $\{u,v\}$ be an edge of $M$ such that $uv\in \overrightarrow{O}$. Then: 
  \begin{enumerate}[(i)]
    \item if $\{u,v\}$ is a tree-edge, there exists $i\in \mathbb{Z}$ such that: \[(\lambda^{\op \ell
    }(uv),\lambda^{\op r}(vu),\lambda^{\op \ell
  }(vu),\lambda^{\op r}(uv))=(i+1,i,i,i-1),\]
  \item if $\{u,v\}$ is a closure-edge not crossing $\gamma$, there exists $i\in \mathbb{Z}$ such that: \[(\lambda^{\op \ell
  }(uv),\lambda^{\op r}(vu),\lambda^{\op \ell
}(vu),\lambda^{\op r}(uv))=(i,i-1,i-1,i+1),\]
\item if $\{u,v\}$ is a closure-edge crossing $\gamma$, there exists $i\in \mathbb{Z}$ such that: \[(\lambda^{\op \ell
}(uv),\lambda^{\op r}(vu),\lambda^{\op \ell
}(vu),\lambda^{\op r}(uv))=(i,i+2,i+2,i+1).\]
\end{enumerate}
\end{claim}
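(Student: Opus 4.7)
The plan is to verify each of the three cases by tracking labels through the contour exploration of $F$ and through the closure operation. Throughout, fix an edge $\{u,v\}$ of $M$ with $\overrightarrow{uv} \in \overrightarrow{O}$.

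For case~(i), since tree-edges not on the root face are oriented from child to parent, $u$ is a child of $v$. In the contour exploration of $F$, the consecutive pair $(\kappa_F^{\op \ell}(vu), \kappa_F^{\op r}(uv))$ records the descent from $v$ into $u$'s subtree, while the consecutive pair $(\kappa_F^{\op \ell}(uv), \kappa_F^{\op r}(vu))$ records the return; both transitions go between two proper corners across a proper edge, so the labeling rule immediately gives $\lambda^{\op r}(uv) = \lambda^{\op \ell}(vu) - 1$ and $\lambda^{\op r}(vu) = \lambda^{\op \ell}(uv) - 1$. It therefore suffices to show $\lambda^{\op \ell}(uv) = \lambda^{\op r}(uv) + 2$, i.e.\ that the net label change accumulated while exploring the subtree $T_u$ rooted at $u$ equals $+2$. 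If $T_u$ contains $n$ proper vertices, it has $n - 1$ proper edges; and since every inner proper vertex of a blossoming forest is incident to exactly two blossoms, $T_u$ contains $2n$ blossoms. Each proper edge of $T_u$ is traversed twice (contributing $-1$ per traversal) and each blossom visit contributes $+1$ (label unchanged upon arrival at the blossom, then $+1$ upon departure), for a total of $-2(n-1) + 2n = +2$. Setting $i = \lambda^{\op \ell}(vu)$ yields the tuple $(i+1, i, i, i-1)$.

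For cases~(ii) and~(iii), suppose $\{u,v\}$ arose by closing a stem $\{u,b\}$ of $F$ onto the corner $s(b)$ at the vertex $v = \v(s(b))$. The labeling rule applied at the blossom $b$ yields $\lambda_F(\kappa_F^{\op \ell}(ub)) = \lambda_F(b) =: \ell$ (label unchanged upon reaching the blossom) and $\lambda_F(\kappa_F^{\op r}(ub)) = \ell + 1$ (label increased by one upon leaving). Under the closure the stem is replaced by the oriented edge from $u$ to $v$, and the two corners $\kappa_F^{\op \ell}(ub)$, $\kappa_F^{\op r}(ub)$ at $u$ survive unchanged as $\kappa_M^{\op \ell}(uv)$, $\kappa_M^{\op r}(uv)$. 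At $v$, the corner $s(b)$ is split by the new edge into two sub-corners, which are precisely $\kappa_M^{\op \ell}(vu)$ and $\kappa_M^{\op r}(vu)$, and both inherit the label $\lambda_F(s(b))$. By Remark~\ref{rmk:embedding}, the edge $\{u,v\}$ crosses $\gamma$ if and only if $b$'s successor is of the second type. In case~(ii) (first type, no crossing) the definition of successor gives $\lambda_F(s(b)) = \lambda_F(b) - 1 = \ell - 1$, and taking $i = \ell$ yields $(i, i-1, i-1, i+1)$. In case~(iii) (second type, crossing $\gamma$) the definition gives $\lambda_F(s(b)) = \lambda_F(b) + 2 = \ell + 2$, and taking $i = \ell$ yields $(i, i+2, i+2, i+1)$.

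The only subtle points are the identification in $M$ of the corners flanking a closure-edge—specifically, that the split of $s(b)$ at $v$ produces exactly the two sub-corners $\kappa_M^{\op \ell}(vu)$ and $\kappa_M^{\op r}(vu)$, which follows from the convention that the new edge is drawn with the unbounded face on its left—and the case split between successor types, for which Remark~\ref{rmk:embedding} supplies the geometric correspondence. Everything else is a direct bookkeeping of the labeling rule along the contour exploration and across the closure.
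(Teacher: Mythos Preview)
Your proof is correct. The paper's own argument is much terser: it invokes Claim~\ref{claim:labelVertex} to see that the two corners at $v$ flanking the incoming edge $\overrightarrow{uv}$ share a label (this gives the equality of the second and third coordinates in all three cases), and then declares the remaining coordinates ``immediate from the definition of $\lambda$.'' Your treatment of cases~(ii) and~(iii) is essentially the same as the paper's, just written out in full: you track the closure of the stem $\{u,b\}$ explicitly and read off the labels from the successor definition, which is exactly what ``immediate from the definition'' unpacks to.

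Where you genuinely diverge is case~(i). Rather than appeal to Claim~\ref{claim:labelVertex} to get $\lambda^{\op r}(vu)=\lambda^{\op\ell}(vu)$, you derive it by a global count over the subtree $T_u$: with $n$ proper vertices, $n-1$ proper edges (each traversed twice for $-2(n-1)$), and $2n$ blossoms (each contributing $+1$), the net label change across the subtree is $+2$, forcing $\lambda^{\op\ell}(uv)=\lambda^{\op r}(uv)+2$. Combined with the two edge-crossing decrements you already have, this pins down the tuple. This is a nice self-contained alternative that does not rely on the local structure of blossoms around a vertex established in Claim~\ref{claim:labelVertex}; the paper's route is shorter because that claim has already done the work. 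Either way the bookkeeping is straightforward, and your version has the virtue of being fully explicit.
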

\begin{proof}
  In $F$, two corners on either side of an incoming edge have the same label by Claim \ref{claim:labelVertex}. The same property holds for $M$, since when we do a closure operation a corner is split into two corners, and these two corners get the same label. This justifies why the second and third coordinates in the quadruples are the same. The remaining elements of the tuple are immediate from the definition of $\lambda$.
\end{proof}

As a direct consequence of Claims~\ref{claim:labelVertex} and~\ref{claim:labelEdges}, we obtain the following.
\begin{claim}\label{claim:labelsNeighbours}
  Let $u$ and $v$ be two inner vertices of $M$ such that $\{u,v\}\in E(M)$. Then: 
  \begin{itemize}
    \item $|X(u)-X(v)|\leq 3$ if $\{u,v\}$ is a closure-edge, and
    \item $|X(u)-X(v)|\leq 1$ otherwise.
  \end{itemize}
\end{claim}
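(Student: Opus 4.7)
The plan is to translate Claims~\ref{claim:labelVertex} and~\ref{claim:labelEdges} into direct bounds on $X(u)$ and $X(v)$ by a short case analysis. The key consequence of Claim~\ref{claim:labelVertex} is that for any inner vertex $w$ of $M$, the multiset of labels of corners incident to $w$ is exactly $\{X(w),X(w)+1,X(w)+2\}$. In particular, if a single label $\ell$ is witnessed at $w$, then $X(w)\in\{\ell-2,\ell-1,\ell\}$, while if two labels $\ell$ and $\ell+2$ are both witnessed at $w$, they must be the extremes and so $X(w)=\ell$ is pinned down. The hypothesis that $u$ and $v$ are inner vertices of $M$ is exactly what lets us invoke Claim~\ref{claim:labelVertex} cleanly at both endpoints, avoiding the exceptional boundary vertex $\rho_1$ and its duplicated corner $\xi^\circ$ with label $-3$.

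Orient the edge so that $uv\in\overrightarrow{O}$ and read off the four labels $\lambda^{\op\ell}(uv),\lambda^{\op r}(uv),\lambda^{\op\ell}(vu),\lambda^{\op r}(vu)$ from Claim~\ref{claim:labelEdges}. In the tree-edge case these are $(i+1,i-1)$ at $u$ and $(i,i)$ at $v$; the gap of $2$ at $u$ pins $X(u)=i-1$, and the single label $i$ at $v$ gives $X(v)\in\{i-2,i-1,i\}$, so $|X(u)-X(v)|\leq 1$. For a closure-edge not crossing $\gamma$, the labels at $u$ are the consecutive values $i$ and $i+1$, so $X(u)\in\{i-1,i\}$, while both labels at $v$ equal $i-1$, so $X(v)\in\{i-3,i-2,i-1\}$, giving $|X(u)-X(v)|\leq 3$. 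For a closure-edge crossing $\gamma$, the analysis at $u$ is the same ($X(u)\in\{i-1,i\}$) while $X(v)\in\{i,i+1,i+2\}$, again yielding $|X(u)-X(v)|\leq 3$.

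There is no genuine obstacle in this argument: once Claims~\ref{claim:labelVertex} and~\ref{claim:labelEdges} are in hand, the bound is pure bookkeeping, with the closure case losing more than the tree case simply because the jump of labels across a closure-edge stem is larger. If anything is worth double-checking, it is merely that the orientation convention in Claim~\ref{claim:labelEdges} is applied consistently, since interchanging the roles of $u$ and $v$ would only replace $i$ by $-i$-type quantities and preserve the stated symmetric bounds.
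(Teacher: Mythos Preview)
Your proof is correct and follows exactly the approach the paper intends: the paper states the claim as ``a direct consequence of Claims~\ref{claim:labelVertex} and~\ref{claim:labelEdges}'' without spelling out the bookkeeping, and you have filled in precisely that case analysis. The only minor remark is that your assertion that the multiset of corner labels at an inner vertex $w$ is \emph{exactly} $\{X(w),X(w)+1,X(w)+2\}$ (rather than merely contained in it) relies on each of the three regions in Claim~\ref{claim:labelVertex} being nonempty; this is true because a proper inner vertex has degree at least three in $F$ (parent plus two blossoms), but you might mention it for completeness.
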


\subsection{Validly labeled forests}
For $p\geq 3$ fixed, a labeled forest $(F,X)$ of perimeter $p$ is a rooted cyclic forest $F$ together with a labeling $X$ of its vertices, where $X:V(F)\rightarrow \mathbb{Z}$. For $e=\{u,p(u)\}\in E(F)$, the \emph{displacement} at $e$, denoted by $D_e$, is equal to $X(u)-X(p(u))$. Next, for $u$ a vertex of $F$, recall that $k(u)$ denotes the number of children of $u$ and that $(u(1),u(2),\ldots,u(k(u)))$ is the list of its children in lexicographic order. Then the \emph{displacement vector} of $F$ at $u$, denoted by $D_F(u)$ (or $D(u)$ if there is no ambiguity), is the vector $\big(D_{\{u,u(i)\} },1\leq i \leq k(u)\big)$.

Let $(F,X)$ be a labeled forest, and let $\rho_1,\rho_2,\ldots,\rho_p$ denote its vertices incident to the root face starting from the root vertex and in clockwise order. Then $(F,X)$ is \emph{validly labeled} if: 
\begin{itemize}
  \item $X(\rho_i)\geq X(\rho_{i-1})-1$, for $1\leq i \leq p$,
  \item $X(\rho_1)\geq X(\rho_p)+2$,
  \item the sequence $\big(D_{\{u,u(i)\}}, 1\leq i \leq k(u)\big)$ is non-decreasing for any $u \in V(F)$,
  \item $D_{e}\in \{-1,0,1\}$, for $e\in E(F)$ such that both extremities of $e$ are inner vertices, 
  \item $D_e \in \{-1,0,1,2,\ldots\}$ for $e\in E(F)$ with at least one extremity incident to the root face.
\end{itemize}
The set of validly labeled forests of perimeter $p$ is denoted by $\cF^{\op{vl}}_p$, and the set of validly labeled forests of perimeter $p$ and with $n$ vertices is denoted by
$\cF^{\op{vl}}_{p,n}$.
\smallskip

Let $F$ be a blossoming forest endowed with the labeling $X$ of its vertices as defined in Section~\ref{sub:labelsBlossoming}. The following claims are clear from the definition of the labeling. 
\begin{claim}\label{claim:stems}
  For any $e=\{v,p(v)\}\in E(F)$, 
  \[D_e+1=|\{ e'\,:\, e'\preceq_{\op{lex}}e,\,e'\text{ is a stem incident to }p(v)\}|.\] 
  For any edge $\rho_i\rho_{i+1}$, 
  \[X(\rho_{i+1})-X(\rho_i)+1+2\delta_{i=p}=|\{\text{stems incident to }\rho_i\}|.\]
\end{claim}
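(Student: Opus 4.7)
The plan is to prove both identities by carefully tracking how the labeling $\lambda$ evolves along the contour exploration of $F$, combining the three-case labelling rule with Claim~\ref{claim:labelVertex}, which pins down the labels of the corners incident to any inner non-blossom vertex.

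For the first identity, fix a tree edge $e=\{v,p(v)\}$ and write $u=p(v)$. I would list the edges incident to $u$ in lex order $\pl$: if $u$ is an inner non-blossom vertex, the first edge is $\{u,p(u)\}$ and the remaining edges are the child-edges of $u$ in clockwise order; if $u$ is a root $\rho_i$, the first edge is $\{\rho_{i-1},\rho_i\}$, again followed by the child-edges. Write $e_j$ for the $j$-th edge of $u$ in this order and $\kappa_j$ for the corner of $u$ located clockwise between $e_{j-1}$ and $e_j$. By induction on $j$ I would prove
\[
\lambda(\kappa_{j+1}) \;=\; X(u) + \big|\{e'\pl e_j \,:\, e'\text{ is a stem incident to }u\}\big|.
\]
The inductive step uses two observations: a stem contributes $+1$ (the blossom corner keeps the label, then leaving the blossom adds one), and a proper child contributes $0$ (by Claim~\ref{claim:labelVertex} applied to the child, the two corners of the child adjacent to its parent-edge have labels differing by $+2$, which is cancelled exactly by the two unit drops along the parent-edge). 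The base case $j=1$ is $\lambda(\kappa_1)=X(u)$, which follows from Claim~\ref{claim:labelVertex} if $u$ is inner and by direct inspection if $u$ is a root. Then if $v$ is the $j$-th child of $u$, descending across the proper edge $e$ drops the label by one, so the first contour-corner at $v$ has label $X(u)+\sigma-1$ with $\sigma=|\{e'\pl e : e'\text{ a stem incident to }u\}|$. Claim~\ref{claim:labelVertex} applied at $v$ identifies this corner as the one realising $X(v)$, hence $D_e=\sigma-1$, which is the first identity.

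For the second identity, let $\alpha_i$ and $\omega_i$ denote the labels of the first and last inner corner of $\rho_i$ in contour order. The same tracking around $\rho_i$ gives $\omega_i=\alpha_i+s^{(i)}$ with $s^{(i)}=|\{\text{stems incident to }\rho_i\}|$, and $\alpha_i=X(\rho_i)$ (the first corner realises the minimum since labels only increase clockwise around $\rho_i$; for $i=1$ this uses the convention that $\xi^\circ$ is excluded from the definition of $X(\rho_1)$). Crossing the boundary edge $\{\rho_i,\rho_{i+1}\}$ drops the label by one, so for $1\le i\le p-1$ one has $\alpha_{i+1}=\omega_i-1$, which yields $X(\rho_{i+1})-X(\rho_i)+1=s^{(i)}$. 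For $i=p$ the crossing lands on the artificial corner $\xi^\circ$ with $\lambda(\xi^\circ)=-3$ instead of on $\xi^\dagger$, which carries the label $0=X(\rho_1)$; the constant discrepancy between $\lambda(\xi^\circ)$ and $X(\rho_1)$ is precisely what produces the extra $\delta_{i=p}$ correction in the claim.

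The main obstacle, and the only step beyond routine bookkeeping, is the \emph{proper-subtree excursion cancels} principle described above, which is where Claim~\ref{claim:labelVertex} enters essentially. Once that is secured, both identities are direct sliding-label computations, and the only care needed is to separate the three cases (stem, proper interior child, boundary edge) cleanly.
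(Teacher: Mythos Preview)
Your approach is the natural one, and is exactly what the paper has in mind when it declares the claim to follow ``from the definition of the labeling'' without giving a proof. The tracking of labels around a vertex, using the subtree-excursion cancellation supplied by Claim~\ref{claim:labelVertex}, is correct and handles the first identity cleanly (modulo some harmless indexing slips in your write-up).

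For the second identity your last sentence is too casual: you should actually carry out the computation at $i=p$ rather than asserting that the discrepancy ``is precisely what produces the extra $\delta_{i=p}$ correction''. Doing so, one gets $\omega_p-1=\lambda(\xi^\circ)=-3$, hence
\[
s^{(p)}=\omega_p-X(\rho_p)=-2-X(\rho_p)=X(\rho_1)-X(\rho_p)+1-3,
\]
so the correction term is $-3\,\delta_{i=p}$, not $+2\,\delta_{i=p}$ as printed. This is a typo in the paper: with $+2$ the total $\sum_{i=1}^{p}s^{(i)}$ would equal $p+2$ instead of the required $p-3$, and the non-negativity $s^{(p)}\ge 0$ would read $X(\rho_1)\ge X(\rho_p)-3$ rather than the stated condition $X(\rho_1)\ge X(\rho_p)+2$ for validly labeled forests. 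Your method is sound; had you computed the constant you would have caught the misprint instead of glossing over it.
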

This claim directly implies that erasing the stems of a blossoming forest gives a validly labeled forest. In fact, this operation is a bijection between blossoming forests and validly labeled forests, and its inverse is described as follows, see also Figure~\ref{fig:LabeledForest}.
\begin{figure}
  \centering
  \includegraphics[scale=0.8]{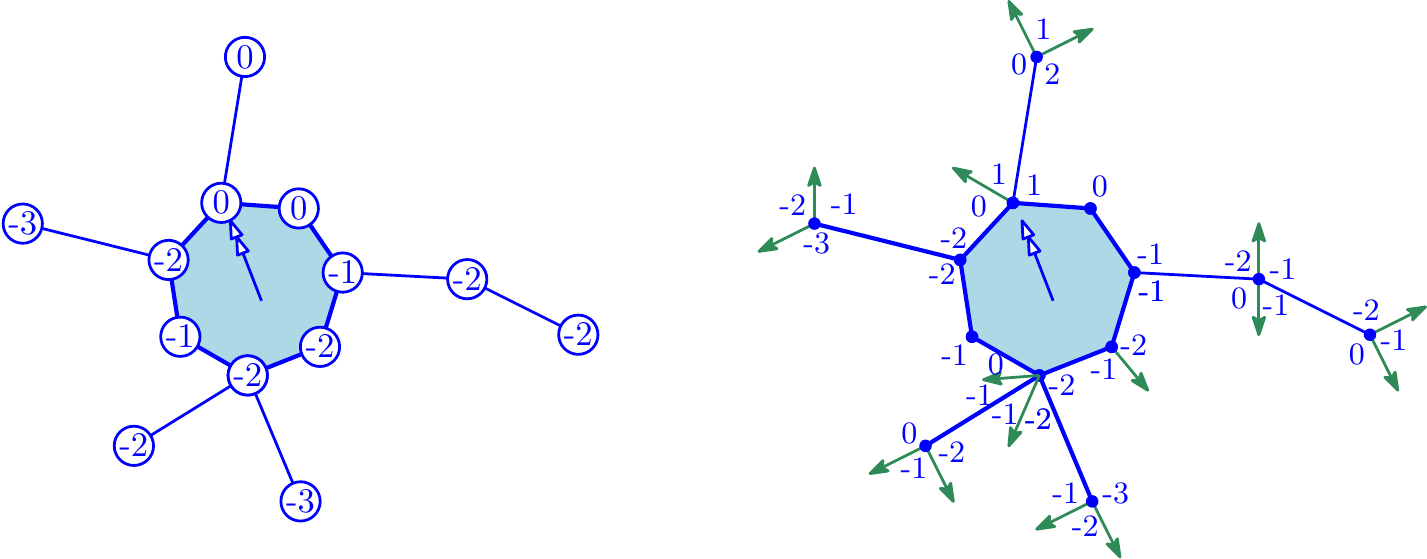}
  \caption{\label{fig:LabeledForest}Transformation of a validly labeled forest $(F,X)$ into the corresponding blossoming forest $\Phi(F,X)$.}
\end{figure}

Let $(F,X)\in \cF^{\op{vl}}_{p}$ be a validly labeled forest and let $\Phi(F,X)$ be the blossoming forest obtained in the following way. For every inner vertex $u$, we add two stems $\{u,b_1\}$ and $\{u,b_2\}$, such that $\{u,b_1\}\preceq_{\op{lex}}\{u,b_2\}$ and if $e$ is an edge between $u$ and one of its children
\begin{itemize}
  \item $e\preceq_{\op{lex}}\{u,b_1\}$, if $D_e = -1$, 
  \item $\{u,b_1\}\preceq_{\op{lex}}e\preceq_{\op{lex}}\{u,b_2\}$, if $D_e = 0$, 
  \item $\{u,b_2\}\preceq_{\op{lex}} e$, if $D_e=1$.
\end{itemize}
For $1\leq i \leq p$, we define $s_i:=X(\rho_{i+1})-X(\rho_i)+1+2\delta_{i=p}$. Next, we add $s_i$ stems $e^i_1,\ldots,e^i_{s_i}$ at $\rho_i$, such that $e^i_1\preceq_{\op{lex}}\ldots\preceq_{\op{lex}} e^i_{s_i}$ and if $e$ is an edge between $\rho_i$ and one of its children: 
\begin{itemize}
  \item $e\preceq_{\op{lex}}e^i_{1}$, if $D_e = -1$, 
  \item $e^i_{j}\preceq_{\op{lex}}e\preceq_{\op{lex}}e^i_{j+1}$, if $D_e = j-1$, for $j\in\{1,\ldots,s_{i}-1\}$,
  \item $e^i_{s_i}\preceq_{\op{lex}} e$, if $D_e=s_{i}-1$.
\end{itemize} 
Thanks to Fact~\ref{claim:stems}, the following is immediate.
\begin{claim}\label{claim:label}
  The map $\Phi$ is a bijection between $\cF^{\op{vl}}_{p,n}$
  and $\cF^{*}_{p,n}$.
\end{claim}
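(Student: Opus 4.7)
The plan is to exhibit an explicit inverse $\Psi: \cF^{*}_{p,n} \to \cF^{\op{vl}}_{p,n}$. Given $F \in \cF^{*}_{p,n}$, set $\Psi(F) = (F^{\flat}, X_{F^{\flat}})$, where $F^{\flat}$ is obtained from $F$ by deleting every blossom together with its incident stem and $X_{F^{\flat}}$ is the restriction to proper vertices of the vertex labeling $X_{F}$ introduced in Section~\ref{sub:labelsBlossoming}. The first step is to check that $(F^{\flat}, X_{F^{\flat}}) \in \cF^{\op{vl}}_{p,n}$. The bound $D_{e} \in \{-1, 0, 1\}$ on inner tree-edges and the monotonicity of the displacement vectors follow directly from Claim~\ref{claim:labelVertex}: children of an inner vertex whose incident edge is $\pl$-smaller than both stems have displacement $-1$, those between the two stems have displacement $0$, and those $\pl$-larger than both stems have displacement $+1$. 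The remaining inequalities governing the boundary vertices $\rho_{1}, \ldots, \rho_{p}$ and the displacements of boundary edges follow from the non-negativity of the stem counts in the second half of Claim~\ref{claim:stems}.

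Next I would verify that $\Phi \circ \Psi = \mathrm{id}$. Given $F \in \cF^{*}_{p,n}$, Claim~\ref{claim:stems} identifies, at every proper vertex $u$, both the number of stems incident to $u$ and their positions in the lexicographic order of edges around $u$, purely as a function of the displacements of $X_{F}$ along edges from $u$ to its children (for inner $u$) or of the label increments around the boundary (for $u = \rho_{i}$). The recipe defining $\Phi$ reinserts stems at every proper vertex using exactly the same rule, so $\Phi(\Psi(F))$ has the same proper edges, the same set of stems, and the same cyclic ordering of edges at every proper vertex as $F$, yielding $\Phi(\Psi(F)) = F$.

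The opposite direction $\Psi \circ \Phi = \mathrm{id}$ is the main obstacle, because the labels $X$ of a validly labeled forest are an external datum that must be recovered by the canonical contour labeling of $\widetilde{F} := \Phi(F, X)$. I would argue by induction along the contour order of $\widetilde{F}$: starting from $\lambda(\xi^{\dagger}) = 0$, each step of the contour either traverses a proper edge (label decreases by $1$), enters a blossom (label unchanged), or leaves a blossom (label increases by $1$). Using Claim~\ref{claim:labelVertex} as the target, one checks that when the contour first visits a proper vertex $v \neq \v(\xi)$ along the edge $\{p(v), v\}$, the accumulated label equals $X(v)$: the $-1$ contributed by climbing down the parent edge, together with the number of stems placed by $\Phi$ at $p(v)$ before $\{p(v), v\}$ in lexicographic order, reproduces exactly the displacement $D_{\{p(v), v\}}$ prescribed by $X$. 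The validity conditions on $(F, X)$ — monotonicity of the displacement vectors and the $\{-1, 0, 1\}$ bound on inner displacements — are precisely what is needed for this inductive step to close, and the cyclic boundary inequalities on $(X(\rho_{1}), \ldots, X(\rho_{p}))$ play the analogous role along the boundary cycle. Since stems encountered between consecutive visits to a vertex return the label to the correct value, one concludes $X_{\widetilde{F}} = X$ on proper vertices, and erasing the stems of $\widetilde{F}$ then recovers $(F, X)$, completing the proof.
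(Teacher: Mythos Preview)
Your proposal is correct and follows the same approach as the paper: the inverse of $\Phi$ is the stem-erasure map, and the two compositions are checked via Claim~\ref{claim:stems}. The paper's own proof is a single sentence (``Thanks to Fact~\ref{claim:stems}, the following is immediate''), so you have simply unpacked what the authors left implicit; in particular, your contour-induction for $\Psi\circ\Phi=\mathrm{id}$ is exactly the content of the first half of Claim~\ref{claim:stems} read in the other direction.
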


\subsection{Sample the free marked triangulation}\label{subsec:sample}
Fix $p\ge 3$ an integer. Let $\Map_p $ be sampled from $\Bol_{\op{III}}(p)$, \emph{reweighted} by the total number of inner faces, i.e., the measure from which $\Map_p $ is sampled has Radon-Nikodym derivative relative to $\Bol_{\op{III}}(p)$ which is proportional to the number of inner faces. 
Conditionally on $\Map_p$, uniformly sample an inner face $f^*$ and let $\Map^*_p\defeq(\Map_p,f^*)$. 
Let $\Bolm(p)$ be the law of $\Map^*_p$, which is a probability measure on $\mtrig$.
In this subsection, we describe a way of sampling a blossoming forest $F^*_p$ such that $\chi(F^*_p)$ has the law of $\Bolm(p)$, where $\chi$ is the closure operation defined in Theorem~\ref{th:PS}. The construction is illustrated on Figure~\ref{fig:sampling}.
\begin{figure}
  \centering
  \includegraphics[scale=0.9,page=1]{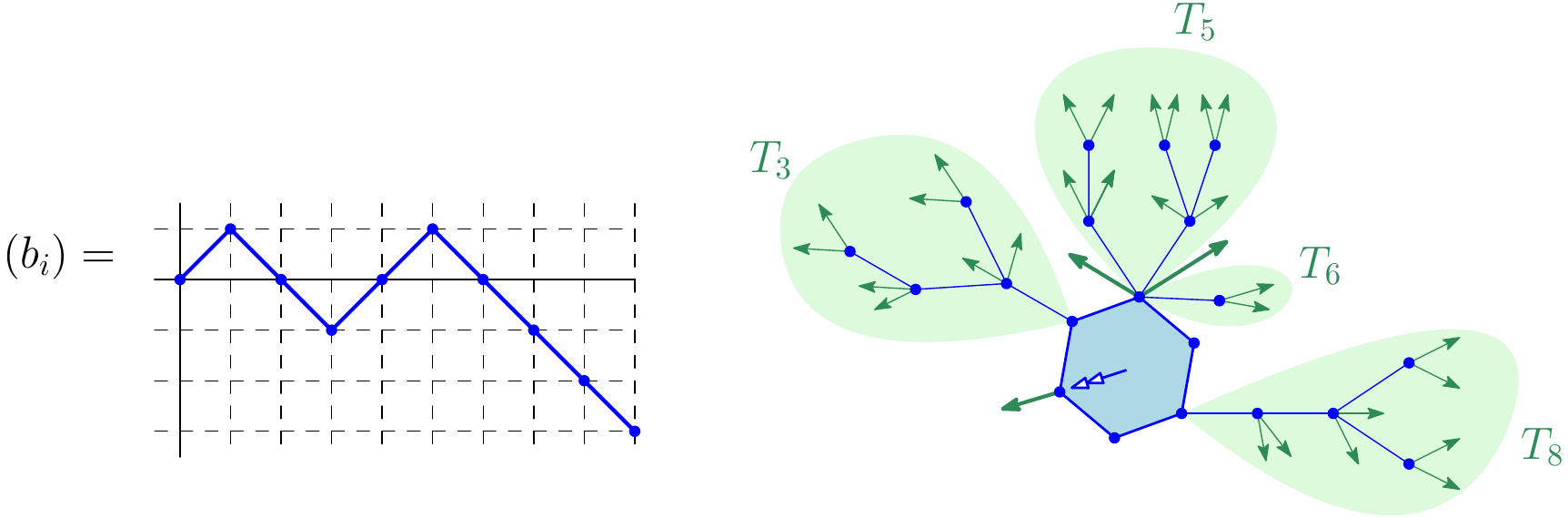}
  \caption{\label{fig:sampling}Illustration of the sampling procedure in the case $p=6$. The trees $T_1,T_2,T_4,T_7$, and $T_9$ are reduced to single vertices. On this example $(Z_j)=(1,0,2,0,0,0)$.}
\end{figure}

The first step is to sample a sequence of random blossoming trees. 
Let $G$ be a geometric random variable on $\{0,1,\cdots \}$ with parameter $3/4$. Namely,  $\P[G=k]= 3/4^{k+1}$ for $k=0,1,2\cdots$. 
Let $B$ be a random variable with probability distribution given by 
\begin{equation}\label{eq:B}
\p{B=k}=\frac{\binom{k+2}{2}\p{G=k}}{\BB E\big[\binom{G+2}{2}\big]}\qquad \textrm{for all }k=0,1,2,\cdots.
\end{equation}
Now sample a Galton-Watson tree $T^{\circ}$
such that its offspring distribution is given by $G$ for  the root vertex, and $B$ for all other vertices. Conditioning on $T^\circ$, 
for each non-root vertex $v\in V(T^\circ)$, add two stems incident to $v$, uniformly at random among the $\binom{k(v)+2}{2}$ possibilities. 
This gives rise to a random blossoming tree which we denote by $T$. Let $\{T_i\}_{i\in\N}$ be a sequence of independent copies of $T$.

Consider $(X_1,\ldots,X_{2p-3})$ to be uniformly distributed on the set of $(2p-3)$-tuples $(x_1,\ldots,x_{2p-3})\in \mathbb{Z}^{2p-3}$ such that:
\[
\begin{cases}
x_i\in\{-1,1\}\text{ for all }1\leq i \leq 2p-3,\\
\sum_{i=1}^{2p-3}x_i=-3.
\end{cases}
\]
Let $b_0=0$ and $b_k=\sum_{1}^{k} X_i$ for $1\le k\le 2p-3$. Then $(b_k)_{0\le k\le 2p-3}$ has the same distribution as the steps of a random walk with step distribution uniform on $\{+1,-1\}$,  starting from 0 and conditioned to visit $-3$ at time $2p-3$. 
For such a tuple $(X_1,\ldots,X_{2p-3})$, define $(t_j)_{0\leq j\leq p}$ inductively as follows: $t_0=0$ and
\[
t_{j}=\inf\{t>t_{j-1}\,:\,X_{t_j}=-1\}, \quad \text{for }j>0.
\]
Next, set $Z_j:= t_j-t_{j-1} -1$. 
Informally, $Z_j$ is the number of up-steps between the $(j-1)$-th and the $j$-th down-steps.

Finally, take a rooted $p$-gon and let $\rho_1,\ldots,\rho_p$ denote its boundary vertices, ordered as above starting from the root vertex. 
For $i=1,\ldots,p$, add $Z_i$ stems at $\rho_i$ in the non-root face. Since $\sum_{i=0}^{
  2p-3 } X_i=-3$, we have added $p-3$ stems in total, so that there are $2p-3$ (proper) corners incident to the non-root face. 
We call these corners \emph{boundary corners. }
Now  for $1\leq i \leq 2p-3$, graft the blossoming tree $T_i$ from the first step  in the $i$-th corner in clockwise order around the root face and starting from $\xi^{\dagger}$. 
The resulting blossoming forest is a random variable  in  $\cF_{p}^*$ which we denote by $F^*_p$.

\begin{remark}\label{rmk:bridge}
  For $1\le i \le 2p-3$, let $k_i$ be the  number of children of the root vertex of $T_i$. 
  Then attaching the tree $T_i$ to the $i$-th boundary corner divides this corner into $k_i+1$ corners of $F^*_p$.
  Recall the corner labeling rule for $F^*_p$. Since two adjacent corners are bordered by an incoming arrow,  
  these $k_i+1$ corners share the same label, hence the $i$-th boundary corner  has a well-defined label. 
  It is easy to check that this label is exactly equal to $b_i$.
\end{remark}

\begin{proposition}\label{prop:Bol}
  Suppose $F^*_p$ is sampled as described above.
  Let $\Map^*_p=(\Map_p,f)  =\chi(F^*_p)$ where $\chi$ is the closure operation in Theorem~\ref{th:PS}. 
  Then the law of  $\Map^*_p$ is  $\Bolm(p)$.
\end{proposition}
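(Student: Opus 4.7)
The plan is to compute the probability mass function of $F^*_p$ in closed form on $\cF^*_{p,n}$ and show it equals a constant (depending only on $p$) times $\rho_{\op{III}}^n$. Combined with the Poulalhon--Schaeffer bijection of Theorem~\ref{th:PS}, this transfers to the statement that $\chi(F^*_p)$ puts weight proportional to $\rho_{\op{III}}^n$ on each element of $\mtrign{n}$. To identify this with $\Bolm(p)$, observe that $\Bol_{\op{III}}(p)$ already assigns weight proportional to $\rho_{\op{III}}^n$ to each triangulation of size $n$; reweighting by the number of inner faces (which depends only on $n$ and $p$ via Euler) multiplies the density by that factor, and the subsequent uniform face-marking divides by exactly the same quantity, leaving weight proportional to $\rho_{\op{III}}^n$ per element of $\mtrign{n}$. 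Thus the two probability measures on $\mtrig$ coincide after normalization.

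The central calculation is the law of a single sampled tree $T$. For a fixed blossoming tree $T_0$ of size $m$, the uniform placement of the two stems at each non-root vertex gives
\[
\P[T = T_0] \;=\; \P[G = k(\rho)] \prod_{v \neq \rho} \frac{\P[B = k(v)]}{\binom{k(v)+2}{2}}.
\]
The definition~\eqref{eq:B} of $B$ is engineered precisely so that the binomial factor cancels, leaving
\[
\P[T = T_0] \;=\; \frac{\prod_{v \in V(T_0)} \P[G = k(v)]}{\dE\bigl[\binom{G+2}{2}\bigr]^{m-1}} \;=\; \frac{3}{4}\,\rho_{\op{III}}^{m-1},
\]
where the last equality uses $\P[G=k] = 3 \cdot 4^{-k-1}$, $\sum_v k(v) = m-1$, and the computation $\dE[\binom{G+2}{2}] = 16/9$. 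Crucially, this expression depends on $T_0$ only through its size $m$, which is the whole purpose of the inflated distribution $B$.

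To finish, one checks that the sampling procedure parametrises $\cF^*_{p,n}$ bijectively: the sequence $(X_i)$ (equivalently, $(Z_j)$) records the positions of the $p-3$ boundary blossoms among $\rho_1,\ldots,\rho_p$, and the trees $(T_1,\ldots,T_{2p-3})$ grafted at the $2p-3$ non-root face corners of the boundary reconstruct the remaining tree structure of $F$ (with the root of each $T_i$ identified with the incident boundary vertex, as in Remark~\ref{rmk:bridge}). With $m_i = |T_i|$, a vertex count yields $\sum_i (m_i - 1) = n - p$, so
\[
\P[F^*_p = F] \;=\; \frac{1}{\binom{2p-3}{p-3}} \prod_{i=1}^{2p-3} \frac{3}{4}\,\rho_{\op{III}}^{m_i - 1} \;=\; \frac{(3/4)^{2p-3}}{\binom{2p-3}{p-3}}\,\rho_{\op{III}}^{\,n - p},
\]
which is indeed proportional to $\rho_{\op{III}}^n$ with constant depending only on $p$, as required. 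The main obstacle is really the bijectivity check in the preceding sentence, i.e.\ verifying that every $F \in \cF^*_{p,n}$ is produced by the construction in exactly one way and that the decoding of boundary blossoms and subtrees from $F$ is unambiguous; the probability manipulation itself is routine once~\eqref{eq:B} reveals its purpose.
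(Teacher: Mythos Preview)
Your proof is correct and follows essentially the same approach as the paper: compute $\P[T=T_0]$ for a single blossoming tree by exploiting the cancellation built into~\eqref{eq:B}, multiply over the $2p-3$ grafted trees and the bridge probability, and observe that the result depends on the forest only through its size. The paper's write-up is slightly terser (it phrases the bridge factor via $c_p^{-1}(1/2)^{2p-3}$ rather than $\binom{2p-3}{p-3}^{-1}$, and absorbs your observation about $\Bolm(p)$ into the final sentence ``by the definition of $\Bolm(p)$''), but the computation and the logic are the same.
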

\begin{proof}
  Let $\alpha\defeq3/4$, $\beta\defeq1-\alpha=1/4$, and $\gamma\defeq\BB E\big[\binom{G+2}{2}\big]= 16/9$.  
  For each blossoming tree $t$ such that $\P[T=t]>0$, let $|t|$ be the number of proper edges. Then 
  \[
  \P[T=t]=\alpha \cdot  \left(\alpha/\gamma\right)^{|t|} \cdot  \beta^{|t|}=\alpha (\alpha\beta/\gamma)^{|t|}.
  \]
  Here the factor $\binom{k+2}{2}$ for each non-root vertex cancels with the $\binom{k+2}{2}$ possibilities of attaching the two stems. 
  
  Let $c_p$ be the probability that a random walk started from 0  hits $-3$ at time $2p-3$, where the random walk has step distribution uniform on $\{+1,-1\}$. 
  
  For  $M^*_0=(M_0,f_0^*)\in \mtrig$ with $\chi^{-1}(M_0^*)=(t_i)_{1\le i\le 2p-3}$, our sampling procedure gives 
  \begin{align*}
    \P[\Map^*_p=M^*_0] 
    = &c_p^{-1}  (1/2)^{2p-3}  \prod_{i=1}^{2p-3}\P[T_i=t_i]\\ 
    =&  c_p^{-1}  (1/2)^{2p-3} \alpha^{2p-3} (\alpha\beta/\gamma)^{\sum_{i=1}^{2p-3} |t_i| }\\
    =& ( 27 c_p/512)^{-1} (9/64)^{p} \cdot (27/256)^{|\IV(M_0)|},
  \end{align*}
  where we use $\sum_{i=1}^{2p-3} |t_i|=|\IV(M_0)|$ in the last equality.
  Recalling $\rho_{\op{III}}=27/256$ in \eqref{eq:parameter}, we conclude the proof by the definition of $\Bolm(p)$.
\end{proof}

\section{Convergence of the height  and label processes}\label{sec:process}
Let $F^*_p$  be as in Proposition~\ref{prop:Bol}.
Recall that  $|F^*_p|$ is the number of proper (i.e., 
non-blossom) vertices of $F^*_p$. 
In other words, if $F_p$ is the forest obtained by removing all the blossoms from $F_p^*$, then $|F^*_p|$ is the number of vertices of $F_p$.
Let $H_p$ be the height function of $F_p$ as defined in Section~\ref{sub:Forests}. 
Recall from Claim~\ref{claim:label} that  $F^*_p$  can be viewed as a validly labeled forest with labels on the vertices of $F_p$.
For $i\in[0,|F^*_p|]_\Z$, let $\lambda_p(i)$ be  the label of the $i$-th vertex of $F_p$ in the lexicographic order,
where we identify the $|F^*_p|$-th and the $0$-th vertex.
Then $(H_p,\lambda_p)$ is a function from $[0,|F^*_p|]_\Z$ to $\Z^2$. 
We extend it to be a continuous function on $[0,|F^*_p|]$ via linear interpolation.
For $s>0$ such that $0\le p^2s/3 \le |F^*_p|$, define
\[
H_{(p)} (s)= p^{-1} H_p(p^2s/3)\quad \textrm{and}\quad \lambda_{(p)} (s)=\Big(\frac{3}{2p}\Big)^{1/2} \lambda_p(p^2s/3).
\]
Recall the notations in Section~\ref{subsec:BD}. 
The main result of this section is the following proposition.
\begin{proposition}\label{prop:contour-label}
  $(H_{(p)},\lambda_{(p)})$ converges in law to  $(\CT(t),  \LP(t))_{t\in [0,\cA]}$ in the uniform topology.
\end{proposition}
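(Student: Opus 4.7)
The strategy is to exploit the explicit sampling of Section~\ref{subsec:sample}, which realises $F_p^*$ as a $p$-cycle with $2p-3$ i.i.d.\ blossoming trees grafted at the boundary corners according to an independent $\pm 1$ random-walk bridge $b$ of length $2p-3$ from $0$ to $-3$. Via the bijection of Section~\ref{sub:reformulationPS}, $F_p^*$ corresponds to a validly labelled forest $(F_p,X_p)$ whose height and label processes admit a clean decomposition: by Claim~\ref{claim:stems} and Remark~\ref{rmk:bridge}, the boundary labels $X(\rho_i)$ are read directly off the bridge $b$, while the interior labels accumulate $\{-1,0,1\}$-valued displacements whose conditional law (given $F_p$) is governed by the uniform placement of stems inside each tree. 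The key numerical input is $\mathbb{E}[B]=1$ and $\mathrm{Var}(B)=4/3$, easily computed from the probability generating function of $G$ as in the proof of Proposition~\ref{prop:Bol}; this fixes all scaling constants.

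For the convergence $H_{(p)}\to\CT$, I would recognise $F_p$ (modulo an $O(p)$ perturbation at the boundary vertices, whose offspring distributions differ) as a critical Galton--Watson forest of perimeter $p$ with offspring distribution $B$. A first-passage Donsker-type invariance principle for random walks conditioned to first hit $-p$ at time $|F_p|$, combined with the Duquesne--Le Gall theorem translating \L ukasiewicz-path convergence into height-process convergence, then gives $H_{(p)}\to\CT$ in the uniform topology; the specific scalings $p^{-1}$ in space and $p^2/3$ in time are forced by $\mathrm{Var}(B)=4/3$ together with the requirement that the limit be standard Brownian motion stopped at~$-1$.

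For the labels, decompose $\lambda_p(v)=X(\rho_{\tau(v)})+\Delta_p(v)$ with $\Delta_p(v):=X(v)-X(\rho_{\tau(v)})$. Donsker's invariance principle applied to the independent bridge $b$ (together with the fact that the lex-order time at which $\rho_i$ is visited, rescaled by $p^2/3$, corresponds in the limit to $\bT_{i/p}$) shows that the rescaled boundary contribution converges to $\sqrt{3}\,\BR_{\bT^{-1}(s)}$; the $\sqrt{3}$ arises from the $\sqrt{3/(2p)}$ spatial rescaling against the step-variance-$1$ bridge of length $\sim 2p$. For the interior piece, the key point is that, conditional on $F_p$, the displacement vector at each inner vertex $u$ is uniform on non-decreasing sequences in $\{-1,0,1\}^{k(u)}$ (by the uniform placement of stems in the sampling), so the displacements along the ancestral path of any vertex are conditionally independent with mean~$0$ and variance~$2/3$. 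Hence the conditional covariance of the centred interior label process at two lex-order times is $(2/3)$ times the height in $F_p$ of the most recent common ancestor, which under rescaling converges to $\inf_{u\in[s\wedge s',s\vee s']}(\CT_u-\underline\CT_u)$, matching the covariance of $\LPO$. Kolmogorov--Centsov moment estimates give tightness, and a conditional CLT for triangular arrays of bounded independent summands gives finite-dimensional convergence.

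The main obstacle is passing from the marginal convergences above to the \emph{joint} convergence $(H_{(p)},\lambda_{(p)})\to(\CT,\LP)$. Following the scheme of \cite[Section~8]{BeMi15} and \cite{ABA-Simple}, I would extract a subsequential joint weak limit, use Skorohod's representation to couple a convergent subsequence of $H_{(p)}$ on a single probability space, then verify on this space the almost-sure convergence of the rescaled boundary labels (via the independent bridge $b$) and the conditional Gaussian convergence of the rescaled interior labels (using the covariance identified above). Independence of $b$ from the trees $\{T_j\}$ simplifies the coupling of the two label contributions, and uniqueness of the limit $\LP$ finally upgrades subsequential convergence to full convergence.
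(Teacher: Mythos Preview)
Your overall architecture---separate the label into a boundary-bridge part and an interior part, and identify the scaling constants via $\Var(B)=4/3$---matches the paper's. But the proposed mechanism for the interior labels contains a genuine error. You assert that ``the displacements along the ancestral path of any vertex are conditionally independent with mean~$0$ and variance~$2/3$.'' Conditional independence across generations is correct, but the mean-zero claim is false. At an inner vertex $u$ with $k$ children, the uniform placement of the two stems makes the displacement vector $(D_1,\dots,D_k)$ uniform over \emph{non-decreasing} sequences in $\{-1,0,1\}^k$; the marginal of the $i$-th coordinate is biased (e.g.\ for $k=2$ one computes $\BB E[D_1]=-1/3$, $\BB E[D_2]=+1/3$). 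So along a fixed ancestral path the increments are neither centred nor identically distributed, and the conditional covariance of $\lambda_p$ at two vertices is \emph{not} simply a constant times the height of their most recent common ancestor. The $2/3$ you quote is the effective variance forced by the scaling, not the variance of an individual edge-displacement. This is exactly the difficulty that makes blossoming-tree labels harder than Schaeffer-bijection labels, and it was the main work of \cite{ABA-Simple} to overcome it for a single tree; here you cannot bypass that analysis by a direct CLT.

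The paper does not attempt a self-contained proof of the snake-type limit. For pointwise convergence of $\lambda_{(p)}$ it localises to the subtree $T'_{j'_p(s)}$ containing a given time and invokes the single-tree result \cite[Proposition~6.1]{ABA-Simple} as a black box (Lemma~\ref{lem:coupling}). The remaining (and non-trivial) issue is tightness of $\lambda_{(p)}$ over the whole forest: here the paper embeds the $Y_{2p-3}$ subtrees $T'_j$ into a \emph{single} random blossoming tree $\wh T^*$ constructed from an auxiliary unconditioned random walk $S$, couples the stem placements, and transfers the uniform modulus of continuity of the label process on $\wh T^*$ (again via \cite{ABA-Simple}) back to $F^*_p$, controlling the cross-tree discrepancy by the bridge $b$ and the $Z_i$ (see \eqref{eq:boundlambda}). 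Your ``Kolmogorov--\v{C}entsov moment estimates'' sentence does not substitute for this step: without a correct description of the increment law, you have no moment bound to feed into Kolmogorov's criterion, and in any case the forest structure (jumping between trees) introduces boundary effects that a straightforward moment computation on a single tree does not see.

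A minor point: $F_p$ is not a Galton--Watson forest of perimeter $p$ with offspring law $B$ up to an ``$O(p)$ perturbation''; the root-generation offspring law is $G$, and the paper handles this by passing to the sub-forest $F'_p$ of $Y_{2p-3}\approx 2p/3$ trees rooted one level in, then reconciling $H'_{(p)}$ with $H_{(p)}$ by matching endpoints. This is fixable in your framework, but the label argument above is not, without importing the substance of \cite{ABA-Simple}.
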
 
Throughout this section, whenever we say that a real-valued process converges we mean in the uniform topology.

\subsection{Convergence of a modified height process}\label{subsec:height}
Recall the notations in Section~\ref{subsec:sample}.
For $1\le i\le 2p-3$, let  $\rho_i$ be the root of $T_i$.
For $1\le i\le 2p-3$,  let $G_i$ be the number of non-leaf children of $\rho_i$. Let $Y_0=0$ and  $Y_j=\sum_{i=1}^{j} G_i$ for $1\le j\le 2p-3$.
The next lemma says that each ${Y_j}$ is very close to $j/3$.
\begin{lemma}\label{lem:LLN}
  For each $\eps,\delta\in(0,1)$ there exist positive constants $C$ and $c$ such that 
  \[
  \P[\max \{j^{-1}|Y_j-j/3|: \delta p\le j\le 2p-3  \} \ge \eps]\le Ce^{-cp} \qquad \textrm{for all }p\ge 3.
  \]
\end{lemma}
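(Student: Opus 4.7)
The plan is to identify the $G_i$ as i.i.d.\ copies of $G$ and then invoke an elementary Chernoff bound plus union bound. First, I would argue that for every $i$, the number of non-leaf children of $\rho_i$ in $T_i$ is equal in distribution to $G$. The construction of the blossoming tree $T$ in Section~\ref{subsec:sample} adds two extra stems only to the \emph{non-root} vertices of the underlying Galton--Watson tree $T^\circ$. Hence the children of $\rho_i$ in $T_i$ are exactly the children of the root of $T^\circ$, and the ``non-leaf'' qualifier is automatic: every such child is a non-root vertex of $T^\circ$, and so in $T_i$ it has two attached stems as descendants and is therefore never a leaf. Together with the independence of the $T_i$'s, this shows that $(G_i)_{i\ge 1}$ is i.i.d.\ with the law of $G$; in particular $\mathbb{E}[G_i] = \mathbb{E}[G] = 1/3$.

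Since $G$ is geometric with parameter $3/4$, its moment generating function $\mathbb{E}[e^{\theta G}] = (3/4)/(1 - e^{\theta}/4)$ is finite on a neighbourhood of $0$. A standard Chernoff/Cram\'er bound applied to the centred i.i.d.\ sum $\sum_{i=1}^{j}(G_i - 1/3)$ therefore yields: for each $\eps \in (0,1)$ there exists $c_0 = c_0(\eps) > 0$ such that
\[
\P\bigl[\,|Y_j - j/3| \ge \eps j\,\bigr] \le 2 e^{-c_0 j} \qquad \text{for all } j \ge 1.
\]

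A union bound over the integers $j$ in the range $[\lceil \delta p\rceil, 2p - 3]_{\Z}$ then finishes the proof: the probability of interest is bounded by
\[
\sum_{j \ge \lceil \delta p \rceil} 2 e^{-c_0 j} \;\le\; \frac{2\, e^{-c_0 \delta p}}{1 - e^{-c_0}},
\]
which is of the required form $C e^{-c p}$ with $c = c_0 \delta$ and a suitable constant $C$. The argument is essentially routine. The only point that demands a moment's thought is the correct identification of the distribution of $G_i$ (namely, that ``non-leaf'' imposes no restriction because of the two stems added at every non-root vertex of $T^\circ$); once this is clear, the concentration is a one-line Chernoff computation, and the lower cut-off $j \ge \delta p$ is precisely what is needed to absorb the $O(p)$ factor produced by the union bound into the exponential decay.
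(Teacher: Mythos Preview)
Your proof is correct and follows essentially the same approach as the paper's: identify the $G_i$ as i.i.d.\ geometric random variables with mean $1/3$, then apply exponential concentration. You spell out explicitly both why the ``non-leaf'' qualifier is vacuous and the Chernoff--union-bound mechanics, whereas the paper simply cites a standard concentration reference, but the argument is the same.
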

\begin{proof}
  By the sampling rule in Section~\ref{subsec:sample}, $(G_i)_{1\le i\le 2p-3}$ is a sequence of independent copies of $G$ described in Section~\ref{subsec:sample}.
  Since $\E{G}=1/3$, Lemma~\ref{lem:LLN} follows from  concentration for the sum of independent geometric random variables (see e.g.\ \cite{exponential}).
\end{proof}

Consider the sequence $(\ol v_j)_{1\le j\le Y_{2p-3}}$, 
where $(\ol v_j)_{Y_{i-1}< j\le Y_{i}}$ are the non-leaf children of  $\rho_i$ in clockwise order for $1\le i\le2p-3$. 
Let $T'_j$ be the subtree rooted at $\ol v_j$ with all stems removed. 
Conditioning on $(G_i)_{1\le i\le 2p-3}$, 
the law of $(T'_j)_{1\le j\le Y_{2p-3}}$ is a sequence of independent samples of the Galton-Watson tree with offspring distribution $B$ as in Section~\ref{subsec:sample}.
By an elementary calculation  (see \cite[Appendix]{ABA-Simple}), 
\begin{equation}\label{eq:EB}
\E{B}=1\qquad \textrm{and}\qquad \Var[B]=4/3.
\end{equation}
Moreover, the distribution of $B$ has an exponential tail.

Let $F'_p$ be the forest $(T'_j)_{1\le j\le Y_{2p-3}}$ and let $H'_p$ be the height function of  $F'_p$. Let $|F'_p|$ be the number of vertices in $F'_p$. Then $|F'_p|= |F^*_p|-p$ because the $p$ vertices on the boundary of the root face are not part of $F'_p$. 
For $1\le i\le |F'_p|$, let $v_i'$ be the $i$-th vertex on $F'_p$ in the lexicographical order and let $\xi_i$ be the number of children of $v'_i$. 
To obtain the scaling limit of the height function, we consider the so-called \emph{depth first queue process} $S'_p$ of $F'_p$. 
Let $S'_p(0)=0$ and
\[
S'_p(j)= \sum_{i=1}^{j}(\xi_i-1) \qquad \textrm{for all }1\le j\le |F'_p|.
\]
Conditioning on $(G_i)_{1\le i\le 2p-3}$,   
$S'_p$ has the law of the trajectory of a simple random walk  with step distribution given by  the law of $B-1$, starting at 0 and terminating at the first time hitting $-Y_{2p-3}$. 
It is clear that  $F'_p$ is determined by $S'_p$. Given a plane tree, we may define its associated depth first queue process in the same way.
Let $\tau_0=0$. For $1\le j\le Y_{2p-3}$, let $\tau_j=\inf\{i:  S'_p(i)=-j \}$. 
Then $\{S'_p(i+\tau_{j-1})-S(\tau_{j-1})\}_{0\le i\le \tau_j-\tau_{j-1}}$ is  the depth first queue process  of the tree $T'_j$.

\begin{lemma}\label{lem:CT}
  Let $S'_{(p)} (s)=\frac32 p^{-1}S'_p(p^2s/3)$ and  $H'_{(p)} (s)=p^{-1}H'_p(p^2s/3)$. 
  Let $\CT$, $\cA$, and $\bT^{-1}$ be as in Section~\ref{subsec:BD}.
  Then $S'_{(p)}$ converges in law to  $(\CT(t))_{0\le t\le \cA}$ as $p\to\infty$. 
  If we are under a coupling of $\{ S'_{(p)} \}_{p\ge 3}$ where this convergence holds in probability, then
  $H'_{(p)}$  converges  to  $(\CT(t)-\frac13 \bT^{-1}(t))_{0\le t\le \cA}$, and  $3|F^*_p|/p^2$ converges to $\cA$, in probability.
\end{lemma}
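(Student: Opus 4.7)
The plan is to prove the three claims in Lemma~\ref{lem:CT} -- convergence of $S'_{(p)}$, of $H'_{(p)}$, and of $3|F^*_p|/p^2$ -- by combining Donsker's invariance principle for the Lukasiewicz path with the Le~Gall--Le~Jan theorem relating that path to the height process of a critical Galton--Watson forest.

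For the walk I would first identify, conditionally on $(G_i)_{1\le i\le 2p-3}$, the process $S'_p$ as the depth-first queue (Lukasiewicz) path of the forest $F'_p=(T'_j)_{1\le j\le Y_{2p-3}}$. By the branching property this forest is a sequence of independent critical GW trees with offspring distribution $B$: each root $\overline{v}_j$ is a proper child of some $\rho_i$ and hence a non-root vertex of $T_i^\circ$, whose offspring has the plain law $B$. Setting $n:=p^2/3$, the step $B-1$ has mean zero, variance $4/3$ by \eqref{eq:EB}, and exponential tails, so Donsker's theorem gives $n^{-1/2}S'_p([n\,\cdot\,])\Rightarrow (2/\sqrt{3})\,W$ uniformly on compacts, for $W$ a standard Brownian motion. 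The rescaling in $S'_{(p)}$ is multiplication by $\tfrac{3}{2p}=\tfrac{\sqrt{3}}{2}n^{-1/2}$, yielding $S'_{(p)}\Rightarrow W$. Lemma~\ref{lem:LLN} gives $\tfrac{3Y_{2p-3}}{2p}\to 1$ in probability, so the rescaled walk is stopped exactly when it first hits $-1$, identifying the limit process as $\CT$ on the horizon $[0,\cA]$ with $\cA=\bT_1$.

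For the height process I would pass through Skorokhod representation to a coupling in which $S'_{(p)}\to\CT$ uniformly in probability. Decomposing $H'_p(i)=H^{(0)}(i)-\tau(v_i)+1$ with $H^{(0)}(i):=|v_i|$ the actual depth, the Le~Gall--Le~Jan theorem for GW forests yields the joint uniform convergence
\[
\bigl(n^{-1/2}S'_p([nt]),\ n^{-1/2}H^{(0)}([nt])\bigr)\ \longrightarrow\ \bigl((2/\sqrt{3})\,W_t,\ \sqrt{3}(W_t-\underline{W}_t)\bigr),
\]
while $\tau(v_i)=1-\inf_{k<i}S'_p(k)$ satisfies $n^{-1/2}\tau(v_{[nt]})\to -(2/\sqrt{3})\underline{W}_t$ by continuity of the running-minimum functional. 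Rescaling by $p^{-1}=n^{-1/2}/\sqrt{3}$ and assembling these three contributions produces the displayed functional of $\CT$ and $\bT^{-1}(s)=-\underline{\CT}_s$ as the uniform-in-probability limit of $H'_{(p)}$.

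The size convergence $3|F^*_p|/p^2\to\cA$ follows because the construction of Section~\ref{subsec:sample} yields $|F^*_p|=|F'_p|+(3p-3)$ -- the missing vertices are the $p$ boundary vertices of $F^*_p$ and the $2p-3$ roots $\rho(T_i)$ of the grafted trees -- so $3|F^*_p|/p^2=3|F'_p|/p^2+O(1/p)$. Since $|F'_p|$ is the first hitting time of $-Y_{2p-3}$ by $S'_p$, the rescaled hitting time converges in probability to $\cA$ by continuity of the first-passage functional at a level that is an almost sure non-singular point for $\CT$. The main obstacle I anticipate is the uniform convergence of the height process: $H^{(0)}$ is a non-local, non-Lipschitz functional of the walk, and the transition from walk convergence to height convergence is exactly the content of the Le~Gall--Le~Jan machinery. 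The remaining work is to adapt that statement to the stopped-walk setting and to check uniformity up to the endpoint $\cA$, which follows from the almost-sure regular behaviour of $\CT$ at its first passage time of $-1$.
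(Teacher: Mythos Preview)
Your approach is essentially the paper's: Donsker for the Lukasiewicz path $S'_p$, followed by the Marckert--Mokkadem/Le~Gall--Le~Jan comparison between height and Lukasiewicz within each tree of the forest (the paper cites \cite[Theorem~3]{Height-process} for exactly this), and then reading off the drift from the tree index. Your decomposition $H'_p=H^{(0)}-\tau+1$ just reorganises the same computation.

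Two small corrections. First, $|F^*_p|=|F'_p|+p$, not $|F'_p|+(3p-3)$: in the grafting of Section~\ref{subsec:sample} the root of each $T_i$ is \emph{identified} with a boundary vertex of the $p$-gon (several $T_i$ may share a root), so the only vertices of $F_p$ missing from $F'_p$ are the $p$ boundary vertices. This is harmless for the conclusion since either quantity is $o(p^2)$. Second, if you actually carry your three pieces through---or simply check the endpoint, where $S'_p(|F'_p|)=H'_p(|F'_p|)=-Y_{2p-3}\approx -2p/3$, hence $S'_{(p)}-H'_{(p)}\to -\tfrac13$ at $\cA$---you obtain $H'_{(p)}\to\CT+\tfrac13\bT^{-1}$, with a plus sign. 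The minus sign in the lemma as stated is a typo in the paper; it is immaterial downstream since the only use made of this limit (in the proof of Proposition~\ref{prop:contour-label}) is that it has the form $\CT+c\,\bT^{-1}$ for some constant $c$.
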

\begin{proof}
  By Lemma~\ref{lem:LLN} and the invariance principle for the Brownian motion, $S'_{(p)}$ converge in law to a linear Brownian motion $\CT$ with variance $\frac94\times \Var[B]\times \frac13=1$,
  until the first time $\cA$ when it hits $-\frac32\times \frac23=-1$. 
  By the now classical relation between height functions and depth first queue processes
  (see e.g.\ \cite[Theorem~3]{Height-process}),
  \begin{equation}\label{eq:concentration}
  \max_{1\le j\le Y_{2p-3}}\max_{\tau_{j-1}\le i\le\tau_j} \left\{p^{-1}\left|\left(S'_p(i)-S'_p(\tau_{j-1})\right)  -2^{-1} \Var[B] \left(   H'_p(i) -  H'_p(\tau_{j-1})  \right) \right| \right\}
  \end{equation}
  converges to 0 in probability.  Therefore, as $p\to\infty$, $S'_{(p)}-H'_{(p)}$ is determined by the variation when jumping from one tree to the next. 
  By Lemma~\ref{lem:LLN},  under a coupling where $S'_{(p)}$  converge to  $(\CT(t))_{0\le t\le \cA}$  in probability,
  $S'_{(p)}-H'_{(p)}$ converge to $\frac13 \bT^{-1}$. Under the same coupling, it is clear that  $\lim_{p\to\infty}3|F'_p|/p^2=\cA$ in probability, which yields the same convergence for $3|F^*_p|/p^2$.
\end{proof}

\subsection{Proof of Proposition~\ref{prop:contour-label}}\label{subsec:label}
We start the proof with two easy results. Let $(Z_i)_{1\leq i \leq p}$ denote the sequence which gives the number of stems incident to each boundary vertex in the construction of $F^*_p$ given in Section~\ref{subsec:sample}. It follows directly from the definition that $(Z_i)$ has the distribution of a sequence of $p$ independent geometric random variable on $\{0,1,\ldots\}$ with parameter $1/2$, conditioned to have sum equal to $p-3$. Therefore, for all $\epsilon >0$ there exists $K>0$ such that
\begin{equation}\label{eq:maxZ}
\p{\max_{1\leq i \leq p}Z_i\geq K\ln p}\leq \varepsilon \quad \text{for any }p\geq 3.
\end{equation}

Next, let $b_p$ denote the bridge $(b_k)_{0\leq k\leq 2p-3}$ associated with $F^*_p$ as introduced in Section \ref{subsec:sample}.
\begin{lemma}\label{lem:BR}
  For $0\le s\le 1$, let $b_{(p)} (s) = (\frac{3}{2p})^{1/2} b_p((2p-3)s)$.  Then $b_{(p)}$ converges in law to $\sqrt{3}\BR$ as defined in Section~\ref{subsec:BD}.
\end{lemma}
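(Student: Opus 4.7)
The plan is to recognize $b_p$ as a simple symmetric random walk bridge and invoke the standard functional invariance principle for such bridges. By the construction in Section~\ref{subsec:sample}, $(b_k)_{0\le k\le 2p-3}$ is distributed as the trajectory of a simple symmetric random walk on $\Z$ (with i.i.d.\ $\pm 1$ increments of mean zero and unit variance) started at $0$ and conditioned to take the value $-3$ at time $2p-3$. In particular, the conditioning endpoint is of constant order and hence satisfies $-3/\sqrt{2p-3}\to 0$.

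Given this identification, I would appeal to the classical Donsker-type invariance principle for random walk bridges: if $W_n$ is a mean-zero, variance-$\sigma^2$ random walk conditioned on $W_n=x_n$ with $x_n/\sqrt{n}\to 0$, then the linearly interpolated rescaled bridge $s\mapsto W_{\lfloor n s\rfloor}/(\sigma\sqrt{n})$ converges in law in the uniform topology on $[0,1]$ to a standard Brownian bridge $\BR$. Applying this with $\sigma^2=1$, $n=2p-3$ and $x_n=-3$ yields $\widehat b_{(p)}(s)\defeq b_p((2p-3)s)/\sqrt{2p-3}\to \BR$ in law.

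To conclude, I would write
\[
b_{(p)}(s) \;=\; \Big(\tfrac{3}{2p}\Big)^{1/2} b_p\big((2p-3)s\big) \;=\; \sqrt{\tfrac{3(2p-3)}{2p}}\;\widehat b_{(p)}(s),
\]
and observe that the deterministic prefactor converges to $\sqrt{3}$. An application of Slutsky's lemma then yields $b_{(p)}\to \sqrt{3}\,\BR$ in law in the uniform topology, as claimed.

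The only mild obstacle is justifying the bridge invariance principle for a conditioning endpoint equal to $-3$ rather than $0$, and this is routine. One route is to invoke the local central limit theorem for the simple random walk and compare the law of the bridge ending at $-3$ with that of the bridge ending at $0$: since $3/\sqrt{2p-3}\to 0$, the Radon--Nikodym derivative is $1+o(1)$ uniformly on macroscopic events, so Donsker's theorem for the standard bridge transfers. Alternatively, one may write $b_p(k) = \widetilde b_p(k) - \lfloor 3k/(2p-3)\rfloor$ where $\widetilde b_p$ is the $0$-bridge; the deterministic drift correction has uniform norm $O(1)$ and hence becomes negligible after normalization by $\sqrt{p}$.
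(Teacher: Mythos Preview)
Your argument is correct and is precisely the approach the paper takes: both simply invoke the classical invariance principle for simple random walk bridges with endpoint $o(\sqrt{n})$, and your explicit computation of the prefactor $\sqrt{3(2p-3)/(2p)}\to\sqrt{3}$ is exactly the variance check the paper alludes to in its one-line proof.

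One minor caveat on your final paragraph: the second alternative justification (writing $b_p$ as a $0$-bridge minus a deterministic drift) does not work as stated, because a simple symmetric random walk cannot reach $0$ in the odd number of steps $2p-3$, so there is no $0$-bridge of that length to compare with. Your first route via the local CLT and Radon--Nikodym comparison (e.g.\ with a bridge to $-1$, or simply citing the bridge invariance principle directly for endpoints $x_n$ with $x_n/\sqrt{n}\to 0$) is the clean way to handle this.
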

\begin{proof}
  This is the classical scaling limit result for the convergence of simple  random walk conditioning on the endpoint. 
  The scaling constant $(\frac32)^{1/2}$ ensures that the limiting Brownian bridge has the right variance.
\end{proof}

\begin{lemma}\label{lem:coupling}
  There is a coupling of $(F^*_p)_{p\ge 3}$ and $(\CT,\BR,\LP)$ (defined in Section~\ref{subsec:BD}) such that the convergences in Lemmas~\ref{lem:CT} and~\ref{lem:BR} hold almost surely, and moreover,  for each fixed $s>0$  $\lim_{p\to\infty}\lambda_{(p)}(s) \1_{\cA>s}=\LP(s)\1_{\cA>s}$ in probability. 
\end{lemma}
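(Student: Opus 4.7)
The first step is to upgrade the convergences in Lemmas~\ref{lem:CT} and~\ref{lem:BR} to an almost-sure joint coupling. In the sampling of Section~\ref{subsec:sample} the bridge $(b_k)_{0\le k\le 2p-3}$ is drawn independently from the blossoming trees $(T_i)_{i\ge 1}$, so for every $p$ the processes $S'_{(p)}$ and $b_{(p)}$ are independent, and their respective scaling limits $\CT$ and $\sqrt 3\BR$ are therefore independent, which is consistent with the hypothesis in Section~\ref{subsec:BD}. Applying Skorokhod's representation theorem to the joint law of $(S'_{(p)},b_{(p)})$ produces a single probability space on which the two convergences hold almost surely in the uniform topology; on this space the auxiliary conclusions $H'_{(p)}\to \CT-\tfrac13\bT^{-1}$ and $3|F^*_p|/p^2\to\cA$ of Lemma~\ref{lem:CT} also hold in probability. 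I would then enlarge this space by an independent Gaussian white noise $W$ and use $(\CT,W)$ to realize the process $\LPO$ of Section~\ref{subsec:BD} as a measurable functional.

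Next, fix $s>0$ and work on $\{\cA>s\}$. For $p$ large, the $\lfloor p^2s/3\rfloor$-th vertex $v_p$ of $F_p$ in lex order lies in a non-trivial subtree $T'_{j_p}$ grafted at a boundary vertex $\rho_{k_p}$, so by Claim~\ref{claim:label} and the validly-labelled-forest description we obtain the decomposition
\[
\lambda_p(v_p)=X_{F^*_p}(\rho_{k_p})+L_p,\qquad L_p:=\sum_{e\in\pi_{v_p}}D_e,
\]
where $\pi_{v_p}$ is the tree path in $F_p$ from $\rho_{k_p}$ to $v_p$. For the boundary piece I would use Remark~\ref{rmk:bridge} to identify $X_{F^*_p}(\rho_{k_p})=b_{a_p}$ for a bridge time $a_p$ computable from $(Z_\ell)_{\ell<k_p}$. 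Combining $\tau(v_p)-1=-\underline{S'_p}(\lfloor p^2s/3\rfloor)+O(1)$ with the a.s.\ convergence $\underline{S'_{(p)}}(s)\to -\bT^{-1}(s)$ and the tightness~\eqref{eq:maxZ} of $\max_\ell Z_\ell$ yields $a_p/(2p-3)\to\bT^{-1}(s)$ in probability, and together with the uniform a.s.\ convergence $b_{(p)}\to\sqrt 3\BR$ this gives
\[
\sqrt{\tfrac{3}{2p}}\,X_{F^*_p}(\rho_{k_p})\to\sqrt 3\BR(\bT^{-1}(s))
\]
in probability on $\{\cA>s\}$.

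For the inner term $L_p$, the independence of the uniform stem placements at distinct parent vertices (Section~\ref{subsec:sample}) makes the displacements $D_e\in\{-1,0,1\}$ along $\pi_{v_p}$ conditionally independent given the tree shape, each marginally uniform on $\{-1,0,1\}$ of variance $2/3$. The classical Le~Gall--Le~Jan height-process relation, applied to $F'_p$ with step variance $\sigma^2=\Var[B]=4/3$, gives $|v_p|_{F_p}/p\to\CT(s)-\underline{\CT}(s)=\CT(s)+\bT^{-1}(s)$ in probability, so a conditional Lindeberg CLT on the enlarged space then yields $\sqrt{3/(2p)}\,L_p\to\LPO(s)$ in probability, with the correct variance $\CT(s)+\bT^{-1}(s)$ matching $\mathrm{Var}(\LPO(s)\mid\CT)$, and independent of $\sqrt 3\BR(\bT^{-1}(s))$ by construction. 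Summing the two limits and invoking~\eqref{eq:defZ} gives $\lambda_{(p)}(s)\1_{\cA>s}\to\LP(s)\1_{\cA>s}$. I expect the main technical difficulty to lie in this conditional CLT: the $D_e$'s on $\pi_{v_p}$ are conditionally independent but not identically distributed, with the conditional mean $\mathbb E[D_e\mid k(\mathrm{parent}),i]$ nonzero and of order one at extreme sibling ranks $i$, so one must verify that along a typical root-to-$v_p$ path the normalised ranks $i/k(\mathrm{parent})$ are asymptotically uniformly distributed, making the drifts cancel in the Lindeberg sum. This in turn follows from a standard size-biasing computation on the underlying Galton--Watson tree (with root offspring $G$ and branching distribution $B$) together with stationarity of the depth-first exploration.
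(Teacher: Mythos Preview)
Your Skorokhod step for $(S'_{(p)},b_{(p)})$ and the identification of the boundary contribution $\sqrt{3/(2p)}\,X_{F^*_p}(\rho_{k_p})\to\sqrt 3\,\BR(\bT^{-1}(s))$ match what the paper does. The divergence is in the treatment of the inner term $L_p$.

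The decisive gap is the sentence ``a conditional Lindeberg CLT on the enlarged space then yields $\sqrt{3/(2p)}\,L_p\to\LPO(s)$ in probability''. A CLT delivers convergence in \emph{law} only. You have realised $\LPO$ as a functional of an \emph{independent} white noise $W$ appended to the space; then $\LPO(s)$ is independent of every $L_p$, and two independent nondegenerate random variables cannot be close in probability. The lemma asks for a coupling in which the discrete labels themselves track the continuous process, so the coupling must be built from the stem placements, not grafted on via an external noise.

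The paper proceeds differently: it localises to the subtree $T'_{j'_p(s)}$ containing the relevant vertex, invokes \cite{ABA-Simple} to obtain that the rescaled (height, label) pair on that single tree converges in \emph{law} to the Brownian-snake pair $(\CT(\cdot-\sigma(s)),\LPO(\cdot-\sigma(s)))$ on the corresponding excursion interval of $\CT-\underline\CT$, and only \emph{then} re-couples via Skorokhod so that this convergence holds in probability. The re-coupling is legitimate because, conditionally on the tree shapes and the bridge, the stem placements inside distinct trees are independent; one may therefore re-sample them tree by tree over the countably many excursions of $\CT-\underline\CT$. Your approach bypasses the \cite{ABA-Simple} input and tries to recover the within-tree snake convergence by a path CLT; even if that CLT argument were completed, you would still only have convergence in law at each fixed $s$ and would need precisely the Skorokhod re-coupling above to finish.

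A secondary point: the displacements $D_e$ along $\pi_{v_p}$ are \emph{not} marginally uniform on $\{-1,0,1\}$, because which child is traversed at each branch point is dictated by the target vertex $v_p$ rather than chosen uniformly (for instance, with $k=2$ children the first child has $D_e=-1$ with probability $1/2$). You do flag the resulting drift, but the proposed remedy---asymptotic uniformity of the normalised rank sequence along the spine---is itself a nontrivial spine/size-bias statement for this Galton--Watson tree, of roughly the same order of difficulty as the result of \cite{ABA-Simple} that the paper simply cites.
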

\begin{proof}
  Since $b_p$ and $\{T_i\}_{i\in \N}$ are independent,  it is immediate from the Skorokhod embedding theorem that there is a coupling of $(F^*_p)_{p\ge 3}$ and $(\CT,\BR,\LP)$ such that the convergence results of Lemmas~\ref{lem:CT} and~\ref{lem:BR} are almost sure. It remains to upgrade the coupling so that the last assertion of the lemma about pointwise convergence of $\lambda_{(p)}$. 
  Let $\sigma(s)=\sup\{t\le s: \CT(t) \leq-\bT^{-1}(s)\}$ and $\tau(s)\defeq \inf\{ t\ge s:  \CT(t)<-\bT^{-1}(s) \}$. Then $\sigma(s)<s<\tau(s)$ almost surely.  
  Let \(j'_p(s)=-\min\{S'_p(k):k\in[0,sp^2/3]\}+1\). Let
  \begin{align*}
  \begin{split}
    \sigma_p(s)&\defeq \inf\left\{j\le sp^2/3: S'_p(j) =-j'_p(s)+1  \right\},\\
    \tau_p(s)&\defeq \inf\left\{j\ge sp^2/3: S'_p(j)= -j'_p(s) \right\}.
  \end{split}
  \end{align*}
  On the event $\cA>s$ define $\sigma_{(p)} (s) \defeq 3\sigma_p(s)/p^2$ and $\tau_{(p)} (s) \defeq 3\tau_p(s)/p^2$. 
  By the convergence of $S'_{(p)}$ to $\CT$,
  we have $\lim_{p\to\infty}(\sigma_{(p)}(s),\tau_{(p)}(s))=(\sigma(s),\tau(s))$ and $\lim_{p\to\infty} \frac32p^{-1} j'_p(s)=\bT^{-1}(s)$ 
  in probability.

  For  $0\le i\le \tau_p(s)-\sigma_p(s)$, let $S^s_p(i)\defeq S'_p(i+\sigma_p(s))-S'_p(\sigma_p(s) )$ and $H^s_p(i)\defeq H'_p(i+\sigma_p(s))-H'_p(\sigma_p(s) )$.
  Then $ (H^s_p(i))_{0\le i\le \tau_p(s)-\sigma_p(s)} $ is the height function of the tree $T'_{j'_p(s)}$ in $F'_p$. 
  Recall the definition of boundary corners in the construction of $F^*_p$ in Section~\ref{subsec:sample}.
  Let $j_p(s)$ be the index of the boundary corner  at which we attach the tree of $F_p$ that contains  $T'_{j'_p}(s)$ as a subtree.
  By Lemma~\ref{lem:LLN}, $\lim_{p\to\infty} (2p)^{-1} j_p(s)=\bT^{-1}(s)$ in probability. Recall Remark~\ref{rmk:bridge}. By Claims \ref{claim:labelVertex} and \ref{claim:labelEdges}, the label of the root vertex of $T'_{j'_p(s)}$ is equal to $b_p(j_p(s))-1$. 
  Therefore  Lemma~\ref{lem:BR} gives that the label of the root vertex of $T'_{j'_p(s)}$ 
  rescaled by $(\frac{3}{2p})^{1/2}$ converge to $\sqrt{3}\BR_{\bT^{-1}(s)}$ in probability.
  
  For  $0\le i\le \tau_p(s)-\sigma_p(s)$, let $\lambda^s_p(i)$ be the label of the $i$-th vertex on $T'_{j'_p(s)}$  in  the lexicographic order. 
  Let  $H^s_{(p)} (t)\defeq p^{-1} H^s_p(tp^2/3)$ and $\lambda^s_{(p)} (t)\defeq (\frac{3}{2p})^{1/2}\lambda^s_p(tp^2/3)$.
  Then by \cite{ABA-Simple},  $(H^s_{(p)}, \lambda^s_{(p)})$  jointly converge in law to $(\CT(\cdot-\sigma(s)),\LPO(\cdot-\sigma(s))_{0\le t\le \tau(s)-\sigma(s)}$.
  By our sampling procedure in Section~\ref{subsec:sample}, we may recouple so that the convergence holds in probability. 
  Combined with the previous paragraph, we see that $\lim_{p\to\infty}\lambda_{(p)}(s)=\LP(s)$ in probability.
  
  For a fixed $s'\neq s$, if $s'\in (\sigma(s),\tau(s))$, then we still have $\lim_{p\to\infty}\lambda_{(p)}(s')=\LP(s')$  in probability.   
  Otherwise, $[\sigma(s'),\tau(s')]\cap [\sigma(s),\tau(s)]=\emptyset$. 
  We can repeat the argument for $[\sigma(s'),\tau(s')]$ to re-couple such that $\lim_{p\to\infty}\lambda_{(p)}(s')=\LP(s')$ in probability. 
  Since $\CT-\underline\CT$  has countably many excursions on $[0,\cA]$, we conclude the proof.
\end{proof} 
\begin{proof}[Proof of Proposition~\ref{prop:contour-label}]
  Suppose we are under a coupling satisfying the condition in Lemma~\ref{lem:coupling}.
  Comparing $H_{(p)}$ and $H'_{(p)}$, it is straightforward from the concentration of i.i.d.\ geometric variables that 
  $H_{(p)}$ converges in probability to $(\CT(t)+c\bT^{-1}(t))_{0\le t\le \cA}$ for some constant $c$. 
  By matching the final condition $\CT(\cA)+c\bT^{-1}(\cA)=-p^{-1}\times p =-1$, we must have $c=0$.
  It remains  to show that in our coupling $\lim_{p\to\infty}\lambda_{(p)}=\LP$ in probability. By Lemma~\ref{lem:coupling},
  it suffices to show that  $\{\lambda_{(p)} \}_{p\ge 3}$ is  tight in the uniform topology. 
  
  Let $\omega_p(\delta)\defeq p^{-1/2}
  \sup\{|\lambda_p (j)- \lambda_p(i) | : j-i\le \delta p^2,    1\le i\le j\le  |F^*_p|\}$. 
  To show the tightness of $\{\lambda_{(p)} \}_{p\ge 3}$ we only need to  show that 
  \begin{equation}\label{eq:tight}
  \textrm{$\lim_{\delta\to 0}\limsup_{p\to \infty} \omega_p(\delta)=0$ in probability.}
  \end{equation}
  
  To prove \eqref{eq:tight}, we will prove a tightness result for a related process. Recall the definition of the contour exploration $\beta$ defined in Section~\ref{sub:Forests}. Then, for $1\leq i \leq 2|F^*_p|-p$, let $\lambda_p^{\!\mathrm{ctr}}(i)$ be the label of the vertex $\beta(i)$. In other words, the process $\lambda_p^{\!\mathrm{ctr}}$ gives the label of vertices of $F_p$ encountered in a contour exploration of $F_p$. We also set $\omega^{\mathrm{ctr}}_p(\delta)\defeq p^{-1/2}
  \sup\{|\lambda^{\!\mathrm{ctr}}_p (j)- \lambda^{\!\mathrm{ctr}}_p(i) | : j-i\le \delta p^2,    1\le i\le j\le  2|F^*_p|-p\}$. Standard results -- see for instance \cite[Proposition~13]{AddarioAlbenqueOdd}) --  ensure that \eqref{eq:tight} holds if and only if the similar result holds for $\omega^{\mathrm{ctr}}_p(\delta)$. We hence now focus on proving that
  \begin{equation}\label{eq:tightctr}
  \textrm{$\lim_{\delta\to 0}\limsup_{p\to \infty} \omega^{\mathrm{ctr}}_p(\delta)=0$ in probability.}
  \end{equation}
The idea for proving~\eqref{eq:tightctr} is to embed the forest $F^*_p$ into a tree $\wh T$ such that both $\wh T$  and the discrepancy between $F^*_p$ and $\wh T$ are easy to control. See Figure~\ref{fig:process} for an illustration.
  
  Given $p\ge 3$, let $\{S_i\}_{i\ge 0}$ be a simple random walk with step distribution given by the law of $B-1$.
  Moreover, suppose $Y_{2p-3}$ is independent of $S$. We will work on the overwhelmingly high probability event that $Y_{2p-3}>0$ to avoid some trivial degenerations in the discussion below.
  For each $i\ge 0$, let $\underline S(i)=\inf\{S(j): j\le i\}$.
  Let $\sigma=\inf\{ i\ge 0: S(i)-\underline S(i) \ge Y_{2p-3} -1\}$.
  Let $\tau=\inf\{i>\sigma : S(i)-S(\sigma) =-Y_{2p-3}\}$.
  Then $\sigma$ and $\tau$ are stopping times for $S$. (We hide the dependence of $\sigma,\tau$ on $p$ for simplicity.)
  By the strong Markov property of $S$, we see that $\{S(i+\sigma)-S(\sigma) \}_{0\le i\le \tau-\sigma}$ has the same distribution as $S_p'$.
  Hereafter we assume that $S'_p$ equals $\{S(i+\sigma)-S(\sigma)\}_{0\le i\le \tau-\sigma}$.
  
  Let $\wh \sigma=\sup\{i\le \sigma: S(i)=\underline S(i)\}$ and $\wh \tau=\inf\{i>\wh \sigma: S(i)=S(\wh \sigma)-1 \}$. Then $\wh \sigma\le \sigma\le \tau = \wh \tau$. Moreover,  $\{S(i+\wh\sigma) -S(\wh\sigma) \}_{0\le i\le \wh\tau-\wh\sigma}$ can be viewed as the depth first queue process  of a tree, which we denote by $\wh T$. 
  Since $S_p'$ equals $\{S(i+\sigma)-S(\sigma)\}_{0\le i\le \tau-\sigma}$, 
  we see that $\{T'_i\}_{1\le i\le Y_{2p-3}}$ are subtrees of $\wh T$.  
  Now we  attach two stems to each vertex of $\wh T$ in a uniform way to obtain a blossoming tree $\wh T^*$.
  For each $1\le i\le 2p-3$, since $T_i'$ is a subtree of $\wh T$,
  we maybe assume that $\wh T^*$ and $F^*_p$ are coupled such that 
  for a non-root vertex $v$ of $T_i'$, the way to attach in $\hat T^*$ the two stems around the corners of $v$ is the same as in the blossoming forest $F^*_p$, see Figure~\ref{fig:process}.  
  \begin{figure}
    \includegraphics[scale=0.8]{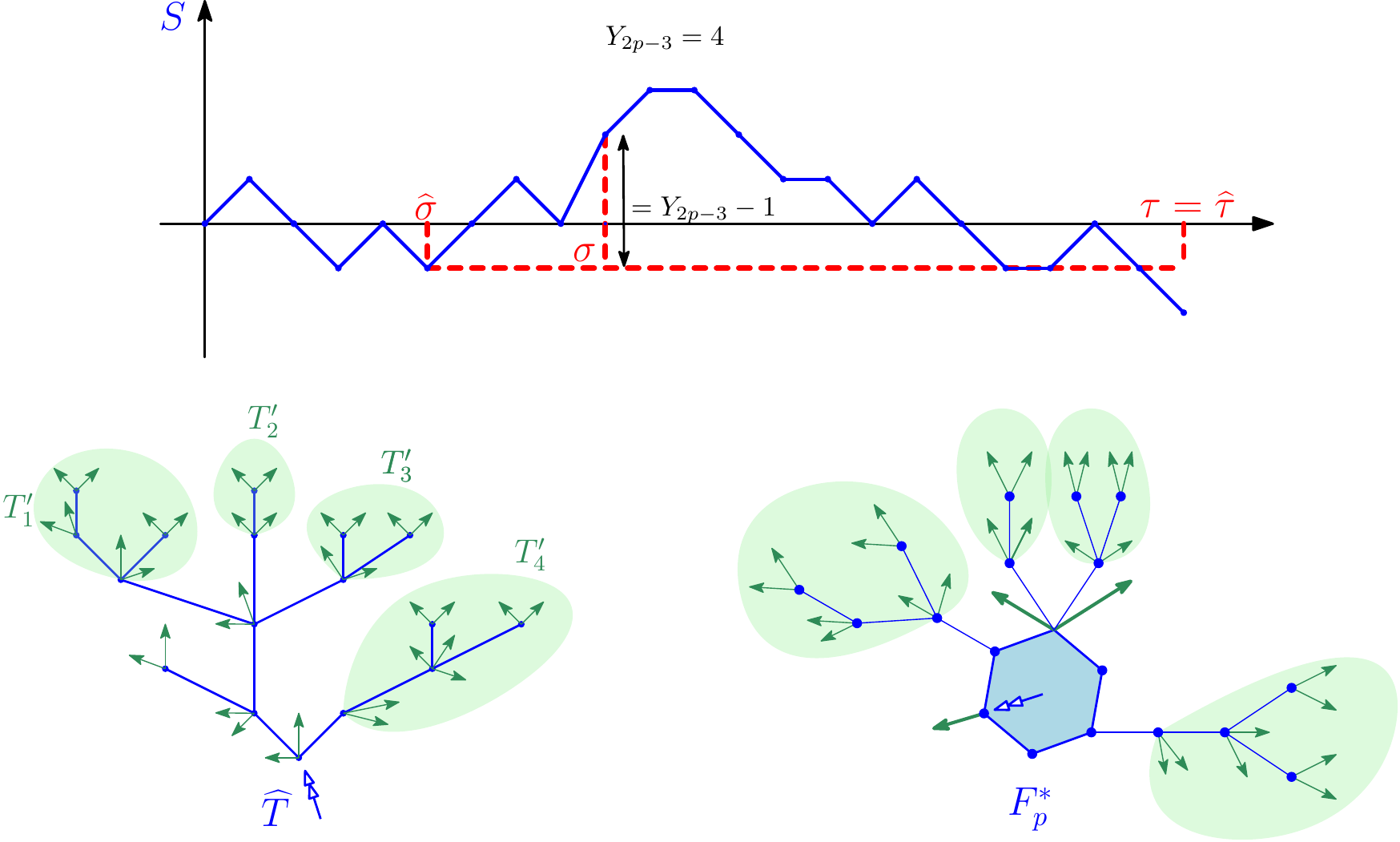}
    \caption{\label{fig:process}An example of the construction of $\wh T$ from $S$ and of the coupling between $\wh T$ and $F_p^*$.}
  \end{figure}
  Let $\wh \lambda^{\!\mathrm{ctr}}_{p}$ be the label process of $\wh T^*$, where the vertices are explored in the order of a contour exploration. 
  Let $\wh\lambda^{\!\mathrm{ctr}}_{(p)}$ be obtained from rescaling $\wh\lambda_p$ as in the definition of $\lambda^{\!\mathrm{ctr}}_{(p)}$.
  Then by \cite[Proposition~6.1]{ABA-Simple}, $\wh \lambda^{\!\mathrm{ctr}}_{(p)}$ converges in law in the uniform topology. 
  Given $\delta>0$, let $\wh\omega^{\mathrm{ctr}}_p(\delta)\defeq p^{-1/2}\sup\{|\wh \lambda_p (j)-\wh \lambda_p(i) | : j-i\le \delta p^2,  1\le i\le j \le \wh\tau-\wh\sigma \}$. Then $\lim_{\delta\to 0}\limsup_{p\to \infty} \wh\omega^{\mathrm{ctr}}_p(\delta)=0$ in probability for each $\delta>0$.
  
  We are now ready to prove \eqref{eq:tight}.   Recall that $|F^*_p|$ is the number of vertices of $F_p$.
  For  $1\le i \le  2|F^*_p|-p$,  define $i_1$ and $i_2$ as follows: 
  \begin{align}
    i_1 &= \max\{j\leq i, \text{ such that }\beta(i)\text{ is a boundary vertex}\}, \\
    i_2 &= \min\{j\geq i, \text{ such that }\beta(j)\text{ is a boundary vertex}\}.
  \end{align}
  Let $1\le i\le j\le  2|F^*_p|-p$ and assume that $j-i\le \delta p^2$. If $i\le i_2\le j_1\le j$,
  then by the triangle inequality and the coupling between $F^*_p$ and $\wh T^*$ above, we have
  \begin{equation}\label{eq:boundlambda}
  \begin{split}
  |\lambda^{\!\mathrm{ctr}}_p (j)- \lambda^{\!\mathrm{ctr}}_p(i) |
  &\le |\lambda^{\!\mathrm{ctr}}_p(j)-\lambda^{\!\mathrm{ctr}}_p(j_1)|+ |\lambda^{\!\mathrm{ctr}}_p (j_1)- \lambda^{\!\mathrm{ctr}}_p(i_2) |
  +|\lambda^{\!\mathrm{ctr}}_p(i_2)-\lambda^{\!\mathrm{ctr}}_p(i)|\\
  &\le 2p^{1/2}\wh \omega^{\mathrm{ctr}}_p(\delta) +2\Big(\max_{1\leq i \leq p}Z_i+1\Big)+ |\lambda^{\!\mathrm{ctr}}_p (j_1)- \lambda^{\!\mathrm{ctr}}_p(i_2) |,
  \end{split}
  \end{equation}
  where we recall that $Z_i$ denotes the number of stems incident to $\rho_i$ in $F^*_p$.  
  If $i\le i_2\le j_1\le j$ does not hold, then  $\beta(i)$ and $\beta(j)$ are in the same tree, hence  $ |\lambda^{\!\mathrm{ctr}}_p (j)- \lambda^{\!\mathrm{ctr}}_p(i) |\le p^{1/2}\wh \omega^{\mathrm{ctr}}_p(\delta)$.
  
  Let $\omega'_p(\delta)\defeq p^{-1/2}
  \sup\{|\lambda_p (j_1)- \lambda_p(i_2) | : j-i\le \delta p^2,    1\le i\le j\le  2|F^*_p|-p\}$. 
  In view of \eqref{eq:maxZ}, to conclude the proof of \eqref{eq:tight} it suffices to show that  
  \begin{equation}\label{eq:tight2}
  \textrm{$\lim_{\delta\to 0}\limsup_{p\to \infty} \omega'_p(\delta)=0$ in probability for each $\delta>0$.}
  \end{equation} 
  Recall the definition of the contour function defined in Section~\ref{sub:Forests}. Denote $C_p$ the contour function of $F_p$ and define $\mathfrak {a}_p(\delta)= \sup\{|C_p(j)- C_p(i) | : j-i\le \delta p^2,    1\le i\le j\le  2|F^*_p|-p\}$.  Again standard results (see \cite[Section~1.6]{LeGallSurvey}) allow to transfer the convergence of $H_{(p)}$ to a convergence result for the scaled contour function, from which we get that  $\lim_{\delta\to 0}\limsup_{p\to \infty}p^{-1}\mathfrak{a}_p(\delta)=0$. 
  
  Since $|C_p(j_2)- C_p(i_1)|$ measures the distance along the boundary between $\beta(j_2)$ and $\beta(i_1)$, \eqref{eq:tight2} follows from Remark~\ref{rmk:bridge} and Lemma~\ref{lem:BR}.
\end{proof}

\subsection{Uniform integrability of the total mass}\label{subsec:UI}
For $p\ge 3$, let $\mathbb E^*_p$ and $\P^*_p$ be the probability and expectation, respectively, corresponding to $\Bolm(p)$, as in Proposition~\ref{prop:contour-label}. 
Let $\mathbb E_p,\P_p$ the ones corresponding to $\Bol_{\op{III}}(p)$. 
Let $\mathbb E, \P$ be the ones corresponding to  $(\CT(t),  \LP(t)_{t\in [0,\cA]})$.
As a byproduct of our proof of Proposition~\ref{prop:contour-label}, we  obtain the following lemma, which will be useful when comparing 
$\P^*_p$ and $\P_p$.
\begin{lemma}\label{lem:RN}
  The $\P^*_p$-law of the Radon-Nikodym derivative 
  between $\P_p$ and $\P^*_p$ is uniformly integrable.  The same statement holds with $\P^*_p$ and $\P_p$ swapped.
  Moreover,
  \begin{equation}\label{eq:limexp}
  \lim_{p\to\infty}3p^{-2}\mathbb E_p[|V(\Map_p)|]=\E{\cA^{-1}}^{-1}.
  \end{equation}
\end{lemma}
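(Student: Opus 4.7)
The plan is to exploit the Radon-Nikodym identity
\[
\frac{d\P^*_p}{d\P_p}(\Map_p)=\frac{N(\Map_p)}{\mathbb E_p[N]},
\]
where $N=N(\Map_p)$ is the number of inner (triangular) faces and is a deterministic affine function of $|V(\Map_p)|$ through Euler's formula. Integrating $\mathbb E_p[1]=1$ against this density yields the dual identity $\mathbb E_p[N]=1/\mathbb E^*_p[N^{-1}]$, and more generally $\mathbb E_p[f]=\mathbb E^*_p[f/N]/\mathbb E^*_p[1/N]$ for any $f\ge 0$. Consequently both uniform integrability claims and the asymptotic for $3p^{-2}\mathbb E_p[|V(\Map_p)|]$ reduce to uniform fractional moment estimates for $N/p^2$ (equivalently $|V(\Map_p)|/p^2$) under $\P^*_p$, combined with the weak convergence $3|V(\Map_p)|/p^2\convdist \cA$ from Lemma~\ref{lem:CT}.

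The key technical step is to prove the two uniform bounds
\[
\mathrm{(i)}\ \sup_{p}\mathbb E^*_p\!\left[\left(\tfrac{p^2}{|V(\Map_p)|}\right)^{1+\eta}\right]<\infty,\qquad \mathrm{(ii)}\ \sup_{p}\mathbb E^*_p\!\left[\left(\tfrac{|V(\Map_p)|}{p^2}\right)^{\eta}\right]<\infty,
\]
for some $\eta\in(0,\tfrac12)$. The sampling representation of Section~\ref{subsec:sample} gives $|V(\Map_p)|-p=\sum_{i=1}^{2p-3}(|T_i|-1)$ under $\P^*_p$, where the $|T_i|-1$ are non-negative i.i.d.\ random variables obtained from the total progeny of a Galton--Watson tree with non-root offspring distribution $B$ (critical, $\Var B=\tfrac43$, exponential tails); in particular $\P[|T_i|\ge n]\sim cn^{-1/2}$ and $\P[|T_i|=1]=\P[G=0]=3/4$. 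Bound (ii) is the standard uniform $\eta$-moment bound for rescaled i.i.d.\ sums in the domain of attraction of the stable law of index $\tfrac12$, and holds for every $\eta<\tfrac12$. Bound (i) demands a uniform lower-tail estimate of the form $\P^*_p[|V(\Map_p)|\le yp^2]\le Ce^{-c/y}$, which I will establish by splitting the range of $y$. When $yp-1\lesssim 1$, the event forces almost every $|T_i|$ to equal $1$, and a Chernoff bound on $\#\{i:|T_i|\ge 2\}\sim\mathrm{Bin}(2p-3,\tfrac14)$ gives exponential decay in $p$, hence in $1/y$. When $yp-1\gtrsim 1$, I will apply an exponential Markov (Chernoff--Cram\'{e}r) inequality with the Laplace transform $\mathbb E[e^{-\lambda(|T_i|-1)}]$, whose $\lambda\downarrow 0$ expansion $1-c\sqrt{\lambda}+o(\sqrt{\lambda})$ follows from the fixed-point equation for the probability generating function of $|T_i|$; optimizing in $\lambda$ reproduces the $e^{-c'/y}$ decay mirroring the limit $\P[\cA\le y]\sim Cy^{1/2}e^{-1/(2y)}$.

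Once (i)--(ii) are secured the remainder is bookkeeping. Bound (i) upgrades the weak convergence to $p^2\mathbb E^*_p[N^{-1}]\to \tfrac32\E{\cA^{-1}}$ (finite since $\cA$ has a super-exponentially thin left tail at $0$), so $\mathbb E_p[N]=1/\mathbb E^*_p[N^{-1}]\sim \tfrac{2p^2}{3\E{\cA^{-1}}}$, and the relation $|V(\Map_p)|=\tfrac12(N(\Map_p)+p+2)$ yields $3p^{-2}\mathbb E_p[|V(\Map_p)|]\to \E{\cA^{-1}}^{-1}$. Bound (i), together with $p^{-2}\mathbb E_p[N]$ being bounded, also produces an $L^{1+\eta}(\P^*_p)$ bound on $d\P_p/d\P^*_p=\mathbb E_p[N]/N$, whence uniform integrability of this Radon-Nikodym derivative under $\P^*_p$. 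Finally, bound (ii) combined with the change-of-measure identity $\mathbb E_p[\,\cdot\,]=\mathbb E^*_p[\,\cdot/N]/\mathbb E^*_p[1/N]$ bounds $\mathbb E_p[(N/\mathbb E_p[N])^{1+\eta}]$, establishing uniform integrability of $d\P^*_p/d\P_p$ under $\P_p$. The only substantive obstacle is the moderate-$y$ regime of bound (i), where the Galton--Watson structure (rather than just the weak limit) is essential in order to obtain a lower-tail estimate that is truly uniform in $p$.
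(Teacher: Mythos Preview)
Your argument is correct and leads to the same conclusions, but it takes a more quantitative route than the paper. The paper's proof is shorter and softer: it first shows, via the hitting-time representation of $|F'_p|$ under $\P^*_p$, that $\mathbb E^*_p[(3p^{-2}|V(\Map_p)|)^{-1}]\to\E{\cA^{-1}}$ (left as a ``simple random walk exercise''), then combines this convergence of first moments with the weak convergence $3p^{-2}|V(\Map_p)|\to\cA$ from Lemma~\ref{lem:CT} to deduce uniform integrability through the classical fact that a family of nonnegative random variables converging in law with converging expectations is uniformly integrable. The swapped statement is then obtained by a second application of the same principle after one change of measure. Your approach instead proves uniform $L^{1+\eta}$ bounds on both Radon--Nikodym derivatives, which is stronger but requires the explicit lower-tail estimate~(i). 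Both approaches ultimately need some uniform control on negative moments of $|V(\Map_p)|/p^2$; the paper hides this in the exercise, you spell out a Chernoff proof.

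One small imprecision in your sketch of bound~(i): the binomial argument for the regime ``$yp-1\lesssim 1$'' only works when $yp-1$ is below roughly $1/2$, since $\#\{i:|T_i|\ge 2\}$ is $\mathrm{Bin}(2p-3,\tfrac14)$ with mean about $p/2$; for $yp-1$ merely bounded the event $\#\{i:|T_i|\ge 2\}\le (yp-1)p$ is not a lower-tail deviation. This is harmless, because the Chernoff argument in fact covers all $y\ge 1/p$ once you replace the bare expansion $\phi(\lambda)=1-c\sqrt\lambda+o(\sqrt\lambda)$ by the uniform bound $\phi(\lambda)\le\exp(-c'\sqrt\lambda)$ valid for $\lambda$ in a fixed bounded interval (which follows from the expansion near $0$ together with $\phi(\lambda)<1$ for $\lambda>0$ and continuity). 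With that refinement the optimisation $\sqrt\lambda=c'/(yp)$ gives the desired $\P^*_p[|V(\Map_p)|\le yp^2]\le Ce^{-c/y}$ uniformly.
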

\begin{proof}
  Recall $|F'_p|=|F^*_p|-p$ defined in Section~\ref{subsec:height}.
  Since the $\P^*_p$-law of $|F'_p|$ is the hitting time of the simple random walk $S'_p$, as a simple random walk exercise we have
  \(\lim_{p\to\infty}\mathbb E^*_p[(3p^{-2}|F'_p|)^{-1}]=\E{\cA^{-1}}.\) 
  Therefore
  \begin{equation}\label{eq:RN0}
  \lim_{p\to\infty}\mathbb E^*_p[(3p^{-2}|V(\Map_p)|)^{-1}]=\E{\cA^{-1}}.
  \end{equation}
  Let $N_p$ be the number of inner faces of $\Map_p$ so that $N^{-1}_p/\mathbb E^*_p[N^{-1}_p]$ is the Radon-Nikodym derivative of $\P_p$ with respect to $\P^*_p$. 
  By Euler's formula, we obtain from \eqref{eq:RN0}  that
  \begin{equation}\label{eq:RN1}
  \lim_{p\to\infty}\mathbb E^*_p[(1.5p^{-2}N_p)^{-1}]=\E{\cA^{-1}}.
  \end{equation}
  Moreover, we  obtain from the last statement of Lemma~\ref{lem:CT} that 
  the $\P^*_p$-law of $(1.5p^{-2}N_p)^{-1}$ weakly converges to the $\P$-law of $\cA^{-1}$.
  Therefore the $\P^*_p$-law of $N^{-1}_p \mathbb E^*_p[N^{-1}_p]^{-1}$  weakly converges to the $\P$-law of $\cA^{-1}\E{\cA^{-1}}^{-1}$. 
  We conclude that the $\P^*_p$-law of $N^{-1}_p \mathbb E^*_p[N^{-1}_p]^{-1}$  is uniformly integrable since this family of random variables is positive, converge in law, and has a converging expectation.
  
  Since~\eqref{eq:RN1} is equivalent to
  \[
  \lim_{p\to\infty}\mathbb E_p[(1.5p^{-2} N_p)^{-1}\cdot N_p\mathbb E_p [N_p]^{-1}]=\E{\cA^{-1}},
  \]
  we have $\lim_{p\to\infty}\mathbb E_p[1.5p^{-2} N_p]=\E{\cA^{-1}}^{-1}$.
  Using Euler's formula again we get~\eqref{eq:limexp}.
  
  It remains to show that the $\P_p$-law of $N_p\mathbb E_p[N_p]^{-1}$ is uniformly integrable.  
  Since the $\P^*_p$-law of $N_p\mathbb E_p[N_p]^{-1}$ weakly converges to $\cA\E{\cA^{-1}}$ and 
  the $\P^*_p$-law of $N^{-1}_p \mathbb E^*_p[N^{-1}_p]^{-1}$ is uniformly integrable, the $\P_p$-law of $N_p\mathbb E_p[N_p]^{-1}$ weakly converges to
  the $\P'$-law of $\cA\E{\cA^{-1}}$, where $d\P'=\cA^{-1}\E{\cA^{-1}}^{-1} d\P$.  
  Since $\cA\E{\cA^{-1}}$  has expectation 1 under $\P'$, the $\P_p$-law of  $N_p\mathbb E_p[N_p]^{-1}$ is uniformly integrable. 
\end{proof}

\section{Leftmost paths: definition, properties, and link with distances} \label{sec:upper}
This section is an adaptation of the work of Addario-Berry and the first author~\cite{ABA-Simple} to \emph{deterministically} relate distances and labels in $M$. 
Throughout this section, $F$ is a $p$-gonal blossoming forest endowed with its orientation and its labeling. We denote by $M \in \mtrig$ its closure, which is also endowed with its minimal 3-orientation $\overrightarrow{O}$ as defined in Lemma~\ref{lem:3orientation} and with its canonical labeling (see Section~\ref{sub:labelsBlossoming}). 
As usual, we identify the vertices and edges of $M$ with the proper vertices and edges of $F$. 

\subsection{Modified leftmost paths and distance to $\v^\star$}
We introduce in this section a variant of leftmost paths, the so-called \emph{modified leftmost paths}, and present some of their properties. Modified leftmost paths were first defined in~\cite{ABA-Simple}.
\begin{figure}[t]
  \centering
  \includegraphics[page=2,scale=0.8]{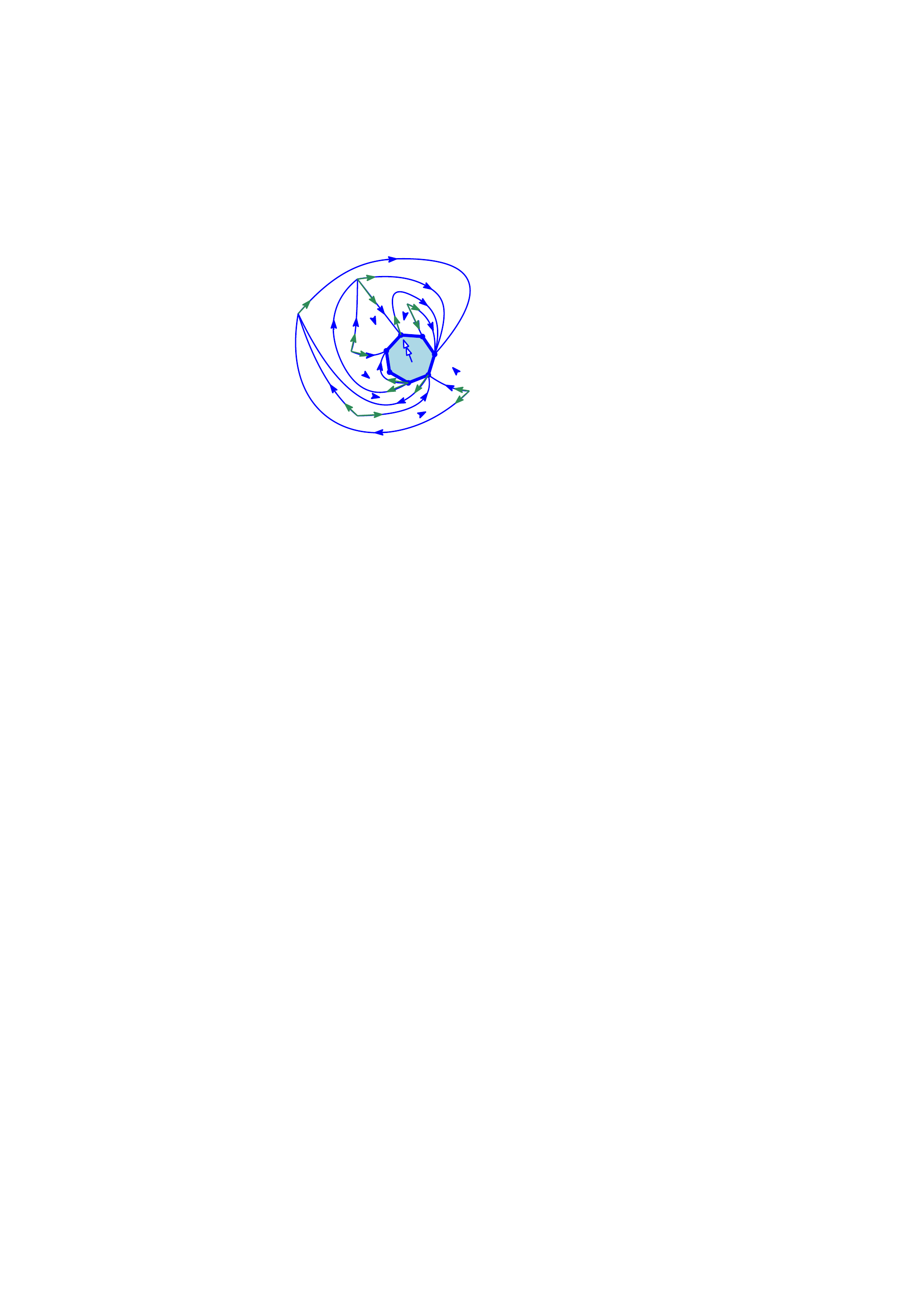}
  \caption{\label{fig:LMPMaps}$(a)$ A triangulation of the $7$-gon endowed with its canonical orientation and with a marked oriented edge $uu_1$. Tree-edges are plain and closure edges are dashed. $(b)$ The modified leftmost path started at $uu_1$ is represented in bold green edges.}
\end{figure}

\begin{proposition}\label{prop:LMPreaches}
  Let $e=v_0v_1$ be an oriented edge in $\overrightarrow{O}$. Consider the sequence of vertices $v_0,v_1,v_2,\ldots$ such that for each $i\in\N$, $v_iv_{i+1}$ is the first edge incident to $v_i$, among edges after $v_{i-1}v_{i}$ in clockwise direction around $v_i$ such that either (see Figure~\ref{fig:LMPMaps}):
  \begin{itemize}
    \item $v_{i}v_{i+1}$ is a tree-edge or 
    \item $v_iv_{i+1}$ is in $\overrightarrow{O}$. 
  \end{itemize}
  Then, there exists $i\in\N$ such that $v_i=v^\star$. Set 
  \(
  \ell = \min\{i\geq 0\,:\, v_i=v^\star\}.
  \)
  Then \[
  \ell = \lambda^{\op \ell}(v_0v_1)-\lambda(\kappa_{\min})+3\delta,\]
  where $\delta = 0$ if $\kappa^\ell(v_0v_1)\in \bbrcy{\xi^\dagger,\kappa_{\min}}$ and $\delta=1$ otherwise. 
\end{proposition}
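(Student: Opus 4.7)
The plan is to track the corner labels visited by the modified leftmost path and identify $\ell$ with the induced label drop, with a correction of $+3$ for each winding around the ray $\gamma$ of Remark~\ref{rmk:embedding}.

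\emph{Step 1 (per-step label increment).} Set $\kappa_i := \kappa^{\op\ell}(v_iv_{i+1})$ and $L_i := \lambda(\kappa_i)$. The edges skipped by the clockwise search at $v_i$ between the arrival corner $\kappa^{\op r}(v_i v_{i-1})$ and the selected corner $\kappa_i$ can only be incoming closure-edges at $v_i$: tree-edges and edges outgoing from $v_i$ in $\overrightarrow O$ both qualify and would have been chosen. By Claim~\ref{claim:labelEdges}(ii)--(iii), the two corners adjacent to an incoming closure-edge at $v_i$ share the same label, so $L_i = \lambda(\kappa^{\op r}(v_i v_{i-1}))$. A case analysis on the four possibilities for the chosen edge $\{v_{i-1},v_i\}$ (tree-edge in either orientation, closure-edge not crossing $\gamma$, or closure-edge crossing $\gamma$), applying the corresponding part of Claim~\ref{claim:labelEdges}, then gives $L_i-L_{i-1}=-1$ in the first three cases and $L_i-L_{i-1}=+2$ when $\{v_{i-1},v_i\}$ is a closure-edge crossing $\gamma$.

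\emph{Step 2 (termination at $v^\star$ with arrival label $\ell_{\min}$).} The labels $L_i$ are bounded below by $\ell_{\min}=\lambda(\kappa_{\min})$, since corner labels in $M$ are inherited from $F$. Each pair (arriving oriented edge, current vertex) determines the whole subsequent trajectory, so if the path did not terminate it would be eventually periodic; a nontrivial period of length $\ell'$ with $k$ $\gamma$-crossings would satisfy $-\ell'+3k=0$, but in the planar embedding of Remark~\ref{rmk:embedding} such a closed leftmost walk would have to wind nontrivially around the unbounded marked face, which combined with the lower bound on $L_i$ yields a contradiction. Hence $v_\ell=v^\star$ for some finite $\ell$. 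To show that the arrival corner $\kappa^{\op r}(v_\ell v_{\ell-1})$ is $\kappa_{\min}$ (so that $L_\ell=\ell_{\min}$), I will argue that the leftmost rule forces the first visit to $v^\star$ to occur at the corner bordering the marked triangular face; otherwise the path, continued one further step, would have to pass through $\kappa_{\min}$ first, contradicting the minimality of $\ell$ together with Claim~\ref{claim:vstar}.

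\emph{Step 3 (counting $\gamma$-crossings).} Iterating Step 1 gives $L_\ell = L_0 -\ell + 3k$ where $k$ is the number of $\gamma$-crossings made by the path; together with $L_\ell=\ell_{\min}$ from Step 2 this yields $\ell = \lambda^{\op\ell}(v_0 v_1)-\lambda(\kappa_{\min})+3k$, so it suffices to prove $k=\delta$. In the embedding of Remark~\ref{rmk:embedding}, $\gamma$ emanates from $\v(\xi)$, separates $\xi^\dagger$ from $\xi^\circ$ near $\v(\xi)$, and exits the map through the unbounded marked face incident to $\kappa_{\min}$. If $\kappa_0\in\bbrcy{\xi^\dagger,\kappa_{\min}}$ then $\kappa_0$ lies on the same side of $\gamma$ as the natural approach to $\kappa_{\min}$, and the leftmost trajectory can be drawn without meeting $\gamma$, giving $k=0$; otherwise $\kappa_0$ lies on the opposite side and a single crossing is required. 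That $k\le 1$ can be argued from the bound $L_i\ge \ell_{\min}$ combined with Step 1, which prevents multiple up-jumps of size $2$ from being compensated by drops of $1$ without reaching $v^\star$ in between.

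The main obstacle will be the topological analysis underpinning Steps 2 and 3: ruling out nontrivial closed leftmost loops, ruling out multiple $\gamma$-crossings, and pinning down the arrival corner at $v^\star$ all require careful use of the planar embedding with the marked face as unbounded. Step 1 is essentially bookkeeping given Claim~\ref{claim:labelEdges}; the subtle part is showing that the global combinatorics of the minimal $3$-orientation forces the path to behave like a monotone descent in the label with at most one winding around $v^\star$.
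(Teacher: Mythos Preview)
Your Step~1 is correct and is essentially the content of the paper's Lemma~\ref{prop:successors}, stated in $M$ rather than in $F$: each step of the modified leftmost path sends the corner $\kappa_F^{\op\ell}(\pi_M(v_{i-1}v_i))$ to its successor in the forest, and this decreases the label by $1$ except for the unique step where the successor is ``of the second type'', which adds $3$ instead. So far your argument and the paper's coincide.

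The gaps are in Steps~2 and~3. Your termination argument is not complete: from periodicity you correctly get $-\ell'+3k=0$, but the sentence ``combined with the lower bound on $L_i$ yields a contradiction'' does not do any work --- a period with $\ell'=3$ and $k=1$ keeps the labels bounded and is not ruled out by $L_i\ge \ell_{\min}$ alone. Similarly, your argument that $k\le 1$ (``the bound $L_i\ge\ell_{\min}$ prevents multiple up-jumps of size $2$ from being compensated by drops of $1$'') is false as stated: two $+2$ jumps separated by many $-1$ steps are perfectly compatible with $L_i\ge\ell_{\min}$. Finally, the claim that the path arrives at $v^\star$ exactly in the corner $\kappa_{\min}$ (so that $L_\ell=\ell_{\min}$) is asserted but not argued.

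All three points are handled simultaneously by the device the paper uses. By Lemma~\ref{prop:successors} the corner sequence $\kappa_F^{\op\ell}(\pi_M(v_iv_{i+1}))$ is obtained by iterating the successor map $s$, and by construction $s(\kappa)\in(\kappa,\kappa_{\min}]_{\mathrm{cyc}}$ for every $\kappa\neq\kappa_{\min}$. Thus the corner sequence advances strictly in the cyclic order towards $\kappa_{\min}$ and, being in a finite set, must reach $\kappa_{\min}$; in particular (i) the path terminates, (ii) it terminates precisely at $\kappa_{\min}$, and (iii) at most one successor of the second type (i.e.\ $\gamma$-crossing) occurs, namely if and only if the starting corner lies in $(\kappa_{\min},\xi^\circ]_{\mathrm{cyc}}$. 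Your topological picture can be pushed through --- the key fact you are missing is that by Remark~\ref{rmk:embedding} every $\gamma$-crossing closure-edge is oriented the same way, so the path can only cross $\gamma$ in one direction, which forces $k\le 1$ and rules out periodic loops --- but the monotone-corner argument via $s$ is both shorter and gives you the arrival corner for free.
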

It is immediate that $v_i$ can be defined for all $i\in\N$ since each vertex is the end-point of at least one tree-edge. The proof of the proposition will be done later in this subsection. 
\begin{definition}
  Let $e$, $(v_i)_{i\geq 0}$, and $\ell$ be as in Proposition~\ref{prop:LMPreaches}. We set \(
  P(e)=v_0,\ldots, v_\ell, 
  \)
  and call $P(e)$ the \emph{modified leftmost path} started from $e$. 
\end{definition}

\begin{remark}
  The modified leftmost paths are a variant of the classical leftmost paths, which are paths $v_0,v_1,\ldots,v_k$ such that $v_0v_{1} \in \overrightarrow{O}$, $v_k$ is the root vertex, and for each $i\in\{1,\ldots,k-1 \}$, $v_iv_{i+1}$ is the first edge incident to $v_i$, among edges after $v_{i-1}v_{i}$ in clockwise direction around $v_i$ that belong to $\overrightarrow{O}$.
  
  Leftmost paths satisfy some nice properties in triangulations of the sphere (e.g., they are self-avoiding). These properties do not hold anymore in triangulations of the $p$-gon, but fortunately, they do hold for modified leftmost paths.
\end{remark}

Recall the correspondence between edges of $M$ and edges of $F$ given at the end of Section~\ref{sub:PS}. For an oriented edge $e$  of $M$, denote by $\pi_M(e)$ its image in $F$. 
Then, we have the following result (see Figure~\ref{fig:LMP}).

\begin{figure}[t]
  \centering
  {\includegraphics[page=1,scale=0.6]{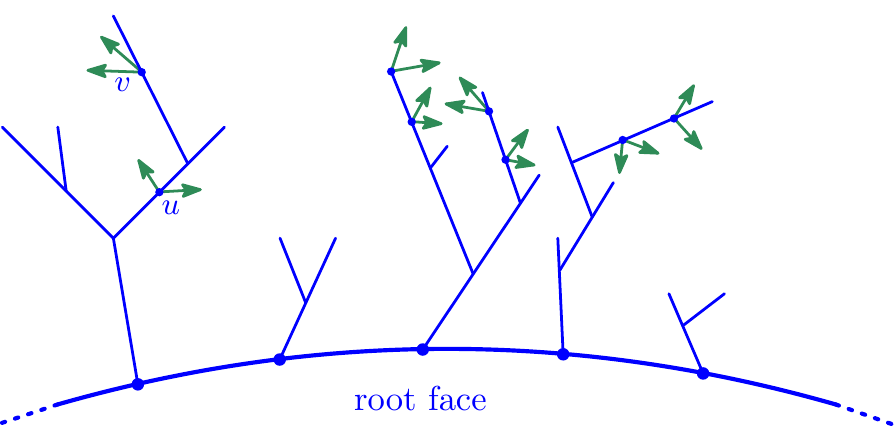}}
  \qquad\qquad
  {\includegraphics[page=2,scale=0.6]{images/LMP}}
  \caption{\label{fig:LMP}The construction of a modified leftmost path seen on the blossoming forest. Note that not all the blossoms are shown in the figure. }
\end{figure}
\begin{lemma}\label{prop:successors}
  Let $P=(v_0,v_1,\ldots)$ be as in Proposition~\ref{prop:LMPreaches}. For any $i\geq 1$, we have:
  \[
  s\Big(\kappa_F^{\op \ell}\big(\pi_M(v_{i-1}v_i)\big)\Big) = \kappa_F^{\op \ell}\big(\pi_M(v_{i}v_{i+1})\big),
  \]
  where $s$ is the successor function defined in Section~\ref{sub:reformulationPS}. 
\end{lemma}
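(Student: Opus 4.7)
The plan is to interpret the successor function on $F$ as encoding, under the closure map, one clockwise step of the modified leftmost rule around a vertex of $M$. I would split the analysis into two cases, according to whether $v_{i-1}v_i$ is a tree-edge or a closure-edge in $M$. Set $\kappa \defeq \kappa_F^{\op \ell}(\pi_M(v_{i-1}v_i))$, a corner at $v_{i-1}$ in $F$ with label $\ell \defeq \lambda_F(\kappa)$.

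If $v_{i-1}v_i$ is a tree-edge, then $\pi_M(v_{i-1}v_i)$ is an $F$-edge oriented from $v_{i-1}$ to its parent $v_i$. By Claim~\ref{claim:labelEdges}(i), the $F$-corner at $v_i$ immediately after this incoming edge (clockwise around $v_i$) has label $\ell-1$, and by the construction of the contour order it is the very next corner after $\kappa$; hence it equals $s(\kappa)$ (a type-1 successor). If $v_{i-1}v_i$ is a closure-edge, then $\pi_M(v_{i-1}v_i) = \{v_{i-1}, b\}$ is a stem with $\v(s(b)) = v_i$ by the labeled closure. The blossom corner $b$ has label $\ell$ as well and is the immediate successor of $\kappa$ in contour order; since $b$ never supplies a label $<\ell$, a direct check of the definition shows $s(\kappa) = s(b)$ regardless of whether a type-1 or type-2 successor is used (both definitions unroll to exactly the same search condition starting from $b$). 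In either case $s(\kappa)$ is a corner at $v_i$ in $F$, which I write as $\kappa_F^{\op \ell}(e_0)$ for the unique $F$-edge $e_0$ lying clockwise immediately after $s(\kappa)$ at $v_i$.

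It remains to show $e_0 = \pi_M(v_iv_{i+1})$. The key observation is that each local closure embeds the new edge inside a single $F$-corner, splitting it without crossing any $F$-edge; hence the cyclic order of $F$-edges around $v_i$ is preserved in $M$, and the $M$-closure-edges pointing into $v_i$ are exactly those inserted at the successor-corners at $v_i$ of other blossoms. In the tree-edge case, $s(\kappa)$ sits at $v_i$ immediately after $\pi_M(v_{i-1}v_i)$ in the cyclic order, so $e_0$ is the first $F$-edge clockwise after $\pi_M(v_{i-1}v_i)$ at $v_i$. In the closure-edge case, the newly inserted closure-edge $v_{i-1}v_i$ lives inside $s(b) = \kappa_F^{\op \ell}(e_0)$; by the bijection of Section~\ref{sub:PS} the $M$-corner immediately clockwise of the inserted edge corresponds to $\kappa_F^{\op \ell}(e_0)$, so once more $e_0$ is the first $F$-edge clockwise after $\pi_M(v_{i-1}v_i)$ at $v_i$. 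Finally, the modified leftmost rule selects $v_iv_{i+1}$ as the first edge clockwise after $v_iv_{i-1}$ that is a tree-edge or lies in $\overrightarrow{O}$ — equivalently, the first edge that is not an in-closure-edge. Combined with the preservation of the $F$-edge cyclic order, this forces $\pi_M(v_iv_{i+1}) = e_0$, and the lemma follows.

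The main obstacle will be the type-2 branch of the successor function, where $s(b)$ is obtained by wrapping past $\xi^\dagger$ and the associated closure-edge crosses the ray $\gamma$ of Remark~\ref{rmk:embedding}. The labeling pattern is then Claim~\ref{claim:labelEdges}(iii) rather than (ii), and one must verify that the clockwise-side $M$-corner of the inserted closure-edge is still the image of $s(b)$ under the bijection (taking into account the $+3$ shift in labels and the position of $\gamma$). A second, more minor subtlety is that several stems may a priori share a common successor corner, so that one $F$-corner is split into more than two $M$-corners; one then checks that the bijection still singles out the clockwise-most of these splits as the image of $\kappa_F^{\op \ell}(e_0)$, which is exactly what the argument above uses.
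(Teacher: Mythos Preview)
Your argument is correct and follows the same two-case split as the paper's proof. One small imprecision: in the tree-edge case you write that ``$\pi_M(v_{i-1}v_i)$ is an $F$-edge oriented from $v_{i-1}$ to its parent $v_i$'', but the modified leftmost rule allows tree-edges traversed in either direction, so $v_i$ may equally well be a child of $v_{i-1}$. Your actual reasoning---that the $F$-corner $\kappa_F^{\op r}(v_iv_{i-1})$ is the very next contour corner after $\kappa$ and has label $\ell-1$, hence equals $s(\kappa)$---goes through in both parent/child orientations, so the conclusion is unaffected; the paper's proof phrases this step without committing to a direction.

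The concerns in your final paragraph are unnecessary. The statement is about $F$-corners, which are insensitive to how many times the closure operation splits them into $M$-corners, so the multiple-closures issue never arises. Likewise, once you have $s(\kappa)=s(b)$ (which you correctly verify from the definition, since $b$ is the immediate next contour corner with the same label), the labeled-closure rule gives $\v(s(b))=v_i$ and the split corner is $s(b)$ regardless of whether the successor is of the first or second type; nothing further about $\gamma$ or the $+3$ shift is needed. The paper's proof makes no case distinction on successor type, and you do not need one either.
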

\begin{proof}
  In the proof, with a slight abuse of notations, vertices and tree-edges of $M$ are identified with their images in $F$. 
  
  Assume first that $\{v_{i-1},v_{i}\}$ is a tree-edge. Then, by definition of the labeling and of the successor function, $s\Big(\kappa_F^{\op \ell}(v_{i-1}v_i)\Big)=\kappa^{\op r}(v_iv_{i-1})$. Let $w \in V(F)$ be such that $\kappa^{\op r}(v_{i}v_{i-1})=(\{v_{i-1},v_{i}\},\{v_{i},w\})$. If $w \notin \mathcal{B}(F)$ then 
  $w=v_{i+1}$. If $w \in \mathcal{B}(F)$, then $\v(s(w))=v_{i+1}$. In either case, $\kappa_F^{\op \ell}\big(\pi_M(v_{i}v_{i+1})\big)=\kappa^{\op r}_F(v_{i}v_{i-1})$, and the proposition is proved in this case.
  
  Next assume that $\{v_{i-1},v_{i}\}$ is a closure-edge. Then $s\Big(\kappa_F^{\op \ell}\big(\pi_M(v_{i-1}v_i)\big)\Big)$ is the corner -- say $\kappa$ -- of $F$ which is split during the closure operation of the stem $\pi_M(v_{i-1}v_i)$. Again, by definition of $P$, $\kappa = \kappa_F^{\op \ell}\big(\pi_M(v_iv_{i+1})\big)$, which concludes the proof.
\end{proof}

We can now give the proof of Proposition~\ref{prop:LMPreaches}.
\begin{proof}[Proof of Proposition~\ref{prop:LMPreaches}]
  For $\kappa \in \mathcal{C}(F)\backslash \{\kappa_{\min}\}$ it follows from the definition of the successor function and of $\kappa_{\min}$, that $s(\kappa)\in (\kappa,\kappa_{\min}]_{\mathrm{cyc}}$. By Lemma~\ref{prop:successors}, since the number of corners of $F$ is finite, there exists $i\in\N$ such that 
  $\kappa_F^{\op \ell}\big(\pi_M(v_iv_{i+1})\big))=\kappa_{\min}$. Since by Claim~\ref{claim:vstar}, $v^\star$ is incident to $\kappa_{\min}$, it proves the first assertion. 
  The fact that $s(\kappa)\in (\kappa,\kappa_{\min}]_{\mathrm{cyc}}$ also implies that there exists at most one index $1\leq j_0 < \ell$ such that $\kappa_F^{\op \ell}\big(\pi_M(v_{i}v_{i+1})\big)$ is the successor of the second type of $\kappa_F^{\op \ell}\big(\pi_M(v_{i-1}v_{i})\big)$, and such an index exists if and only if $\kappa^{\op \ell}_F\big(\pi_M(v_{0}v_{1})\big) \in (\kappa_{\min},\xi^\dagger)_{\mathrm{cyc}}$.
  
  Let $\{u,v\} \in E(M)$ be such that either $uv\in \overrightarrow{O}$ or $\{u,v\}$ is a tree-edge. Then $\lambda^{\op \ell}_M(uv)=\lambda_F^{\op\ell}\big(\pi_M(uv)\big)$. By the definition of a modified leftmost path, for any $0\leq i \leq \ell$, either $\{v_i,v_{i+1}\}$ is a tree-edge, or $v_iv_{i+1}\in \overrightarrow{O}$ (or both). Together with Lemma~\ref{prop:successors}, it implies  
  \[
  \lambda_{M}^{\op\ell}(v_{i}v_{i+1})=\lambda_{M}^{\op\ell}(v_{i-1}v_{i}) -1 +3\delta_{i=j_0},
  \]
  which concludes the proof.
\end{proof}

\begin{corollary}\label{cor:labelLMP}
  Let $e=v_0v_1$ be an oriented edge of $\overrightarrow{O}$, write $P(e)=(v_0,v_1,\ldots,v_\ell=v^\star)$. Then for any $i\in\{0,\ldots,\ell-1\}$, $\lambda^{\op\ell}(v_iv_{i+1})=\lambda^{\op\ell}(v_0v_{1})-i + 3\delta$, with $\delta\in\{0,1\}$. 
\end{corollary}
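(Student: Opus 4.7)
The plan is to extract the per-step label increment from the proof of Proposition~\ref{prop:LMPreaches} and then sum. Concretely, the argument runs as follows.

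First I would recall, as noted inside the proof of Proposition~\ref{prop:LMPreaches}, that whenever $\{u,v\}\in E(M)$ satisfies either $uv\in\overrightarrow{O}$ or $\{u,v\}$ is a tree-edge, we have $\lambda_M^{\op\ell}(uv)=\lambda_F^{\op\ell}(\pi_M(uv))$. Since every consecutive pair $(v_{i-1}v_i,v_iv_{i+1})$ on a modified leftmost path falls into this case by definition of $P(e)$, I can transport the label computation entirely into the forest $F$ and read off increments using the successor function $s$ from Lemma~\ref{prop:successors}.

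Next I would use Lemma~\ref{prop:successors}, which states $\kappa_F^{\op\ell}\big(\pi_M(v_iv_{i+1})\big)=s\big(\kappa_F^{\op\ell}\big(\pi_M(v_{i-1}v_i)\big)\big)$. By the definition of the successor given in Section~\ref{sub:reformulationPS}, a successor of the first type decreases the label by $1$, while a successor of the second type increases it by $2$ (equivalently, $-1+3$). Combined with the preceding paragraph, this gives the per-step recursion
\[
\lambda_M^{\op\ell}(v_iv_{i+1}) = \lambda_M^{\op\ell}(v_{i-1}v_i) - 1 + 3\cdot\mathbf{1}_{\text{step }i\text{ is of the second type}}.
\]

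Then I would invoke the observation (already established inside the proof of Proposition~\ref{prop:LMPreaches}) that among the indices $1\le j\le \ell$, there is at most one index $j_0$ for which the successor is of the second type; this uses the fact that $s(\kappa)\in\bbrcy{\kappa,\kappa_{\min}}$ for every $\kappa\neq \kappa_{\min}$, so the cyclic ``wrap-around'' through $\xi^\dagger$ can occur at most once along $P(e)$ before reaching $v^\star$. Summing the recursion from $1$ to $i$ telescopes to
\[
\lambda_M^{\op\ell}(v_iv_{i+1}) = \lambda_M^{\op\ell}(v_0v_1) - i + 3\delta,
\]
where $\delta=\mathbf{1}_{j_0\le i}\in\{0,1\}$ (and $\delta=0$ if no such $j_0$ exists). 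I do not expect any real obstacle: Proposition~\ref{prop:LMPreaches} and Lemma~\ref{prop:successors} have already done all the work, and the corollary is essentially a partial-sum statement that was implicit in the telescoping step at the end of that proof.
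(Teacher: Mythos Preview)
Your proposal is correct and follows essentially the same approach as the paper's proof, which simply cites Lemma~\ref{prop:successors} together with the observation from the proof of Proposition~\ref{prop:LMPreaches} that at most one successor of the second type occurs along a modified leftmost path. You have spelled out in detail the telescoping that the paper leaves implicit, but the underlying argument is identical.
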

\begin{proof}
  The result follows directly from Lemma~\ref{prop:successors} and the fact (already observed in the proof of Proposition~\ref{prop:LMPreaches}) that, along a modified leftmost path, there is at most one successor of the second type.
\end{proof}

\begin{corollary}\label{cor:distvstar}
  For $u\in V(M)$, 
  \[
  d_M(u,v^{\star})\leq X(u)-\ell_{\min}+3.
  \]
\end{corollary}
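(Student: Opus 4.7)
The plan is to apply Proposition~\ref{prop:LMPreaches} to an out-edge at $u$ whose left-label equals $X(u)$. Since a modified leftmost path $P(e) = (u = v_0, v_1, \ldots, v_\ell = v^\star)$ is an honest path in the graph $M$, its length $\ell$ is an upper bound on $d_M(u, v^\star)$; combined with the explicit formula $\ell = \lambda^{\op \ell}(u v_1) - \ell_{\min} + 3\delta$ of Proposition~\ref{prop:LMPreaches} (with $\delta \in \{0,1\}$), this will immediately yield the bound $X(u) - \ell_{\min} + 3$.

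The main step is thus to exhibit, for every $u \in V(M) \setminus \{v^\star\}$, an out-edge $e = u v_1 \in \overrightarrow{O}$ with $\lambda^{\op \ell}(u v_1) = X(u)$. For an inner proper vertex $u$, Claim~\ref{claim:labelVertex} describes the corners around $u$ in $F$ as three consecutive clockwise arcs with labels $X(u), X(u)+1, X(u)+2$, separated by the parent edge and the two stems; the first stem $\{u, b_1\}$ is oriented from $u$ to $b_1$, satisfies $\lambda^{\op \ell}(u \to b_1) = X(u)$, and its image under closure remains oriented out of $u$ with the same left-label, since the local labeled closure of $\{u,b_1\}$ only splits the corner at the other endpoint $\v(s(b_1))$. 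For a boundary vertex $u = \rho_i$, an analogous statement follows from Claim~\ref{claim:stems} together with the labeling rule: when $s_i \geq 1$ the first stem at $\rho_i$ plays the role of $\{u, b_1\}$ above, whereas when $s_i = 0$ all inner corners at $\rho_i$ share the label $X(\rho_i)$ and the boundary edge $\rho_i \to \rho_{i+1} \in \overrightarrow{O}$ furnishes the required out-edge.

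Once this out-edge is chosen, Proposition~\ref{prop:LMPreaches} gives
\[
d_M(u, v^\star) \leq \ell = X(u) - \ell_{\min} + 3\delta \leq X(u) - \ell_{\min} + 3.
\]
The case $u = v^\star$ is trivial: since $\kappa_{\min}$ is incident to $v^\star$ (except possibly when $v^\star = \rho_1$ and $\kappa_{\min} = \xi^\circ$, for which the slack is even larger), one has $X(v^\star) \geq \ell_{\min}$, so the right-hand side is at least $3$ while $d_M(v^\star, v^\star) = 0$.

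The main subtlety is the boundary vertex case: the clean corner-label structure of Claim~\ref{claim:labelVertex} is stated only for inner non-boundary proper vertices, and a short case analysis based on Claim~\ref{claim:stems} and the labeling increments is required to produce the out-edge with left-label $X(\rho_i)$ in all sub-cases ($s_i \geq 1$ vs.\ $s_i = 0$).
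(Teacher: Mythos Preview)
Your proposal is correct and follows the same approach as the paper: choose an outgoing edge $uv\in\overrightarrow O$ with $\lambda^{\op\ell}(uv)=X(u)$ and apply Proposition~\ref{prop:LMPreaches}. The paper's proof dispatches the existence of such an edge in one line (``guaranteed by the definition of $X(u)$''), whereas you spell out the underlying reason via Claim~\ref{claim:labelVertex} for inner vertices and a case analysis on $s_i$ for boundary vertices; this extra care is harmless and the argument is sound.
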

\begin{proof}
  For $u\in V(M)$, let $uv\in \overrightarrow{O}$ be such that $\lambda^{\op \ell}(uv)=X(u)$ (the existence of such an edge is guaranteed by the definition of $X(u)$). Then the modified leftmost path started at $uv$ reaches $v^\star$ and its length is bounded above by $X(u)-\ell_{\min}+3$ by Proposition \ref{prop:LMPreaches}. 
\end{proof}

\subsection{The distance between two points}
We saw in the previous section that modified leftmost paths give upper bounds on the distances from any vertex to $v^\star$. In this section, we further build on modified leftmost paths to get an upper bound for the distance between any two fixed points. This is similar to the construction given in \cite[Section~7.2]{ABA-Simple} and we only include it here for completeness.

For edges $uu'$ and $vv'$ we recall \eqref{eq:cyclic-interval} and define 
\[
\check X(uu',vv')=\min\{\lambda(\xi)\,:\, \text{for }\xi\in \bbrcy{\kappa^{\op \ell}(uu'),\kappa^{\op \ell}(vv')}\}.
\]

\begin{proposition}\label{prop:dist2points}
  Let $u,v$ be two vertices of $M$ and let $\overrightarrow{uu'}$ and $\overrightarrow{vv'}$ be two edges of $\overrightarrow{O}$ such that $\lambda^{\op \ell}(uu')=X(u)$ and $\lambda^{\op \ell}(vv')=X(v)$.
  Then
  \[
  d_M(u,v)\leq X(u)+X(v)-2\max\{\check X(uu',vv'),\check X(vv',uu')\}+6.
  \]
  
\end{proposition}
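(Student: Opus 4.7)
The plan is to adapt the strategy of~\cite[Section~7.2]{ABA-Simple}. Consider the modified leftmost paths $P(uu') = (u = u_0, u_1, \ldots, u_\ell = v^\star)$ and $P(vv') = (v = w_0, w_1, \ldots, w_{\ell'} = v^\star)$, together with their associated corner sequences in $F$ defined by $\kappa_i := \kappa_F^{\op\ell}(\pi_M(u_iu_{i+1}))$ and $\mu_j := \kappa_F^{\op\ell}(\pi_M(w_jw_{j+1}))$. By \refL{prop:successors} we have $\kappa_{i+1} = s(\kappa_i)$ and $\mu_{j+1} = s(\mu_j)$, and by \refC{cor:labelLMP} the corner labels evolve as $\lambda(\kappa_i) = X(u) - i + 3\delta_1(i)$ and $\lambda(\mu_j) = X(v) - j + 3\delta_2(j)$, where $\delta_1, \delta_2 \in \{0,1\}$ each undergo at most one $0\to 1$ transition — precisely when a successor of the second type occurs along the corresponding path.

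Set $L := \max\{\check X(uu',vv'), \check X(vv',uu')\}$ and, WLOG by symmetry, assume $L = \check X(uu',vv')$; this means the minimum corner label in the cyclic arc $I_1 := \bbrcy{\kappa^{\op\ell}(uu'), \kappa^{\op\ell}(vv')}$ equals $L$. The reduction I will use is: it suffices to produce indices $i_\star \leq X(u) - L + 3$ and $j_\star \leq X(v) - L + 3$ such that $u_{i_\star} = w_{j_\star}$. Indeed, such a common vertex yields a walk of length $i_\star + j_\star$ in $M$ from $u$ to $v$, giving $d_M(u,v) \leq X(u) + X(v) - 2L + 6$, which is the desired bound.

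To produce the indices, I take $i_\star$ (resp.\ $j_\star$) to be the first index along $(\kappa_i)$ (resp.\ $(\mu_j)$) at which the corner label reaches $L$. The index bounds $i_\star \leq X(u) - L + 3$ and $j_\star \leq X(v) - L + 3$ follow immediately from \refC{cor:labelLMP}, the extra ``$+3$'' being contributed exactly when a type-II successor has fired before reaching level $L$. The critical claim is then that $u_{i_\star} = w_{j_\star}$: geometrically, the minimum $L$ of $I_1$ acts as a bottleneck for both successor trajectories. The sequence starting at $\kappa^{\op\ell}(uu')$ stays inside $I_1$ until its label drops to $L$, landing at the vertex $v^\bullet$ incident to the first $L$-labelled corner of $I_1$; the sequence starting at $\kappa^{\op\ell}(vv')$ descends through the complementary arc $I_2 := \bbrcy{\kappa^{\op\ell}(vv'), \kappa^{\op\ell}(uu')}$ and — either directly, or after one type-II successor wrapping around through $\xi^\dagger$ — reaches a corner of the very same vertex $v^\bullet$. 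The identification will be proved by a vertex-by-vertex analysis using \refClm{claim:labelVertex} and \refClm{claim:labelEdges} to track labels around each vertex, together with the embedding picture of \refR{rmk:embedding} to locate when a closure-edge crosses the auxiliary ray $\gamma$ (i.e.\ when a type-II successor occurs).

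The hard part will be this meeting argument. The naive attempt to equate $\kappa_{i_\star}$ and $\mu_{j_\star}$ as corners fails because they lie in opposite cyclic arcs — they must instead be shown to share an incident vertex, which forces a careful case analysis of whether zero or one type-II successor has occurred on each side. Each such type-II event contributes a ``$+3$'' to the relevant index, and the two contributions together account for the constant $6$ in the inequality; the boundary of the $p$-gon is absorbed entirely through the definition of $L$ as the maximum of the two arc-minima. Apart from this adaptation, the bookkeeping is essentially that of~\cite[Section~7.2]{ABA-Simple} transposed to the blossoming-forest setting developed in Sections~\ref{sec:bijection}--\ref{sec:upper}.
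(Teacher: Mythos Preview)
Your strategy matches the paper's --- follow both modified leftmost paths until they merge --- but your identification of the merging point is off. You claim both paths reach, at indices $i_\star,j_\star$ (the first time each has corner label $L$), the vertex $v^\bullet$ incident to the first $L$-labelled corner of $I_1$. The $u$-side of this is fine: since $L=\min I_1$ and $I_1$ is traversed first from $\kappa^{\op\ell}(uu')$, iterating the successor does land on the first label-$L$ corner of $I_1$. But the $v$-side is not: the $v$-path's first label-$L$ corner lies in the complementary arc $I_2$ (or, after a type-II jump, in the initial segment of the contour before $\kappa^{\op\ell}(uu')$), and there is no reason this corner should be incident to $v^\bullet$. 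Two distinct corners with the same label are in general at different vertices.

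The paper sidesteps this by matching one step later. Instead of equating the level-$L$ vertices, it shows that the \emph{successors} of the two level-$L$ corners coincide: both equal the first label-$(L-1)$ corner in a common tail of the contour, because no label-$(L-1)$ corner lies between them. This yields $u_{X(u)-L+1}=w_{X(v)-L+1}$ in the non-wrapping case and $u_{X(u)-L+1}=w_{X(v)-L+3}$ when the $v$-chain has passed through $\xi^\dagger$, giving a concatenated path of length at most $X(u)+X(v)-2L+4$. The paper also organises the reduction differently: it fixes the contour order $\kappa^{\op\ell}(uu')\pc\kappa^{\op\ell}(vv')$ and proves each arc bound separately (disposing of the case $\check X(uu',vv')=\ell_{\min}$ by the triangle inequality and \refC{cor:distvstar}), rather than assuming which arc realises the max. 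Your index budget $i_\star+j_\star\le X(u)+X(v)-2L+6$ is generous enough to accommodate the correct meeting indices, so the repair is local: replace the claimed identity $u_{i_\star}=w_{j_\star}$ by the successor-coincidence argument.
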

\begin{proof}
  Assume without loss of generality that $\kappa^{\op \ell}(uu')\pc\kappa^{\op \ell}(vv')$ (or, in other words, that $\xi^\dagger \notin\bbrcy{\kappa^{\op \ell}(uu'),\kappa^{\op \ell}(vv')})$.

  Consider the modified leftmost paths $P(uu')=(u_0\!=\!u,u_1\!=\!u',\ldots,u_{\ell_u})$ and $P(vv')=(v_0\!=\!v,v_1\!=\!v',\ldots,v_{\ell_v})$, started from $uu'$ and $vv'$, respectively. For $i\in\{0,\ldots,\ell_u-1\}$ and $j\in\{0,\ldots,\ell_v-1\}$ we set:
  \[\kappa^{(u)}_i=\kappa^{\op \ell}(u_iu_{i+1})\quad\text{and}\quad\kappa^{(v)}_j=\kappa^{\op \ell}(v_jv_{j+1}).\] 
  We first prove that
  \[
  d_M(u,v)\leq X(u)+X(v)-2\check X(uu',vv')+6.
  \]
  If $\check X(uu',vv')=\lambda_{\min}$, then the result is proved by the triangle inequality and by Corollary~\ref{cor:distvstar}. We may then assume that $\check X(uu',vv')>\lambda_{\min}$, which implies in particular that $\ell_u$ and $\ell_v$ are greater than $X(u)-\check X(uu',vv')$ and $X(v)-\check X(uu',vv')$, respectively.

  By Lemma~\ref{prop:successors}, $\lambda(\kappa^{(u)}_{X(u)-\check X(uu',vv')})=\check X(uu',vv')$. Furthermore, by definition of $\check X(uu',vv')$, the first corner (for $\preceq_{\mathrm{ctr}}$) labeled $\check X(uu',vv')-1$ after $\kappa^{(u)}_{X(u)-\check X(uu',vv')}$ belongs to $\bbrcy{\kappa^{\op \ell}(vv'),\kappa^{\op \ell}(uu')}$. Now, recall the definition of $\xi^\circ$ from Section \ref{sub:reformulationPS}, and define
  \[
  \check X(vv') =\min\{\lambda(\xi)\,:\, \text{for }\xi\in \bbrcy{\kappa^{\op \ell}(vv'),\xi^\circ}\}.
  \] 
  We have either $\check X(vv') < \check X (uu',vv')$ or $\check X(vv') \geq \check X (uu',vv')$.
  \begin{itemize}
    \item Case $\check X(vv') < \check X (uu',vv')$: In this case, for $j\in\{0,\ldots,X(v)-\check X(uu',vv')+1\}$, $\kappa^{(v)}_j\in \bbrcy{\kappa^{\op \ell}(vv'),\xi^\circ}$ and $\lambda\big(\kappa^{(v)}_{X(v)-\check X(uu',vv')}\big)=\check X(uu',vv')$. Hence
    \[s\big(\kappa^{(u)}_{X(u)-\check X(uu',vv')}\big)=s\big(\kappa^{(v)}_{X(v)-\check X(uu',vv')}\big),
    \]
    and therefore $u_{X(u)-\check X(vv',uu')+1}=v_{X(v)-\check X(vv',uu')+1}$. 
    \item Case $\check X(vv') \geq \check X (uu',vv')$: In this case, $\kappa^{(v)}_{X(v)-\check X(uu',vv')+1} \in \bbrcy{\xi^\dagger,
      \kappa^{\op \ell}(uu')}$ and $\lambda\big(\kappa^{(v)}_{X(v)-\check X(uu',vv')+2}\big)=\check X(uu',vv')$. Hence
    \[
    s\big(\kappa^{(u)}_{X(u)-\check X(uu',vv')}\big)=s\big(\kappa^{(v)}_{X(v)-\check X(uu',vv')+2}\big),\]
    and therefore $u_{X(u)-\check X(vv',uu')+1}=v_{X(v)-\check X(vv',uu')+3}$.
  \end{itemize}
  In both cases, concatenating the corresponding subpaths of $P(uu')$ and $P(vv')$ yields the desired bound. 
  
  The proof to establish that \(d_M(u,v)\leq X(u)+X(v)-2\check X(vv',uu')+6\) is similar and left to the reader.
\end{proof}

\section{Modified leftmost paths are almost geodesic}\label{sec:lower}
\label{sec:geodesic}
For $M\in\mtrig$ and $u\in \mathring{V}(M)$, we denote by $\wt d_M(u,v^\star)$ the minimum length of a path from $u$ to $v^\star$ which does not touch the root face of $M$ (except at its extremity if $v^\star$ is incident to the root face). In particular, if $\mathring{M}$ denotes the subgraph of $M$ generated by the inner vertices of $M$ and $v^\star\in \mathring{V}(M)$, then $\wt  d_M(u,v^\star)=d_{\mathring{M}}(u,v^\star)$. The main result of this section is the following.   
\begin{proposition}\label{prop:lower}
  Fix $\varepsilon >0$. Then, for $p$ sufficiently large, $\Map_p^*=(\Map_p,f^*)$ sampled from $\Bol_{\op{III}}^*(p)$, and $u$ a uniform vertex in $V(\Map_p^*)$,
  \[
  \p{u\in \mathring{V}(\Map_p^*) \text{\,\,\,and\,\,\,} \wt d_{\Map_p}(u,v^\star)<X(u)-X(v^\star)-\varepsilon p^{1/2}}<\varepsilon.
  \]  
\end{proposition}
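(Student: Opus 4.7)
The proposition is the lower-bound counterpart to \refC{cor:distvstar}; together they will identify $X(u) - X(v^\star)$ as the correct proxy for $\wt d_M(u, v^\star)$. My overall approach is to adapt the strategy of \cite[Section~7]{ABA-Simple}, with the additional twist that $M$ has a macroscopic boundary and we are restricting to interior paths.

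\textbf{Key steps.} The heart of the argument is deterministic. Given an interior path $u = u_0, u_1, \ldots, u_k = v^\star$ in $\mathring M$, I will inductively construct a sequence of corners $\kappa_0, \kappa_1, \ldots, \kappa_k$ with $\kappa_i$ incident to $u_i$ such that $\lambda(\kappa_{i-1}) - \lambda(\kappa_i) \leq 1$ at every step, except at a set $B \subseteq [k]$ of steps which use a closure-edge crossing the ray $\gamma$ of \refR{rmk:embedding}. This corner descent is possible thanks to a case analysis based on \refClm{claim:labelsNeighbours} and \refClm{claim:labelEdges}: tree-edges admit corner choices with label gap at most $1$, closure-edges not crossing $\gamma$ can always be handled similarly by selecting corners straddling the edge on its ``descending'' side, and only $\gamma$-crossing closure-edges incur an extra bounded cost. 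Telescoping yields
\[
X(u) - X(v^\star) \leq k + C \lvert B \rvert + O(1)
\]
for a universal constant $C$. By \refR{rmk:embedding}, closure-edges crossing $\gamma$ are in bijection with second-type successors of $F^*_p$, and by \refR{rmk:bridge} their total number across the whole map is bounded by the range of the bridge $b_p$, which by \refL{lem:BR} is of order $p^{1/2}$ with high probability.

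\textbf{Main obstacle.} The real difficulty is that an $O(p^{1/2})$ global budget of $\gamma$-crossings does not immediately yield the required $\varepsilon p^{1/2}$ error, since in principle a single geodesic could exploit all of them. The crucial additional input is that, for a uniformly chosen vertex $u$, the geodesic from $u$ to $v^\star$ typically encounters only $o(p^{1/2})$ $\gamma$-crossings. I plan to establish this by combining \refP{prop:contour-label} with the independence structure of the blossoming trees $T_i$ in the sampling described in \refS{subsec:sample}: conditionally on the label process, the $\gamma$-crossings are localized in the region of the forest associated with boundary corners on which $b_p$ achieves its running infimum, whereas a uniformly chosen $u$ lies in a generic tree whose geodesic to $v^\star$ traverses a bulk portion of the forest disjoint from this localized region. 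A localized shortcut-replacement argument in the spirit of \cite{ABA-Simple}, together with \refL{lem:RN} to control the change of measure between $\P_p$ and $\P^*_p$, should convert this independence input into the required $\varepsilon p^{1/2}$ bound and conclude the proof.
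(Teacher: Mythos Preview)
Your plan diverges substantially from the paper's argument and contains a genuine gap in the deterministic step.

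The corner-descent you propose cannot give $\lambda(\kappa_{i-1})-\lambda(\kappa_i)\le 1$ along every non-$\gamma$-crossing edge. From \refClm{claim:labelEdges}(ii) and \refClm{claim:labelVertex}, a closure-edge $\{u,v\}$ with $uv\in\overrightarrow O$ that does \emph{not} cross $\gamma$ has $X(u)\in\{i-1,i\}$ and $X(v)\in\{i-3,i-2,i-1\}$, so $X(u)-X(v)$ can equal $3$; this is exactly why \refClm{claim:labelsNeighbours} records the bound $3$ for \emph{all} closure-edges, not just the $\gamma$-crossing ones. If the path traverses such an edge from $u$ to $v$ and the previous inductive step has fixed $\kappa_{i-1}$ at label $X(u)+2$, then the maximal label available at $v$ is $X(v)+2$ and the drop is $X(u)-X(v)=3$. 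You cannot repair this by restricting $\kappa_{i-1}$ to the corners adjacent to the current edge, because $\kappa_{i-1}$ was already chosen to be compatible with the \emph{previous} edge, and switching corners at the same vertex already costs up to $2$ by \refClm{claim:labelVertex}. Since closure-edges form a positive fraction of $E(M)$, your telescoping yields at best $X(u)-X(v^\star)\le 3k+O(1)$, off by a factor of $3$ and useless for the lower bound. (Your assertion that the total number of $\gamma$-crossings is bounded by the range of $b_p$ is also not justified---it is governed by the ladder structure of the full label process $\lambda$ on $\sim p^{2}$ corners, not by the $(2p-3)$-step boundary bridge---but this is secondary.)

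The paper's argument is entirely different and never tracks labels along an arbitrary path. It fixes the modified leftmost path $P=P(uv)$, whose length \emph{is} $X(u)-X(v^\star)$ up to $O(1)$ by \refP{prop:LMPreaches}, and decomposes any interior geodesic $Q$ into excursions away from $P$. An Euler-formula count exploiting the $3$-orientation (\refL{lem:diffDistances}) bounds the shortcut contributed by each excursion type; only types $(4),(6),(7)$ can shorten $Q$, each by at most $6$, giving $|Q|\ge|P|-15\sigma-12$ where $\sigma$ counts type-$(4)$ and type-$(7)$ excursions (\refClm{claim:shortcuts}). The crucial step is then \refP{prop:wrappings}: if $\sigma$ is large, one exhibits a short cycle $C\subset\mathring M$ with macroscopic label variation on both sides, and this is ruled out with high probability by the enumerative \refL{lem:sepCircles} combined with \refL{lem:regContour} (which converts label variation into vertex count via \refP{prop:contour-label}). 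The modified-leftmost-path comparison, not label-tracking along geodesics, is the mechanism that works here.
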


\subsection{Comparison of the lengths of paths}\label{sub:compLength}
Throughout this section, we assume that $M$ is a fixed element of $\mtrig$ endowed with its minimal orientation $\overrightarrow{O}$ and such that $v^\star\in \mathring{V}(M)$.\footnote{Note that, by Lemma \ref{lem:nottouching} below, if $M$ is sampled from $\Bolm(p)$, then the condition $v^\star\in \mathring{V}(M)$ is satisfied with probability converging to 1 as $p\rta\infty$.} 
We assume that $e=uv$ is a fixed oriented edge of $\overrightarrow{O}$ such that for any $w\in P(uv)$, we have $w\in \mathring{V}(M)$. We write $P=P(uv)=(u=v_0,v=v_1,\ldots,v_{\ell}=v^\star)$. 
Finally, we assume $Q$ is a self-avoiding path (i.e., $Q$ is a path where no vertex is used more than once) from $u$ to $v^\star$ which starts with the edge $uv$ such that $Q$ only contains vertices in $\mathring{V}(M)$.
\medskip

We decompose $Q$ into edge-disjoint subpaths $R_1,R_2,\ldots, R_t$ verifying the following property: the extremities of $R_i$ belongs to $P$ and either no other vertices of $R_i$ belong to $P$ or $R_i$ is a subpath of $P$. We assume the decomposition is maximal in the sense that $t$ is as small as possible; note that the maximal decomposition is unique.

The subpaths $R_j$ such that only their extremity belong to $P$ are called \emph{excursions of $Q$ away from $P$}. For an excursion $R$ of $Q$, denote by $v_i$ and $v_j$ (with $i<j$) the extremities of $R$. We say that $R$ is \emph{separating} if the cycle $R\cup \{v_i,v_{i+1},\ldots,v_j\}$ separates the root face from infinity. 
Note that even if the orientation of the edges of $P$ does not necessarily agree with the orientation of $\overrightarrow{O}$, we may view $P$ as an oriented path from $v_0$ to $v_{\ell}$. With this in mind, if $R$ is an excursion of $Q$ away from $P$ then exactly one of the following holds  (see Figure~\ref{fig:casesLMP}):
\begin{figure}
  \centering
  \includegraphics[scale=0.7]{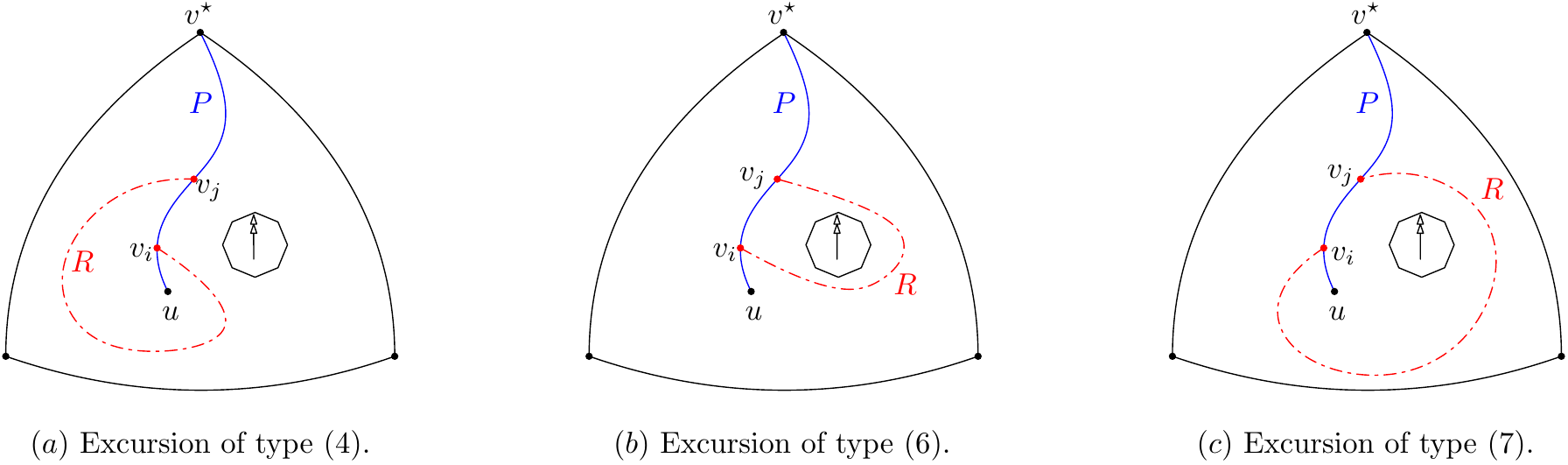}
  \caption{\label{fig:casesLMP}The three types of excursions away from a modified leftmost path that can cause a shortcut, see Lemma~\ref{lem:diffDistances}. The triangular marked face is the unbounded face and the root corner is indicated by a double arrow.}
\end{figure} 

\begin{enumerate}
  \item 
  $R$ leaves $P$ on its left at $v_i$ and returns on its left at $v_j$, and $R$ is not separating. 
  \item $R$ leaves $P$ on its right at $v_i$ and returns on its rig{}ht at $v_j$, and $R$ is not separating. 
  \item $R$ leaves $P$ on its left at $v_i$ and returns on its right at $v_j$, and $R$ is not separating. 
  \item $R$ leaves $P$ on its right at $v_i$ and returns on its left at $v_j$, and $R$ is not separating. 
  \item $R$ leaves $P$ on its left at $v_i$ and returns on its left at $v_j$, and $R$ is separating. 
  \item $R$ leaves $P$ on its right at $v_i$ and returns on its right at $v_j$, and $R$ is separating. 
  \item $R$ leaves $P$ on its left at $v_i$ and returns on its right at $v_j$, and $R$ is separating. 
  \item $R$ leaves $P$ on its right at $v_i$ and returns on its left at $v_j$, and $R$ is separating. 
\end{enumerate}
For $1\leq i \leq 8$, an excursion is said to be of type $i$, depending on which of the above cases applies. The number of excursions of type $i$ of $Q$ is denoted by $n_{i,Q}$ (or $n_i$ if there is no ambiguity).

\begin{lemma}\label{lem:diffDistances}
  Assume that $R$ is an excursion of $Q$ away from $P$ such that $R\cap P=\{v_i,v_j\}$, with $i,j\in\{0,1,\dots,\ell \}$ and $i<j$. Then, depending on which possibility we consider, we have the following bounds for the length $l_R$ of $R$: 
  \begin{align*}
    (1)&\quad l_R\geq j-i & (5)&\quad l_R \geq j-i+6\\
    (2)&\quad l_R\geq j-i & (6)&\quad l_R\geq j-i-6\\
    (3)&\quad l_R\geq j-i+3&(7)&\quad l_R \geq j-i-3\\
    (4)&\quad l_R \geq j-i-3&(8)&\quad l_R \geq j-i+3
  \end{align*}
\end{lemma}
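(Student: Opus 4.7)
The plan is to carry out a case analysis based on tracking how corner labels evolve along the excursion $R$ versus along the corresponding subpath $P_{ij} := v_iv_{i+1}\cdots v_j$ of the modified leftmost path, and to read off the eight bounds from a single label-balance identity around the closed curve $C := R \cup P_{ij}$ formed in $M$.

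The first ingredient is the precise label accounting along $P$ from Corollary~\ref{cor:labelLMP}: the left-corner labels $\lambda^{\op\ell}(v_kv_{k+1})$ decrease by exactly $1$ per step in $k$, with at most a single $+3$ correction at the unique step (if any) at which $P$ uses a successor of the second type. Geometrically, in the embedding of Remark~\ref{rmk:embedding}, this exceptional step is exactly the step whose corresponding closure-edge crosses the ray $\gamma$. Consequently the corner-label drop along $P_{ij}$ is either exactly $j-i$, or $j-i-3$ in the subcase that the unique $+3$ jump of $P$ lies inside the interval $[i,j)$.

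The second ingredient is a ``discrete Lipschitz'' statement for labels along arbitrary self-avoiding paths: for any oriented self-avoiding path $R'$ of length $l'$ from $a$ to $b$ in $M$ and any choice of side of $R'$, the difference between the incident corner labels on that side at $a$ and at $b$ is bounded by $l'$ in absolute value, with a correction of $\pm 3$ for each closure-edge of $R'$ that crosses $\gamma$. This follows directly from Claim~\ref{claim:labelEdges} applied edge by edge, using the uniform $\pm 1$ label jumps for tree-edges and closure-edges not crossing $\gamma$, and the exceptional $\pm 2$ / $\pm 3$ jumps for closure-edges crossing $\gamma$. Armed with this, one bounds the total label change along $R$ by $l_R$ plus $3$ times the number of $\gamma$-crossings contained in $R$.

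With these two ingredients in hand, each of the eight types of excursion reduces to summing around the closed cycle $C$: (a) the exact label-drop along $P_{ij}$; (b) the label change along $R$ bounded in terms of $l_R$; (c) a ``junction'' contribution of $0$, $\pm 2$, or $\pm 3$ at the attachment corners $v_i$ and $v_j$, given by Claim~\ref{claim:labelVertex} and depending on whether $R$ leaves/returns to $P$ on the left or the right; and (d) in the separating cases (5)--(8), an additional contribution of $\pm 6$ reflecting the fact that $C$ then encloses the root face, whose boundary is a clockwise cycle in the minimal orientation with outdegree-sum $2p-3$ (Lemma~\ref{lem:3orientation}), which forces a net label-winding of $\pm 6$ across the cycle. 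Equating the total label change around $C$ to zero and solving for $l_R$ gives exactly the stated bounds: $j-i$ in cases (1),(2); $j-i\pm 3$ in cases (3),(4),(7),(8) depending on the interaction between side-swap and possible $\gamma$-crossing absorption; and $j-i\pm 6$ in cases (5),(6) from the boundary winding.

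The main obstacle is the orientation bookkeeping across the eight cases, especially verifying the correct signs of the junction and winding contributions. The non-separating cases (1)--(4) are a direct adaptation of the analogous argument for sphere triangulations in \cite[Section~7.2]{ABA-Simple}; what is genuinely new here is the separating cases, where the possible interactions between (i) $\gamma$-crossings of $R$, (ii) $\gamma$-crossings of $P_{ij}$, and (iii) winding around the root face must be tracked carefully to extract the $\pm 6$ term.
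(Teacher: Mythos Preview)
Your approach is genuinely different from the paper's, and as stated it has a real gap.

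The paper does \emph{not} argue via label changes. Instead it applies Euler's formula to the subtriangulation $\Delta$ enclosed by $R$ and $P_{ij}$ (a triangulated polygon in cases (1)--(4), a triangulated cylinder in cases (5)--(8)), obtaining $3v_{\inn}+(j-i)+l_R=3+e_{\inn}$ (respectively $3v_{\inn}+(j-i)+l_R+p=e_{\inn}$). It then bounds $e_{\inn}$ above or below by counting inner edges by their \emph{tail} in the minimal $3$-orientation. The crucial structural input is that a modified leftmost path has \emph{no outgoing edges on its left}: thus when $\Delta$ lies to the left of $P_{ij}$, the $P$-vertices contribute no inner tails, whereas when $\Delta$ lies to the right, each interior $P$-vertex contributes at least two inner tails (three outgoing, one on $P$). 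This asymmetry is what generates the eight different constants; the $\pm 3$ per root-face boundary in the cylinder cases produces the additional $\pm 6$.

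Your label-balance scheme tries to bypass this, but the ``discrete Lipschitz'' step is false as written. You assert that closure-edges not crossing $\gamma$ give ``uniform $\pm 1$ label jumps''. Claim~\ref{claim:labelEdges}(ii) says that for such an edge with $uv\in\overrightarrow O$ one has $(\lambda^{\ell}(uv),\lambda^{r}(vu),\lambda^{\ell}(vu),\lambda^{r}(uv))=(i,i-1,i-1,i+1)$; on the right side of $uv$ the jump is $|\lambda^{r}(uv)-\lambda^{\ell}(vu)|=2$, not $1$. So along an arbitrary self-avoiding path the side-label change is only bounded by $2l'$, not $l'$, and your balance equation at best yields $l_R\ge (j-i)/2$ in cases (1)--(2). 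The extra factor of $2$ is precisely what the no-outgoing-edge-on-the-left property of $P$ is needed to recover, and your outline never invokes it for the $R$-side of the count. Relatedly, the ``junction contributions of $0,\pm2,\pm3$'' and the ``net label-winding of $\pm6$'' are asserted but not derived; without a proof that the total corner-label change around $C$ vanishes (which also requires handling the within-vertex corner changes at boundary vertices with interior edges), there is no equation to solve. A label-based proof may well be possible, but it would have to encode the same orientation asymmetry the paper exploits, and your sketch does not.
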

\begin{proof}
  The proof of this lemma relies on Euler's formula, on the property of the orientation we consider, and on the fact that there is no outgoing (for $\overrightarrow{O}$) edge on the left side of $P$. More formally, for $i>1$, there is no $v_i\wh v\in \overrightarrow{O}$ between $v_{i-1}v_i$ and $v_{i}v_{i+1}$ when turning clockwise around $v_i$.
  We write $R=(w_0=v_i,\ldots,w_{l_R}=v_j)$ and denote $P_{i,j}$ the subpath of $P$ between $v_i$ and $v_j$.
  
  \textbf{Cases $(1)$, $(2)$, $(3)$, and $(4)$}. We consider the subtriangulation $\Delta$ that is bounded by $R$ and $P_{i,j}$ and which contains neither the root face nor the outer face. This is a triangulation of a $j-i+l_R$-gon. We denote by $v_{\inn}$ and $e_{\inn}$, respectively, the number of inner vertices and inner edges. Euler's formula directly implies that
  \[
  3v_{\inn}+j-i+l_R=3+e_{\inn}.
  \] 
  Now, for case $(1)$, since $R$ leaves $P$ on its left by definition, the edge $v_{i-1}v_i$ does not belong to $\Delta$ and by definition of $R$, neither does the edge $v_{j}v_{j+1}$. Since a modified leftmost path has no outgoing edges on its left, it implies that $\overrightarrow{w_1w_0} \in \overrightarrow{O}$ and $\overrightarrow{w_{l_R-1}w_{l_R}}\in \overrightarrow{O}$. Further, it gives that the tail of any oriented inner edge of $\Delta$ is either an inner vertex or is equal to $w_{k}$, for $0<k<l_R$. There are exactly $3v_{\inn}$ inner edges of the former kind, since any inner vertex has exactly three outgoing edges. There are at most $3(l_R-1)-l_R$ inner edges of the latter kind, since each vertex $w_{k}$ has exactly three outgoing edges, but exactly $l_R$ of these lie on $R$ and there may be some edges which are not on $\Delta$. Combining these estimates, we get:
  \[
  e_{\inn}\leq 3v_{\inn}+3(l_R-1)-l_R=3v_{\inn}+2l_R-3,
  \] 
  and the result follows. 
  
  For case $(2)$, all the oriented edges whose tail is equal to $v_{k}$, with $i<k<j$, are either inner edges of $\Delta$ or belongs to $P_{i,j}$ and a similar argument shows that: 
  \[
  e_{\inn} \geq 3v_{\inn}+3(j-i-1)-j-i=3v_{\inn}+2(j-i)-3.
  \]
  For case $(3)$,
  all the oriented edges whose tail is equal to $v_{k}$, with $i\leq k<j$, are either inner edges of $\Delta$ or belongs to $P_{i,j}$ (note that indeed $\overrightarrow{w_1w_0}\in \overrightarrow{O}$) and a similar argument shows that: 
  \[
  e_{\inn} \geq 3v_{\inn}+3(j-i)-j-i=3v_{\inn}+2(j-i).
  \]
  For case $(4)$, the tail of any oriented inner edge of $\Delta$ is either an inner vertex or is equal to $w_{k}$, for $0\leq k<l_R$, which gives: 
  \[
  e_{\inn}\leq 3v_{\inn}+3l_R-l_R=3v_{\inn}+2l_R.
  \]
  
  \textbf{Cases $(5)$, $(6)$, $(7)$, and $(8)$}. We consider the subtriangulation $\Delta$ that is bounded by $R$ and $P_{i,j}$ and which contains the root face. This is a triangulation of a cylinder with perimeters $j-i+l_R$ and $p$. We denote by $v_{\inn}$ and $e_{\inn}$ the number of inner vertices and inner edges, respectively. Euler's formula directly implies that
  \begin{equation}\label{eq:EulerCyl}
  3v_{\inn}+j-i+l_R+p=e_{\inn}.
  \end{equation}
  Modulo replacing $3v_{\inn}$ by $3v_{\inn}+p-3$, the bounds we obtain above for cases $(1)$, $(2)$, $(3)$, and $(4)$ give bounds for cases $(5)$, $(6)$, $(7)$, and $(8)$, respectively:
  \begin{align*}
    e_{\inn}&\leq 3v_{\inn}+2l_R+p-6,& \text{for case }(5)&,\\
    e_{\inn}&\geq 3v_{\inn}+2(j-i)+p-6,& \text{for case }(6)&,\\
    e_{\inn}&\geq 3v_{\inn}+2(j-i)+p-3,& \text{for case }(7)&,\\
    e_{\inn}&\leq 3v_{\inn}+2l_R+p-3,& \text{for case }(8)&.
  \end{align*}
  Together with \eqref{eq:EulerCyl}, it gives the desired result.
\end{proof}

\subsection{Modified leftmost paths and shortcuts}\label{sub:shortcuts}
Consider as above the subdivision of $Q$ into maximal subpaths $R_1,\ldots,R_t$. We assume that $R_1,\ldots,R_t$ are ordered in such a way that $Q$ is the concatenation of $R_1,\ldots,R_t$. In particular, $u$ is the first vertex of $R_1$ and $v^\star$ is the last vertex of $R_t$. For $s\in\{1,\dots,t \}$, let $i_s<j_s\in\{0,1,\dots,\ell \}$ be such that $v_{i_s}$ and $v_{j_s}$ are both extremities of $R_s$. 

\begin{lemma}\label{lem:noNested6}
  There exists a shortest path between $u$ and $v^\star$ in $\mathring{M}$ such that all its excursions away from $P$ are of types $(4)$, $(6)$, or $(7)$
  and such that there are no 3 consecutive excursions away from $P$ of type $(6)$. More formally, if there exist $s_1,s_2,s_3\in \{1,\ldots,t\}$ such that $R_{s_i}$ is an excursion of type $(6)$ for $i\in\{1,2,3\}$ and $s_1<s_2<s_3$, then there exists $s\in\{s_1+1,s_1+2,\ldots,s_3-1\}$ such that $R_s$ is of type $(4)$ or $(7)$.\footnote{We could in fact prove a much stronger statement, namely that there exists a shortest path with at most one excursion of type $(6)$. However, since the proof is more involved and requires a lot of case-by-case analysis, we only prove the above lemma, which is sufficient for our purposes.}
\end{lemma}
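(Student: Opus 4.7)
The plan is to double-minimize over the set $\mathcal{Q}$ of shortest paths from $u$ to $v^\star$ in $\mathring{M}$: pick $Q \in \mathcal{Q}$ so that the number of excursions of $Q$ from $P$ whose type is outside $\{(4), (6), (7)\}$ is minimal and, secondarily, so that the number of triples of type-$(6)$ excursions with no type-$(4)$ or $(7)$ excursion strictly between them is minimal. I will argue that both counts are zero.

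For the first count, suppose some excursion $R_s$ of $Q$, with extremities $v_i, v_j$ on $P$, has type $\tau \in \{(1), (2), (3), (5), (8)\}$. Substituting $R_s$ in $Q$ by the subpath $P[v_i, v_j]$ produces a walk $Q'$ in $\mathring{M}$ (the interior vertices of $P[v_i, v_j]$ lie in $\mathring{V}(M)$ by the standing assumption on $P$). By Lemma~\ref{lem:diffDistances} we have $|Q'| = |Q| - l_{R_s} + (j - i) \le |Q|$, with strict inequality when $\tau \in \{(3), (5), (8)\}$. Extracting a self-avoiding sub-path $Q''$ of $Q'$ from $u$ to $v^\star$ yields a path of length at most $|Q'| \le |Q|$; after bookkeeping of how the decomposition of $Q$ restricts to $Q''$, one sees that $Q''$ has strictly fewer excursions of types outside $\{(4), (6), (7)\}$ than $Q$. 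This contradicts the minimality of the first count (and of the length itself, in cases $(3), (5), (8)$).

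For the second count, assume toward contradiction that $Q$ has three excursions $R_{s_1}, R_{s_2}, R_{s_3}$ of type $(6)$, with $s_1 < s_2 < s_3$ and no $R_s$ of type $(4)$ or $(7)$ for $s_1 < s < s_3$. Write their extremities as $(v_{i_k}, v_{j_k})$; then $i_1 < j_1 \le i_2 < j_2 \le i_3 < j_3$. Each cycle $C_k := R_{s_k} \cup P[v_{i_k}, v_{j_k}]$ is simple (since $Q$ is self-avoiding) and, being separating of type $(6)$, encloses the root face in the planar embedding where the marked face $f^*$ is the unbounded face. Because $Q$ is self-avoiding and the $R_{s_k}$'s have their interior vertices off $P$, planarity --- viewed on the annulus obtained by removing the root face and $f^*$ --- forces these three cycles into a nested configuration. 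Exploiting this nesting I would perform a surgery: splice an outer arc of $R_{s_1}$ to an inner arc of $R_{s_3}$ through a chord of the triangulated region that they bound, producing an alternative $u$-to-$v^\star$ walk $\widetilde Q$ in $\mathring M$. Applying the Euler-formula bound used in cases $(5)$--$(8)$ of Lemma~\ref{lem:diffDistances} to this region, the saving from avoiding $R_{s_2}$ entirely should yield $|\widetilde Q| < |Q|$, contradicting $Q \in \mathcal{Q}$.

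The main obstacle will be the surgery step: the nesting argument in the annulus has several subcases depending on whether some $j_k = i_{k+1}$ and on the relative direction in which each $R_{s_k}$ wraps, and one must verify both that a suitable chord exists and that the resulting walk stays in $\mathring M$. This is precisely the case-by-case analysis the authors acknowledge in their footnote. Its cleanest implementation will likely rely on the minimal 3-orientation: the fact that every inner vertex has outdegree exactly three together with the absence of counterclockwise cycles sharply constrains the edges of the triangulated region between $C_1$ and $C_3$, and it is these constraints that should produce the chord realizing the shortcut.
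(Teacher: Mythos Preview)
Your treatment of the first part (reducing to types $(4)$, $(6)$, $(7)$) is essentially the same as the paper's, though the paper is more casual: since $Q$ is shortest, types $(3),(5),(8)$ are impossible by Lemma~\ref{lem:diffDistances}, and types $(1),(2)$ can be swapped for the corresponding subpath of $P$ at no cost.

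The second part, however, has a genuine error in the geometry. You assert that the three type-$(6)$ excursions have \emph{disjoint} $P$-intervals, $i_1<j_1\le i_2<j_2\le i_3<j_3$. This is false. A type-$(6)$ excursion $R_{s_k}$ together with $P[v_{i_k},v_{j_k}]$ forms a simple cycle $C_k$ separating the root face from the marked face. Since $Q$ is self-avoiding, the $C_k$ are pairwise non-crossing, and three non-crossing separating curves in the annulus are nested; consequently their $P$-intervals are nested as well: $i_{s_3}\le i_{s_2}\le i_{s_1}\le j_{s_1}\le j_{s_2}\le j_{s_3}$ (or the reverse). The paper states and uses exactly this. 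Your sequential picture is therefore inconsistent with the self-avoidance of $Q$, and the surgery you sketch (splicing an outer arc of $R_{s_1}$ to an inner arc of $R_{s_3}$ through a chord) is addressing the wrong configuration.

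Once the nesting is correctly identified, the argument becomes much simpler than what you outline and requires neither chords nor the minimal $3$-orientation. With, say, $[i_1,j_1]\subset[i_2,j_2]\subset[i_3,j_3]$ and letting $\ell_A=j_1-i_1$, $\ell_B$ the $P$-distance between $R_{s_1}$ and $R_{s_2}$ on the appropriate side, and $\ell_C$ the $P$-distance between $R_{s_2}$ and $R_{s_3}$, Lemma~\ref{lem:diffDistances} applied to $R_{s_3}$ gives $|R_{s_3}|\ge \ell_A+\ell_B+\ell_C-6$. One checks $\ell_A\ge 2$ and $\ell_B\ge 3$ (otherwise a direct replacement already removes an excursion), whence $|R_{s_3}|\ge \ell_C-1$, so $|R_{s_2}|+|R_{s_3}|\ge \ell_C$. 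This means the portion of $Q$ consisting of $R_{s_2}$, the intermediate $P$-segment, and $R_{s_3}$ can be replaced by the length-$\ell_C$ subpath of $P$ without lengthening $Q$, eliminating two type-$(6)$ excursions. There are two symmetric subcases depending on which endpoint of $R_{s_1}$ is visited first along $Q$, but both are handled by the same estimate; no annulus surgery or orientation argument is needed.
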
 

\begin{proof}
  Let $Q$ be a shortest path between $u$ and $v^\star$ and consider the decomposition of $Q$ into maximal subpaths as above. Since $Q$ is a shortest path, by Lemma~\ref{lem:diffDistances}, it has no excursion of types $(3)$, $(5)$ and $(8)$. Furthermore, we can replace all excursions of types $(1)$ and $(2)$ by the corresponding subpaths of $P$ without changing its length. This leaves us with excursions of types $(4)$, $(6)$ or $(7)$.
  
  \begin{figure}
    \centering
    \includegraphics[scale=0.8,page=3]{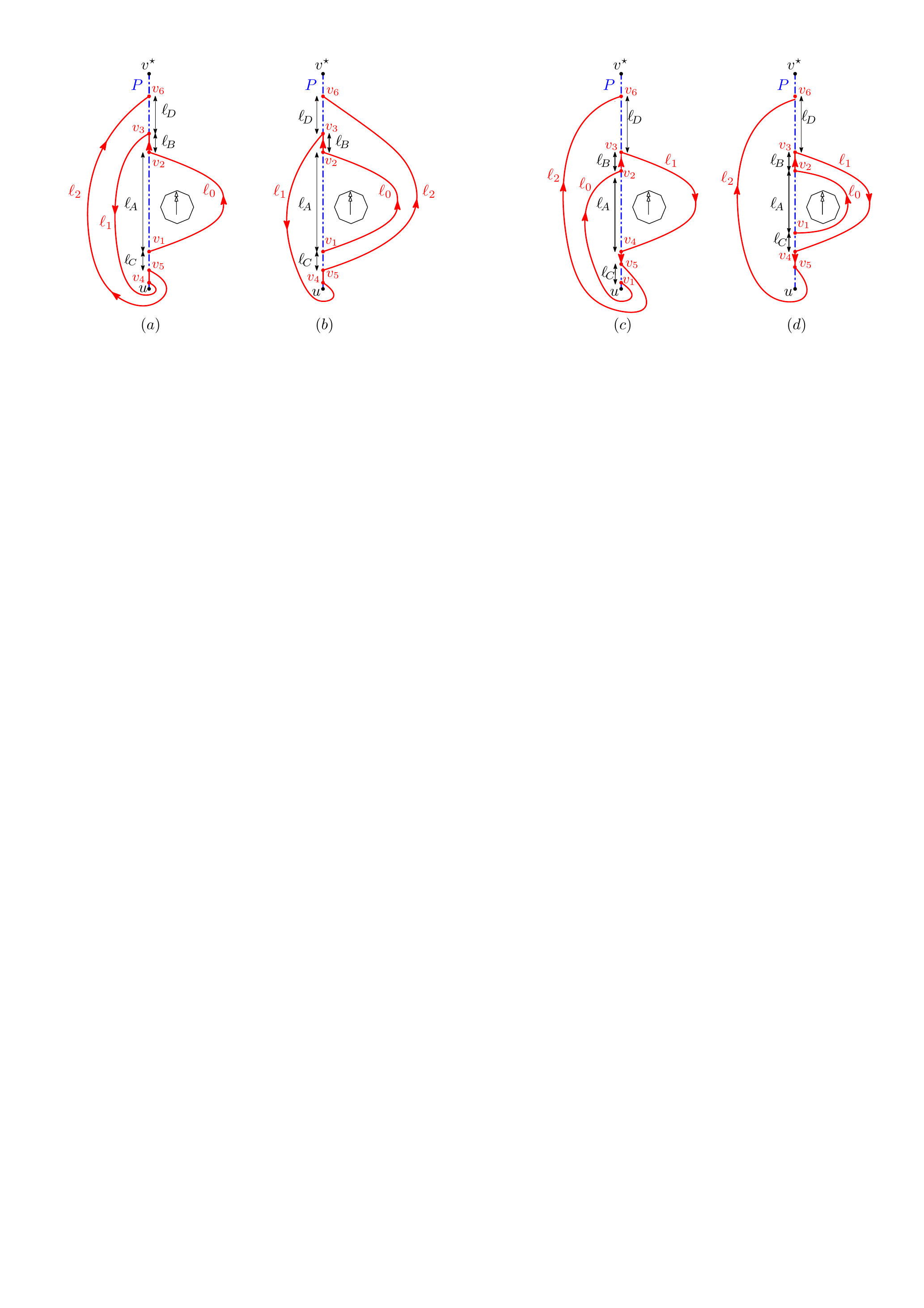}
    \caption{The configurations of paths considered in the proof of Lemma~\ref{lem:noNested6}.\label{fig:noNested}}
  \end{figure}
  
  Assume that $R_{s_1},R_{s_2}$ and $R_{s_3}$ are 3 consecutive excursions of type $(6)$. Since $Q$ is a shortest path, it is self-avoiding. Therefore, $R_{s_1},R_{s_2}$ and $R_{s_3}$ are nested, i.e.,
  \begin{itemize}
    \item $i_{s_3}\leq i_{s_2} \leq i_{s_1}\leq j_{s_1}\leq j_{s_2}\leq j_{s_3}$ or
    \item $i_{s_1}\leq i_{s_2} \leq i_{s_3}\leq j_{s_3}\leq j_{s_2}\leq j_{s_1}$.
  \end{itemize}
  The two cases being fully symmetric, we assume, without loss of generality, that the first one holds. Two further symmetric cases (displayed respectively on the left and on the right of Figure~\ref{fig:noNested}) can then occur, depending on whether $v_{i_{s_1}}$ or $v_{j_{s_{1}}}$ is first reached by $Q$ (when going from $u$ to $v^\star$). 
  
  We start with the case where $v_{i_{s_1}}$ is reached before $v_{j_{s_{1}}}$. Denote by $\ell_1$, $\ell_2$, and $\ell_3$ the length of $R_{s_1}$, $R_{s_2}$, and $R_{s_3}$, respectively, and denote by $\ell_A$, $\ell_B$, and $\ell_C$ the length of $P$ between $v_{i_1}$ and $v_{j_1}$, $v_{i_1}$ and $v_{i_2}$, and $v_{j_2}$ and $v_{j_3}$, respectively.
  Then, by Lemma~\ref{lem:diffDistances}, 
  \[
  \ell_3\geq (\ell_A+\ell_B+\ell_C-6). 
  \]
  If $\ell_A=1$, then we can replace $R_{s_1}$ by the subpath of $P$ between $v_{i_1}$ and $v_{j_1}$ without lengthening $Q$, so we can assume $\ell_A\geq 2$. Similarly, since the subpath of $Q$ between $v_{i_1}$ and $v_{i_2}$ has length at least 2 (because it contains two excursions away from $P$), we can assume that $\ell_B\geq 3$. Hence, the previous bound gives $\ell_3\geq \ell_C-1$. Since $\ell_2\geq 1$ we have $\ell_3+\ell_2\geq \ell_C$, so we can replace the subpath of $Q$ between $v_{j_2}$ and $v_{j_3}$ (consisting of the two excursions of length $\ell_2$ and $\ell_3$ plus the possibly empty subpath of $P$ between $v_{i_2}$ and $v_{i_{3}}$) by the corresponding length $\ell_C$ subpath of $P$  without lengthening it, which concludes the proof in this case.
  
  We now move to the case where $v_{j_{s_1}}$ is reached before $v_{i_{s_{1}}}$. We keep the same definition of $\ell_1,\ell_2,\ell_3$, and $\ell_A$, but denote by $\ell_B$ and $\ell_C$ the length of $P$ between $v_{j_1}$ and $v_{j_2}$, and $v_{i_2}$ and $v_{i_3}$, respectively. Then, the exact same proof as above applies.
\end{proof}

We now make the following assumption:
\begin{center}
  \emph{Until the end of this section, $Q$ is a shortest path 
    in $\mathring{M}$ between $u$ and $v^\star$\\ satisfying the properties of Lemma~\ref{lem:noNested6}.}  
\end{center}

Recall the definition of $n_1,\dots,n_8$ right above the statement of Lemma \ref{lem:diffDistances} and define
\[
\sigma(Q,e)=n_4+n_7.
\]
\begin{claim}\label{claim:shortcuts}
  If $e=uv$, $P(uv)$, and $Q$ are as above, then 
  \[
  |Q| \geq |P(e)|-15\sigma(Q,e)-12.
  \]
\end{claim}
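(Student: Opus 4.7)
The plan is to combine the length lower bounds from Lemma~\ref{lem:diffDistances} with the structural constraint from Lemma~\ref{lem:noNested6} via a telescoping argument on the positions along $P$. Using the decomposition $Q = R_1 R_2 \cdots R_t$ from Section~\ref{sub:compLength}, I will index each $R_s$ by its start and end positions $\alpha_s, \beta_s \in \{0,\ldots,\ell\}$ along $P$ (so that $R_s$ runs from $v_{\alpha_s}$ to $v_{\beta_s}$), and set $\epsilon_s = \beta_s - \alpha_s$. Since the $R_s$ concatenate to trace $Q$ from $v_0$ to $v_\ell$, the $\epsilon_s$ telescope to $\ell = |P(e)|$.

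By Lemma~\ref{lem:noNested6} I may assume every excursion of $Q$ away from $P$ is of type (4), (6), or (7). Rewriting the inequalities in Lemma~\ref{lem:diffDistances} in terms of $|\epsilon_s| = j_s - i_s$, I obtain $|R_s| \geq |\epsilon_s| - 3$ for excursions of types (4) and (7), $|R_s| \geq |\epsilon_s| - 6$ for excursions of type (6), and $|R_s| = |\epsilon_s|$ when $R_s$ is a subpath of $P$. Summing over $s$ and using $|\epsilon_s| \geq \epsilon_s$ yields
\[
|Q| = \sum_s |R_s| \geq \sum_s \epsilon_s - 3\sigma(Q,e) - 6 n_6 = \ell - 3\sigma(Q,e) - 6 n_6.
\]

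It remains to control $n_6$. The ``no three consecutive excursions of type (6)'' part of Lemma~\ref{lem:noNested6} means that, in the list of excursions of $Q$ in order, maximal runs of type-(6) excursions have length at most two. Such runs are separated by the $\sigma(Q,e) = n_4 + n_7$ excursions of types (4) and (7) into at most $\sigma(Q,e) + 1$ blocks, giving $n_6 \leq 2(\sigma(Q,e) + 1)$. Plugging this into the previous inequality gives the claimed bound $|Q| \geq |P(e)| - 15\sigma(Q,e) - 12$. The whole argument is essentially bookkeeping on top of Lemmas~\ref{lem:diffDistances} and~\ref{lem:noNested6}; the only point I would double-check is that the inequality $|R_s| \geq |\epsilon_s| - c_s$ is valid regardless of the direction in which $Q$ traverses each excursion, which is why I pass through $|\epsilon_s|$ rather than $\epsilon_s$ before telescoping.
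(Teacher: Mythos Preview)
Your proof is correct and follows essentially the same approach as the paper's, which simply asserts $|Q|\geq |P(e)|-3(2n_6+n_4+n_7)$ from Lemma~\ref{lem:diffDistances} and then invokes $n_6\leq 2(n_4+n_7+1)$ from Lemma~\ref{lem:noNested6}. Your telescoping argument via $\epsilon_s=\beta_s-\alpha_s$ and the passage through $|\epsilon_s|\geq \epsilon_s$ is exactly what justifies that first inequality, and your reading of ``no three consecutive type-(6) excursions'' matches the paper's formal statement once one restricts to the subsequence of excursions.
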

\begin{proof}
  By Lemma~\ref{lem:diffDistances}, we have $|Q| \geq |P(e)|-3(2n_6+n_4+n_7)$. By Lemma~\ref{lem:noNested6}, $n_6\leq 2(n_4+n_7+1)$, which concludes the proof by definition of $\sigma(Q,e)$.
\end{proof}
Now, let $C$ be a cycle in $\mathring{M}$ (where a cycle is a path that starts and ends at the same vertex and uses each vertex a most one, except for its starting point). Let $V^\Delta[C]$ (respectively $V^\inn[C]$) be the subset of vertices of $M$ that lies either on $C$ or on the side of $C$ containing the triangular marked face (respectively, on $C$ or on the side of $C$ which does not contain the marked face), so that  $V^\Delta[C]\cap V^\inn[C] = C$.  

\begin{proposition}\label{prop:wrappings}
  If $\sigma(Q,e)>2$, then there is a cycle $C$ in $\mathring{M}$ such that
  \begin{itemize}
    \item $|C| \leq \frac{6|Q|}{\sigma(Q,e)-2} +3 $, and
    \item $\max_{y\in V^\delta[C]}X(v)-\min_{y\in V^\delta[C]}X(v) \geq \lfloor\sigma(Q,e)/3\rfloor -4$ for $\delta\in\{\Delta,\inn\}$.
  \end{itemize}
\end{proposition}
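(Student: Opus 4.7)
The approach is to take the candidate cycles $C_s$ formed by each type $(4)$ or $(7)$ excursion $R_{k_s}$ together with the corresponding arc of the leftmost path, and to show after an averaging argument that one of them (or a carefully combined cycle) meets both requirements. First I would enumerate the $m := \sigma(Q,e)>2$ type $(4)/(7)$ excursions along $Q$ as $R_{k_1},\ldots,R_{k_m}$, with each $R_{k_s}$ having endpoints $v_{i_{k_s}},v_{j_{k_s}}$ on $P$ and $i_{k_s}<j_{k_s}$, and set $a_s := l_{R_{k_s}}$, $b_s := j_{k_s}-i_{k_s}$, $C_s := R_{k_s}\cup P_{i_{k_s},j_{k_s}}$. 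From Lemma~\ref{lem:diffDistances} applied to types $(4)$ and $(7)$ we get $a_s\ge b_s-3$, and since the $R_{k_s}$ are disjoint subpaths of $Q$, we have $\sum_s a_s\le |Q|$, which yields $\sum_s b_s\le |Q|+3m$ and $\sum_s |C_s|\le 2|Q|+3m$.

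Next I would observe that, by Corollary~\ref{cor:labelLMP}, labels along $P$ decrease by $1$ at each step apart from at most one single $+3$ jump, so the vertices of $P_{i_{k_s},j_{k_s}}$ span a range of labels at least $b_s-3$. Since $C_s\subseteq V^\Delta[C_s]\cap V^\inn[C_s]$, these vertices lie on both sides simultaneously. Thus it suffices to exhibit one index $s$ satisfying both $b_s\ge \lfloor m/3\rfloor-1$ and $|C_s|=a_s+b_s\le 6|Q|/(m-2)+3$, as this automatically gives the desired label variation $\ge \lfloor m/3\rfloor -4$ on each side.

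To produce such an $s$, I plan to argue by contradiction. If every ``large-span'' excursion (those with $b_s\ge \lfloor m/3\rfloor-1$) has cycle length exceeding $6|Q|/(m-2)+3$, then the averaging $\sum_s|C_s|\le 2|Q|+3m$ forces at most roughly $m/3$ such indices, so at least $2m/3$ of the excursions have $b_s<\lfloor m/3\rfloor-1$. In this short-span regime I would combine two or three consecutive short-span excursions: because each $R_{k_s}$ is connected to the next type $(4)/(7)$ excursion by a subpath of $Q$ containing only subpaths of $P$ and type-$(6)$ excursions (which, by Lemma~\ref{lem:noNested6}, do not come in triples), concatenating two or three consecutive $R_{k_s}$'s with the intermediate pieces of $Q$ and with a single arc of $P$ produces a new cycle in $\mathring M$ whose $P$-span is at least $\lfloor m/3\rfloor-1$ while, by the same disjointness bound applied to the enlarged family of $(m-2)$ grouped cycles, has total length controlled by $6|Q|/(m-2)+3$ (the factor $6$ coming from absorbing the length of two neighboring excursions and the factor $m-2$ reflecting the loss of the first and last groups).

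The main obstacle is the combinatorial/topological verification of this combining step: one must check that the concatenated object is actually a simple cycle in $\mathring M$, handle carefully the distinction between type $(4)$ (non-separating, bounding a disc disjoint from the root face) and type $(7)$ (separating, bounding a cylindrical region around the root face) when combining excursions of different types, and ensure that no candidate combined cycle degenerates (e.g.\ collapses to an arc of $P$) because of nesting. Once the combined cycle is produced, the label-variation conclusion is essentially immediate from Corollary~\ref{cor:labelLMP} applied to the $P$-arc contained in the cycle.
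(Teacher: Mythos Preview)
Your plan has a genuine gap at the heart of the argument. You try to extract the label variation on both sides of $C$ from the $P$-arc $P_{i_{k_s},j_{k_s}}$ sitting \emph{on} the cycle; that forces $b_s\ge \lfloor m/3\rfloor-1$, and since $a_s\ge b_s-3$ this already gives $|C_s|\ge 2b_s-3\ge 2\lfloor m/3\rfloor-5$. But the length bound you must meet is $|C|\le 6|Q|/(m-2)+3$, which in the regime where the proposition is actually used (the proof of Proposition~\ref{prop:lower}) is of constant order: there $m=\sigma(Q,e)\gtrsim \varepsilon p^{1/2}$ while $|Q|\le \Delta X+3\lesssim p^{1/2}$, so $6|Q|/(m-2)+3$ is $O(1)$ whereas $2\lfloor m/3\rfloor-5$ is of order $p^{1/2}$. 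No single $C_s$ can satisfy both constraints, and your ``combining'' step does not escape this: any cycle whose label variation on both sides is witnessed by a $P$-arc contained in the cycle must have length at least that arc's length, so the same obstruction applies verbatim to any concatenated cycle. The label variation simply cannot come from the cycle itself.

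The paper's proof resolves this by a different mechanism. Setting $m=\lfloor\sigma(Q,e)/3\rfloor$, it restricts attention to the $m$ type~(4)/(7) excursions $R_s$ with $m+1\le\sigma(s)\le 2m$ and picks the one, $R_{s^\star}$, of minimal length. Pigeonhole over these $m$ disjoint subpaths of $Q$ gives $|R_{s^\star}|\le |Q|/m$, hence $|C|\le 2|R_{s^\star}|+3\le 6|Q|/(\sigma(Q,e)-2)+3$. The label variation is then read off \emph{outside} $C$: with $v_{i_{s^\star}},v_{j_{s^\star}}$ on $C$, one pairs them with $v_0$ and $v^\star$ (which lie on opposite sides of $C$) and uses Corollary~\ref{cor:labelLMP} to get $X(v_0)-X(v_{j_{s^\star}})\ge j_{s^\star}-5$ and $X(v_{i_{s^\star}})-X(v^\star)\ge \ell-i_{s^\star}-5$. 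The ``middle third'' choice is exactly what guarantees both $j_{s^\star}$ and $\ell-i_{s^\star}$ are at least of order $m$, so each side sees variation $\ge m-4$. This decoupling of the short-cycle requirement from the large-label-variation requirement is the step your proposal is missing.
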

\begin{proof}
  Write $Q=(u=u_0,v=u_1,u_2,\ldots,v^\star).$ 
  
  Consider as above the subdivision of $Q$ into maximal subpaths $R_1,\ldots,R_t$. For $s\in\{1,\dots,t \}$ and $i\in\{1,\dots,8 \}$, let $n_i(s)$ denote the number of subpaths of type $i$ among $\{R_1,\ldots R_s\}$, and define $\sigma(s)= n_4(s)+n_7(s)$. In particular, $\sigma(0)=0$, $\sigma(t)=\sigma(Q,e)$, and for $s\in[t]$, 
  \begin{equation}
  \sigma(s)-\sigma(s-1) = 
  \begin{cases}
  1&\text{if }R_s\text{ is an excursion away from $P$ of type (4) or (7)},\\
  0&\text{otherwise. }
  \end{cases}
  \label{eq:sigma-diff}
  \end{equation}
  Let $m=\lfloor \sigma(Q,e)/3\rfloor$. By \eqref{eq:sigma-diff} there are at least $m$ values of $s$ such that $R_s$ has type (4) or (7) and 
  \begin{equation}
  m+1\leq \sigma(s)\leq 2m.
  \label{eq:ns}
  \end{equation}
  Among these values of $s$, let $s^\star\in \{1,\dots,t \}$ be such that $|R_{s^\star}|$ is minimal. Recall that the length of a path is defined as its number of edges. Then, first using that $|R_{s^\star}|$ is minimal and then using the definition of $m$, 
  \begin{equation}
  \wt d_M(u,v^\star)\geq m|R_{s^\star}|\geq \frac{\sigma(Q,e)-2}{3}|R_{s^\star}|.
  \label{eq:Rsstar}
  \end{equation}
  By Lemma \ref{lem:diffDistances}, the subpath of $P(e)$ between the endpoints of $R_{s^\star}$ has at most three more edges than $R_{s^\star}$, so the cycle $C$ obtained by concatenating this subpath and $R_{s^\star}$ has at most $2|R_{s^\star}|+3$ edges. Therefore, first using \eqref{eq:Rsstar} and then using that $Q$ is a shortest path, 
  \[
  |C|\leq \frac{6\wt d_M(u,v^\star)}{\sigma(Q,e)-2}+3= \frac{6|Q|}{\sigma(Q,e)-2}+3.
  \]

  For $w\in V(\mathring{M})$, and for any corner $\kappa$ incident to $w$, $\lambda(\kappa)-X(w)\in\{0,1,2\}$. Together with Corollary~\ref{cor:labelLMP}, this observation yields, $X(v_0)-X(v_{j_{s^\star}})\geq j_{s^\star}-5$ and $X(v_{i_{s^\star}})-X(v^\star)\geq \ell-i_{s^\star}-5$. Using again that $j_{s^\star} \geq m+2$ and $\ell-i_{s^\star} \geq m+1$, it completes the proof. 
  
\end{proof}

\subsection{Proof of Proposition~\ref{prop:lower}}\label{sub:lower}
Throughout this section, $F_p^*$ and $\Map^*_p=(\Map_p,f)=\chi(F^*_p)$ are \emph{random} forests and maps sampled as~in Proposition~\ref{prop:Bol}.

Before proving Proposition~\ref{prop:lower}, we state three lemmas. The first lemma rules out the possibility for $\Map^*_p$ to have a short cycle that separates it into two macroscopic parts. Its proof, postponed to the end of the section, relies mostly on enumerative arguments.
More precisely, fix $K\in \mathbb{N}$ and $\alpha>0$, let $\Gamma_K(\alpha)$ be the event that a triangulation of the $p$-gon with a marked triangular face admits a simple cycle $C$ such that
\begin{itemize}
  \item$|C|\leq K$,
  \item $C$ does not contain a vertex incident to the root face, and 
  \item $|V^\Delta[C]|\geq \alpha p^2$ and $|V^\inn[C]|\geq \alpha p^2$.
\end{itemize}

\begin{lemma}\label{lem:sepCircles}
  For any $K \in \mathbb{N}$ and $\alpha,\varepsilon>0$, for $p$ sufficiently large,
  \[
  \Bol_{\op{III,p}}^*\left(\Gamma_K(\alpha)\right)\leq \varepsilon.
  \]
\end{lemma}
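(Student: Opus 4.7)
The plan is to use a union bound over the cycle length $k \in \{3, \ldots, K\}$, combined with a decomposition of the triangulation along a putative separating cycle. If $C$ is a simple cycle of length $k$ in $M^* \sim \Bolm(p)$ that does not touch the boundary, then $C$ splits $M$ into an outer piece $M_{\mathrm{ext}}$ (a type III triangulation of an annulus with outer perimeter $p$ and inner perimeter $k$) and an inner piece $M_{\mathrm{int}}$ (a type III triangulation of the $k$-gon). The Boltzmann weight factorizes as $\rho_{\op{III}}^{|V(M)|} = \rho_{\op{III}}^{|V(M_{\mathrm{ext}})|-k}\cdot \rho_{\op{III}}^{|V(M_{\mathrm{int}})|}$, so conditionally on $(M_{\mathrm{ext}}, C)$ and on the side of $C$ containing $f^*$, the other piece is, up to the constraint that the combined map remain type III, a Boltzmann sample on the $k$-gon (marked if $f^* \in M_{\mathrm{int}}$, unmarked otherwise).

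I would split into two cases depending on whether $f^*$ lies in $M_{\mathrm{int}}$ or in $M_{\mathrm{ext}}$. In the first case, $|V^\Delta[C]| \geq \alpha p^2$ becomes a tail event for $|V|$ under $\Bolm(k)$; in the second case, $|V^\inn[C]| \geq \alpha p^2$ becomes a tail event for $|V|$ under $\Bol_{\op{III}}(k)$. The key ingredient is that for each fixed $k \geq 3$ there is a constant $C_k$ such that for all $N \geq 1$,
\[
\Bolm(k)(|V| \geq N) \leq C_k N^{-1/2}, \qquad \Bol_{\op{III}}(k)(|V| \geq N) \leq C_k N^{-3/2}.
\]
These follow from the standard asymptotic $|\Delta_{\op{III}}(k, n)| \rho_{\op{III}}^n \sim c_k n^{-5/2}$ for critical Boltzmann triangulations with fixed boundary; alternatively, they can be read off from the construction of Section~\ref{subsec:sample}, since $|V(\Bolm(k))|$ is essentially the sum of $2k-3$ iid critical Galton--Watson trees with offspring distribution $B$ (satisfying $\mathbb{E}[B]=1$, $\mathrm{Var}[B]=4/3$), whose maximum has a tail of order $N^{-1/2}$.

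To convert these conditional tails into an unconditional bound, I would control the expected number of separating $k$-cycles. Summing $\rho_{\op{III}}^{|V(M)|}$ over pairs $(M, C)$ with $|C| = k$ and factoring the weight across $C$ yields $k^{-1} \rho_{\op{III}}^{-k} A_{p,k} Z_k$, where $A_{p,k}$ is the Boltzmann weight of doubly rooted type III annulus triangulations with outer perimeter $p$ and inner perimeter $k$. ``Filling in'' the inner $k$-boundary with a fixed minimal non-crossing triangulation of the $k$-gon gives an injection (up to type III gluing constraints, which only work in our favour for an upper bound) from such annulus triangulations into rooted pairs $(M, C, \text{root on }C)$ with $M \in \Delta_{\op{III}}(p)$. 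Together with the standard bound $\mathbb{E}_p[\#\{\text{simple }k\text{-cycles in }M\}] = O_k(p^2)$ and with Lemma~\ref{lem:RN} giving $Z_p^* / Z_p \asymp p^2$, this yields $\mathbb{E}_p[N_k(M)] = O_k(p^2)$ and $\mathbb{E}^*[\#\{k\text{-cycles with } f^* \in M_{\mathrm{int}}\}] = O_k(1)$. Combining with the tail estimates, each of the two cases contributes at most $O_k(p^{-1})$ to $\Bolm(p)(\Gamma_K(\alpha))$, and summing over $k \in \{3,\ldots,K\}$ gives the desired conclusion.

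The main obstacle is the combinatorial bookkeeping. The ``cut-and-paste'' decomposition $M \leftrightarrow (M_{\mathrm{ext}}, M_{\mathrm{int}})$ overcounts because an arbitrary gluing of type III pieces along $C$ may create parallel edges; since only an upper bound on the probability is needed the overcounting is harmless, but one must carefully track the rooting on $C$ (a cyclic factor of $k$), the case distinction on whether $f^*$ is inside or outside $C$, and the ``filling-in'' injection. The $n^{-5/2}$ asymptotic for $|\Delta_{\op{III}}(k,n)|\rho_{\op{III}}^n$ is a standard universality result; for this lemma only the one-sided upper bound $\leq c_k n^{-5/2}$ is required, which can be obtained from a local-limit-theorem analysis of the random-walk encoding of the forests described in Section~\ref{subsec:sample}.
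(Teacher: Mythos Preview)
Your strategy---cut along the cycle, factor the Boltzmann weight, and control the small-perimeter piece by a polynomial tail on its volume---is exactly the paper's. The paper implements it by first conditioning on the total number of inner vertices $n$ and then summing over the split $m+(n-m)$ using the explicit asymptotics $t_{m,k}\rho^m\asymp m^{-5/2}$ and $t_{n-m,p,k}\rho^{n-m}\asymp\gamma^p p^{1/2}(n-m)^{-3/2}$; this gives a conditional bound of order $p^{-3}n$, which is then averaged over~$n$. Your version skips the conditioning and factors the partition functions directly.

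There is, however, a genuine gap in your Case~2 ($f^*\in M_{\mathrm{ext}}$). After factoring, the outer sum is the partition function of \emph{face-marked} type~III annuli with outer perimeter $p$ and inner perimeter $k$, and that quantity is \emph{infinite}: since $t_{n,p,k}\rho^n\asymp\gamma^p p^{1/2}n^{-3/2}$ for $n\gtrsim p^2$ and a face mark contributes an extra factor $\asymp n$, the sum behaves like $\sum_n n^{-1/2}$ and diverges. Hence the product ``(marked annulus)\,$\times$\,(tail $O_k(p^{-3})$ on $M_{\mathrm{int}}$)'' is of the form $\infty\cdot 0$ and gives no bound. Your estimate $\mathbb E_p[N_k(M)]=O_k(p^2)$ is under the \emph{unmarked} law $\P_p$, and Lemma~\ref{lem:RN} only gives uniform integrability of $d\P^*_p/d\P_p$, not a pointwise bound, so you cannot transport an expectation from $\P_p$ to $\P^*_p$ by a constant factor. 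There are two clean fixes. Either condition on $|V(M)|=n$ first, as the paper does (the constraint $m+(n-m)=n$ truncates the annulus size and makes every sum finite); or run your factorization entirely under $\P_p$---there the unmarked annulus partition function is $\le\rho_{\op{III}}^{-(k+3)}Z^*_p<\infty$ via your filling-in injection, so your argument \emph{does} give $\P_p(\Gamma_K(\alpha))=O_k(p^{-1})$---and then transfer to $\P^*_p$ via the uniform-integrability half of Lemma~\ref{lem:RN}: for each $\epsilon>0$ pick $L$ with $\sup_p\mathbb E_p\big[(d\P^*_p/d\P_p)\,\1_{\{d\P^*_p/d\P_p>L\}}\big]<\epsilon$ and bound $\P^*_p(\Gamma_K(\alpha))\le L\,\P_p(\Gamma_K(\alpha))+\epsilon$.
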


To state the second lemma, we introduce some further notations. For $p\geq 3$ and $F\in \mathcal{F}^*_p$, recall the definition of the contour exploration $\beta:[0,2|V(F)|-p]_{\Z}\rightarrow V(F)$. For $i\in\{0,\dots, 2|V(F)|-p\}$ and $\Delta >0$, let: 
\begin{align*}
  g_F(i,\Delta)&=\sup\{j<i\,:\,|X(\beta(j))-X(\beta(i))|\geq \Delta\text{ or }j=0\}\\
  d_F(i,\Delta)&=\inf\{j>i\,:\,|X(\beta(j))-X(\beta(i))|\geq \Delta\text{ or }j=2|V(F)|-p\}
\end{align*}
Then let $N(i,\Delta)=\{v\in V(F)\,:\,\exists j\in \{g_F(i,\Delta),\ldots,d_F(i,\Delta)\} \text{ with }\beta(j)=v\}$ be the set of vertices visited by the contour exploration at least once between times $g_F(i,\Delta)$ and $d_F(i,\Delta)$. The following lemma, whose proof is omitted, is a straightforward generalization of~\cite[Lemma~8.2]{ABA-Simple} and relies on the convergence result established in Proposition~\ref{prop:contour-label}.\footnote{The convergence established in Proposition~\ref{prop:contour-label} is in fact for the height process rather than for the contour process but classical results (see e.g.\ \cite[Section~1.6]{LeGallSurvey}) ensure that both processes converge jointly to the same limit.}
\begin{lemma}\label{lem:regContour} For all $\varepsilon,\gamma>0$ there exists $\alpha$ such that for $p$ sufficiently large,
  \[
  \p{\inf\big\{|N(i,\gamma \sqrt{p})|\,:\,0\leq i \leq 2|V(F_{p}^*)|-p\big\}\geq \alpha p^2}\geq 1-\varepsilon.
  \]
\end{lemma}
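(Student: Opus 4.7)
My plan is to combine the convergence of the rescaled contour and label processes established in Proposition~\ref{prop:contour-label} with a truncation argument controlling the contribution of high-degree vertices.

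First, I would upgrade Proposition~\ref{prop:contour-label} from the height parametrization to the contour parametrization: by the standard comparison between height and contour functions for plane forests (see~\cite[Section~1.6]{LeGallSurvey} and the analogue of~\eqref{eq:concentration} applied also to the label increments), the rescaled pair $(C_{(p)},\lambda^{\mathrm{ctr}}_{(p)})$ consisting of the contour function and the contour-indexed label jointly converges in law in the uniform topology to $(\CT,\LP)_{[0,\cA]}$. I would then invoke Skorokhod's representation theorem to fix a coupling under which this convergence is almost sure.

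Next I would work in the scaling limit. Set $\Delta\defeq\gamma\sqrt{3/2}$ and define the continuous analogues
\begin{align*}
g(t,\Delta)&=\sup\bigl(\{s<t:|\LP(s)-\LP(t)|\ge\Delta\}\cup\{0\}\bigr),\\
d(t,\Delta)&=\inf\bigl(\{s>t:|\LP(s)-\LP(t)|\ge\Delta\}\cup\{\cA\}\bigr).
\end{align*}
Using only continuity of $\LP$ on the compact interval $[0,\cA]$, one shows that $\alpha_1\defeq\inf_{t\in[0,\cA]}(d(t,\Delta)-g(t,\Delta))>0$ almost surely: otherwise a sequence $t_n$ realizing the infimum would admit a subsequential limit $t^\star$ with both $g(t_n,\Delta)$ and $d(t_n,\Delta)$ converging to $t^\star$, and passing to the limit in $|\LP(g(t_n,\Delta))-\LP(t_n)|\ge\Delta$ (treating the boundary cases $s=0$ and $s=\cA$ separately) would contradict continuity of $\LP$ at $t^\star$. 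Combined with the almost sure uniform convergence of $\lambda^{\mathrm{ctr}}_{(p)}$ to $\LP$, this gives, on an event of probability at least $1-\varepsilon/2$ and for all sufficiently large $p$,
\[d_{F_p^*}(i,\gamma\sqrt{p})-g_{F_p^*}(i,\gamma\sqrt{p})\ge \tfrac{1}{2}\alpha_1 p^2\qquad\text{for every }i.\]

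The final step, which I expect to be the main obstacle, is to convert this lower bound on contour-time intervals into a lower bound on the number of \emph{distinct} vertices visited. Since each vertex $v$ of $F_p^*$ is visited exactly $\deg(v)$ times by its contour (up to an $O(p)$ correction coming from the root face), a long time interval could in principle be absorbed by very few high-degree vertices. To rule this out, I would truncate at degree $K$: the offspring distributions $G$ and $B$ of Section~\ref{subsec:sample} have exponential tails, so for any $\delta>0$ one can choose $K=K(\delta,\varepsilon)$ such that
\[\e\Bigl[\,\sum_{v\in V(F_p^*):\,\deg(v)>K}\deg(v)\,\Bigr]\lesssim p^2\,\e\bigl[B\mathbf{1}_{B>K}\bigr]<\tfrac{\delta\varepsilon}{4}\,p^2,\]
and Markov's inequality then yields, with probability at least $1-\varepsilon/4$, that the total contour time $T_{>K}$ spent at vertices of degree $>K$ is at most $\delta p^2$. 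On the intersection of this event with the one from the previous paragraph, for each $i$ the contour time in $[g_{F_p^*}(i,\gamma\sqrt p),d_{F_p^*}(i,\gamma\sqrt p)]$ spent at vertices of degree at most $K$ is at least $\tfrac12\alpha_1 p^2-\delta p^2-O(p)$; since each such vertex is visited at most $K$ times, this forces $|N(i,\gamma\sqrt{p})|\ge K^{-1}\bigl(\tfrac12\alpha_1 p^2-\delta p^2-O(p)\bigr)$. Choosing $\delta=\alpha_1/4$ and $\alpha=\alpha_1/(8K)$ then yields $\inf_i|N(i,\gamma\sqrt p)|\ge\alpha p^2$ with probability at least $1-\varepsilon$ for $p$ large, as required.
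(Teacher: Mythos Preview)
Your overall strategy (convergence of the contour/label pair, compactness of~$\LP$ to get a lower bound on the contour-time window, then conversion to a vertex count) is the natural one and matches what the paper has in mind when it cites~\cite[Lemma~8.2]{ABA-Simple}. However, the third step as written does not work.

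\textbf{The main gap.} Under $\Bolm(p)$ the size $|V(F_p^*)|$ has \emph{infinite} expectation: by~\eqref{eq:devtnp} one has $\P[|V(\Map_p)|=n]\asymp n^{-3/2}$ after the face reweighting, and equivalently each $T_i$ is a critical Galton--Watson tree (offspring $B$ with $\E{B}=1$) so $\E{|T_i|}=\infty$. Hence your displayed bound
\[
\e\Bigl[\sum_{v:\deg(v)>K}\deg(v)\Bigr]\lesssim p^2\,\e[B\1_{B>K}]
\]
is false: both sides involve an implicit factor $\E{|V(F_p^*)|}=\infty$, and Markov's inequality gives nothing. This is the step that needs to be replaced, not merely tightened.

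\textbf{A clean fix.} You can bypass the degree truncation entirely. In the contour exploration every $+1$ increment of $C_{F}$ is the \emph{first} visit to some vertex, so the number of distinct vertices visited in a contour interval $[a,b]$ is at least the number of up-steps there, namely $\tfrac12\bigl((b-a)+C_F(b)-C_F(a)\bigr)$. Since the rescaled contour function converges to $\CT$ (the remark after~\eqref{eq:tightctr}), $\|C_F\|_\infty=O(p)$ with high probability, while $b-a\ge \tfrac12\alpha_1 p^2$ from your step~2. For large $p$ this gives $|N(i,\gamma\sqrt p)|\ge (b-a)/4$ uniformly in $i$, and no moment bound on the tree size is needed.

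\textbf{A minor circularity.} In your final paragraph you set $\delta=\alpha_1/4$ and $\alpha=\alpha_1/(8K)$, but $\alpha_1=\inf_t(d(t,\Delta)-g(t,\Delta))$ is a random variable, whereas $K=K(\delta,\varepsilon)$ must be chosen deterministically for the truncation bound (or for the event in step~2) to make sense. The fix is routine: since $\alpha_1>0$ a.s., first pick a deterministic $\alpha_1^*>0$ with $\P[\alpha_1\ge\alpha_1^*]\ge1-\varepsilon/4$, then base all subsequent choices on $\alpha_1^*$.
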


Lastly, the third lemma is the following result.
\begin{lemma}\label{lem:nottouching} For all $\varepsilon>0$, let $u$ be sampled uniformly in $V(\Map_p^*)$. Then for $p$ sufficiently large,
  \[
  \p{\exists e\in E(\Map_p^*), \text{ with }u\in e\,:\,P(e)\text{ contains a vertex incident to the root face of }\Map_p^*}\leq \varepsilon.
  \]
\end{lemma}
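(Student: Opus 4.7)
The plan is to translate the statement into the forest $F_p^*$ via Lemma~\ref{prop:successors} and then combine the scaling limits of Section~\ref{sec:process} with a probabilistic counting argument that exploits the exchangeability built into the sampling of Section~\ref{subsec:sample}.

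By Lemma~\ref{prop:successors}, $P(e)$ contains a boundary vertex of $\Map_p^*$ if and only if the iterated successor orbit $\bigl(s^k(\kappa^{\op\ell}(e))\bigr)_{k\ge 0}$ in $F_p^*$ contains a corner incident to one of $\rho_1,\dots,\rho_p$. Since the offspring distribution $B$ from~\eqref{eq:EB} has finite variance, the vertex degrees of $F_p^*$ are uniformly integrable, so up to a bounded multiplicative factor, sampling $u$ uniformly in $V(\Map_p^*)$ is equivalent to taking $u=\beta(t_u)$ for $t_u$ uniform on $[0,2|V(F_p^*)|-p]_\mathbb{Z}$. By Proposition~\ref{prop:contour-label}, Lemma~\ref{lem:coupling}, and the standard fact that $\LP$ almost surely attains its minimum at a unique interior time, I would restrict to an event of probability at least $1-\varepsilon/2$ on which both $u$ and $v^\star$ lie in $\mathring V(\Map_p^*)$ and $|X(u)|+|\ell_{\min}|\le C\sqrt p$ for some $C=C(\varepsilon)$; Corollary~\ref{cor:labelLMP} then yields $|P(e)|\le 2C\sqrt p+3$ for every edge $e$ incident to $u$.

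The heart of the argument is to bound the probability that this orbit of length $O(\sqrt p)$ hits one of the $O(p)$ corners of $F_p^*$ incident to a boundary vertex. There are $\Theta(p^2)$ corners in $F_p^*$ carrying $\Theta(\sqrt p)$ distinct label values, so typically $\Theta(p^{3/2})$ corners per label value, among which only $O(\sqrt p)$ are boundary-incident since $\sum_{i=1}^p \deg_{F_p^*}(\rho_i)=O(p)$. If the $k$-th corner of the orbit were approximately uniform among the corners of its label $\lambda^{\op\ell}(e)-k$, a union bound over $k$ would yield an expected number of boundary-incident hits of $O(\sqrt p\cdot\sqrt p/p^{3/2})=O(p^{-1/2})$, which is $\le\varepsilon/2$ for $p$ large. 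To justify the approximate uniformity I would exploit the sampling of Section~\ref{subsec:sample}: conditionally on the bridge $b$ and the multiset of tree shapes, the trees $T_1,\dots,T_{2p-3}$ are exchangeable, and a re-randomization of the labelling of the trees preserves the law of $F_p^*$ while making the orbit corner at each label roughly uniform within the exchangeable class.

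The main obstacle is implementing this re-randomization cleanly, because the successor function depends both on the cyclic contour order and on the label values, and reshuffling the trees changes the orbit itself. My proposed route is to first condition on the ``label skeleton'' of the orbit (i.e.\ which label values are visited and in what relative order the involved trees appear), and then apply exchangeability only to the identity of the trees carrying each label value, so that at each step the choice among boundary-incident versus inner candidate corners becomes a controlled random draw. A secondary technical issue is the single possible second-type (wrap-around) successor along $P(e)$; by Corollary~\ref{cor:labelLMP} this event can only occur when the current orbit label is within $O(1)$ of $\ell_{\min}$, and I would handle it by a separate and independent union bound over this small set of labels together with the observation from Remark~\ref{rmk:embedding} that a second-type successor's target can be compared directly to a uniform corner of the correct label.
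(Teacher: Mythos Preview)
Your opening translation via Lemma~\ref{prop:successors} matches the paper, but the core of your argument diverges. The paper does not give a full proof; it only points to \cite[Lemma~19]{BeMi15} for the continuum analogue and to Proposition~\ref{prop:contour-label} and Lemma~\ref{prop:successors} for the transfer. Unpacking this: the successor orbit $\bigl(s^k(\kappa^{\op\ell}(e))\bigr)_{k\ge0}$ is exactly the sequence of running-infimum record corners of the label along the contour from $\kappa^{\op\ell}(e)$, while boundary-incident corners are the times at which the contour function sits at its own running infimum. Under Proposition~\ref{prop:contour-label} these become, respectively, the running-infimum record times of $\LP$ and the zero set of $\CT-\underline\CT$; that the former avoid the latter from a uniform start is the content of \cite[Lemma~19]{BeMi15}. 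No counting or exchangeability enters.

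Your counting heuristic gives the right orders of magnitude, but the exchangeability justification has a genuine gap that you yourself flag and do not close. Permuting $T_1,\dots,T_{2p-3}$ preserves the law of $F_p^*$, yet it does not make the $k$-th orbit corner approximately uniform among corners of its label: it simply replaces one deterministic orbit by another. The ``label skeleton'' you propose to condition on---which trees the orbit enters and in what order---is not measurable with respect to any sigma-field coarse enough to leave exploitable residual exchangeability; specifying it essentially specifies the orbit. Concretely, the orbit from a corner $\kappa$ hits a given boundary corner $\kappa_b$ precisely when $\kappa$ lies in the contour interval between $\kappa_b$ and the previous corner of label at most $\lambda(\kappa_b)$; these excursion-interval lengths are heavy-tailed and correlated with the bridge $b$ through the very structure your permutation is meant to randomize. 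The scaling-limit route in the paper sidesteps this difficulty entirely.
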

We omit the proof of this result, but remark that \cite[Lemma~19]{BeMi15} gives the corresponding continuum property, and that, using Proposition~\ref{prop:contour-label} and Lemma~\ref{prop:successors}, we can transfer the result to the discrete setting, following the continuum proof.
\smallskip

The remainder of the proof of Proposition~\ref{prop:lower} is now an almost verbatim translation of the proof of~\cite[Theorem~8.1]{ABA-Simple} to our setting.
\begin{proof}[Proof of Proposition~\ref{prop:lower}]
  Denote by $\Delta X$ the maximal difference between the labels of two vertices of $F_p$, that is 
  \[
  \Delta X = \max\{X(v),\,v\in V(F_p)\}-\min\{X(v),\,v\in V(F_p)\}.
  \] 
  Thanks to Proposition~\ref{prop:contour-label}, $\Delta X\,p^{-1/2}$ converges in distribution to an almost surely finite random variable. It follows that there exists $y>0$ such that $\p{\Delta X\geq yp^{1/2}}<\varepsilon$. Fix such a $y$ for the rest of the proof. Now, denote by $\mathring{\Map}_p$ the subgraph of $\Map_p^*$ induced by its inner vertices. Let $B$ be the event that $\mathring{\Map}_p$ contains a cycle $C$ of length at most $y/\varepsilon$ and such that for $\delta\in\{\Delta,\inn\}$ we have: 
  \[
  \max_{v\in V^\delta[C]}X(v)-\min_{v\in V^\delta[C]}X(v)\geq 2\varepsilon p^{1/2}-4.
  \]
  Let now $u$ be a uniformly random vertex of $\Map_p^*$. Let $E$ be the event that there is no modified leftmost path starting at $u$ that contains a vertex incident to the root face of $\Map_p^*$.
  By Lemma~\ref{lem:nottouching}, $\lim_{p\rta\infty}\p{E}=1$, so it is enough to prove the result on $E$, and we assume that $E$ holds until the end of the proof.
  
  Suppose now  that $d_{\mathring{\Map}_p}(u,v^\star)<X(u)-15\cdot8\varepsilon p^{1/2}$. By Claim~\ref{claim:shortcuts} it implies that, for $p$ large enough, there exists $v\in \mathring{\Map}_p$ a neighbor of $u$ and $Q$ a geodesic path in $\mathring{\Map}_p$ from $u$ to $v^\star$ such that $\sigma(Q,uv)> 7\varepsilon p^{1/2}$. 
  
  Next, if $B$ does not occur, then by Proposition~\ref{prop:wrappings}, $\dfrac{6|Q|}{\sigma(Q,uv)-2}+3\geq \dfrac{y}{\varepsilon}$. 
  Since $E$ holds, we know that if $P$ is a modified leftmost path starting from $u$, then $P$ contains only vertices of $\mathring{\Map}_p$. Since $Q$ is a geodesic path in $\mathring{\Map}_p$, we get $|Q|\leq|P|$ for all modified leftmost paths $P$ starting from $u$. Using this and Proposition~\ref{prop:LMPreaches}, we get $|Q|\leq X(u)-X(v^\star)+3\leq \Delta X+3$. Combining this with $\dfrac{6|Q|}{\sigma(Q,uv)-2}+3\geq \dfrac{y}{\varepsilon}$, we get $\Delta X\geq yp^{1/2}$. Therefore, either $B$ occurs or $\Delta X\geq yp^{1/2}$. Since $\p{\Delta X\geq yp^{1/2}}<\varepsilon$, in order to conclude the proof it is sufficient to establish that: 
  \[
  \p{B} < 2\varepsilon.
  \]
  Suppose $B$ occurs, let $C$ be as in the definition of $B$ and let respectively $F_p^\Delta$ and $F_p^\inn$ be the subgraphs of $F_p^*$ induced by $V^\Delta[C]$ and $V^\inn[C]$. For $\delta\in\{\inn,\Delta\}$, each connected component of $F_p^\delta$ is a forest contained in $V^\delta[C]$. Furthermore all the edges of $\Map_p^*$ that do not belong to $F_p^\Delta\cup F_p^\inn$, belong to $E(\mathring{\Map}_p)$. Since Claim~\ref{claim:labelsNeighbours} gives that for any $\{u,v\}\in E(\mathring{\Map}_p)$ we have $|X(u)-X(v)|\leq 3$, it follows that, for $\delta\in \{\inn,\Delta\}$, if $V^\delta[C]$ contains $k$ components of $G$, then at least one such component $T_h$ must have: 
  \[
  \max_{u\in V(T_h)}X(u)-\min_{u\in V(T_h)}X(u)>\frac{2\varepsilon p^{1/2}-4}{k}-3.
  \]
  Since $|C|\leq y/\varepsilon$, $V^\delta[C]$ has at most $y/\varepsilon$ components of $G$, so:
  \[
  \max_{u\in V(T_h)}X(u)-\min_{u\in V(T_h)}X(u)>\frac{2\varepsilon^2 p^{1/2}-4\varepsilon}{y}-3.
  \]
  
  Using again that labels of adjacent vertices in $\mathring{\Map}_p$ differ by at most three and that $|C|\leq y/\varepsilon$, we get that for $\delta\in\{\inn,\Delta\}$, there is $v_\delta\in V^\delta[C]$ such that: 
  \[
  \min_{v\in V(C)}|X(v_\delta)-X(v)|\geq \frac{\varepsilon^2 p^{1/2}-2\varepsilon}{y}-\frac{3\varepsilon}{2}-\frac{3y}{2\varepsilon}.
  \]
  Now, let $j_\delta=\inf\{0\leq i \leq 2|F_p^*|-p\,:\,\beta(i)=v_\delta\}$. Fix $\gamma \in (0,\varepsilon^2/y)$ and let $\alpha$ be such that the conclusion of Lemma~\ref{lem:regContour} holds. For $p$ large enough such that $(\varepsilon^2 p^{1/2}-2\varepsilon)/y-3\varepsilon/2-3y/2\varepsilon>\gamma p^{1/2}$, for $\delta\in\{\inn,\Delta\}$, we also have $N(j_\delta,\gamma p^{1/2})\subset V^\delta[C]$, and it follows that for $p$ sufficiently large: 
  \begin{equation*}
    \p{B}< \varepsilon + \P\Big(\exists C\text{ cycle in }\mathring{\Map}_p\,:\,|C|\leq \frac{y}{\varepsilon},\min\big(V^\inn[C],V^\Delta[C]\big)\geq \alpha p^{2}\Big),
  \end{equation*}
  which concludes the proof in view of Lemma~\ref{lem:sepCircles}.
\end{proof}
\begin{proof}[Proof of Lemma~\ref{lem:sepCircles}]
  The number $t_{n,p}$ of simple triangulations of the $p$-gon with $n$ inner vertices has been computed in \cite{BrownTriangulation}, and has the asymptotic form $t_{n,p} \underset{n\rightarrow\infty}{\sim} A_p \rho^{-n} n^{-5/2}$, where $A_p$ is an explicit constant and $\rho=27/256$. Moreover, this asymptotic behavior can be refined when $p$ grows to infinity together with $n$ in the following way. There exists $c_1,c_2>0$ such that for all $n$ and $p$ with $n\geq \alpha p^2$, 
  \begin{equation}\label{eq:devtnp}
  c_1\gamma^p p^{1/2}\rho^{-n} n^{-5/2}\leq t_{n,p}\leq
  c_2\gamma^p p^{1/2}\rho^{-n} n^{-5/2}, \text{ with }\gamma=64/9.
  \end{equation}
  
  For $\ell_1,\ell_2 \geq 3$, a simple triangulation of a cylinder with perimeters $\ell_1$ and $\ell_2$ is a rooted simple planar map with a root face of degree $\ell_1$ and a marked face of degree $\ell_2$ both with simple boundaries, such that no vertices are incident to both the root and the marked faces and such that all the other faces are triangles. A vertex incident neither to the root nor the marked face is called an inner vertex. Let $t_{n,\ell_1,\ell_2}$ be the number of simple triangulations of a cylinder with perimeters $\ell_1$ and $\ell_2$, with $n$ inner vertices. By adding three vertices in the marked face of degree $\ell_2$ and triangulating it, we produce a simple triangulation of a $\ell_1$-gon with a marked triangular face, and with $n+\ell_2+3$ inner vertices. Reciprocally, given an element of $\mtrign[\ell_1]{n}$ (recall the definition of $\Delta_{p,n}^\Delta$ from the beginning of Section \ref{sub:orientations}), we can construct a triangulation of the cylinder with perimeters $\ell_1$ and $\ell_2$ and $n$ inner vertices by adding some triangles in the marked triangular face, such that the three vertices on the marked face are all inner vertices of the cylinder. Observing that both these constructions can be made injective, we get that
  \[
  t^\Delta_{n,\ell_1}\leq t_{n,\ell_1,\ell_2}\leq t^\Delta_{n+\ell_2+3,\ell_1},
  \]
  where $t_{n,p}^\Delta=|\Delta_{p,n}^\Delta|$.
  
  Since a triangulation of the $p$-gon with $n$ inner vertices has $2n+p-2$ triangular faces, the bounds given above combined with the one given in \eqref{eq:devtnp} translate immediately to the following. For any $K\in \mathbb{N}$, there exists $c_1,c_2>0$ such that for all $n$, $\ell_1$ and $\ell_2$ with $n\geq \alpha \ell_1^2$ and $\ell_2\leq K$, then
  \begin{equation*}
    c_1\gamma^{\ell_1} {\ell_1}^{1/2}\rho^{-n} n^{-3/2}\leq t^\Delta_{n,\ell_1}\leq
    c_2\gamma^{\ell_1} {\ell_1}^{1/2}\rho^{-n} n^{-3/2}
  \end{equation*}
  and
  \begin{equation}\label{eq:devtnl}
  c_1\gamma^{\ell_1} {\ell_1}^{1/2}\rho^{-n} n^{-3/2}\leq t_{n,\ell_1,\ell_2}\leq
  c_2\gamma^{\ell_1} {\ell_1}^{1/2}\rho^{-n} n^{-3/2}.
  \end{equation}
  
  To establish the bound on $\Bolm(p)(\Gamma_K(\alpha))$, let us first consider the case where $C$ separates the root face of $\Map_p^*$ from its marked triangular face. Then, the graph induced by $V^\Delta[C]$ is a triangulation of a $|C|$-gon with a marked triangular face and the graph induced by $V^\inn[C]$ is a triangulation of a cylinder with perimeters $p$ and $|C|$. Similarly, if $C$ does not separate the root face and the marked triangular face, then the graph induced by $V^\Delta[C]$ is a triangulation of a cylinder with perimeters $p$ and $|C|$ with an additional marked triangular face and the graph induced by $V^\inn[C]$ is a triangulation of a $|C|$-gon. 
  
  Consequently, there is an injection from the set of maps satisfying the event $\Gamma_K(\alpha)$ and pairs consisting of a cylinder and a triangulated polygon.  Recall that a triangulation of the $p$-gon with $n$ inner vertices has $2n+p-2$ triangular faces and that a triangulation of a $(\ell_1,\ell_2)$-cylinder with $n$ inner vertices has $2n+\ell_1+\ell_2$ triangular faces. Hence, for $n \geq \alpha p^2$ and sufficiently large $p$, 
  \begin{align*}
    \Bolm(p)&\left(\Gamma_K(\alpha)\,\big|\,|\Map_p^*|=n\right)\\
    \leq&\, \frac{1}{t^\Delta_{n,p}}\sum_{k=3}^{K}\sum_{m=\lfloor \alpha p^2\rfloor}^{n-\lfloor \alpha p^2\rfloor}\Big(t_{m,k}t_{n-m,p,k}\big((2m+k-2)+(2(n-m)+k+p)\big)\Big)\\
    \leq&\,2 \frac{p^2}{t^\Delta_{n,p}}\sum_{k=3}^{K}\int_{\alpha}^{\frac{n}{p^2}-\alpha}\Big(t_{\lfloor up^2\rfloor,k}t_{\lfloor n-up^2\rfloor,p,k}\big((2\lfloor up^2\rfloor+k-2)+(2\lfloor n-up^2\rfloor+k+p)\big)\Big)du\\
    \leq&\,p^{-3}n\sum_{k=3}^KA_k\int_{\alpha}^{\frac{n}{p^2}-\alpha}u^{-5/2}(1-up^2/n)^{-3/2}du\\
    =& \,p^{-3}n \sum_{k=3}^K A_k\Big(\frac{n}{p^2}\Big)^{-3/2}\int_{p^2\alpha/n}^{1-p^2\alpha/n} v^{-5/2}(1-v)^{-3/2}dv\\
    \leq&\,A_{K,\alpha}p^{-3}n,
  \end{align*}
  where $A_k$ is a constant depending only on $k$ and $A_{K,\alpha}$ is a constant depending only on $K$ and $\alpha$.
  
  Since $\Bolm(p)\left(\Gamma_K(\alpha)\,\big|\,|\Map_p^*|=n\right)=0$ for $n\leq \alpha p^2$, we get that
  \[
  \Bolm(p)\big(\Gamma_K(\alpha)\big)\leq A_{K,\alpha}p^{-3}\E{|\Map_p^*|}.
  \]
  By \cite[Proposition~6.4]{Angel-Growth}, the right-hand term converges to $0$ when $p$ tends to infinity, which yields the result.
\end{proof}

\section{Coupling between different types of triangulations}\label{sec:kernel}

In this section we present a folklore coupling of $\Bol_{i}(p)$ for $i\in \{\op{I},\op{II},\op{III}\}$. 
The idea is to use the so-called core construction due to Tutte \cite{Tutte}. Namely, 
if we start from a sample from $\Bol_{\op{I}}(p)$, 
then its so-called 2-connected core has the law of a sample from $\Bol_{\op{II}}(p)$ (see Proposition~\ref{prop:ratio}). 
Furthermore, its so-called maximal simple core has the law of a sample from $\Bol_{\op{III}} (p)$ (see Proposition~\ref{prop:ratio2}).
In this coupling, it is not hard to see that both the metric and the measure are unchanged in the proper scaling limit. We will explain this point in Section~\ref{subsec:compare}. 
We point out that this section is close in spirit to \cite{ABW-Core}, which studies the case of quadrangulations of a fixed size without boundary, and we take some technical input from that paper. However, since we focus on the Boltzmann case, we do not need the very fine enumerative asymptotic analysis used in \cite{ABW-Core}. 

\subsection{A coupling between type II and III triangulations}\label{subsec:coupling1}
For integers $n\ge p\ge 3$ and $i\in\{ \op{I},\op{II},\op{III} \}$, recall that $\Delta_i(p,n) $ is the set of type  $i$ triangulations of the $p$-gon of size $n$ and $\Delta_i(p)=\bigcup_{n\ge p}\Delta_i(p,n)$. For technical reasons we extend the definition of   $\Delta_{\op{II}}(p,n) $ and $\Delta_{\op{II}}(p) $ to the case $p=2$.
For $n>2$, let $\Delta_{\op{II}}(2,n) $ be the set of maps of size $n$ such that all its non-root faces have degree 3 and its root face has degree $2$, and, moreover, there are no self-loops.
We adopt the convention that $\Delta_{\op{II}}(2,2)$ is the set with a single element -- the map with two edges sharing the same endpoints. We still set $\Delta_{\op{II}}(2)=\bigcup_{n\ge 2}\Delta_{\op{II} }(2,n)$.
Recall the notation $\IF(M)$ in Section~\ref{subsec:notation} for the set of inner faces of $M$.  We make the convention that $\IF(M)=\emptyset$ if $M\in \Delta(2,2)$.

Following \cite[Section~2]{UIPT}, let $Z_p(t)\defeq\sum_n|\Delta_{\op{II}}(p,n)|t^n$ for each $p\ge 2$. 
Then  $\rho_{\op{II}}=2/27$
is the convergence radius of $Z_p(t)$. 
Let $\Bol_{\op{II}}(p)$ be the probability   on $\Delta_{\op{II}}(p)$ 
where each element in $\Delta_{\op{II}}(p,n)$ is assigned  probability $\rho_{\op{II}}^n/Z_p(\rho_{\op{II}})$. Then for $p\ge 3$ this measure coincides with  $\Bol_{\op{II}}(p)$ defined in Theorem~\ref{thm:rigor}.

Fix $p\ge 3$. 
We now describe the simple  core construction for maps in $\Delta_{\op{II}}(p)$ following the presentation of \cite[Section~1.1]{ABW-Core} for quadrangulations 
(see also the general framework presented in~\cite[Section~5.1]{Airy}). An example is given on Figure~\ref{fig:core}. 
Let $S\in \Delta_{\op{III}}(p)$ be a simple triangulation with root edge $uv$. 
List the vertices of $S$ in a breadth-first order
as $u_1,\dots, u_n$, i.e., the distance to $u$ is non-decreasing. Then list the edges of $S$ as $uv= e_1,\dots, e_k$, and orient the edges ``away from'' $uv$ so that the tail precedes
the head in the breadth-first order.\footnote{The breath-first ordering is introduced only for the purpose of  assigning orientations to the edges of $S$. 
  We assume the breath-first ordering is chosen in a way only depending on $S$.}
For  $1\le i\le k$, let $M_i\in \Delta_{\operatorname{II}}(2)$ be sampled from $\Bol_{\op{II}}(2)$, independently for each $i$.
For each  $1\le i\le k$, identify the edge $e_i$ with the root edge of $M_i$. This way we attach $M_i$ to $e_i$ so that it is on the left side of $e_i$.
When $M_i\in\Delta_{\op{II}}(2,2)$, we collapse its two edges into a single one.
This operation forms  a map $M\in \Delta_{\op{II}}(p)$ with
\begin{equation}\label{eq:decomph}
|\IF(M)|=|\IF(S)|+\sum_{i=1}^{k} |\IF(M_i)|.
\end{equation}
Moreover, this construction is a bijection in the sense that any $M\in \Delta_{\op{II} }(p)$ can be obtained this way, and given $M$, the maps $(S,M_1, \cdots, M_k)$ are uniquely determined.
In this bijection, 
we call $S$ the \notion{simple core} of $M$ and denote it by $\simple(M)$.  We call  $(S,M_1, \cdots, M_k)$ the \notion{simple core decomposition} of $M$.  
\begin{figure}
\centering
\includegraphics[width=0.9\linewidth,page=2]{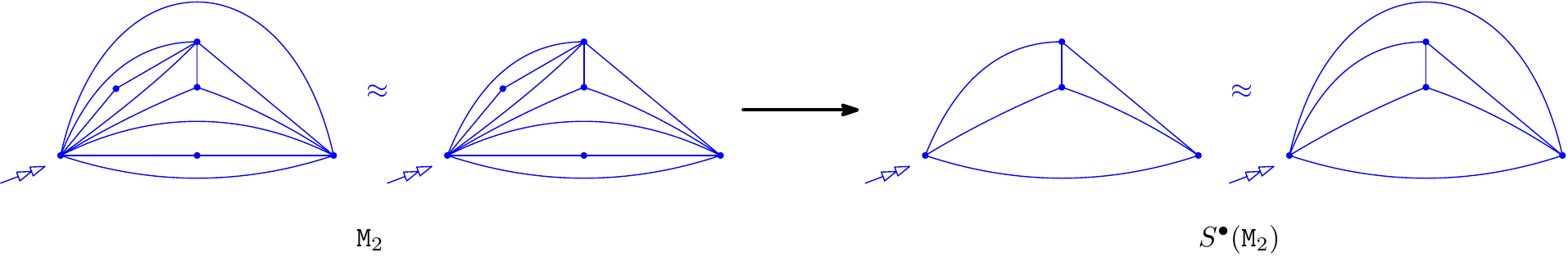}
\caption{\label{fig:core} An example of the simple core construction and decomposition. The maps $M_1$, $M_2$, $M_3$, $M_4$, $M_5$, $M_7$, $M_9$, $M_{12}$, $M_{13}$, $M_{14}$ and $M_{15}$ are all equal to the unique element of $\Delta_{\operatorname{II}}(2,2)$.\\ Note that the the edges are oriented according to a breadth-first search as described in the text. Their orientation has nothing to do with the minimal 3-orientation.}
\end{figure}

The main result of this subsection is the following.
\begin{proposition}\label{prop:ratio}
  For $p\ge 3$, let $\Map_{p}$ be a sample of $\Bol_{\op{II}}(p)$ and let $S_p=\simple(\Map_{p})$. Then 
  $\lim_{p\to \infty} |E(\Map_p)|/|E(S_p)|=2$ in probability. 
  Moreover, the law of $S_p$ is $\Bol_{\op{III}}(p)$.
\end{proposition}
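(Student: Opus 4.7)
The plan is to exploit the bijectivity of the simple-core decomposition and let generating-function bookkeeping do the work. Each $M\in\Delta_{\op{II}}(p)$ corresponds uniquely to a pair $(S,(M_i)_{i=1}^{|E(S)|})$ with $S\in\Delta_{\op{III}}(p)$ and $M_i\in\Delta_{\op{II}}(2)$. Since attaching $M_i$ to $e_i$ identifies its two root vertices with the endpoints of $e_i$, one has $|V(M)| = |V(S)| + \sum_{i=1}^{|E(S)|}(|V(M_i)|-2)$, so the Boltzmann weight factorizes as $\rho_{\op{II}}^{|V(M)|} = \rho_{\op{II}}^{|V(S)|}\prod_{i}\rho_{\op{II}}^{|V(M_i)|-2}$. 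For the marginal law of $S_p$, I would sum this factorized weight over all $(M_i)\in\Delta_{\op{II}}(2)^{|E(S)|}$, producing a factor $(Z_2(\rho_{\op{II}})/\rho_{\op{II}}^2)^{|E(S)|}$. Using $|E(S)| = 3|V(S)| - p - 3$ (Euler), the marginal on $S$ is proportional, up to a function of $p$ alone, to
\[
\bigl(\rho_{\op{II}}(Z_2(\rho_{\op{II}})/\rho_{\op{II}}^2)^3\bigr)^{|V(S)|}.
\]
The key algebraic identity $\rho_{\op{II}}(Z_2(\rho_{\op{II}})/\rho_{\op{II}}^2)^3 = \rho_{\op{III}}$ then identifies this marginal as $\Bol_{\op{III}}(p)$; it is verified by directly evaluating $Z_2(\rho_{\op{II}})$ from the classical explicit enumeration of type II triangulations of the 2-gon together with the values in~\eqref{eq:parameter}. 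The same factorization shows that, conditional on $S_p$, the maps $(M_i)_{i=1}^{|E(S_p)|}$ are i.i.d.\ $\Bol_{\op{II}}(2)$.

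For the edge ratio, Euler's formula gives $|E(M_p)| - |E(S_p)| = 3(|V(M_p)| - |V(S_p)|) = 3\sum_{i=1}^{|E(S_p)|}(|V(M_i)|-2)$, so the claim reduces to showing that $|E(S_p)|^{-1}\sum_{i=1}^{|E(S_p)|}(|V(M_i)|-2) \to 1/3$ in probability. The classical asymptotics $|\Delta_{\op{II}}(2,n)|\rho_{\op{II}}^n = \Theta(n^{-5/2})$ give a finite mean $\mu:=\mathbb{E}_{\Bol_{\op{II}}(2)}[|V|-2]$ (in fact finite variance), and the already-established marginal law of $S_p$ combined with the analogous tail bound for $|\Delta_{\op{III}}(p,n)|\rho_{\op{III}}^n$ (used in the proof of Lemma~\ref{lem:sepCircles}) forces $|V(S_p)| = \Theta(p^2)$, hence $|E(S_p)|\to\infty$ in probability. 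The law of large numbers then yields convergence of the empirical mean to $\mu$. To identify $\mu = 1/3$, I would differentiate the $t$-dependent version of the factorization, $\sum_n|\Delta_{\op{II}}(p,n)|t^n = (Z_2(t)/t^2)^{-p-3}\sum_n|\Delta_{\op{III}}(p,n)|(t(Z_2(t)/t^2)^3)^n$, at $t=\rho_{\op{II}}$, or equivalently compute $\mu = \rho_{\op{II}} Z_2'(\rho_{\op{II}})/Z_2(\rho_{\op{II}}) - 2$ by direct enumeration.

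The conceptual structure is completely standard: a bijective factorization plus a law of large numbers. The main concrete obstacle is the critical-weight identity $\rho_{\op{II}}(Z_2(\rho_{\op{II}})/\rho_{\op{II}}^2)^3 = \rho_{\op{III}}$, which is precisely what makes the core construction compatible with the specific Boltzmann weights in~\eqref{eq:parameter}; once it is in hand, the companion fact $\mu = 1/3$ drops out of the same framework by differentiation. Both are elementary but require honest enumerative input about $Z_2$, and this is the only place where such input is needed.
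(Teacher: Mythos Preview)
Your argument is correct and follows the same line as the paper's: factorize the Boltzmann weight through the core bijection (Lemma~\ref{lem:Bol}), then apply the law of large numbers to the i.i.d.\ attached $2$-gons, with the relevant mean computed from the explicit formula~\eqref{eq:partition}. The one substantive difference is how you identify the law of $S_p$: you verify the weight identity $\rho_{\op{II}}\bigl(Z_2(\rho_{\op{II}})/\rho_{\op{II}}^2\bigr)^3=\rho_{\op{III}}$ directly (and indeed $Z_2(\rho_{\op{II}})=1/162$ from~\eqref{eq:partition} at $\theta=1/6$, giving $(2/27)(9/8)^3=27/256$), whereas the paper argues indirectly, observing only that the marginal of $S_p$ is Boltzmann with some weight $\alpha\le\rho_{\op{III}}$ and then ruling out $\alpha<\rho_{\op{III}}$ by comparison with the $p^2$ scaling of $|V(\Map_p)|$ supplied by Lemma~\ref{lem:gamma}. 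Your route is cleaner and more self-contained; the paper's route trades the numerical check for an appeal to a known asymptotic.

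Two small points. Your parenthetical ``(in fact finite variance)'' is false: the $n^{-5/2}$ tail gives finite mean but $\mathbb E[|V|^2]=\infty$. This is harmless, since the weak law of large numbers needs only a finite first moment. And you do not need the $\Theta(p^2)$ estimate to conclude $|E(S_p)|\to\infty$: the trivial bound $|E(S_p)|=3|V(S_p)|-p-3\ge 2p-3$ already suffices.
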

The proof of Proposition~\ref{prop:ratio} relies on the following fact, which is immediate from the core construction above and the definition of $\Bol_{\op{II}}(p)$ for $p\ge 2$.
\begin{lemma}\label{lem:Bol}
  In the setting of Proposition~\ref{prop:ratio}, let $k_p=|E(S_p)|$ and let $(S_p,\Map^1,\cdots,\Map^{k_p})$ be the simple core decomposition of $\Map_p$.  Then, conditioning on $k_p$, $S_p$ is uniform among all type III triangulations of the $p$-gon with $k_p$ edges, and the maps $\{\Map^i\}_{1\le i\le k_p}$ are independent samples from $\Bol_{\op{II}}(2)$.
\end{lemma}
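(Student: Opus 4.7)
The plan is to prove the lemma by exploiting the weight-factorization property that the simple core bijection induces on the Boltzmann measure. The key geometric observation is that gluing each $\Map^i\in\Delta_{\op{II}}(2)$ onto the edge $e_i$ of $S$ identifies the two endpoints of the root edge of $\Map^i$ with the two endpoints of $e_i$, and no other vertices are identified. Hence
\[
|V(\Map_p)| \;=\; |V(S_p)| + \sum_{i=1}^{k_p}\bigl(|V(\Map^i)|-2\bigr),
\]
so the Boltzmann weight factorizes as
\[
\rho_{\op{II}}^{|V(\Map_p)|} \;=\; \rho_{\op{II}}^{|V(S_p)|}\,\prod_{i=1}^{k_p}\rho_{\op{II}}^{|V(\Map^i)|-2}.
\]

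Next I would compute the marginal law of $S_p$. Summing the joint weight over all admissible $(\Map^1,\ldots,\Map^{k})$ with $\Map^i\in\Delta_{\op{II}}(2)$, and using the bijectivity of the simple core decomposition together with the above factorization,
\[
\Bol_{\op{II}}(p)\bigl[S_p=S\bigr]
\;=\; \frac{\rho_{\op{II}}^{|V(S)|}}{Z_p(\rho_{\op{II}})}\,\Bigl(\rho_{\op{II}}^{-2}\,Z_2(\rho_{\op{II}})\Bigr)^{|E(S)|}
\qquad\text{for every } S\in\Delta_{\op{III}}(p).
\]
By Euler's formula, $|E(S)|$ is a deterministic function of $|V(S)|$ and $p$, so this probability depends on $S$ only through $k:=|E(S)|$. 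Equivalently, on the event $\{k_p=k\}$ the law of $S_p$ is uniform on $\{S\in\Delta_{\op{III}}(p):|E(S)|=k\}$, proving the first claim.

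For the conditional independence, I would use the same factorization read in the other direction: conditionally on $S_p=S$ (hence on $k_p=|E(S)|$),
\[
\Bol_{\op{II}}(p)\bigl[(\Map^1,\ldots,\Map^{k_p})=(m_1,\ldots,m_k)\,\big|\,S_p=S\bigr]
\;=\; \prod_{i=1}^{k}\frac{\rho_{\op{II}}^{|V(m_i)|}}{Z_2(\rho_{\op{II}})}
\;=\; \prod_{i=1}^{k}\Bol_{\op{II}}(2)[m_i],
\]
since the normalizing constant for the $S$-conditional law is precisely $(\rho_{\op{II}}^{-2}Z_2(\rho_{\op{II}}))^k$. Thus, conditionally on $S_p$ (and in particular conditionally on $k_p$), the pieces $\Map^1,\ldots,\Map^{k_p}$ are i.i.d.\ with law $\Bol_{\op{II}}(2)$; combined with the uniformity above this yields the lemma.

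There is no serious obstacle; the whole argument is a bookkeeping of weights once the vertex-count decomposition is set up. The only point to double-check is that all the infinite sums involved are finite, i.e.\ $Z_2(\rho_{\op{II}})<\infty$, so that the marginal of $S_p$ and the conditional distribution of the pieces are honest probability measures. This is exactly the statement, already used in the paper right after the definition of $\Bol_{\op{II}}(p)$, that $\Bol_{\op{II}}(p)$ is a well-defined probability measure for every $p\ge 2$, and in particular for $p=2$.
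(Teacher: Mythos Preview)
Your proof is correct and is precisely the computation the paper has in mind when it declares the lemma ``immediate from the core construction above and the definition of $\Bol_{\op{II}}(p)$ for $p\ge 2$'': the paper gives no further argument. Your vertex-count identity, the resulting weight factorization, and the appeal to Euler's formula to see that the marginal of $S_p$ depends only on $|E(S_p)|$ are exactly the bookkeeping that justifies the claim.
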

Another ingredient for Proposition~\ref{prop:ratio} is the following fact from \cite[Proposition~6.4]{Angel-Growth}.
\begin{lemma}\label{lem:gamma}
  $|V(\Map_p)|/p^2$  converge in law to the random variable with density proportional to $\1_{x>0}x^{-5/2}e^{-1/(3x)}$.
\end{lemma}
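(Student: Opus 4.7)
\textbf{Proof plan for Lemma~\ref{lem:gamma}.} My plan is to derive the limit law from exact enumeration of type II triangulations with boundary, as in~\cite{Angel-Growth}. The count $|\Delta_{\op{II}}(p,n)|$ admits a closed-form expression as a product of ratios of factorials, derivable via Tutte's quadratic method applied to the generating function for type II triangulations of a polygon, or equivalently via bijective correspondences (Brown, Goulden--Jackson). Since $\rho_{\op{II}}=2/27$ is precisely the radius of convergence of $Z_p(t)=\sum_n|\Delta_{\op{II}}(p,n)|t^n$, we have $Z_p(\rho_{\op{II}})<\infty$, and the distribution of $|V(\Map_p)|$ under $\Bol_{\op{II}}(p)$ is given by $\P[|V(\Map_p)|=n]=|\Delta_{\op{II}}(p,n)|\rho_{\op{II}}^n/Z_p(\rho_{\op{II}})$.

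Next, I would apply Stirling's formula to this closed form in the joint regime $p\to\infty$, $n\sim xp^2$ for fixed $x>0$. After cancelling the exponential growth of the factorials against $\rho_{\op{II}}^n$, careful bookkeeping yields an asymptotic of the shape
\[
|\Delta_{\op{II}}(p,n)|\,\rho_{\op{II}}^n \;\sim\; B(p)\cdot n^{-5/2}\cdot e^{-p^2/(3n)},
\]
where $B(p)$ is a sub-exponential prefactor depending only on $p$. Summing this asymptotic in $n$ reduces $Z_p(\rho_{\op{II}})$ to a Laplace-type sum whose leading order, after changing variables to $x=n/p^2$, is $B(p)\, p^{-3}\int_0^\infty y^{-5/2}e^{-1/(3y)}\,dy$. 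Dividing $|\Delta_{\op{II}}(p,n)|\rho_{\op{II}}^n$ by $Z_p(\rho_{\op{II}})$ and multiplying by the Jacobian $p^2$ produces a Riemann sum converging to the probability density
\[
\frac{x^{-5/2}e^{-1/(3x)}\,\1_{x>0}}{\int_0^\infty y^{-5/2}e^{-1/(3y)}\,dy},
\]
which is the claimed limit law.

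The main obstacle is the precision of the Stirling expansion: the polynomial factor $n^{-5/2}$ is standard and parallels the well-known exponent already used for $|\Delta_{\op{III}}(p,n)|$ in~\eqref{eq:devtnp}, but the non-trivial exponential factor $e^{-p^2/(3n)}$ emerges only by carefully tracking the $O(1)$ corrections in $\log\Gamma$ arising from the interplay between $n$ and $p$ in the factorial ratio. Once the pointwise asymptotic has been established together with crude uniform tail bounds (a Gaussian-type envelope suffices to control $x\to 0$ and $x\to\infty$), tightness is immediate and weak convergence follows by Scheff\'e's lemma or standard density-convergence arguments. A variant approach, more in the spirit of this paper, would combine Lemmas~\ref{lem:CT} and~\ref{lem:RN} (which give the limit of $|V(S_p)|/p^2$ under $\Bol_{\op{III}}(p)$) with the combinatorial simple-core decomposition of Lemma~\ref{lem:Bol} to express $|V(\Map_p)|$ as $|V(S_p)|$ plus an independent bubble sum; but controlling the latter requires effort comparable to the enumerative estimate above.
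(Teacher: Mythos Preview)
The paper does not give its own proof of this lemma: it simply records it as ``the following fact from \cite[Proposition~6.4]{Angel-Growth}'' and moves on. Your enumerative plan (exact formula for $|\Delta_{\op{II}}(p,n)|$, cancellation of the exponential growth against $\rho_{\op{II}}^n$, Stirling in the regime $n\sim xp^2$ to extract the factor $n^{-5/2}e^{-p^2/(3n)}$, then Riemann-sum normalization) is exactly the standard route to this stable-$3/2$ size law and is essentially how the cited reference obtains it; so your proposal is correct and matches what the paper relies on, only spelled out rather than cited.

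One caution on your ``variant approach'': in the paper's logical order, Lemma~\ref{lem:gamma} is invoked inside the proof of Proposition~\ref{prop:ratio} (to get $|\IF(\Map_p)|/p\to\infty$), and Proposition~\ref{prop:ratio} is what identifies the simple core of a $\Bol_{\op{II}}(p)$ sample as a $\Bol_{\op{III}}(p)$ sample and gives the size ratio $2$. So deducing Lemma~\ref{lem:gamma} from the type~III limit law via the core decomposition, as you suggest, risks circularity unless you first establish independently that the core is $\Bol_{\op{III}}$-distributed and that the bubble contributions satisfy a law of large numbers---both of which are true without Lemma~\ref{lem:gamma}, but you would need to reorganize the argument accordingly. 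Your primary enumerative route avoids this issue entirely.
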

\begin{proof}[Proof of Proposition~\ref{prop:ratio}]
  Let $k_p=|E(S_p)|$. By  \eqref{eq:decomph}, we have
  \begin{equation}\label{eq:decomp}
  |\IF(\Map_p) |=|\IF(S_p)| + \sum_{i=1}^{k_p} |\IF(\Map^i)|.
  \end{equation}  
  
  Let $\Map_2$ be sampled from  $\Bol_{\op{II}}(2)$. From the definition of $Z_2(t)$,  we have
  \(\E{|V(\Map_2)|}=\rho_{\op {II}} Z'_2(\rho_{\op {II}})/Z_2(\rho_{\op {II}})\).   
  By \cite[Proposition~2.4]{UIPT} (see also \cite[Section 2.9.5]{Goulden-Jackson}), if $t=\theta(1-2\theta)^2$ with $\theta\in(0,1/2)$, then
  \begin{equation}\label{eq:partition}
  Z_p(t) =\frac{(2p-4)!((1-6\theta)p+6\theta)}{p!(p-2)!}\theta^{p}(1-2\theta)^{2}.
  \end{equation}
  Since $t=\rho_{\op {II}}=2/27$ when $\theta=1/6$, by the chain rule we have 
    \begin{equation}\label{eq:Vnum}
    \E{|V(\Map_2)|}=\rho_{\op {II}} Z'_2(\rho_{\op {II}})/Z_2(\rho_{\op {II}})=7/3.
  \end{equation} 
  By Euler's formula, $|\IF(\Map_2)|=2|V(\Map_2)|-4$. (Recall  that by our convention  $|\IF(\Map_2)|=0$ if $\Map_2\in\Delta(2,2)$.)
  Now $\BB E\big[|\IF(\Map_2)|\big]=2/3$.
  
  By Lemma~\ref{lem:gamma}, $\lim_{p\to\infty} |\IF(\Map_p)|/p=\infty$. Therefore $\lim_{p\to\infty} |\IF(\Map_p)|/|E(\Map_p)|=2/3$ in probability, since $3|\IF(\Map_p)|=2|E(\Map_p)|-p$ by Euler's formula. Similarly, by Lemma~\ref{lem:CT} and Euler's formula, $\lim_{p\to\infty}|\IF(S_p)|/k_p=2/3$ and $\lim_{p\to \infty}k_p=\infty$ in probability.
  By the law of large numbers and Lemma~\ref{lem:Bol}, 
  \[
  \lim_{p\to\infty} k_p^{-1}\sum_{i=1}^{k_p} |\IF(\Map^i)|=\BB E\big[\IF(\Map_2)\big]=2/3\quad \textrm{in probability}.
  \]

  Therefore \eqref{eq:decomp} yields that  \(\lim_{p\to \infty} |E(\Map_p)|/k_p=3(2/3+2/3)/2=2\) in probability.
  
  For the last assertion, let $\mathcal{Z}_p(\alpha)=\sum_{T\in \Delta_{\op{III}}(p)} \alpha^{|T|}$. Then $\rho_{\op{III}}$ is the convergence radius of $\mathcal{Z}_p$.
  It is clear that there exists 
  $0<\alpha\le \rho_{\op{III}}$ such that $\P[S=T]=\mathcal{Z}_p^{-1}(\alpha) \alpha^{|V(T)|}$ for each $T\in \Delta_{\op{III}}(p)$.
  Moreover,   $\lim_{p\to 0}|\IV(S)|/p^2=0$ in probability if $\alpha<\rho_{\op{III}}$.
  By Lemma~\ref{lem:gamma}, we must have  $\alpha=\rho_{\op{III}}$. 
\end{proof}
\begin{remark}\label{rmk:ratio}
  The fluctuation of $k_p$ around its mean is expected to be the Airy distribution, following the technique of \cite{Airy}, but here we only need the first order asymptotics.
\end{remark}

\subsection{Comparison between type II and III triangulations}\label{subsec:compare}
We retain the setting in Proposition~\ref{prop:ratio} and Lemma~\ref{lem:Bol}, where $S_p$ is the simple core of $\Map_p$.
Let $\cM_p\in \BMS^{\GHPU}$ be defined as in Theorem~\ref{thm:rigor} in the case $i=\op{II}$. 
Let $\cS_p$ be the element in $\BMS^{\GHPU}$ obtained by rescaling $S_p$ as in the $i=\op{III}$ case of Theorem~\ref{thm:rigor}. 
Since $S_p$ is sampled from $\Bol_{\op{III} }(p)$, by Propositions~\ref{prop:contour-label} and~\ref{prop:dist2points},  the routine argument (see e.g.\ \cite{legall-tightness,ABA-Simple}) shows the following. 
\begin{proposition}\label{prop:tight}
  $\{ \cS_p\}_{p\ge 3}$ is tight in the  GHP topology.
\end{proposition}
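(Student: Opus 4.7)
The plan is to reduce tightness of $\{\cS_p\}$ in the GHP topology to the tightness of the rescaled label process already provided by Proposition \ref{prop:contour-label}, via the deterministic distance upper bound of Proposition \ref{prop:dist2points}; this follows the template of \cite{legall-tightness,ABA-Simple}. The criterion to verify is: (i) tightness of the total mass $3p^{-2}|V(S_p)|$; (ii) tightness of the rescaled diameter $\sqrt{3/2}\,p^{-1/2}\diam(S_p)$; and (iii) for every $\epsilon,\eta>0$, existence of $N=N(\epsilon,\eta)$ such that with probability at least $1-\epsilon$ the space $\cS_p$ admits a cover by $N$ balls of radius $\eta$.

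Item (i) is immediate from the last assertion of Lemma \ref{lem:CT}, which gives $3|F^*_p|/p^2\to \cA$ in probability, combined with Euler's formula. For items (ii) and (iii), enumerate the proper vertices of $F^*_p$ in lexicographic order as $(v_0,\ldots,v_{|F^*_p|-1})$ and define a surjection $\pi_p:[0,|F^*_p|]_{\Z}\to V(S_p)$ by $\pi_p(i)=v_i$, extended cyclically. Boundary vertices of $S_p$ lie within one tree-edge of the image of $\pi_p$ and thus contribute only an $O(p^{-1/2})$ error after rescaling. Combining Proposition \ref{prop:dist2points} with Claims \ref{claim:labelVertex} and \ref{claim:labelEdges} (which identify corner and vertex labels up to an additive constant in $\{0,1,2\}$) yields
\[
d_{S_p}\bigl(\pi_p(i),\pi_p(j)\bigr) \;\le\; \lambda_p(i)+\lambda_p(j) - 2\max\Bigl\{\min_{k\in\bbrcy{i,j}}\lambda_p(k),\,\min_{k\in\bbrcy{j,i}}\lambda_p(k)\Bigr\}+O(1),
\]
where the two minima are taken over the complementary arcs of $[0,|F^*_p|]_{\Z}$ determined by $i$ and $j$. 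After rescaling per \eqref{eq:scale}, this is the discrete analogue of \eqref{eq:metric2}.

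By Proposition \ref{prop:contour-label} the family $\{\lambda_{(p)}\}_{p\ge 3}$ is tight in $C([0,\cA],\R)$, so both its uniform norm and its modulus of continuity are tight. The former gives (ii) directly. For (iii), fix $\eta>0$ and use tightness of the modulus to choose $\delta>0$ such that, with probability at least $1-\epsilon$, the oscillation of $\lambda_{(p)}$ on every interval of length $\delta$ is at most $\eta/3$; then partition $[0,3|F^*_p|/p^2]$ into at most $\lceil \cA/\delta\rceil+1$ subintervals of length $\le\delta$. By the displayed inequality the $d_p$-diameter of the image of each subinterval under $\pi_p$ is then at most $\eta$, producing the required finite cover. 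The main (but standard) technical point is to match the cyclic interval $\bbrcy{i,j}$ of Proposition \ref{prop:dist2points} with a cyclic interval of $[0,|F^*_p|]_{\Z}$ in the argument above, and to show that the boundary contributes negligibly; this follows from Remark \ref{rmk:bridge}, Lemma \ref{lem:BR} and the logarithmic bound \eqref{eq:maxZ} on the number of stems at any boundary vertex. With this identification in hand, the argument is essentially that of \cite[Proof of Proposition 8.2]{ABA-Simple} and requires no further probabilistic input.
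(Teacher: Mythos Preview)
Your proposal is correct and follows exactly the approach the paper indicates: it invokes Propositions~\ref{prop:contour-label} and~\ref{prop:dist2points} and spells out the ``routine argument'' from \cite{legall-tightness,ABA-Simple} that the paper merely cites. The only point worth flagging is that Proposition~\ref{prop:contour-label} and Lemma~\ref{lem:CT} are stated under $\Bolm(p)$ rather than $\Bol_{\op{III}}(p)$; both you and the paper leave implicit the transfer of tightness between these measures, which is justified by the uniform integrability in Lemma~\ref{lem:RN}.
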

The main result in this subsection is the following.
\begin{proposition}\label{prop:ghpu-close}\
  The GHP distance between $\cS_p$ and $\cM_p$ tends to $0$ in probability.
\end{proposition}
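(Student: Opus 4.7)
The plan is to realize $\cS_p$ and $\cM_p$ as isometrically embedded subspaces of the common ambient metric space $(V(\Map_p), d_p)$ via the natural inclusion $\iota\colon V(S_p)\hookrightarrow V(\Map_p)$ induced by the simple-core decomposition, and then bound the Hausdorff and Prokhorov components of $\dghp(\cS_p, \cM_p)$ separately. The first thing I would verify is that $\iota$ is an isometry for the rescaled graph metric, i.e.\ $d_{\Map_p}(u,v)=d_{S_p}(u,v)$ for $u,v\in V(S_p)$. Indeed, each component $M_i$ in the decomposition $(S_p, M_1,\ldots, M_{k_p})$ is attached to $S_p$ along a single edge $e_i$, so any path in $\Map_p$ between two vertices of $S_p$ that makes an excursion through some $M_i$ enters and exits through the two endpoints of $e_i$ and can therefore be replaced by (at most one) traversal of $e_i$ without increasing length; hence $\iota$ gives a common isometric embedding.

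For the Hausdorff part it suffices to show $\max_{1\le i\le k_p}\op{diam}(M_i) = o(\sqrt p)$ in probability. By Lemma~\ref{lem:Bol} the $M_i$ are i.i.d.\ samples from $\Bol_{\op{II}}(2)$ conditionally on $k_p$, and standard singularity analysis of $Z_2$ near $\rho_{\op{II}}$ (cf.~\eqref{eq:partition}) gives the polynomial tail $\P[|V(M_i)|\ge n]\lesssim n^{-3/2}$, which combined with the Brownian-map diameter concentration $\op{diam}\lesssim |V|^{1/4}$ yields $\P[\op{diam}(M_i)\ge x]\lesssim x^{-6}$. Since $k_p = O(p^2)$ by Proposition~\ref{prop:ratio}, a union bound over the $k_p$ samples gives $\max_i \op{diam}(M_i)\lesssim p^{1/3+\delta}$ with high probability for any $\delta>0$, so the rescaled Hausdorff distance is $o(1)$.

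For the Prokhorov part I would introduce a projection $\pi\colon V(\Map_p)\to V(S_p)$ that fixes $V(S_p)$ and sends each $v\in V(M_i)\setminus V(e_i)$ to an endpoint of $e_i$; this transports mass a distance at most $\max_i\op{diam}(M_i)=o(\sqrt p)$, so $d^{\mathrm P}_{d_p}(\mu_{\cM_p},\pi_*\mu_{\cM_p})=o(1)$. Comparing $\pi_*\mu_{\cM_p}$ to $\mu_{\cS_p}$ on $V(S_p)$, vertex $u$ has respective weights $\mcon_{\op{II}} p^{-2}|\pi^{-1}(u)|$ and $\mcon_{\op{III}} p^{-2}$, and the identity $\mcon_{\op{III}}=2\mcon_{\op{II}}$ together with $|V(\Map_p)|/|V(S_p)|\to 2$ (from Proposition~\ref{prop:ratio} via Euler's formula) makes the total masses match asymptotically. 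The main obstacle is upgrading this to \emph{local} matching over balls of a fixed rescaled radius $\varepsilon$: $|V(M_i)|$ has only a $3/2$-moment, so cell sizes have infinite variance and a naive Chebyshev bound fails. I would handle it by truncating at a threshold $T_p$ with $p^{2/3}\ll T_p\ll p^2$, so that exceptional cells $\{i:|V(M_i)|>T_p\}$ contribute negligible rescaled mass (using $\E{|V|\1_{|V|>T_p}}\sim T_p^{-1/2}$ together with $k_p=O(p^2)$), while after truncation a heavy-tail law of large numbers (or a Chebyshev bound on the truncated variance) yields cellwise concentration. Tightness of $\{\cS_p\}$ from Proposition~\ref{prop:tight} then lets us cover $\cS_p$ by $O_\varepsilon(1)$ balls of radius $\varepsilon$ and take a finite union bound, giving $d^{\mathrm P}_{d_p}(\pi_*\mu_{\cM_p},\mu_{\cS_p})\to 0$ in probability and completing the proof.
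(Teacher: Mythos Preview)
Your overall strategy coincides with the paper's: embed $\cS_p$ isometrically into $\cM_p$ via the simple-core decomposition (your verification that $d_{\Map_p}|_{V(S_p)}=d_{S_p}$ is correct), control the Hausdorff distance by $p^{-1/2}\max_i\diam(\Map^i)$, and handle the Prokhorov part by a projection-plus-concentration argument. The Prokhorov sketch you give is essentially the ``concentration of exchangeable random variables'' argument from \cite[Sections~5--6]{ABW-Core} that the paper also invokes without reproducing.

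The genuine gap is your diameter tail bound. You assert $\P[\diam(M_i)\ge x]\lesssim x^{-6}$ via ``Brownian-map diameter concentration $\diam\lesssim|V|^{1/4}$'', but that is a heuristic, not a theorem: what is needed is a quantitative estimate of the form $\P[\diam>n^{1/4+\epsilon}\mid |V|=n]\to 0$ polynomially fast, uniformly over type~II triangulations of the $2$-gon, and no such statement is available. The paper explicitly flags this as the obstacle (see the paragraph before Lemma~\ref{lem:max1}): the exponential diameter tail for quadrangulations coming from Schaeffer's bijection has no known analogue for type~II triangulations. The paper therefore proves only the barely-sufficient bound $\lim_{x\to\infty}x^4\P[\diam(\Map_2)>x]=0$ (Lemma~\ref{lem:diameter}), and even that requires real work: one iterates the simple-core decomposition on $\Map_2$ itself to obtain a \emph{subcritical} Galton--Watson tree of type~III cores (the offspring mean inequality is \eqref{eq:subcritical}), controls the diameter of each type~III core via the estimates of \cite{ABA-Simple} (Lemma~\ref{lem:diameter1}), and then assembles these through a max-over-random-index tail bound that does not assume independence (Lemma~\ref{lem:max1}). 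With only an $o(x^{-4})$ tail available, the union bound also needs $\E{k_p}\lesssim p^2$, which is where Lemma~\ref{lem:RN} enters. In short, the step you dispose of in one sentence is precisely where the paper has to introduce a new idea.
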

Our strategy is similar to \cite{ABW-Core}, where the case of fixed size quadrangulations is treated. 
The idea is that as $p$ gets large, the contribution of the $\Map^i$ are all negligible hence the scaling limit of $\Map_p$ coincides with $S_p$. 
More precisely, viewing $\cS_p$ as a subset of $\cM_p$, the Hausdorff distance
between $\cS_p$ and $\cM_p$ is dominated by  $\sqrt{3/2}p^{-1/2}\max_{1\le i\le k_p}\diam(\Map^i)$. Here, given a planar map $M$, $\diam(M)$ denotes its diameter under the graph distance. 
The following lemma controls the diameter of a planar map sampled from $\Bol_{\op{II}}(2)$.
\begin{lemma}\label{lem:diameter}
  Let $\Map_2$ be sampled from $\Bol_{\op{II}}(2)$. Then \(\lim_{x\to\infty} x^4\P[\diam(\Map_2)>x]=0\).
\end{lemma}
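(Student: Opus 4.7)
The plan is to combine the polynomial decay of the size distribution of $\Map_2$ with a uniform-in-size moment bound on the diameter, and then apply Markov's inequality for an exponent strictly greater than $4$.

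First I would extract the size tail from \eqref{eq:partition}. At $p=2$, $Z_2(t)=(1-3\theta)\theta^2(1-2\theta)^2$ with $t=\theta(1-2\theta)^2$. Since $dt/d\theta=(1-2\theta)(1-6\theta)$ vanishes to first order at $\theta=1/6$, the inverse function satisfies $\theta-1/6\sim c\,(1-t/\rho_{\op{II}})^{1/2}$ as $t\uparrow\rho_{\op{II}}$, and a direct Taylor expansion of $Z_2$ around $\theta=1/6$ produces a singularity of type $(1-t/\rho_{\op{II}})^{3/2}$. Standard transfer theorems (singularity analysis) then yield
\[
|\Delta_{\op{II}}(2,n)|\sim c\,\rho_{\op{II}}^{-n} n^{-5/2}\qquad(n\to\infty),
\]
so that $\P[|V(\Map_2)|=n]\sim c' n^{-5/2}$, and in particular $\P[|V(\Map_2)|\ge n]=O(n^{-3/2})$.

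Second, let $T_n$ be uniformly sampled in $\Delta_{\op{II}}(2,n)$. I would appeal to the uniform moment bound: for every integer $k\ge 1$ there exists $C_k<\infty$ with
\[
\E[\diam(T_n)^k]\le C_k n^{k/4}.
\]
This is a classical ingredient in the proofs of convergence of random planar maps to the Brownian map; it can be obtained from the moment estimates developed in the tightness arguments of Le Gall and Miermont (see \cite{LeGall-uniqueness,Miermont-BM}) applied to loopless (type II) triangulations of the sphere, to which an element of $\Delta_{\op{II}}(2,n)$ canonically corresponds by identifying the two boundary edges into a single marked oriented edge.

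Third, choosing $k=5$ and combining:
\[
\E[\diam(\Map_2)^5]=\sum_{n\ge 2}\P[|V(\Map_2)|=n]\,\E[\diam(T_n)^5]\le C\sum_{n\ge 2}n^{-5/2}\cdot n^{5/4}=C\sum_{n\ge 2}n^{-5/4}<\infty,
\]
since $5/4-5/2<-1$. Markov's inequality then gives
\[
\P[\diam(\Map_2)>x]\le x^{-5}\,\E[\diam(\Map_2)^5]=O(x^{-5})=o(x^{-4}),
\]
which is the claim. The hard part is the moment bound in the second step: while bounds of the form $\E[\diam(T_n)^k]=O(n^{k/4})$ are well known for uniformly sampled planar maps, the literature typically formulates them for quadrangulations or type I triangulations of the sphere, so one must either cite the loopless variant directly or adapt the tightness argument; once this input is secured the rest of the proof is a one-line computation.
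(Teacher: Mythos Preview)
Your argument is logically sound, but it rests entirely on the moment bound $\E[\diam(T_n)^k]\le C_k n^{k/4}$ for uniform \emph{type~II} triangulations of size $n$, and this is precisely the input that is not available. The diameter estimates in \cite{LeGall-uniqueness,Miermont-BM} are established for quadrangulations and (in Le~Gall's case) for type~I triangulations and even $p$-angulations via the Schaeffer and Bouttier--Di~Francesco--Guitter bijections; the simple (type~III) case is handled in \cite{ABA-Simple} via the Poulalhon--Schaeffer bijection. Loopless (type~II) triangulations fall in between and do not come with a bijection of this kind from which such moment bounds can be read off directly. The paper in fact states explicitly that the analogous tail estimate used in \cite{ABW-Core} for quadrangulations cannot be located in the triangulation literature, and the whole point of the argument given here is to bypass this missing ingredient. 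So your ``hard part'' is not a technicality to be cited but the actual content of the lemma; once it is granted, nothing is left to prove.

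The paper's route is genuinely different: instead of controlling $\diam(\Map_2)$ directly in terms of $|V(\Map_2)|$, it iterates the simple-core decomposition to build a Galton--Watson tree $\cT$ of nested type~II pieces, shows this tree is subcritical (equation~\eqref{eq:subcritical}), and bounds $\diam(\Map_2)\le HD$ where $H$ is the height of $\cT$ (exponential tail) and $D$ is the maximal diameter of a simple core appearing in the decomposition. The latter is controlled by Lemma~\ref{lem:diameter1}, which uses only the type~III estimates of \cite{ABA-Simple}. What this buys is self-containment: the only diameter input required is for simple triangulations, where it is already established. Your approach would be shorter and more transparent if the type~II moment bound existed, but as things stand it defers the entire difficulty to an uncited result.
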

We postpone the proof of Lemma~\ref{lem:diameter} and proceed to the proof of Proposition~\ref{prop:ghpu-close}. The following lemma is needed.
\begin{lemma}\label{lem:max}
  Let $\tau$ be a  random positive integer with finite mean. Let $\{X_i\}_{i\ge 1}$ be a sequence of  random variables independent of $\tau$.  Then 
  \[
  \P\Big[\max_{1\le i\le \tau}X_i>x \Big] \le \E{\tau} \max_{i\ge 1}\P[X_i>x] \qquad \textrm{for each }x\in \R.
  \]
\end{lemma}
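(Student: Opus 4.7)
The plan is to reduce to a conditional union bound. First I would condition on $\tau$, using independence of $\tau$ and the sequence $(X_i)$, to write
\[
\P\Big[\max_{1\le i\le \tau}X_i>x\Big] = \sum_{n\ge 1}\P[\tau=n]\,\P\Big[\max_{1\le i\le n}X_i>x\Big].
\]
Next, for each fixed $n$, I would use the crude union bound
\[
\P\Big[\max_{1\le i\le n}X_i>x\Big] \le \sum_{i=1}^{n}\P[X_i>x] \le n\cdot \max_{i\ge 1}\P[X_i>x].
\]

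Substituting this estimate into the previous identity gives
\[
\P\Big[\max_{1\le i\le \tau}X_i>x\Big] \le \max_{i\ge 1}\P[X_i>x]\cdot \sum_{n\ge 1}n\,\P[\tau=n] = \E{\tau}\,\max_{i\ge 1}\P[X_i>x],
\]
which is the desired inequality. There is essentially no obstacle here: the result is a one-line consequence of the independence assumption plus a union bound, and the finiteness of $\E{\tau}$ is only needed to ensure the right-hand side is meaningful (otherwise the bound is trivial). The only thing to note carefully is that the $X_i$ are not assumed identically distributed, which is why $\max_{i\ge 1}\P[X_i>x]$ appears rather than a single tail $\P[X_1>x]$; but since the union bound treats each index separately, this makes no difference.
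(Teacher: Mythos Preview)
Your proof is correct and essentially identical to the paper's own argument: both condition on $\tau$ using independence, apply a union bound to get $\P[\max_{1\le i\le n}X_i>x]\le n\max_{i\ge 1}\P[X_i>x]$, and then sum to recover $\E{\tau}$.
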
 
\begin{proof}
  By the independence of $\tau$ and $\{X_i\}_{i\ge 1}$, along with a union bound, we have 
  \[
  \P\Big[\max_{1\le i\le \tau}X_i>x \Big]
  =
  \sum_{n=1}^\infty \P[\tau=n]\P\Big[\max_{1\le i\le n}X_i>x \Big]
  \le 
  \sum_{n=1}^\infty \P[\tau=n]n\max_{i\ge 1}\P[X_i>x ].
  \]
  Since $\E{\tau}=\sum_{n=1}^\infty \P[\tau=n]n$, we conclude the proof.
\end{proof}
\begin{proof}[Proof of Proposition~\ref{prop:ghpu-close}]
  By Lemma~\ref{lem:RN} and Proposition~\ref{prop:ratio}, there exists $C>0$ independent of $p$ such that 
  \[
  \E{k_p} = \E{|E(S_p)|}\le Cp^2 \qquad \textrm{for all } p\ge 3.
  \]
  By  Lemmas~\ref{lem:Bol} and~\ref{lem:max}, for each $\eps>0$,  
  \[
  \P\Big[ \max_{1\le i\le k_p}\diam(\Map^i) >\eps p^{1/2}\Big] \le \E{k_p}  \P[\diam(\Map_2)>\eps p^{1/2}].
  \]
  By Lemma~\ref{lem:diameter},
  \[
  \lim_{p\to \infty}p^{-1/2}\max_{1\le i\le k_p}\diam(\Map^i)=0\qquad \textrm{in probability}.
  \]  
  
  To prove Proposition~\ref{prop:ghpu-close},  it remains to  handle the Prokhorov distance between the measures.
  Let $\mu_p$ and $\mu'_p$ be the area measures of $\cM_p$ and $\cS_p$, respectively. It suffices to show that
  the Prokhorov distance between the two measures tends to $0$ in probability as $p\to\infty$.  
  We omit the details of the proof of this assertion as the same problem in the setting of quadrangulations of a sphere has been treated in detail in \cite[Sections~5 and~6]{ABW-Core} via a robust argument based on the concentration of exchangeable random variables. After straightforward adaptation this proves our case. 
\end{proof}

A similar result to Lemma~\ref{lem:diameter} is established in \cite{ABW-Core}. However, one key ingredient used there is an exponential tail estimate (\cite[Proposition~1.11]{ABW-Core}) for the diameter of uniform large quadrangulations with self-loops and multiple edges. 
This relies on the simple form of the Schaeffer bijection in the quadrangulation case.
We cannot find such an estimate in the triangulation case in the literature. Therefore we conclude this subsection with  a detailed proof of Lemma~\ref{lem:diameter} only using weaker estimates. 
We start with  another variant of Lemma~\ref{lem:max}.
\begin{lemma}\label{lem:max1}
  Let $\tau$ be a  random positive integer with finite mean. Let $\{X_i\}_{i\ge 1}$ be a sequence of identically  distributed positive random variables. Then for each $p>0$  there exists a constant $C_p>0$ such that 
  \begin{equation}\label{eq:max}
  \P\Big[\max_{1\le i\le \tau}X_i>x\Big] \le  x^{-p} +C_p \E{\tau} \P[X_1>x]\log x \qquad \textrm{for each }x>0.
  \end{equation}
\end{lemma}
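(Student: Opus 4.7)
The plan is to combine a truncation of $\tau$ with Markov's inequality and a union bound exploiting the identical-distribution hypothesis. For a truncation level $N > 0$ to be chosen, I start from the decomposition
\[
\P\Big[\max_{1\le i\le \tau}X_i>x\Big] \le \P[\tau > N] + \P\Big[\max_{1 \le i \le \lfloor N \rfloor} X_i > x\Big].
\]
Markov's inequality handles the first term, yielding $\E{\tau}/N$. For the second term, the identical-distribution assumption together with the union bound $\P\big[\bigcup_i \{X_i > x\}\big] \le \sum_i \P[X_i > x]$ gives $\lfloor N\rfloor\, \P[X_1>x]$. Combining,
\[
\P\Big[\max_{1\le i\le \tau}X_i>x\Big] \le \frac{\E{\tau}}{N} + N\,\P[X_1 > x].
\]

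A single choice of $N$ of order $x^p$ (or $x^p \E{\tau}$) makes the first term of order $x^{-p}$; after handling the regime of small $x$ trivially (when $x^{-p} \ge 1$, the claim is automatic) and absorbing constants into $C_p$, this recovers the $x^{-p}$ summand in the claim. The nontrivial part is the second term: a naive choice $N = x^p$ would give a factor $x^p \, \P[X_1>x]$ rather than the sharper $\log x \cdot \P[X_1>x]$ that appears in the statement. To recover the logarithmic factor, I would refine the approach by a dyadic decomposition of the range of $\tau$: write $\{\tau \le N\} = \bigcup_{k=0}^{K}\{\tau \in [2^k, 2^{k+1})\}$ with $K = O(\log x)$, and on each dyadic scale control the probability by the \emph{minimum} of the Markov-type bound $\P[\tau \ge 2^k] \le \E{\tau}/2^k$ and the union-bound contribution $2^{k+1}\P[X_1>x]$. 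Summing these over the $O(\log x)$ dyadic scales, and using that at scales where the Markov bound dominates the bound collapses to $\E{\tau}\P[X_1>x]$ (up to constants), then produces the desired $\log x$ factor.

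The main obstacle will be achieving the precise product $\E{\tau}\P[X_1>x]\log x$ rather than the weaker $\sqrt{\E{\tau}\P[X_1>x]}\log x$ that a careless balancing of the Markov and union-bound estimates at each dyadic scale would yield. This requires a careful min/max argument, comparing the Markov and union-bound estimates at each scale in such a way that the full first-moment information of $\tau$ (rather than just a square-root of it) is retained. Once this is done, the assembly of the $x^{-p}$ error (from $\{\tau > N\}$ with $N$ polynomial in $x$) and the logarithmic-weighted main term from the dyadic sum gives the claimed inequality.
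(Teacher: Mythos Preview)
Your proposal has a genuine gap at precisely the point you flag as the ``main obstacle.'' The truncation--Markov--union-bound scheme, even with the dyadic refinement, can only produce the square-root bound $\sqrt{\E{\tau}\,\P[X_1>x]}$, not the product $\E{\tau}\,\P[X_1>x]$. At dyadic scale $k$ your two estimates are $a_k=\E{\tau}/2^k$ and $b_k=2^{k+1}\P[X_1>x]$; these satisfy $a_k b_k = 2\E{\tau}\,\P[X_1>x]$, so $\min(a_k,b_k)\le\sqrt{a_k b_k}$ and nothing better is available without further structure. In particular, $\min(a_k,b_k)\le a_k b_k$ would require $\max(a_k,b_k)\ge 1$, which fails at the relevant scales. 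The ``careful min/max argument'' you allude to does not exist: without some independence between $\tau$ and the $X_i$'s, one cannot replace $\min(\P[A],\P[B])$ by $\P[A]\P[B]$. Concretely, if $\E{\tau}\,\P[X_1>x]=x^{-p}$ your method yields only $x^{-p/2}$, far worse than the claimed $x^{-p}\log x$.

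The paper's proof supplies exactly the missing decoupling idea. One introduces an i.i.d.\ sequence $(\tau_j)_{j\ge 1}$ of copies of $\tau$, independent of both $\tau$ and $(X_i)$, and sets $Y_j=\max_{1\le i\le \tau_j}X_i$ and $\xi=\inf\{j:\tau\le\tau_j\}$. Then $\max_{i\le\tau}X_i\le Y_\xi$, and since each $\tau_j$ is genuinely independent of the $X_i$'s, Lemma~\ref{lem:max} gives $\P[Y_j>x]\le\E{\tau}\,\P[X_1>x]$ --- the product you could not obtain. The variable $\xi$ is geometric (by symmetry $\P[\tau\le\tau_1]\ge 1/2$), so $\P[\xi>C\log x]\le x^{-p}$ for suitable $C$, and a union bound over $j\le C\log x$ produces the factor $\log x$. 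This independent-copy trick is the key step your argument lacks.
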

\begin{proof}
  Let  $\{\tau_j\}_{j \ge 1}$ be a sequence of independent copies of $\tau$, which is also independent of $\tau$ and $\{X_i\}_{i\ge 1}$.
  Let $Y_j=\max_{1\le i\le \tau_j}X_i$ and  $\xi=\inf\{j\ge 1: \tau\le \tau_j\}$. Then \(\max_{1\le i\le\tau} X_i \le Y_{\xi}. \)
  By Lemma~\ref{lem:max}, $\P[Y_j>x]\le \E{\tau}\P[X_1>x]$ for each $j\in\N$.  For each $C>0$,
  \begin{equation}
  \P\Big[\max_{1\le i\le \tau}X_i>x\Big] \le \P[\xi>C\log x]+\P\Big[\max_{1\le j\le C\log x}Y_j>x\Big]. 
  \label{eq:max2}
  \end{equation}
  Since $\xi$ is a geometric random variable, choosing $C$ sufficiently large (depending only on $p$),  the first term on the right side of \eqref{eq:max2} is smaller than $x^{-p}$. We bound the second term on the right side of \eqref{eq:max2} by a second application of Lemma \ref{lem:max}.
\end{proof}
Note that in Lemma~\ref{lem:max1} we do not require independence of $\tau$ and $\{X_i\}_{i\ge 1}$, in contrast to Lemma~\ref{lem:max}.

\begin{figure}[t]
\centering
\includegraphics[width=0.9\linewidth]{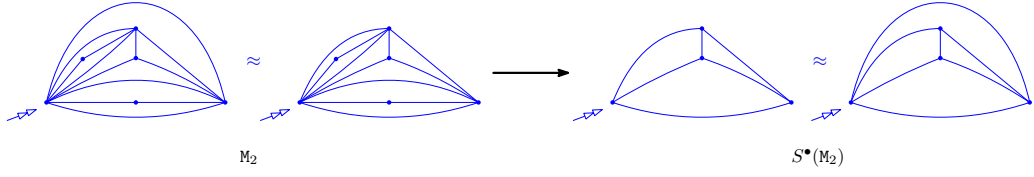}
\caption{\label{fig:core2}The construction of the simple core of an element of $\Delta_{\op{II}}(2)$. To obtain a non-trivial simple core, we first identify an element of $\Delta_{\op{II}}(2)$ with a triangulation of type II of the sphere.}
\end{figure}

By our definition of $\Delta_{\op{II}}(2)$, each $M\in \Delta_{\op{II}}(2)\setminus \Delta_{\op{II}}(2,2)$ can be identified with a type II triangulation (without boundary) by gluing the two edges on the root face of $M$, see Figure~\ref{fig:core2}. 
From now on we take this perspective and  view $\Delta_{\op{II}}(2)$ as the set of type II triangulations.  Under this identification, the measure $\Bol_{\op{II}}(2)$ coincides with the classical definition of the critical Boltzmann type II triangulation in the literature. We can further identify $M$ with an element of $\Delta_{\op{II}}(3)$ (i.e., view it as a triangulation with boundary of length 3 rather than a triangulation) and perform the simple core decomposition in the previous section to get $(S,M_1,\cdots, M_k)$. We still call $S$ the simple core of $M$ and denote it by $\op S^{\bullet}(M)$. 

Now let $\Map_2$ be sampled from $\Bol_{\op{II}}(2)$ viewed as a type II triangulation (without boundary). If $\Map_2\notin \Delta_{\op{II} }(2,2)$, let $S_2\defeq \simple(\Map_2)$ be its simple core. Let $k_2\defeq |E(S_2)|$. Let $(S_2,\Map^1,\cdots, \Map^{k_2})$  be the simple core decomposition of  $\Map_2$. For $1\le i\le k_2$, if  $\Map^i\notin \Delta_{\op{II}}(2,2)$ we perform the simple core decomposition for $\Map^i$ and iterate. This procedure terminates in finitely many steps and corresponds to a Galton-Watson tree, which we denote by $\cT$. The offspring distribution of $\cT$ is given by the law of $k_2\1_{\Map_2\notin \Delta_{\op{II}}(2,2)}$. 
For each vertex $v$ on the tree $\cT$, let $\Map_v$ be the corresponding triangulation in $\Delta_{\op{II} }(2)$. 
Let $D\defeq \max_{v\in V(\cT)} \diam(\simple(\Map_v))$. 
By the triangle inequality for the graph distance of $\Map_2$, we see that $\diam(\Map_2)\le HD$, where $H$ is the height of the Galton-Watson tree. 

We claim that  $\cT$ is subcritical  so that its height $H$ has an exponential tail. Namely, we claim that
\begin{equation}\label{eq:subcritical}
c\defeq\E{ k_2\1_{\Map_2\notin   \Delta_{\op{II}}(2,2)} }<1.
\end{equation}
To prove \eqref{eq:subcritical},
if $\Map_2\notin \Delta_{\op{II} } (2,2)$, since identifying $\Map_2$ as an element in $\Delta_{\op{II} }(3)$ will decrease the number of faces by 1, by~\eqref{eq:decomp} we have:
\[
|\IF(\Map_2)|-1=|\IF(S_2)|+\sum_{i=1}^{k_2} |\IF(\Map^i)|.
\]
We emphasize that here $|\IF(\Map_2)|$ (resp.\ $|\IF(\Map^i)|$) is the number of inner faces of $\Map_2$ (resp.\ $\Map^i$) viewed as an element of $\Delta_{\op{II} }(2)$ and $|\IF(S_2)|$ is the number of inner faces of $S_2$ viewed as an element of $\Delta_{\op{III} }(3)$.
Now by conditioning on $k_2$, we get 
\[
\BB E \big[(|\IF(\Map_2)| -1)\1_{\Map_2\notin   \Delta_{\op{II}}(2,2)}\big] =\BB E\big[|\IF(S_2)| \1_{\Map_2\notin   \Delta_{\op{II}}(2,2)}  \big]+ \BB E\big[ k_2\1_{\Map_2\notin   \Delta_{\op{II}}(2,2)} \big]   \BB E \big[|\IF(\Map_2)|\big].
\]
In particular, $c \BB E \big[|\IF(\Map_2)|\big]< \BB E \big[|\IF(\Map_2)|  \big] $ hence $c<1$. 

Since $\E{|V(\cT)|}<\infty$,   Lemma~\ref{lem:max1} implies that for $x$ large enough,
\begin{equation}\label{eq:tri}
\P[D>x]\lesssim x^{-100}+\P[\diam(\simple(\Map_2)) >x]\log x.
\end{equation} 
Since $\diam(\Map_2)\le HD$, $H$ has an exponential tail, and \eqref{eq:tri} holds,  Lemma~\ref{lem:diameter} follows from the lemma below and the fact that $14/3 > 4$.
\begin{lemma}\label{lem:diameter1}
  For $S_2=\simple(\Map_2)$, we have \(\lim_{n\to\infty} n^{14/3}\P[\diam(S_2)>n]=0\).
\end{lemma}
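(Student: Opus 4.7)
The plan is to identify the law of $S_2$ as $\Bol_{\op{III}}(3)$ and then combine the resulting polynomial size tail with a moment bound on the diameter of a uniform simple triangulation of the 3-gon of fixed size.

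First I would compute the law of $S_2$ via a generating function calculation analogous to the one in the proof of Proposition~\ref{prop:ratio}. Summing the Boltzmann weights $\rho_{\op{II}}^{|V(M)|}$ over all $M \in \Delta_{\op{II}}(2) \setminus \Delta_{\op{II}}(2,2)$ with $\simple(M)=T$, and using that the simple-core decomposition of $M$ (viewed as an element of $\Delta_{\op{II}}(3)$) satisfies $|V(M)| = |V(T)| + \sum_{i=1}^{|E(T)|}(|V(M_i)|-2)$, one obtains
\[
\P[S_2=T] \;\propto\; \rho_{\op{II}}^{|V(T)|}\, A^{|E(T)|}, \qquad A := \rho_{\op{II}}^{-2}\, Z_2(\rho_{\op{II}}).
\]
Evaluating~\eqref{eq:partition} at $\theta = 1/6$ gives $Z_2(\rho_{\op{II}}) = 1/162$, hence $A = 9/8$ and $\rho_{\op{II}} A^3 = 27/256 = \rho_{\op{III}}$. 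Combined with Euler's relation $|E(T)| = 3|V(T)| - 6$ for $T \in \Delta_{\op{III}}(3)$, this collapses to $\P[S_2=T] \propto \rho_{\op{III}}^{|V(T)|}$, so $S_2 \sim \Bol_{\op{III}}(3)$. By the classical enumeration of simple triangulations (cf.\ the proof of Lemma~\ref{lem:sepCircles}), $|\Delta_{\op{III}}(3,V)| \sim c\, V^{-5/2}\, \rho_{\op{III}}^{-V}$, whence $\P[|V(S_2)|=V] \lesssim V^{-5/2}$.

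Conditionally on $|V(S_2)|=V$, $S_2$ is uniform in $\Delta_{\op{III}}(3,V)$, which is essentially the same as a uniform simple sphere triangulation with $V$ vertices carrying a distinguished triangular root face. The Brownian map scaling limit for uniform simple triangulations of the sphere proved in~\cite{ABA-Simple}, combined with the uniform integrability arguments developed for uniform quadrangulations in~\cite{LeGall-uniqueness}, yields that for some $q \in (14/3, 6)$ (for instance $q=5$) there exists $C_q$ with
\[
\e\bigl[\diam(S_2)^q \,\big|\, |V(S_2)|=V\bigr] \le C_q\, V^{q/4}, \qquad V \ge 4.
\]
Averaging against the size tail then gives $\e[\diam(S_2)^q] \le C_q \sum_{V \ge 3} V^{-5/2 + q/4} < \infty$ since $q < 6$ forces $q/4 - 5/2 < -1$, and Markov's inequality yields $\P[\diam(S_2) > n] \le C'_q\, n^{-q} = o(n^{-14/3})$. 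The hard step is the conditional moment bound on the diameter: although the analog for quadrangulations is standard, its transfer to simple triangulations of the 3-gon requires some care, most cleanly via the identification of $\Delta_{\op{III}}(3,V)$ with rooted simple sphere triangulations where the Brownian map limit theorem and the attendant moment estimates are already known.
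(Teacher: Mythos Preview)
Your approach is correct and uses the same two inputs as the paper: the polynomial tail on $|V(S_2)|$ coming from the Boltzmann law on $\Delta_{\op{III}}(3)$, and diameter concentration for uniform simple triangulations of a given size coming from~\cite{ABA-Simple}. The packaging is different. You pass through a $q$th-moment bound and apply Markov's inequality; the paper instead writes
\[
\P[\diam(S_2)>n^{0.3}] \le \P[|V(S_2)|\ge n] + \P\bigl[\diam(S_2)>n^{0.3},\ |V(S_2)|<n\bigr],
\]
bounds the first term by $O(n^{-3/2})$ from enumeration, and shows the second term decays faster than any power of $n$ by invoking \cite[Corollaries~6.7 and~7.5]{ABA-Simple}, which give that for every $\eps>0$ the probability that a uniform vertex lies at distance $>m^{1/4+\eps}$ from the root (conditional on size $m$) is $o(m^{-K})$ for every $K$; a union bound over the $m$ vertices then controls the diameter. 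Substituting $n=m^{10/3}$ yields the claim. This route is cleaner because it sidesteps moment bounds entirely.

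On your side, the one step that needs tightening is the conditional moment inequality $\E{\diam^q\mid |V|=V}\le C_q V^{q/4}$. Distributional convergence to the Brownian map does not by itself give uniform integrability of $(\diam/V^{1/4})^q$, and the arguments in~\cite{LeGall-uniqueness} that you invoke are specific to the Cori--Vauquelin--Schaeffer bijection, not to simple triangulations. What \emph{does} follow from the super-polynomial tails in~\cite{ABA-Simple} (after the union bound just described) is the slightly weaker $\E{\diam^q\mid |V|=V}\le C_{q,\eps}V^{q(1/4+\eps)}$ for every $\eps>0$; taking $q=5$ and any $\eps<1/20$ still makes $\sum_V V^{-5/2+q(1/4+\eps)}$ convergent, so your argument goes through with this adjustment. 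Your explicit verification that $S_2\sim\Bol_{\op{III}}(3)$ via the identity $\rho_{\op{II}}\bigl(\rho_{\op{II}}^{-2}Z_2(\rho_{\op{II}})\bigr)^{3}=\rho_{\op{III}}$ is a nicer justification than the paper gives, where only conditional uniformity of $S_2$ given its size is used.
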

\begin{proof}
  Let $\{u_i\}$ be a sequence of random integers in $[|V(\Map_2)|]$ such that conditioning on $\Map_2$, they are independently and uniformly distributed. Fix an ordering $\{v_i\}_{1\le i\le |\IV(\Map_2)|}$  of $V(\Map_2)$ so that $v_{u_i}$ is a uniformly sampled vertex given $\Map_2$.
  Let $D_i$ be the distance between the root vertex and $v_{u_i}$.
  
  By the union bound, 
  \[
  \P[\diam(S_2)>n^{0.3}] \le \P[|V(S_2)|\geq n] +  \P[\diam(S_2)>n^{0.3}, \; |V(S_2)|<n]
  \]
  It is clear that  for each $n$ such that $\P[|V(S_2)=n|]>0$, the conditional law of $S_2$ given $|V(S_2)|=n$ is uniform among type III triangulations with $n$ vertices.
  By  \cite[Corollary~6.7 and~7.5]{ABA-Simple}, for each $\epsilon>0$, we have that  $\P[D_1>n^{1/4+\epsilon}\mid |V(S_2)|=n]$ decays faster than any power of $n$.
  Therefore  \(\P[\diam(S_2)>n^{0.3}, \; |V(S_2)|<n]\) also decays faster than any power of $n$.
  Since $ \P[|V(S_2)|>n]  \lesssim n^{-3/2}$, e,g., by \eqref{eq:devtnp},  we have \(\lim_{n\to\infty} n^{1.4}\P[\diam(S_2)>n^{0.3}]=0\).
\end{proof}

\subsection{A coupling between type I and II triangulations}\label{subsec:coupling2}
In this section we describe the coupling between $\Bol_{\op{I} }(p)$ and $\Bol_{\op{II}}(p)$ which allows to transfer the result obtained in Theorem~\ref{thm:rigor} for type $\op{II}$ to type I.
Suppose $M\in \Delta_{\op{I}}(p)$  with $p\ge3$. If $e\in E(M)$ is a self-loop, then $e$ must be an inner edge of $M$ since $M$ has simple boundary.
We may define a partial order $\prec$ on self-loops such that two self-loops $e,\wt e$ satisfy $\wt e\prec e$ if and only if $\wt e$ is separated from $\partial M$ by $e$.  
Suppose a self-loop $e$ is maximal in the partial order $\prec$.  Let $v_e$ be the vertex incident to $e$. 
The self-loop $e$ separates $M$ into two components.
The one that does not contain $\partial M$ together with $e$  forms a triangulation of a $1$-gon, which  we denote by $M_e$. There exists a unique $v^{\op o}_e\neq v_e$ in the other component such that $v^{\op o}_e$ and $e$ are incident to a common triangle.
Let $t^{\op o}_e$ be the triangle incident to both $e$ and $v^{\op o}_e$.  
There must be two edges $e'\neq e''$ linking $v_e$ and $v^{\op o}_e$ that are incident to $t^{\op o}_e$. 
We assume that $e',e,e''$ are counterclockwise arranged around $v_e$. 
Let $M^{\op{II}}$ be the map obtained from $M$ by removing the map of the 2-gon with boundary $\{e',e''\}$  (the one containing $M_e$) 
and then mering $e'$ and $e''$, where $e$ ranges over all maximal self-loops with respect to $\prec$. Then $M^{\op{II}}$ is a type II triangulation with a simple boundary, which is called the \emph{2-connected core} of $M$.
\begin{proposition}\label{prop:ratio2}
  For $p\ge 3$, let $\Map_{p}$ be a sample of $\Bol_{\op{I}}(p)$ and let $\Map^{\op{II}}_p$ be its 2-connected core. 
  Then  $\lim_{p\to \infty} |E(\Map_p)|/|E(\Map^{\op{II}}_p)|=2$ in probability. 
  Moreover, the law of $\Map^{\op{II}}_p$ is $\Bol_{\op{II}}(p)$.
\end{proposition}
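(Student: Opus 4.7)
The plan is to follow the template of the proof of Proposition~\ref{prop:ratio} essentially verbatim, with \emph{simple core} replaced by \emph{2-connected core} and \emph{elements of $\Delta_{\op{II}}(2)$} replaced by \emph{elements of $\Delta_{\op{I}}(1)$} (augmented by a trivial ``no pod'' element).

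First I would invert the 2-connected-core extraction described just before the proposition into a bijective decomposition: each $\Map \in \Delta_{\op{I}}(p)$ corresponds uniquely to its core $\Map^{\op{II}} \in \Delta_{\op{II}}(p)$ together with a pod attached to each of a fixed collection of ``slots'' of $\Map^{\op{II}}$ (one slot per directed side/endpoint of each edge of the core), where a pod is either trivial or consists of a triangle carrying a self-loop bounding a type I triangulation of the 1-gon. This factorizes the Boltzmann weight $\rho_{\op{I}}^{|V(\Map)|}$ as a product over the core and the pods, and yields an additive decomposition of $|E(\Map)|$ as $|E(\Map^{\op{II}})|$ plus pod edge-contributions. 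The direct analog of Lemma~\ref{lem:Bol} then follows: writing $k_p \defeq |E(\Map^{\op{II}}_p)|$, conditional on $k_p$ the core is uniform among type II triangulations of the $p$-gon with $k_p$ edges, and the pods are i.i.d.\ copies of a fixed Boltzmann-type random pod with parameter $\rho_{\op{I}}$.

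To identify the marginal law of $\Map^{\op{II}}_p$ as $\Bol_{\op{II}}(p)$, I would copy the final paragraph of the proof of Proposition~\ref{prop:ratio}: the above conditional uniformity forces $\P[\Map^{\op{II}}_p = T] \propto \alpha^{|V(T)|}$ for some $\alpha \in (0, \rho_{\op{II}}]$, and $\alpha < \rho_{\op{II}}$ would force $|V(\Map^{\op{II}}_p)|/p^2 \to 0$ in probability, which contradicts the type-I analog of Lemma~\ref{lem:gamma} (also proved in~\cite{Angel-Growth}) once combined with the asymptotic proportionality of $|V(\Map^{\op{II}}_p)|$ and $|V(\Map_p)|$ established in the ratio step. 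For the edge ratio, I would write $|E(\Map_p)| - |E(\Map^{\op{II}}_p)| = \sum_{i} Y_i$ with the $Y_i$ i.i.d.\ copies of the per-slot pod edge contribution and $i$ ranging over $c \cdot k_p$ slots for some fixed $c$. Since $k_p \to \infty$ in probability, a law of large numbers gives $|E(\Map_p)|/k_p \to 1 + c\mu$ in probability, where $\mu = \E{Y_1}$ is computable from the 1-gon Boltzmann partition function at weight $\rho_{\op{I}}$ (analogously to~\eqref{eq:Vnum}); a direct check using $\rho_{\op{I}} = 1/(12\sqrt{3})$ should yield $c\mu = 1$ and hence $|E(\Map_p)|/|E(\Map^{\op{II}}_p)| \to 2$.

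The main obstacle is pinning down the pod decomposition carefully (including the slot count $c$, the correct assignment of pods to slots, and the consistent handling of trivial pods) and performing the 1-gon partition-function computation that yields $c\mu = 1$. As in Proposition~\ref{prop:ratio}, the cleanest identification $\alpha = \rho_{\op{II}}$ avoids heavy enumeration by going through Lemma~\ref{lem:gamma} rather than through direct asymptotic analysis of the type I partition function.
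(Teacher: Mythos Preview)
Your proposal is correct and matches the paper's own approach: the paper omits the proof entirely, stating only that it is ``very similar to the proof of Proposition~\ref{prop:ratio}'', which is exactly the template you lay out (bijective core decomposition, conditional uniformity of the core plus i.i.d.\ pods, law of large numbers for the edge ratio, and the $\alpha<\rho_{\op{II}}$-vs-Lemma~\ref{lem:gamma} dichotomy to identify the marginal). The only parts you leave implicit---the precise slot structure for attaching pods at edges of the type~II core and the generating-function computation giving $c\mu=1$---are routine and are likewise suppressed in the paper.
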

We have the following analogue of Proposition~\ref{prop:ghpu-close}.
\begin{proposition}\label{prop:ghpu-close2}
  Let $\Map_{p}$  and $\Map^{\op{II}}_p$ be as Proposition~\ref{prop:ratio2}. 
  Let $\cM_{p}$  and $\cM^{\op{II}}_p$ be the rescaled spaces as defined in Theorem~\ref{thm:rigor}. Then the GHP distance between $\cS_p$ and $\cM_p$ tends to $0$ in probability.
\end{proposition}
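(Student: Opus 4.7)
The proof will follow the template of Proposition~\ref{prop:ghpu-close}, with the simple core construction replaced by the 2-connected core construction; I sketch the three main ingredients.

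First I would establish an analog of Lemma~\ref{lem:Bol}. Thanks to the 2-connected core bijection and the definition of the critical Boltzmann weight $\rho_{\op I}$, conditional on $\Map_p^{\op{II}}$ and on $k'_p\defeq |E(\Map_p^{\op{II}})|$, the map $\Map_p$ is recovered by independently attaching to each inner edge of $\Map_p^{\op{II}}$ a decoration $N_i$ drawn from a common critical Boltzmann law $\nu$ on rooted triangulations of the 1-gon (with the convention that the trivial/empty decoration is allowed, and with no decoration attached to the boundary edges). Viewing $\cM_p^{\op{II}}$ as a metric subspace of $\cM_p$, one then has
\[
\BB d^{\op H}\bigl(\cM_p^{\op{II}},\cM_p\bigr)\;\le\;\sqrt{3/2}\,p^{-1/2}\max_{1\le i\le k'_p}\diam(N_i).
\]

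Second I would prove the tail bound $\lim_{x\to\infty}x^4\,\P[\diam(N)>x]=0$ for $N$ drawn from $\nu$, which is the exact analog of Lemma~\ref{lem:diameter}. This proceeds by the same recursive Galton--Watson strategy as in Section~\ref{subsec:compare}: iterating the 2-connected core decomposition on $N$ itself produces a tree whose offspring law is the number of inner edges of a non-trivial 2-connected core, each node of the tree carries a 2-connected core which under the natural identification is a $\Bol_{\op{II}}$-distributed object to which Lemma~\ref{lem:diameter1} (combined with the simple-to-2-connected reduction of Section~\ref{subsec:compare}) applies, and the diameter of $N$ is bounded by the tree height times the maximum of the per-node core diameters. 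Subcriticality of the tree, hence an exponential tail for its height, follows from an Euler-formula computation strictly analogous to the derivation of~\eqref{eq:subcritical}: at the critical weight $\rho_{\op I}$ the expected number of inner faces consumed by the decorations of one core is strictly smaller than the number of inner faces of the core itself. Applying Lemma~\ref{lem:max1} to the product of these two contributions yields the desired polynomial decay.

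Third I would conclude as in Proposition~\ref{prop:ghpu-close}. Tightness of $\{\cM_p^{\op{II}}\}_{p\ge 3}$ in the GHP topology is inherited from Proposition~\ref{prop:tight} via Proposition~\ref{prop:ghpu-close}. By Lemma~\ref{lem:RN} and Proposition~\ref{prop:ratio2} one has $\E{k'_p}\lesssim p^2$, and Lemma~\ref{lem:max} together with the tail bound of the previous step force $p^{-1/2}\max_{1\le i\le k'_p}\diam(N_i)\to 0$ in probability, whence the Hausdorff distance vanishes. The Prokhorov distance between the area measures of $\cM_p^{\op{II}}$ and $\cM_p$ is controlled by the same exchangeability and concentration argument from \cite[Sections~5 and~6]{ABW-Core} that was invoked in the proof of Proposition~\ref{prop:ghpu-close}, once one notes that the sizes $|V(N_i)|$ have a uniformly bounded mean at critical weight.

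The main obstacle is the recursive tail estimate in the second step, because two levels of core decomposition (first 2-connected, then simple) must be compounded to bring the problem down to an estimate on a uniform simple triangulation as in Lemma~\ref{lem:diameter1}; fortunately Lemma~\ref{lem:max1} is robust enough to absorb the logarithmic losses at each stage, and the subcritical Galton--Watson tree contributes only an exponential factor that is dominated by the polynomial tail inherited from the simple-core level.
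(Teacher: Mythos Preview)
Your proposal is correct and follows exactly the approach the paper intends: the paper explicitly omits the proof of Proposition~\ref{prop:ghpu-close2}, stating it is ``very similar to the proofs of Propositions~\ref{prop:ratio} and~\ref{prop:ghpu-close}'', and your three-step outline (Markov decomposition of the decorations, polynomial tail for their diameter via a subcritical Galton--Watson recursion reducing to Lemma~\ref{lem:diameter}, and the exchangeability argument from \cite{ABW-Core} for the Prokhorov part) is precisely that template transported to the 2-connected core.

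Two small imprecisions worth noting, though neither affects the argument. First, the decorations attached to the edges of $\Map_p^{\op{II}}$ are more accurately described as Boltzmann type~I triangulations of the $2$-gon (the object removed in Section~\ref{subsec:coupling2} is the map of the $2$-gon with boundary $\{e',e''\}$), rather than triangulations of the $1$-gon; a $1$-gon sits inside each maximal self-loop, but several maximal self-loops can collapse to the same edge of $\Map_p^{\op{II}}$. Second, boundary edges of $\Map_p^{\op{II}}$ can also carry decorations, just as in the simple-core construction of Section~\ref{subsec:coupling1}; since there are only $p$ of them this is harmless. With these adjustments the analogue of Lemma~\ref{lem:Bol} holds, the 2-connected core of each decoration is indeed $\Bol_{\op{II}}(2)$-distributed so that Lemma~\ref{lem:diameter} applies at each node, and the Euler-formula computation yielding subcriticality goes through verbatim.
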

We omit the proofs of Propositions~\ref{prop:ratio2} and~\ref{prop:ghpu-close2}, as they are very similar to the proofs of Propositions~\ref{prop:ratio} 
and~\ref{prop:ghpu-close}.

\section{Convergence of type III triangulations of polygons}\label{sec:BD}
Given a sample of  $\Map_p^*=(\Map_p,f^*)$   from $\Bolm(p)$ (see Proposition~\ref{prop:Bol}),
put mass $p^{-1}$ on each boundary vertex, mass $3p^{-2}$ on each inner vertex, and length $(\frac{3}{2p})^{1/2}$ to each edge.  As explained at the end of Section~\ref{subsec:GHPU}, $\Map_p$ induces a random variable $\cM_p\defeq\left( \Map_p , d_p , \mu_p , \bdy_p \right)$ belonging to the space  $\BMS^\GHPU$.
The main result of this section is the following.
\begin{theorem}\label{thm:Bol-conv-III}
  In the setting right above, $\cM_p$  converges in law  to   the pointed free Brownian disk $\BD^*_{1}$
  (see Section~\ref{subsec:BD}) in the GHPU topology. 
\end{theorem}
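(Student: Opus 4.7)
The plan is to follow the now-classical strategy of Le Gall \cite{LeGall-uniqueness} and Bettinelli-Miermont \cite{BeMi15}: establish tightness of $\{\cM_p\}$ in GHPU via the upper distance bound, extract subsequential limits along a dense parametrization, and identify them using the label-process convergence together with a re-rooting argument. The distinguished point of $\BD^*_1$ corresponds to $v^\star$, the vertex incident to $\kappa_{\min}$, so the goal is to match the discrete distance-to-$v^\star$ with $\LP - \min \LP$ and the discrete two-point distances with $d_{\LP}$.

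For tightness, I would start from the upper bound of Proposition~\ref{prop:dist2points}, which after rescaling reads
\[
d_p(u,v) \le \Big(\frac{3}{2p}\Big)^{1/2}\Big(X(u) + X(v) - 2\max\{\check X(uu',vv'),\check X(vv',uu')\} + 6\Big).
\]
Combined with the uniform convergence $\lambda_{(p)}\to \LP$ from Proposition~\ref{prop:contour-label}, this yields equicontinuity of $d_p$ viewed via the contour exploration as a pseudo-metric on $[0,3|F^*_p|/p^2]$; tightness of $\{\cM_p\}$ in $\BMS^{\GHPU}$ then follows by the standard argument of \cite[Section 8]{BeMi15}, with the curve component handled by Lemma~\ref{lem:BR} and Remark~\ref{rmk:bridge}, and the measure component by Lemma~\ref{lem:CT}.

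To identify any subsequential limit pseudo-metric $d_\infty$ on $[0,\cA]$, I would first pass to the limit in the above upper bound to obtain $d_\infty \le d_{\LP}$, and use continuity of labels to check $\{d_\CT = 0\}\subset\{d_\infty = 0\}$; maximality of $D^*$ among such pseudo-metrics then gives $d_\infty \le D^*$. For the matching lower bound, I would combine Proposition~\ref{prop:lower} with Corollary~\ref{cor:distvstar} to get, for a uniformly sampled inner vertex $u$ corresponding to parameter $s$,
\[
d_p(u,v^\star) = \Big(\frac{3}{2p}\Big)^{1/2}\bigl(X(u) - X(v^\star)\bigr) + o_p(1).
\]
A re-rooting argument in the spirit of \cite[Section 9]{LeGall-uniqueness} then upgrades this to the full two-point formula: sample a second inner vertex $u'$ uniformly and apply Proposition~\ref{prop:lower} to the triangulation re-rooted at $u'$ (with the triangular marked face moved accordingly), using Lemma~\ref{lem:RN} and the bounded Radon-Nikodym derivative between $\Bolm(p)$ and its re-rooted variant to transfer the estimate. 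This yields $d_\infty(s,s') \ge d_{\LP}(s,s')$ on a dense set of typical pairs, forcing $d_\infty = D^*$ and identifying the limit as the metric component of $\BD^*_1$. The convergence of the measure follows because $\mu_p$ is the pushforward of the rescaled counting measure on $[0,3|F^*_p|/p^2]$ under the contour exploration, which converges (jointly) to the pushforward of Lebesgue measure on $[0,\cA]$ under $\pi$; the boundary curve convergence comes from Lemma~\ref{lem:BR} combined with Remark~\ref{rmk:bridge}, matching $\beta_p$ with $\pi\circ\bT$ in the limit.

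The main obstacle I anticipate is the re-rooting step. Unlike in the closed-surface setting, $\Bolm(p)$ is not literally invariant under re-rooting at an arbitrary vertex, since the root must remain on the boundary; one must therefore construct a suitable comparison measure in which both $u$ and $u'$ play symmetric roles, ensure the Radon-Nikodym derivative is uniformly integrable via Lemma~\ref{lem:RN}, and verify that the contribution from boundary vertices (a vanishing fraction of all vertices) can be discarded. Additionally, the presence of the macroscopic boundary means the ``shortcut'' analysis of Section~\ref{sec:lower} (types (5)--(8) excursions through the root face) must genuinely enter the argument, which is why Proposition~\ref{prop:lower} restricts to paths avoiding the root face — this is handled cleanly by Lemma~\ref{lem:nottouching}, ensuring that modified leftmost paths from a typical vertex do not touch the boundary.
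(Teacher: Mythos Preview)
Your outline has a genuine gap at the step where you claim $d_p(u,v^\star) = (\frac{3}{2p})^{1/2}(X(u)-X(v^\star)) + o_p(1)$ by combining Proposition~\ref{prop:lower} with Corollary~\ref{cor:distvstar}. Corollary~\ref{cor:distvstar} gives the upper bound $d_{\Map_p}(u,v^\star)\le X(u)-X(v^\star)+3$, but Proposition~\ref{prop:lower} bounds $\wt d_{\Map_p}(u,v^\star)$ --- the distance through paths avoiding the boundary --- from below, not $d_{\Map_p}$. Since trivially $d_{\Map_p}\le \wt d_{\Map_p}$, the two inequalities do not sandwich $d_{\Map_p}$: they only sandwich $\wt d_{\Map_p}$. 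Nothing rules out that a genuine geodesic of $\Map_p$ takes a shortcut through the boundary and is much shorter than $X(u)-X(v^\star)$. Your appeal to Lemma~\ref{lem:nottouching} does not help here: that lemma controls modified leftmost paths (the specific paths furnishing the \emph{upper} bound), not arbitrary geodesics. This is precisely the ``new difficulty'' the introduction flags.

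The paper resolves this not by a discrete re-rooting argument but by an auxiliary construction. One first couples $\Map_p$ with a type~II Boltzmann triangulation $\Map^{\op{II}}_p$ whose simple core is $\Map_p$, peels off the $1$-neighborhood of the boundary in $\Map^{\op{II}}_p$, and extracts the simple core $S_p$ of the largest remaining $2$-connected component. By the Markov property of the peeling, $S_p$ is (conditionally) Boltzmann type~III with a random perimeter $\ell_p$ satisfying $\ell_p/p\to1$. Because $S_p$ sits strictly inside $\Map_p$, one has $d_{S_p}\gtrsim \wt d_{\Map_p}$ on the relevant vertices, and \emph{now} Proposition~\ref{prop:lower} gives a usable lower bound for $d_{S_p}$. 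Combined with the upper bound (Proposition~\ref{prop:dist2points} applied to $S_p$) and the \emph{continuum} re-rooting invariance of the Brownian disk \cite[Corollary~21]{BeMi15}, this identifies the GHP limit of $\cS_p$ as $\BD^*_1$ (Lemma~\ref{lem:random-bdy}). Finally, a coupling of the bridges (Proposition~\ref{prop:compare}) shows $\cM_p$ and $\cM_{\ell_p}$ are GHP-close, transferring the convergence to $\cM_p$ itself, and the boundary curve is handled separately to upgrade to GHPU.
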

Given Theorem~\ref{thm:Bol-conv-III}, the original type III  case  of Theorem~\ref{thm:rigor} immediately follows by
reweighting the probability measure by the inverse of the number of inner faces of $\cM_p$. 
The Gromov-Hausdorff-Prokhorov (GHP)  convergence in the type I and II cases of Theorem~\ref{thm:rigor}  follows from Propositions~\ref{prop:ghpu-close} and~\ref{prop:ghpu-close2}.
As we will explain in Section~\ref{subsec:GHPU-conv}, in the GHPU convergence of Theorem~\ref{thm:Bol-conv-III}, the uniform part is rather straightforward once the convergence is established under the GHP metric.

Given Propositions~\ref{prop:contour-label}, \ref{prop:dist2points}, and~\ref{prop:lower}, 
for readers who are familiar with how a proof of GHP convergence to the Brownian disk/map goes (see e.g.\ \cite[Section 8]{BeMi15}), 
the GHP convergence part of Theorem~\ref{thm:Bol-conv-III} would be immediate if in Proposition~\ref{prop:lower} we had the actual graph distance instead of the one obtained from paths visiting only inner vertices. 
However,  it is a priori possible that adding the vertices incident to the root face dramatically shortens the distance between vertices. 
To resolve this issue,  we first use the results in the previous sections to prove the following.
\begin{proposition}\label{prop:random-bdy}
  There exists a sequence of random integers $\{\ell_p\}_{p\ge 3}$ in $[3,\infty)$ such that $\lim_{p\to\infty}\ell_p/p=1$ in probability, and, moreover, for $\{\Map_{p}^*\}_{p\ge 3 }$ samples from $\Bolm(p)$ independent of $\{\ell_p\}_{p\ge 3}$, $\{\cM_{\ell_p}\}_{p\geq 3}$ converges in law  to the pointed free Brownian disk $\BD^*_{1}$ in the GHP topology.
\end{proposition}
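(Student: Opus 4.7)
The plan is to exploit the freedom to let the perimeter be random so that the boundary-label process becomes (jointly with $\ell_p$) an unconditioned random walk, rather than a bridge of fixed length conditioned to end at $-3$. Concretely, I would define $\ell_p$ via a stopping-time coupling with the boundary walk appearing in Section~\ref{subsec:sample}, chosen so that $\ell_p/p\to 1$ in probability and so that, conditionally on $\ell_p$, the map $\Map_{\ell_p}^*$ has law $\Bolm(\ell_p)$. The joint law of the boundary labels together with $\ell_p$ is then that of an unconditioned $\pm 1$ walk stopped at a hitting time of $-3$, which is the essential gain over working with a deterministic perimeter.

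Given such $\ell_p$, the first step is joint convergence of the rescaled contour and label processes of $F_{\ell_p}^*$, which is immediate from Proposition~\ref{prop:contour-label} together with $\ell_p/p\to 1$ and yields $(H_{(\ell_p)},\lambda_{(\ell_p)})\to(\CT,\LP)$ as well as $3\ell_p^{-2}|V(\Map_{\ell_p})|\to \cA$. The second step is two-point distance asymptotics: for a uniformly chosen inner vertex $u\in\IV(\Map_{\ell_p})$, the upper bound on $d_{\Map_{\ell_p}}(u,v^\star)$ is provided by Corollary~\ref{cor:distvstar} via modified leftmost paths, and the lower bound for the inner-vertex distance $\wt d_{\Map_{\ell_p}}(u,v^\star)$ is provided by Proposition~\ref{prop:lower}; the two-point statement is handled the same way using Proposition~\ref{prop:dist2points}. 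The remaining task is to close the gap between $d$ and $\wt d$: a boundary shortcut saves distance only if the boundary labels dip below some target on a short interval, and conditionally on the trees grafted at the boundary corners, the probability of such a dip is a standard random-walk overshoot computation once the boundary labels are an unconditioned walk; a union bound over the (at most polynomially many) pairs of boundary entry/exit points closes the gap up to $o(\sqrt{\ell_p})$ with high probability.

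The third step is the classical re-rooting argument of \cite[Section~9]{LeGall-uniqueness}, as adapted to the Brownian disk in \cite[Section~8]{BeMi15}: tightness of $\{\cM_{\ell_p}\}$ in the GHP topology follows from Proposition~\ref{prop:tight} together with $\ell_p/p\to 1$, and any subsequential limit is identified with $\BD_1^*$ by combining the first two steps with the uniqueness of the quotient pseudometric induced by $\LP$. The main obstacle is the second step, and specifically the quantitative control of boundary shortcuts. Without randomizing the perimeter one is stuck with a conditioned bridge for the boundary labels, and the conditioning makes it delicate to rule out pathological label configurations on the boundary that would create systematic shortcuts between typical inner vertices; randomizing $\ell_p$ replaces the bridge by a standard random walk stopped at a hitting time, for which the required large-deviation estimates reduce to textbook fluctuation theory.
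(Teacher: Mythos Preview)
Your construction of $\ell_p$ is not well-defined as stated. If the joint law of the boundary walk and $\ell_p$ is that of an unconditioned simple symmetric random walk stopped at its first hitting time of $-3$, then $2\ell_p-3$ \emph{is} that hitting time, which has tail $\P[\tau_{-3}=2k-3]\sim ck^{-3/2}$ and does not concentrate; hence $\ell_p/p\to 1$ cannot hold. Conditioning $\ell_p$ to lie in a window around $p$ would restore concentration but would also reinstate conditioning on the walk, so the claimed gain disappears. More seriously, even granting some reasonable $\ell_p$, your plan for closing the gap between $d_{\Map_{\ell_p}}$ and $\wt d_{\Map_{\ell_p}}$ is not a proof. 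The only lower bound available is Proposition~\ref{prop:lower}, which controls $\wt d(u,v^\star)$ for a uniform $u$ and the special vertex $v^\star$; there is no lower bound for distances to boundary vertices. Labels give only \emph{upper} bounds on distances (Corollary~\ref{cor:distvstar}, Proposition~\ref{prop:dist2points}), so a geodesic touching the boundary at $w_1,\dots,w_k$ cannot be bounded below by summing label increments. The assertion ``a boundary shortcut saves distance only if the boundary labels dip below some target on a short interval'' is unjustified, and the proposed union bound over entry/exit points would require piecewise lower bounds you do not have.

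The paper's route is quite different and sidesteps the boundary-shortcut problem entirely. It defines $\ell_p$ \emph{geometrically}: embed $\Map_p$ into a type~II map $\Map_p^{\op{II}}$ via the core coupling of Section~\ref{sec:kernel}, remove the $1$-neighborhood of the boundary using the Markov property of Lemma~\ref{lem:Markov}, and let $\ell_p=\ol\ell_p$ be the perimeter of the largest $2$-connected component $\ol\Map^{\op{II}}_p$ of what remains; the peeling results of~\cite{Curien-branch} give $\ol\ell_p/p\to 1$. Conditionally on $\ol\ell_p$, the simple core $S_p$ of $\ol\Map^{\op{II}}_p$ has law $\Bol_{\op{III}}(\ol\ell_p)$. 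The crucial observation is that, with high probability, $S_p$ is isomorphic to a $2$-connected component $\ol\Map_p$ of $\mathring\Map_p$, so the intrinsic graph distance in $S_p$ dominates $\wt d_{\Map_p}$ (since $\ol\Map_p\subset\mathring\Map_p$), and Proposition~\ref{prop:lower} applies directly to yield the matching lower bound. In short, the randomization of the perimeter is used not to simplify the boundary walk but to realize the Boltzmann map as a submap of the interior of $\Map_p$, where $\wt d$ is the relevant metric.
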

In Section~\ref{subsec:random-bdy}  we prove Proposition~\ref{prop:random-bdy}.
In  Section~\ref{subsub:couple} we will use Proposition~\ref{prop:Bol} to couple samples of 
$\Map_{\ell_p}$ in Proposition~\ref{prop:random-bdy} and $\Map_p$ such that they are close in the GHP distance.
This will give the GHP convergence of $\cM_p$.

\subsection{Convergence of type III triangulation with  random perimeters}\label{subsec:random-bdy}
To prove Proposition~\ref{prop:random-bdy}, we introduce the following setup.
Let $(\Omega,\P,\cF)$ be a probability space that contains the random variables
$\{(\Map_p,f^*)\}_{p\ge 3}$ in Theorem~\ref{thm:Bol-conv-III}.
Let $\Q_p$ be the measure obtained by reweighting $\P$ by the inverse of the number of inner faces of $\Map_p$, so that under $\Q_p$ 
the law of $\Map_p$ is $\Bol_{\op{III}}(p)$. In light of Proposition~\ref{prop:ratio},
we further require that the probability space $(\Omega,\P,\cF)$ contains random variables $\{\Map^{\op {II}}_p\}_{p\ge 3}$ such that  
the $\Q_p$-law of $\Map^{\op{II}}_p$ is $\Bol_{\op{II}}(p)$, and $\Map_p$ is the simple core of $\Map^{\op{II}}_p$. For 
$e\in E(\Map_p)$, let $M_e$ be the type II triangulation of a $2$-gon attached to $e$ in the simple core decomposition of $\Map^{\op{II}}_p$ (see Section~\ref{subsec:coupling1}).

Let $\mathring{\Map}_p$  be the maximal subgraph of $\Map_p$  whose vertex set consists of inner vertices of $\Map_p$.
Let $\mathring{\Map}^{\op{II}}_p$ be defined in the same way as $\mathring{\Map}_p$   with $\Map_p$ replaced by $\Map^{\op{II} }_p$. Then both  $\mathring \Map^{\op{II} }_p$ and $\mathring{\Map}_p$ could have several connected components,  each of which could have several 2-connected components.
If $M$ is a 2-connected component of $\mathring{\Map}_p$, then $M$ together with $\{M_e\}_{e\in E(M)}$ forms a 2-connected component of $\mathring \Map^{\op{II}}_p$.
A 2-connected component of $\mathring{\Map}^{\op{II} }_p$ can either be obtained in this way, or is a subgraph  of $M_e$ for some $e\in E(\Map_p)\setminus E(\mathring \Map_p)$. {}
We leave to the reader to check this fact, see also Figure~\ref{subfig:b1}.
\begin{figure}
\centering
\subfigure[An element  $\Map^{\op{II}}_9$ of $\Delta_{\op{II}}(9)$.]{\includegraphics[page=1,width=0.4\linewidth]{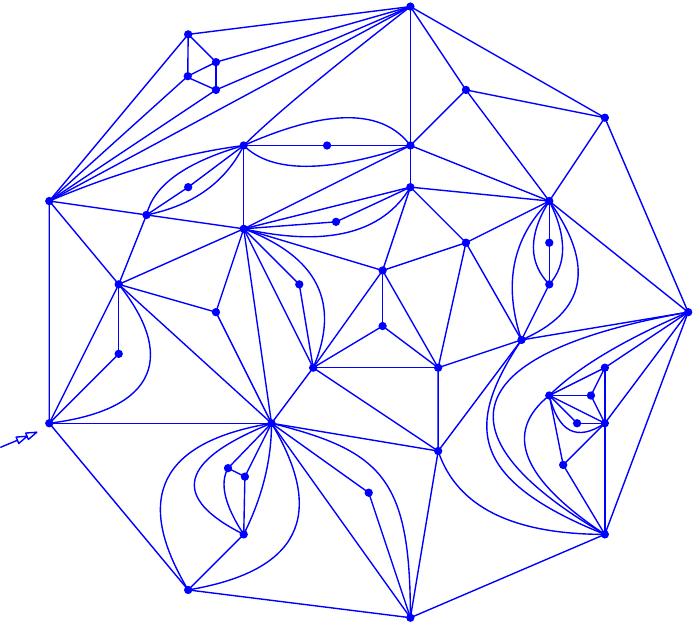}}
\qquad\qquad\qquad
\subfigure[\label{subfig:b1} The construction of $\cB_1$ and of $\ol\Map^{\op{II}}_9$]{\includegraphics[page=3,width=0.4\linewidth]{images/B1}}
\caption{\label{fig:constB1}An example of the construction of $\cB_1$ and of $\ol\Map^{\op{II}}$ on an element  of $\Delta_{\op{II}}(9)$. Edges of $\cB_1$ are dotted. Edges and vertices of $\ol\Map^{\op{II}}_9$ are fat and red.}
\end{figure}

The reason we consider $\mathring \Map^{\op{II}}_p$ is the following Markov property that is \emph{not} true for $\mathring \Map_p$, and which is exemplified on Figure~\ref{fig:constB1}.
\begin{lemma}\label{lem:Markov}
  Let $\partial \mathring\Map^{\op {II}}_p$ be the union of the boundaries of all the connected components of $\mathring\Map^{\op {II}}_p$.
  Let $\cB_1$ be the submap of $ \Map^{\op {II}}_p$ such that $V(\cB_1)=V(\partial \mathring\Map^{\op {II}}_p)\cup V(\partial \Map^{\op {II}}_p)$  and 
  $E(\cB_1)=E(\partial \mathring \Map^{\op{II}}_p)\cup (E(\Map^{\op{II} }_p)\setminus E(\mathring \Map^{\op{II}}_p))$.
  Conditioning on $\cB_1$,
  assign each $2$-connected component of $\mathring{\Map}^{\op{II} }_p$ a boundary root edge in a way only depending on $\cB_1$.
  Then the conditional $\Q_p$-law of these $2$-connected components 
  are independent Boltzmann type II triangulations with prescribed perimeters.
\end{lemma}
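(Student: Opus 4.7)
The plan is to prove Lemma~\ref{lem:Markov} by the standard bijective argument underlying the spatial Markov property of Boltzmann planar maps. The key observation is that $\cB_1$ together with the (rooted) tuple of 2-connected components $M^{(1)},\ldots,M^{(k)}$ of $\mathring{\Map}^{\op{II}}_p$ determines $\Map^{\op{II}}_p$ bijectively, and that the Boltzmann weight on $\Map^{\op{II}}_p$ factorizes multiplicatively across this decomposition.

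First I would set up the bijection. The map $\Map^{\op{II}}_p \mapsto (\cB_1,(M^{(j)})_j)$ is well-defined by construction. For the inverse, I would observe that each component $M^{(j)}$ has a simple boundary cycle (by 2-connectedness of the component and the planarity inherited from $\Map^{\op{II}}_p$), consisting of vertices and edges lying in $\partial\mathring{\Map}^{\op{II}}_p \subset \cB_1$, and this cycle bounds a unique face of $\cB_1$, namely the ``hole'' into which $M^{(j)}$ is glued. Since every non-root face of $\Map^{\op{II}}_p$ is triangular, one has $M^{(j)} \in \Delta_{\op{II}}(p_j)$, where $p_j$ is the length of this bounding cycle. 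Once a boundary root edge on $M^{(j)}$ and a corresponding edge on the $\cB_1$-side of the hole are selected (both as deterministic functions of $\cB_1$), the gluing is canonical, recovering $\Map^{\op{II}}_p$.

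Next I would factorize the Boltzmann weight. Every vertex of $\Map^{\op{II}}_p$ is either a vertex of $\cB_1$ or an inner (non-boundary) vertex of exactly one component $M^{(j)}$, and these categories are disjoint, so
\[
|V(\Map^{\op{II}}_p)| = |V(\cB_1)| + \sum_{j=1}^{k}\bigl(|V(M^{(j)})| - p_j\bigr).
\]
Therefore $\rho_{\op{II}}^{|V(\Map^{\op{II}}_p)|}$ equals a $\cB_1$-dependent constant times $\prod_{j=1}^{k} \rho_{\op{II}}^{|V(M^{(j)})|}$. Since the $\Q_p$-law of $\Map^{\op{II}}_p$ is $\Bol_{\op{II}}(p)$, which has density proportional to $\rho_{\op{II}}^{|V(\Map^{\op{II}}_p)|}$, the conditional $\Q_p$-law of $(M^{(j)})_j$ given $\cB_1$ is proportional to $\prod_{j=1}^k \rho_{\op{II}}^{|V(M^{(j)})|}\,\mathbf{1}\{M^{(j)}\in\Delta_{\op{II}}(p_j)\}$. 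Normalizing by $\prod_j Z_{p_j}(\rho_{\op{II}})$ yields exactly the joint law of independent samples from $\Bol_{\op{II}}(p_j)$, as claimed.

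The main delicacy lies in the bijection step: one must verify that no multiplicity factor arises in the weight factorization. Because the rooting rule assigns exactly one boundary root edge to each $M^{(j)}$ as a function of $\cB_1$, and the matching edge on the $\cB_1$-side of each hole is likewise determined by $\cB_1$, the reconstruction is one-to-one and no combinatorial correction appears. Once this is in place, the factorization above gives the desired conditional independence, and the proof is complete without requiring any asymptotic input.
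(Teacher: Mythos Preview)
Your proposal is correct and is exactly the standard bijective/weight-factorization argument underlying the spatial Markov property for Boltzmann planar maps. The paper itself does not give a proof of this lemma: it is stated without proof, with the surrounding text pointing to the peeling-process framework of \cite{Curien-branch} as the setting in which this property is well known, so your argument simply spells out what the authors take for granted.
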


Geometrically, $\cB_1$ is the 1-neighborhood  of $\partial \Map^{\op{II}}_p$ under the graph distance, and $\mathring \Map^{\op{II}}_p$ can be viewed as  the complement of $\cB_1$.
As $r$ varies, the boundary lengths of the complement of the $r$-neighborhood  of $\partial \Map^{\op{II}}_p$ form a branching process.
The law of this process under $\Q_p$ is studied in great detail in \cite{Curien-branch} via the so-called peeling process, where fine scaling limit results are established.
Here we only draw the following simple conclusions.
\begin{lemma}\label{lem:byd-II}
  With $\Q_p$-probability $1-o_p(1)$, there is a unique $2$-connected component $\ol\Map^{\op{II}}_p$ of $\mathring\Map^{\op{II} }_p$ with  largest perimeter, and,
  moreover, $\ol\Map^{\op{II}}_p$  is not a subgraph of $M_e$ for any edge $e\in E(\Map_p)\setminus E(\mathring \Map_p)$.
  Let  $\ol\ell_p$ be the perimeter of $\ol \Map^{\op{II}}_p$. Then the $\Q_p$-law of both $|V(\ol \Map^{\op{II}}_p)|/|V(\Map^{\op{II}}_p)|$ and $\ol\ell_p/p$ converge to 1.
  The  same hold if $\Q_p$ is replaced by $\P$.
\end{lemma}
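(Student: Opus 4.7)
The plan is to prove the statement under $\Q_p$ first, and then deduce the $\P$-version via the uniform integrability of the Radon--Nikodym derivatives established in Lemma~\ref{lem:RN}. Under $\Q_p$, $\Map^{\op{II}}_p$ has law $\Bol_{\op{II}}(p)$ and its simple core $\Map_p=\simple(\Map^{\op{II}}_p)$ has law $\Bol_{\op{III}}(p)$, so the full machinery of Sections~\ref{sec:bijection}--\ref{sec:lower} is at our disposal.

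From the core construction, the 2-connected components of $\mathring \Map^{\op{II}}_p$ come in two flavors: (a) those arising from a 2-connected component of $\mathring \Map_p$ with the pieces $M_e$ reinserted on each of its inner edges, and (b) those lying inside $M_e$ for some boundary-incident edge $e\in E(\Map_p)\setminus E(\mathring \Map_p)$. First I would control the type (b) contribution. The number of boundary-incident edges is at most $\sum_{v\in\partial\Map_p}\deg_{\Map_p}(v)$, which is $O(p)$ by the Poulalhon--Schaeffer construction in Section~\ref{subsec:sample} (boundary degrees being controlled by the $p$ variables $Z_i$ and the root degrees of the planted subtrees). Combined with $\mathbb{E}[|V(\Map_2)|]<\infty$ from~\eqref{eq:Vnum} and the diameter tail bound in Lemma~\ref{lem:diameter} (used through Lemma~\ref{lem:max1}), every type (b) component has perimeter $o(p)$, and their aggregate volume is $o(p^2)$ in probability.

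For the type (a) components, the first step is to exhibit a unique giant connected component of $\mathring\Map_p$. The marked vertex $v^\star$ is an inner vertex with probability $1-o(1)$: by Proposition~\ref{prop:contour-label} and Lemma~\ref{lem:BR} the scaling limit of the label process $\lambda_{(p)}$ is $\LP$, whose a.s.\ unique minimum is attained at an interior time of $[0,\cA]$ and is strictly smaller than the minimum of the associated bridge $\sqrt{3}\BR$ sampled at the transition times between trees; in the prelimit this forces $\kappa_{\min}$ to be an inner corner with high probability. Applying Lemma~\ref{lem:nottouching} to a uniformly sampled inner vertex $u$, Proposition~\ref{prop:LMPreaches} shows that a modified leftmost path from $u$ stays inside $\mathring\Map_p$ and terminates at $v^\star$, so $u$ lies in the connected component $\wt C$ of $\mathring\Map_p$ containing $v^\star$ with probability $1-o(1)$. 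Averaging over $u$ gives $|\wt C|/|V(\Map_p)|\to 1$ in $\Q_p$-probability.

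The main obstacle is to promote ``unique giant connected component'' to ``unique giant 2-connected component'', and simultaneously to pin down its perimeter as $(1-o(1))p$. Here I would invoke the peeling analysis of \cite{Curien-branch} applied to the metric ball $\cB_1$ under $\Bol_{\op{II}}(p)$: the branching process that encodes the perimeters of the boundary cycles of $\mathring\Map^{\op{II}}_p$ has a subcritical (heavy-tailed) reproduction law, and the scaling limit results there imply that, with $\Q_p$-probability $1-o_p(1)$, exactly one such cycle has length $(1-o(1))p$ while all others have length $o(p)$. Declare $\ol\Map^{\op{II}}_p$ to be the type (a) 2-connected component bounded by this distinguished cycle. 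By the Markov property in Lemma~\ref{lem:Markov}, conditionally on $\ol\ell_p$ the component $\ol\Map^{\op{II}}_p$ is a Boltzmann type II triangulation of perimeter $\ol\ell_p$, so Lemma~\ref{lem:gamma} gives that $|V(\ol\Map^{\op{II}}_p)|/p^2$ has the same limit as $|V(\Map^{\op{II}}_p)|/p^2$; using once more the subcriticality of the branching process to handle the aggregate volume of the remaining cycles yields $|V(\ol\Map^{\op{II}}_p)|/|V(\Map^{\op{II}}_p)|\to 1$ in probability. Uniqueness of the largest-perimeter component and the fact that $\ol\Map^{\op{II}}_p\not\subset M_e$ for every boundary-incident edge $e$ then follow from $\ol\ell_p/p\to 1$ combined with the type (b) bound of the previous paragraph.
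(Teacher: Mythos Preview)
Your proposal is correct and follows essentially the same route as the paper: the paper's proof of the $\Q_p$-statements is simply a citation to the peeling-by-layer analysis of \cite{Curien-branch} (with details omitted), and the transfer to $\P$ is done via Lemma~\ref{lem:RN}---exactly the two ingredients you identify. Your additional work (the explicit type (b) bound via $O(p)$ boundary-incident edges and Lemma~\ref{lem:diameter}, and the unique giant connected component of $\mathring\Map_p$ via modified leftmost paths and Lemma~\ref{lem:nottouching}) is more detailed than what the paper provides, but since you still need to invoke \cite{Curien-branch} at the decisive step to control the branching of perimeters after peeling $\cB_1$, these preliminary arguments are largely redundant rather than an alternative path.
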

\begin{proof}
  The  statements  concerning $\Q_p$ are straightforward consequences of the peeling by layer process in \cite{Curien-branch} and we omit the details.
  By Lemma~\ref{lem:RN}, the same statements hold  for~$\P$.
\end{proof}
When  the  2-connected component with the largest perimeter is not unique, 
we extend the definition of $\ol\Map^{\op{II}}_p$ in Lemma~\ref{lem:byd-II}  to a
$2$-connected component $\ol\Map^{\op{II}}_p$ chosen in a way measurable with respect to the map $\cB_1$.
By Lemma~\ref{lem:Markov}, conditioning on $\ol\ell_p$, the conditional $\Q_p$-law of $\ol\Map^{\op{II}}_p$ is $\Bol_{\op{II}}(\ol\ell_p)$. 
Let $S_p$ be the simple core of $\ol\Map^{\op{II}}_p$. 
Then conditioning on $\ol\ell_p$, the $\Q_p$-law of $S_p$ is $\Bol_{\op{III}}(\ol\ell_p)$.
Proposition~\ref{prop:random-bdy} is an immediate consequence of the following.
\begin{lemma}\label{lem:random-bdy}
  Let $\cS_p$ be the element in $\BMS^{\GHPU}$ obtained by rescaling $S_p$ as in the $i=\op{III}$ case of Theorem~\ref{thm:rigor}. Then the $\P$-law of $\cS_p$ converges to the  pointed free Brownian disk $\BD^*_{1}$ in the GHP topology. 
\end{lemma}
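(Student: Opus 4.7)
The plan is to reduce Lemma~\ref{lem:random-bdy} to an application of the machinery of Sections~\ref{sec:bijection}--\ref{sec:lower} at the random perimeter $\ol\ell_p$ (which satisfies $\ol\ell_p/p\to 1$ by Lemma~\ref{lem:byd-II}), extract subsequential limits via tightness, and identify the limit with $\BD_1^*$ through a re-rooting argument in the style of~\cite{LeGall-uniqueness,BeMi15}.

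First I would reduce to a conditional setting: by Lemma~\ref{lem:Markov}, conditionally on $\ol\ell_p$ the $\Q_p$-law of $S_p$ is $\Bol_{\op{III}}(\ol\ell_p)$, and by Lemma~\ref{lem:RN} the Radon-Nikodym derivative between $\P$ and the pointed measure obtained from $\Q_p$ by reweighting by the inner-face count of $S_p$ is uniformly integrable. Thus, since $\ol\ell_p/p\to 1$ in probability, it is enough to prove GHP convergence to $\BD_1^*$ under this conditional pointed measure. In this setting, Proposition~\ref{prop:contour-label} (applied with $\ol\ell_p$ in place of $p$) gives joint convergence of the rescaled height and label processes of the blossoming-forest encoding of $S_p$ to $(\CT,\LP)$ on $[0,\cA]$, Proposition~\ref{prop:tight} yields tightness of $\{\cS_p\}$ in GHP (its proof is insensitive to the $o(p)$ perturbation of the perimeter), and the measure convergence is automatic from $3\,\ol\ell_p^{-2}|V(S_p)|\to \cA$ together with $\ol\ell_p/p\to 1$.

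Working along a convergent subsequence, Proposition~\ref{prop:dist2points} combined with the label convergence shows that the limiting metric $d_\infty$ satisfies $d_\infty\le d_\LP$ on the tree quotient $[0,\cA]/\{d_\CT=0\}$, while the implication $d_\CT=0\Rightarrow d_\infty=0$ follows from Claim~\ref{claim:labelsNeighbours} by a standard comparison of consecutive vertices along the contour, yielding $d_\infty\le D^*$. For the matching lower bound, the re-rooting trick reduces matters to showing that for a uniform vertex $u$ of $S_p$ the rescaled distance $\sqrt{3/(2p)}\,d_{S_p}(u,v^\star)$ concentrates near $\sqrt{3/(2p)}\,(X(u)-X(v^\star))$; the upper bound $d_{S_p}(u,v^\star)\le X(u)-X(v^\star)+O(1)$ is immediate from Corollary~\ref{cor:distvstar}, and Proposition~\ref{prop:lower} delivers the matching concentration for the interior distance $\wt d_{S_p}(u,v^\star)$.

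The main obstacle is precisely that Proposition~\ref{prop:lower} controls $\wt d_{S_p}$ rather than $d_{S_p}$, and a priori a geodesic in $S_p$ could exploit the macroscopic boundary of $S_p$ as a shortcut, making $d_{S_p}$ strictly smaller than $\wt d_{S_p}$. This is exactly the place where the random-perimeter construction pays off: by Lemma~\ref{lem:byd-II}, the vertices of $S_p$ lie in the dominant interior $2$-connected component $\ol \Map^{\op{II}}_p$ of $\Map^{\op{II}}_p$, so the boundary of $S_p$ corresponds, through the simple-core decomposition of $\ol \Map^{\op{II}}_p$, to an \emph{interior} cycle of $\Map^{\op{II}}_p$ whose geometry is controlled by the peeling-by-layers analysis of~\cite{Curien-branch} used already in Lemma~\ref{lem:byd-II}. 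The plan is to show that this peeling control, together with Proposition~\ref{prop:ghpu-close} applied with $\ol\ell_p$ in place of $p$, forces any hypothetical macroscopic boundary shortcut to produce a short separating cycle in $\Map^{\op{II}}_p$ of a type ruled out with vanishing probability, in a manner analogous to Lemma~\ref{lem:sepCircles}. This upgrades the $\wt d_{S_p}$-lower bound to the $d_{S_p}$-lower bound at typical points, completing the identification $d_\infty=D^*$ and hence the GHP convergence $\cS_p\to\BD_1^*$.
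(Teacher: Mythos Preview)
Your setup through the upper bound $D'\le D^*$ is fine and essentially matches the paper: tightness from Proposition~\ref{prop:dist2points}, convergence of the encoding processes at the random perimeter $\ol\ell_p$ via $\ol\ell_p/p\to 1$, and the identification of the main obstacle --- that Proposition~\ref{prop:lower} only controls the interior distance --- is exactly right.

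The gap is in your proposed resolution of that obstacle. You suggest applying Proposition~\ref{prop:lower} to $S_p$ itself, obtaining a lower bound on $\wt d_{S_p}(u,v^\star)$, and then upgrading this to $d_{S_p}(u,v^\star)$ by arguing that a macroscopic boundary shortcut in $S_p$ would create a short separating cycle in $\Map^{\op{II}}_p$ ``in a manner analogous to Lemma~\ref{lem:sepCircles}''. But this does not work as stated: the boundary $\partial S_p=\partial\ol\Map^{\op{II}}_p$ has length $\ol\ell_p\sim p$, and a geodesic in $S_p$ that runs along a piece of this boundary produces no cycle of bounded length anywhere. Lemma~\ref{lem:sepCircles} concerns cycles of length at most a fixed $K$, so there is nothing to feed into it. The peeling input from~\cite{Curien-branch} tells you that $\partial S_p$ sits at graph distance $1$ from $\partial\Map^{\op{II}}_p$, but that information alone does not rule out the geodesic using $\partial S_p$.

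The paper's argument avoids this problem entirely by \emph{never} comparing $d_{S_p}$ with $\wt d_{S_p}$. Instead it applies Proposition~\ref{prop:lower} to the \emph{ambient} map $\Map_p$ (not to $S_p$): since $\ol\Map_p$ is a $2$-connected component of $\mathring\Map_p$, every path in $\ol\Map_p$ avoids $\partial\Map_p$, so $d_{\ol\Map_p}(\cdot,\cdot)\ge \wt d_{\Map_p}(\cdot,\cdot)$ tautologically. As $\ol\Map_p\cong S_p$, Proposition~\ref{prop:ghpu-close} (at perimeter $\ol\ell_p$) transfers $d_{S_p}$ to $d_{\ol\Map_p}$ up to $o(p^{1/2})$. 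Proposition~\ref{prop:lower} applied to $\Map_p$ then gives $\wt d_{\Map_p}(u,v^\star)\ge X(u)-X(v^\star)-\eps p^{1/2}$ at a typical $u$, where $v^\star$ corresponds to the marked face $f^*$ of $\Map_p$; a re-rooting step (using that $|F(S_p)|/|F(\Map_p)|\to 1$, so a uniform inner face of $S_p$ and the marked face $f^*$ of $\Map_p$ are interchangeable in law) matches this with $D'(U\cA,V\cA)$ and closes the argument. The point you should take away is that the random-perimeter construction is engineered precisely so that the boundary of $S_p$ becomes an \emph{interior} object of $\Map_p$, making Proposition~\ref{prop:lower} for $\Map_p$ directly applicable without any further control of boundary shortcuts.
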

\begin{proof}[Proof of Proposition~\ref{prop:random-bdy} given Lemma~\ref{lem:random-bdy}]
  By Lemma~\ref{lem:byd-II}, $|V(\ol \Map^{\op{II}}_p)|/|V(\Map^{\op{II}}_p)|$ converges to 1 in $\P$-probability.
  By Proposition~\ref{prop:ratio} and Lemma~\ref{lem:RN}, $|V(\Map_p)|/ |V(\Map^{\op{II}}_p)|$ converges to $1/2$ in $\P$-probability. 
  By the same  argument as in Proposition~\ref{prop:ratio} using the law of large numbers, 
  $|V(S_p)|/ |V(\ol\Map^{\op{II}}_p)|$ converges to $1/2$ in $\P$-probability.  Combining these three results and Euler's formula we  get
  \begin{equation}\label{eq:volume}
  |F(S_p)|/|F(\Map_p)|\to 1 \qquad \textrm{in $\P$-probability}.
  \end{equation}
  Lemma~\ref{lem:random-bdy} implies that the $\Q_p$-law of  $\cS_p$ converges to the free Brownian disk $\BD_{1}$ in the GHP topology.  Now we set $\{\ell_p\}_{p\ge 3}$ in Proposition~\ref{prop:random-bdy} to be $\{\ol\ell_p\}_{p\ge3}$. Then the law of $\Map_{\ell_p}$ in Proposition~\ref{prop:random-bdy} is the same as the $\Q_p$-law of $S_p$ weighted by its number of inner faces. Therefore, $\{\ell_p\}_{p\ge 3}$ satisfy the property required in Proposition~\ref{prop:random-bdy}.
\end{proof} 
It remains to prove Lemma~\ref{lem:random-bdy}. By Lemma~\ref{lem:byd-II}, with $\P$-probability $1-o_p(1)$,
$\ol\Map^{\op{II}}_p$ comes from a 2-connected component $\ol \Map_p$ of $\mathring{\Map}_p$, together with  $\{M_e\}_{e\in E(\ol \Map_p)}$. 
In particular, $\ol \Map_p$  and $\ol\Map^{\op{II}}_p$ have the same boundary length, which is $\ol\ell_p$.
On the low probability event that this is not the case, we set $\ol \Map_p=\emptyset$. 
When $\ol \Map_p\neq \emptyset$, since $S_p$ is the simple core of $\ol\Map^{\op{II}}_p$,
as planar maps  $\ol\Map_p$ and $S_p$ are isomorphic. However, as subgraphs of $\ol\Map^{\op{II}}_p$, they are not necessarily identical because for two edges with the same endpoints, it is possible  that one  is on $\ol \Map_p$ and the other is on $S_p$.

\begin{proof}[Proof of Lemma~\ref{lem:random-bdy}]
  Unless otherwise specified, we will work under the $\P$-probability.
  Let  $(H^{\op S}_{(p)},\lambda^{\op S}_{(p)})$ be defined as
  $(H_{(p)},\lambda_{(p)})$  in Proposition~\ref{prop:contour-label} with $S_p$ in place of $\Map_p$.
  Since $\ol\ell_p/p\to 1$ by Lemma~\ref{lem:byd-II}, and  $|F(S_p)|/|F(\Map_p)|\to 1$ in probability (see~\eqref{eq:volume}), 
  $(H^{\op S}_{(p)},\lambda^{\op S}_{(p)})$  converge in law to  $(\CT(t),  \LP(t)_{t\in [0,\cA]})$ in the uniform topology.
  Let $D^{\op S}_{p} (i,j )$ be the $S_p$-graph distance of the $i$-th and $j$-th vertex in the lexicographical order. Set 
  \[
  D^{\op S}_{(p)} (s,t)=\Big(\frac{3}{2p}\Big)^{1/2} D^{\op S}_p(p^2s/3, p^2t/3).
  \] Then by Proposition~\ref{prop:dist2points}, $\{D^{\op S}_{(p)}\}_{p\ge 3}$ is tight. 
  We choose a subsequence of $\{p\}_{p\ge 3}$ where  $(H^{\op S}_{(p)}$, $\lambda^{\op S}_{(p)})$ and $D^{\op S}_{(p)} (s)$  jointly converge in law, 
  and couple $\{S_p\}_{p\ge 3}$ by the Skorokhod embedding theorem such that  the convergence holds almost surely. Let $D'$ be the limit of $D^{\op S}_{(p)}$.
  Recall  the set $\cD$ of   pseudo-metrics  on $[0,\cA]$ in the definition of the Brownian disk in Section~\ref{subsec:BD} and the maximal element
  $D^*$ defined in terms of $(\CT(t),  \LP(t)_{t\in [0,\cA]})$. Again by Proposition~\ref{prop:dist2points}  we have that almost surely  
  $D'\in\cD$ so that
  \begin{equation}\label{eq:upper}
  D'(s,t)\le D^*(s,t)\qquad \textrm{for all }s,t\in [0,\cA].
  \end{equation}

  Let $u^{\op{II} }_p$ be a vertex uniformly sampled in $V(\Map^{\op{II}}_p)$.  
  Then with probability $1-o_p(1)$, $u^{\op{II}}_p\in V(\ol\Map^{\op{II}}_p)$, and there exists $\ol e_p\in E(\ol\Map_p)$ 
  such that $u^{\op{II}}_p\in V(M_{\ol e_p})$.
  Similarly, there exists $e^{\op S}_p\in E(S_p)$ such that $u^{\op{II} }_p$ is a vertex on the 2-gon attached to
  $e^{\op S}_p$ in the simple core decomposition. Now
  we  uniformly sample an inner face $\wh f$ of $S_p$  conditioning on everything else.
  Let $\wh v$ be a vertex   incident to $\wh f$ chosen arbitrarily.
  Then  $\wh v\in V(\ol\Map^{\op{II}}_p)$ with probability $1-o_p(1)$. 
  Let $\wh e^{\op S}_p\in  E(S_p)$ be defined as $e^{\op S}_p$ with $\wh v$ in place of $u^{\op{II} }_p$. 
  
  Given a map $M$, for $x\in E(M)\cup F(M)$, we let $V(x)$ be the set of vertices incident to $x$.  For $x,y\in E(M)\cup F(M)$, let 
  $d_M(x,y):=\min\{ d_G(v,u), v\in V(x), u\in V(y) \}$. By the definition of $e^{\op S}_{p}$ and $\wh e^{\op S}_{p}$, 
  along the chosen subsequence of   $\{p\ge 3\}$ we have 
  \begin{equation}\label{eq:Dbis}
  \lim_{p\to\infty} \Big(\frac{3}{2p}\Big)^{1/2}d_{S_p} (e^{\op S}_{p},\wh e^{\op S}_{p})=D'(U\cA,V\cA) \qquad\textrm{in law},
  \end{equation}
  where $U$ and $V$ are independent uniform random variables on $(0,1)$ independent
of everything else.
  Here we use the fact that the distinction between uniform sampling on edges, vertices, or faces on a Boltzmann triangulation is negligible in the scaling limit. 
  
  Let $e^{\op S}_*$ be defined as $\wh e^{\op S}_p$  with the root face $f_*$ in place of $\wh f$. 
  Since $(e^{\op S}_p,e^{\op S}_*)$ has the same distribution as $(e^{\op S}_{p},\wh e^{\op S}_{p})$, \eqref{eq:Dbis} remains true for $d_{S_p} (e^{\op S}_p,e^{\op S}_*)$
  instead of $d_{S_p} (e^{\op S}_{p},\wh e^{\op S}_{p})$. Using the Skorokhod embedding theorem again, we assume that along the chosen subsequence 
  \begin{equation}\label{eq:D'}
  \lim_{p\to\infty} \Big(\frac{3}{2p}\Big)^{1/2}d_{S_p} (e^{\op S}_p,e^{\op S}_*)=D'(U\cA,V\cA)\qquad \textrm{almost surely}.
  \end{equation}

  Recall $\wt d_{\Map_p}$
  in Proposition~\ref{prop:lower}.
  With probability $1-o_p(1)$, $f_*\in F(\ol \Map_p)$.  
  On this event, $d_{\ol\Map_p}(\ol e_p, f_*)\ge \wt d_{\Map_p}(\ol e_p, f_*)$.
  By Lemma~\ref{prop:ghpu-close}
  the pairwise differences between 
  \[
  \textrm{$p^{-1/2}d_{\ol\Map_p}(\ol e_p, f_*)$, 
    $p^{-1/2}d_{\ol\Map^{\op{II}}_p}(\ol e_p, f_*)$, and $p^{-1/2}d_{S_p} (e^{\op S}_p,e^{\op S}_*)$
  }
  \]
  all converge to 0 in probability.
  Therefore~\eqref{eq:D'} implies that 
  \[
  D'(U\cA,V\cA) \ge \liminf_{p\to\infty}\Big(\frac{3}{2p}\Big)^{1/2}\wt d_{\Map_p}(\ol e_p, f_*) \qquad\textrm{almost surely}.
  \]
  Thanks to Proposition~\ref{prop:lower} and Corollary~\ref{cor:distvstar}, we know that
  \[
  \lim_{p\to\infty}\Big(\frac{3}{2p}\Big)^{1/2}\wt d_{\Map_p}(\ol e_p, f_*)=\LP(U\cA)-\min\{\LP(s),s\in[0,\mathcal{A}]\} \text{ in distribution.}
  \]
  Therefore, $D'(U\cA,V\cA)$ stochastically dominates $\LP(U\cA)-\min\{\LP(s),s\in[0,\mathcal{A}]\}$. 
  By the re-rooting invariance of the Brownian disk established in \cite[Corollary 21]{BeMi15},
  \[
  \LP(U\cA)-\min\{\LP(s),s\in[0,\mathcal{A}]\} \eqdist D^*(U\cA,V\cA).
  \]
  On the other hand, by~\eqref{eq:upper}, $D^*(U\cA,V\cA)\ge D'(U\cA,V\cA)$ almost surely.
  Therefore  $D^*(U\cA,V\cA)=D'(U\cA,V\cA)$ almost surely.
  By the continuity of $D^*$ and $D'$, almost surely $D^*(s,t)=D'(s,t)$ for all $s,t\in[0,\cA]$.
  Therefore $H^{\op S}_{(p)}$, $\lambda^{\op S}_{(p)}$, and $D^{\op S}_{(p)} (s)$  jointly converge to $\CT$,  $\LP$, and $D^*$.
  Now the GHP convergence of $\cS_{(p)}$ follows from the routine argument (see e.g.\  \cite[Section~5.2]{BeMi15}). 
\end{proof}

\subsection{Comparison between type III triangulations with close perimeters}\label{subsub:couple}
In this subsection we use $(\ell_p)_{p\ge 3 }$ to denote a general  random sequence such that $\ell_p/p$ tends to 1. For each $p$, let $M'_p$ be coupled with $\ell_p$ such that, conditioned on $\ell_p$, the law of $M'_p$  is $\Bolm(\ell_p)$. Let $M_p$ be sampled from $\Bolm(p)$ as in Theorem~\ref{thm:Bol-conv-III}. 
The goal of this subsection is to prove the following.
\begin{proposition}\label{prop:compare}
  For each $\delta\in (0,1)$,
  there exists a coupling of $(\cM_p, \cM'_p)_{p\ge 3}$ satisfying the following.
  For large enough $p$, the probability of the event that $\cM_p$ and $\cM'_p$ 
  have GHP-distance at least $\delta$  is smaller than $\delta$.
\end{proposition}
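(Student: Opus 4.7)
The plan is to realize both $\Map_p^*$ and $M'^*_{\ell_p}$ on a single probability space via the Poulalhon-Schaeffer sampling of Proposition~\ref{prop:Bol}. Recall that this sampling builds the pointed triangulation from two independent ingredients: a $\pm 1$ random walk bridge conditioned to end at $-3$ (of length $2p-3$ or $2\ell_p-3$ respectively), and an i.i.d.\ sequence of blossoming trees. I would use a single infinite i.i.d.\ sequence $(T_i)_{i\ge 1}$ of blossoming trees for both forests, and couple the two bridges $b_p$ and $b_{\ell_p}$ by Skorokhod embedding so that, after the rescaling of Lemma~\ref{lem:BR}, both converge almost surely under the coupling to a common Brownian bridge $\sqrt{3}\BR$.

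Under this coupling the two rescaled height-and-label processes $(H_{(p)}, \lambda_{(p)})$ and $(H_{(\ell_p)}, \lambda_{(\ell_p)})$ both converge in probability to the same limit $(\CT, \LP)$ on $[0, \cA]$: the argument of Proposition~\ref{prop:contour-label} applies to each forest separately, and the shared bridge limit together with the shared tree sequence force both convergences to happen on the same coupling to the same limit. Since $\ell_p/p \to 1$ in probability, only $O(|p-\ell_p|) = o_p(p)$ of the trees (and a negligible tail of the bridge) differ between the two constructions, so $\|H_{(p)} - H_{(\ell_p)}\|_\infty + \|\lambda_{(p)} - \lambda_{(\ell_p)}\|_\infty \to 0$ in probability.

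To upgrade the forest-level closeness to GHP closeness of the closed maps, I would build a correspondence between $V(\Map_p)$ and $V(M'_{\ell_p})$ by identifying vertices that belong to a common tree $T_i$ (covering all but $o_p(p^2)$ vertices after rescaling). The upper bound on graph distances given by Proposition~\ref{prop:dist2points} is a continuous functional of the label process and hence matches asymptotically between the two maps. The matching lower bound comes from Proposition~\ref{prop:random-bdy} applied to $M'_{\ell_p}$: the argument of Lemma~\ref{lem:random-bdy} identifies the scaling-limit metric as the canonical pseudometric $D^*$ built from $(\CT, \LP)$, using only the label process (via Proposition~\ref{prop:lower}) and the re-rooting invariance of $\BD_1^*$; the same identification therefore applies to $\cM_p$ under the coupling. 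Closeness in Prokhorov distance of the empirical vertex measures follows from the convergence of $|V(\Map_p)|/p^2$ and $|V(M'_{\ell_p})|/p^2$ to the same $\cA$ under the coupling.

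The main obstacle is that the Poulalhon-Schaeffer closure is a non-local operation on the forest: small changes in the bridge or in the last few trees could in principle cascade into large-scale combinatorial differences between the two closed maps. To avoid this, the plan is to never directly compare the closed maps combinatorially; instead, all estimates will be phrased as functionals of the height-and-label processes (for which closeness is automatic under the coupling) and of the scaling limit (which is identical for the two forests by construction), and Propositions~\ref{prop:dist2points}, \ref{prop:lower}, and~\ref{prop:random-bdy} will be invoked as black boxes to transfer process-level closeness to metric-level closeness.
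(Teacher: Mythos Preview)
Your overall strategy---share the i.i.d.\ blossoming-tree sequence $(T_i)_{i\ge 1}$ between the two samplings and couple the two bridges---is exactly the paper's approach. Two points deserve attention, one stylistic and one substantive.

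First, the paper does not couple the bridges via Skorokhod embedding to a common limiting Brownian bridge. Instead it uses a local limit theorem to couple $(b_i)_{0\le i\le 2p-3}$ and $(b'_i)_{0\le i\le 2\ell_p-3}$ so that with probability $1-o_p(1)$ they agree \emph{exactly} on $[0,2(1-\eps)p]\cap\Z$ for a fixed small $\eps$. Combined with the shared tree sequence, this makes the two blossoming forests literally identical on that initial segment, not merely close after rescaling. The paper then concludes using only Propositions~\ref{prop:contour-label} and~\ref{prop:dist2points}: the differing tails of the two forests occupy a contour interval of relative length $O(\eps)$, and by those two propositions the corresponding region has rescaled metric diameter $o_\eps(1)$ in each closed map, which is enough to control the GHP distortion of the obvious correspondence.

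Second, and more importantly, your step~6 is circular. You write that ``the argument of Lemma~\ref{lem:random-bdy} \dots\ the same identification therefore applies to $\cM_p$ under the coupling.'' But the lower bound in that lemma rests on the inequality $d_{\ol\Map_p}(\cdot,\cdot)\ge \wt d_{\Map_p}(\cdot,\cdot)$, which holds precisely because $\ol\Map_p$ sits inside $\mathring\Map_p$ and hence avoids the boundary of $\Map_p$. There is no analogue of this inequality for $d_{\Map_p}$ itself: Proposition~\ref{prop:lower} only controls $\wt d_{\Map_p}$, and the entire purpose of the two-step scheme (Proposition~\ref{prop:random-bdy} followed by Proposition~\ref{prop:compare}) is to sidestep exactly this obstruction. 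If the identification argument applied directly to $\cM_p$, Proposition~\ref{prop:compare} would be unnecessary. So you cannot invoke Lemma~\ref{lem:random-bdy} as a black box for $\cM_p$; you must instead compare $\cM_p$ and $\cM'_p$ directly, and for that the exact discrete coupling of the bridges (rather than mere joint convergence) is the right tool.
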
 
\begin{proof}
  Recall the sampling method of $M_p$ given in Proposition~\ref{prop:Bol}. The input is a random  $p$-bridge $(b_i)_{0\le i\le 2p-3}$ and the i.i.d.\ sequence of blossoming trees $\{T_i\}_{i\ge 1}$. 
  Here by a random $p$-bridge we mean a simple random walk conditioned on visiting $-3$ at time $2p-3$.
  We first use $(b_i)_{1\le i\le 2p-3}$  to determine the stems attached to boundary vertices to create $2p-3$ corners. Then we attach $(T_i)_{i\ge 1}$ to these corners and perform the closure operation $\chi$.
  
  Given a sample of  $\ell_p$, let $ (b'_i)_{1\le i\le 2\ell_p-3}$ be such that  $(b')_{1\le i\le 2\ell_p-3}$ is a random $\ell_p$-bridge.   We may use $(b'_i)_{1\le i\le 2\ell_p-3}$ and the same sequence $(T_i)_{i\ge 1}$ above to sample $M'_p$.
  
  Using a local limit theorem (see  e.g.\ \cite[Theorem~2.3.11]{Lawler-modern}), for a small but fixed $\eps\in(0,1)$, 
  we may further couple $(b_i)_{1\le i\le 2p-3}$ and $(b'_i)_{1\le i\le 2\ell_p-3}$ such that with probability $1-o_p(1)$ they agree on $[0, 2(1-\eps)p]\cap \Z$. We omit this simple random walk exercise. (See e.g.\  \cite[Proposition 5.19]{LSW-Schnyder}  for a similar but more complicated statement of this type in the setting of random walk in cones.)
  Using Propositions~\ref{prop:contour-label} and~\ref{prop:dist2points},
  by making $\eps$ as small as we want depending on $\delta$, we conclude the proof. 
\end{proof}
\subsection{GHPU convergence}\label{subsec:GHPU-conv}
It is clear from Propositions~\ref{prop:random-bdy} and~\ref{prop:compare}  that $\cM_p$ converges in the  GHP topology. 
In fact, we see that $(H_{(p)},\lambda_{(p)},\cM_p)$ jointly converge to their continuum counterparts.
Recall  that  $\cM_p$ can be written as $\left( \Map_p , d_p , \mu_p , \bdy_p \right)$ as in Section~\ref{subsec:GHPU}.
Using Proposition~\ref{prop:dist2points} and the exact same argument as in the proof of \cite[Theorem~4.1]{gwynne-miller-uihpq} (based on Lemma~2.14 here), 
we can upgrade the GHP convergence of $\cM_p$ to the GHPU convergence. 
This concludes the type III case of Theorem~\ref{thm:rigor}. The GHP convergence in the type I and II cases follows from Propositions~\ref{prop:ghpu-close} and~\ref{prop:ghpu-close2}. Since a type I triangulation of a polygon, its 2-connected core, and its simple core share the same set of boundary vertices, the uniform convergence of the boundary curve in the type I and II cases of Theorem~\ref{thm:rigor} follows from the couplings in Proposition~\ref{prop:ratio} and~\ref{prop:ratio2}.



\providecommand{\bysame}{\leavevmode\hbox to3em{\hrulefill}\thinspace}
\providecommand{\MR}{\relax\ifhmode\unskip\space\fi MR }
\providecommand{\MRhref}[2]{%
  \href{http://www.ams.org/mathscinet-getitem?mr=#1}{#2}
}
\providecommand{\href}[2]{#2}

\end{document}